%% file: main.tex
\documentclass[11pt]{article}
\usepackage{graphicx} 
\usepackage[margin=1in]{geometry}
\usepackage[utf8]{inputenc}
\usepackage[english]{babel}
\usepackage{amssymb}
\usepackage{amsmath}
\usepackage{amsfonts}
\usepackage{bbm}
\usepackage{booktabs}
\usepackage{array}
\usepackage{verbatim}
\usepackage{xcolor}
\usepackage{geometry}
\usepackage{setspace}
\usepackage{amsthm}
\graphicspath{ {./images/} }
\usepackage{graphicx}
\usepackage{float} 
\usepackage{natbib}
\usepackage{enumitem}
\usepackage{float}
\usepackage{csquotes}

\usepackage{setspace}
\usepackage{authblk}
\usepackage{orcidlink}

\usepackage{bm}
\usepackage{arydshln}
\usepackage{multirow}
\usepackage{subcaption}
\usepackage[normalem]{ulem}
\definecolor{mygreen}{rgb}{0.0,0.55,0.0}
\definecolor{myorange}{rgb}{0.95,0.5,0.0}

\usepackage{longtable}
\usepackage{booktabs}

\usepackage{nicefrac}       

\usepackage{algorithm}
\usepackage{algorithmic}
\usepackage{enumitem}
\usepackage{csquotes}
\usepackage{tikz}
\usetikzlibrary{
  shapes.geometric,
  arrows.meta,
  positioning,
  decorations.pathreplacing,
  decorations.markings,
  calc,
  fit,
  backgrounds,
  fadings,
  patterns,
  shadows
}

\usepackage{array}

\DeclareMathOperator*{\argmin}{arg\,min}

\theoremstyle{definition}
\newtheorem{theorem}{Theorem}[section]
\newtheorem{definition}[theorem]{Definition}
\newtheorem{assumption}[theorem]{Assumption}
\newtheorem{lemma}[theorem]{Lemma}
\newtheorem{remark}[theorem]{Remark}
\newtheorem{proposition}[theorem]{Proposition}
\newtheorem{corollary}[theorem]{Corollary}

\numberwithin{equation}{section}

\title{Generative Transfer for Entropic Optimal Transport with Unknown Costs}

\author{
Antoine Debouchage$^{1,2,*}$,
Xiaozhen Wang$^{3,*}$,
Zhenjie Ren$^{2,\dagger}$,
Francois Buet-Golfouse$^{1}$ \\
{\small
$^{1}$ AIML Global Markets, Barclays \\
$^{2}$ LaMME, Université Évry Paris-Saclay, Évry-Courcouronnes, France \\
$^{3}$ CEREMADE, Université Paris-Dauphine PSL, Paris, France
}
}
\date{}

\begin{document}

\maketitle

\begingroup
\renewcommand{\thefootnote}{\fnsymbol{footnote}}
\footnotetext[1]{These authors contributed equally as co-first authors.}
\footnotetext[2]{Supported by the Finance For Energy Market Research Centre, the France 2030 grant (ANR-21-EXES-0003), and the PEPR PDE-AI project.}
\endgroup

\begin{abstract}
    This paper addresses the practical challenge in Entropic Optimal Transport (EOT) where the underlying ground cost function is typically latent and unobserved.
    Rather than assuming a fixed geometric cost, we adopt a data-driven approach where a shared cost is revealed only through samples from a reference optimal coupling.
    The question is then: given samples from a reference optimal coupling, can we recover the optimal coupling for new marginals under the same latent cost?
    We introduce a generative transfer framework that recovers the optimal coupling for new marginals by utilizing an iterative path-wise tilting algorithm.
    Unlike static importance reweighting, this method evolves the coupling jointly with a marginal transport path, allowing mass to move beyond the reference support.
    We derive sample-level learning rules for these infinitesimal updates, which yield covariance-type evolution equations for the associated transport vector fields.
    By integrating this dynamics with Conditional Flow Matching (CFM), we produce a practical sampler for paired data.
    Finally, we provide theoretical guarantees establishing a global convergence rate of $\mathcal{O}(\delta)$, ensuring the generated coupling converges to the target EOT plan in $W_1$ distance.
\end{abstract}

\section{Introduction}

Generative models built from dynamical transport, notably diffusion and flow-based models, have become a standard way to map a simple base distribution to complex data \citep{ho2020denoising,song2020score,rezende2015variational}.
In many applications, the supervision consists of unpaired samples from a source and a target distribution, and learning amounts to constructing a time-dependent map that pushes one distribution to the other.
Score-based and flow-matching objectives make this practical, providing scalable samplers and learned vector fields that connect the two marginals \citep{lipman2022flow,tong2023improving}.

Optimal transport (OT) plans form a canonical notion of \textit{alignment} between probability distributions and have become a useful primitive in generative modeling \citep{villani2021topics,peyre2019computational}.
Most scalable neural approaches to compute OT rely on dual formulations \citep{peyre2019computational}, which have been successfully integrated into generative pipelines.
Entropic optimal transport (EOT), together with its dynamic formulation as a Schr\"odinger bridge (SB), yields cost-aware entropic couplings whose conditional laws can exhibit one-to-many behavior, with diversity controlled by the regularization level, while remaining computationally tractable and differentiable via Sinkhorn-type solvers \citep{cuturi2013sinkhorn,conforti2019second}.

A practical obstacle in EOT is that the underlying cost function is typically latent.
For convenience, quadratic costs are often used even when they distort the task geometry.
We therefore adopt a data-driven view: the cost is unobserved and only revealed through the optimal couplings it induces.

Unlike most EOT works that focus on varying marginals, we consider a shared but unknown cost function across tasks.
The cost is never observed directly; its structure is entirely encoded in a reference optimal coupling.
To recover the optimal coupling for new marginals, we introduce path-wise tilting, an iterative algorithm that evolves a coupling jointly with the marginals along a path.
Because mass can move beyond the reference support, this avoids the failures of static importance reweighting. We refer to our method as \textbf{TACO} (\textbf{T}ilting  \textbf{A}daptation of \textbf{C}ouplings for entropic \textbf{Optimal} Transport).

\paragraph{Contributions.}
\begin{itemize}[leftmargin=1.3em,itemsep=0pt,topsep=2pt]
  \item \textbf{Cost-transfer setting.} We formalise \textit{data-based cost-transfer} for EOT, where a shared, unobserved cost is accessed solely via samples from a reference coupling.
  \item \textbf{Path-wise tilting.} We propose an iterative algorithm that avoids the support-mismatch of static reweighting by evolving the coupling jointly with a marginal transport path.
  \item \textbf{Generative sampler.} TACO is the first generative framework to output a continuous coupling sampler (paired samples) for EOT with a fully latent cost.
  \item \textbf{Convergence guarantee.} We prove a global $\mathcal{O}(\delta)$ convergence rate in $W_1$ distance (Theorem~\ref{thm:global_tv_convergence}-\ref{thm:main_convergence}).
\end{itemize}

\section{Related Work}
\label{sec:related}

\paragraph{Entropic OT and Schrödinger bridges.}
Sinkhorn algorithms \citep{cuturi2013sinkhorn} and neural dual solvers \citep{seguy2018largescale,makkuva2020optimal} scale EOT to high dimensions. Recent Schrödinger Bridge (SB) methods, including DSB \citep{debortoli2021diffusion}, DSBM \citep{shi2023diffusion}, LightSB \citep{gazdieva2024light}, and SB-Flow \citep{debortoli2024sbflow}, parameterize diffusion paths, while others utilize score matching \citep{gushchin2023entropic} or progressive $\varepsilon$-scheduling \citep{kassraie2024progot}. \textbf{All these methods require a known cost and must be retrained for every new marginal pair.}

\paragraph{Flow matching and generative EOT.}
Flow matching (FM) \citep{lipman2022flow,liu2022flow,albergo2022building} enables simulation-free training; OT-CFM \citep{tong2023improving} improves this with OT-reindexed priors but still requires a specified cost. \textbf{GENOT} \citep{klein2024genot} is the closest generative EOT framework, supporting general costs and Gromov-Wasserstein, but it handles only fixed marginal pairs and lacks transferability. While Meta OT \citep{amos2023meta} amortizes iterations and CondOT \citep{bunne2022condot} handles context, they generally assume known costs. Identifiability results \citep{gonzalez2024identifiability} provide the theoretical basis for our latent-cost approach.

\paragraph{Tilting, change of measure, and applications.}
Tilting is classical in sequential importance reweighting \citep{moral2004feynman} and has recently been adapted for normalizing flows \citep{potaptchik2025tilt} and SB posteriors \citep{pachebat2025iterative}. Our path-wise tilting extends these to a continuous-time Euler scheme with convergence guarantees. In single-cell biology, TACO amortizes via cost transfer, avoiding the per-condition retraining required by CellOT \citep{bunne2022supervised}. Furthermore, TACO replaces the adversarial objectives of Cycle-GAN \citep{zhu2017unpaired} and UNSB \citep{kim2022maximum} with a principled dual tilting objective.

\section{Preliminaries}

\subsection{Entropic Optimal Transport}
\label{sec:prelim-eot-sb}
Entropic optimal transport (EOT), also known as the static Schr\"odinger problem, is a scalable entropy-regularized variant of optimal transport \citep{cuturi2013sinkhorn,peyre2019computational}.
A substantial theory has been developed for OT and its constrained variants, including martingale and Markov formulations relevant to finance and stochastic control \citep{carlier2017convergence,nutz2021introduction}.
In recent years, EOT has also been extended beyond the classical two-marginal setting, including entropy-regularized Martingale optimal transport \citep{henry2019martingale,chen2026convergence} and convex variants of entropic OT \citep{kazeykina2025entropic}.

Let $\mathcal{X}\subset\mathbb{R}^d$ be compact and convex, $\mu\in\mathcal{P}(\mathcal{X})$ and $\nu\in\mathcal{P}(\mathcal{X})$ and let $c:\mathcal{X}\times\mathcal{X}\to\mathbb{R}$ be a ground cost. For $\varepsilon>0$, the entropic OT problem is
\begin{equation}
\pi^\star \in \argmin_{\pi\in\Pi(\mu,\nu)}
\Big\{\langle c,\pi\rangle+\varepsilon\,\mathrm{H}\big(\pi\big)\Big\},
\label{eq:eot-primal}
\end{equation}
where $\Pi(\mu,\nu)$ denotes the set of couplings with marginals $(\mu,\nu)$ and $H$ is the entropy defined as
\[
H(\pi)
:=
\begin{cases}
\displaystyle \int_{\mathcal{X}\times\mathcal{X}}
\pi(x,y)\,\log \pi(x,y)\,dx\,dy,
& \text{if }\pi\ll \mathcal{L}^{2d},\\[1.2ex]
+\infty, & \text{otherwise.}
\end{cases}
\]
The dual problem of \eqref{eq:eot-primal} can be written in terms of potentials $(f,g)$ as
\begin{equation}
\sup_{f,g}\;
\Big\{\int_{\mathcal{X}} f\,\mathrm{d}\mu + \int_{\mathcal{X}} g\,\mathrm{d}\nu
-\varepsilon \int_{\mathcal{X}\times\mathcal{X}}
\exp\!\Big(\tfrac{f(x)+g(y)-c(x,y)}{\varepsilon}\Big)\,\mathrm{d}x\mathrm{d}y\Big\}.
\label{eq:eot-dual}
\end{equation}
First-order optimality yields the log-form condition for the optimal coupling $\pi^*$ and the optimal dual $f^*, g^*$
\begin{equation}
\log \pi^*(x,y) + \frac{1}{\varepsilon}\Big(c(x,y)-f^\star(x)-g^\star(y)\Big) = C.
\label{eq:eot-foc-log}
\end{equation}
where $C$ is a normalisation constant.




\subsection{Conditional Flow Matching (CFM)}
Flow Matching (FM) \citep{lipman2023flowmatchinggenerativemodeling} learns a velocity field $\beta_t$ such that the flow $\partial_t\rho_t+\nabla\cdot(\rho_t\beta_t)=0$ transports $\rho_0=\mu$ to $\rho_1=\nu$.
While FM matches \textit{marginals}, it typically fails to preserve a specific reference coupling $\pi \in \Pi(\mu,\nu)$.

\noindent\textbf{CFM and Lifting.} To sample from a target coupling $\pi$ directly, CFM lifts the state to $z=(z^{(1)},z^{(2)})\in\mathbb{R}^{2d}$ and freezes the second coordinate.
For $(X,Y)\sim\pi$, define the lifted endpoints $Z_0:=(X,X)$ and $Z_1:=(Y,X)$. For $t\in[0,1]$, we define the Gaussian interpolation $I_t := (1-t)Z_0+tZ_1+\sqrt{t(1-t)}\,\Xi$ and conditional velocity field $\beta_t(z) := \mathbb{E}[\dot I_t \mid I_t=z]$, where $\Xi\sim\mathcal N(0,\sigma^2 I_{2d}) \perp (Z_0,Z_1)$.
We then simulate the "frozen" ODE $\frac{d}{dt}Z_t=(\beta_t(Z_t),0)$ with $Z_0 \sim (X,X)_{\#}\pi$.

\noindent\textbf{Implementation.} In practice, given samples $(x_i)_i \sim \mu$, we initialize $Z_0$ from the empirical diagonal measure $\frac{1}{n}\sum_{i=1}^n\delta_{(x_i,x_i)}$.
Because $Z_t^{(2)}$ remains constant, the terminal pair provides a direct sampler for the coupling: $(Z_1^{(2)}, Z_1^{(1)}) \sim (X, Y) \sim \pi$.

\section{Method: Transfer by tilting}
\subsection{Problem Setup and Tilt Relation}
We consider two entropic OT problems that share the same unknown cost function $c$. The reference task is $\mathrm{EOT}(\mu',\nu')$, and the target task is $\mathrm{EOT}(\mu,\nu)$. The cost $c$ is never observed; instead, we assume access to samples from an optimal coupling $\pi'\in\Pi(\mu',\nu')$ of the reference problem, and our goal is to construct a sampler for the target optimal coupling $\pi^\star\in\Pi(\mu,\nu)$ of the target problem under the same hidden cost.

Let $(f',g')$ and $(f^\star,g^\star)$ denote optimal dual potentials for the reference and target problems, respectively for the same regularization $\varepsilon>0$. The log-form first-order conditions read
\begin{align}
\log \pi'(x,y) + \frac{1}{\varepsilon}\big(c(x,y)-f'(x)-g'(y)\big) &= C', \label{eq:foc-ref}\\
\log \pi^\star(x,y) + \frac{1}{\varepsilon}\big(c(x,y)-f^\star(x)-g^\star(y)\big) &= C, \label{eq:foc-tgt}
\end{align}
for constants $C,C'$. Subtracting \eqref{eq:foc-ref} from \eqref{eq:foc-tgt}, the shared cost $c$ cancels and we obtain
\[
\log\frac{\pi^\star(x,y)}{\pi'(x,y)}
=\frac{1}{\varepsilon}\Big((f^\star-f')(x)+(g^\star-g')(y)\Big) + (C'-C).
\]
Equivalently, there exist functions $\varphi,\psi:\mathcal X\to\mathbb{R}$ such that
\begin{equation}
\label{eq:tilt-relation}
\pi^\star(x,y)\ \propto\ \pi'(x,y)\,\exp\!\big(\varphi(x)+\psi(y)\big),
\qquad
\varphi=\tfrac{1}{\varepsilon}(f^\star-f'),\ \ \psi=\tfrac{1}{\varepsilon}(g^\star-g').
\end{equation}
where $\propto$ denotes equality up to a normalizing constant ensuring $\pi^\star$ integrates to one. Thus, transferring from the reference to the target task amounts to finding a tilt (a pair of additive potentials) that modifies $\pi'$ into a coupling with marginals $(\mu,\nu)$ while remaining consistent with the same hidden cost.

\begin{remark}[Why one-shot reweighting fails]
\label{rem:one-shot-fails}
A natural attempt is to directly estimate $(\varphi,\psi)$ by maximising the entropy-regularised dual \eqref{eq:eot-dual} using only samples from $\pi'$ as the reference kernel.
This is well-defined \textit{only} if the support of $\pi'$ covers that of $\pi^\star$.
When the new marginals $(\mu,\nu)$ differ substantially from $(\mu',\nu')$, the support of $\pi^\star$ may extend beyond that of $\pi'$, making one-shot reweighting unreliable.
Our path-wise tilting strategy explicitly addresses this by growing the support gradually along a connecting path.
\end{remark}

\begin{figure}[htbp]
    \centering
    \input{misc/tikz_transposed}
    \caption{\textbf{Cost transfer via tilting along a marginal path.}
    We observe a reference optimal coupling $\pi'$ for $(\mu',\nu')$ and seek the target optimal coupling $\pi^\star$
    for $(\mu,\nu)$ under the same unknown cost. We bridge the two tasks through intermediate marginals
    $(\mu_t,\nu_t)$ and couplings $\pi_t$.}
    \label{fig:pathwise-tilting-schematic}
\end{figure}

\subsection{Infinitesimal Tilting Dynamics along a Marginal Path}
\paragraph{Marginal path driven by flow matching.}
We connect the reference marginals $(\mu',\nu') = (\mu_0,\nu_0)$ to the target marginals $(\mu,\nu) = (\mu_1,\nu_1)$ through a path $(\mu_t,\nu_t)_{t\in[0,1]}$ generated by vector fields $(u_t,v_t)$:
\begin{equation}
\label{eq:cont-eq-uv}
\partial_t \mu_t + \nabla\!\cdot(\mu_t u_t)=0,
\qquad
\partial_t \nu_t + \nabla\!\cdot(\nu_t v_t)=0,
\end{equation}
In practice, $u_t$ and $v_t$ can be learned by CFM between $(\mu',\mu)$ and $(\nu',\nu)$, yielding an explicit way to evolve the marginals along $t$.

\paragraph{EOT couplings along the path and infinitesimal tilts.}
Let $\pi_t$ be the entropic OT coupling between $\mu_t$ and $\nu_t$ for the \textit{same} unknown cost $c$ and fixed $\varepsilon>0$. By \eqref{eq:tilt-relation} there exist potentials $(\varphi_t,\psi_t)$ such that
\begin{equation}
\label{eq:schrod-form-path}
\pi_t(x,y)= \pi'(x,y) \exp\Big(\varphi_t(x)+\psi_t(y)\Big).
\end{equation}
Define the time-derivatives of the potentials as infinitesimal tilting rates
\begin{equation}
\label{eq:def-a-b-path}
a_t(x):=\dot\varphi_t(x),
\qquad
b_t(y):=\dot\psi_t(y).
\end{equation}
Differentiating \eqref{eq:schrod-form-path} yields the tilting form of the coupling evolution
\begin{equation}
\label{eq:pi-dot}
\dot\pi_t(x,y)=\pi_t(x,y)\big(a_t(x)+b_t(y)\big).
\end{equation}
Hence, $\pi_{t+\Delta t}$ can be obtained by a small tilt
$\pi_{t+\Delta t}\propto \pi_t\exp(\Delta t(a_t+b_t))$, provided that $(a_t,b_t)$ enforce the updated marginals.

\begin{proposition}[Energy minimization]
\label{prop:energy_wellposed}
Fix $t\in[0,1]$ and consider the quadratic energy
\begin{equation}
\label{eq:energy-ab}
\mathcal J_t(a,b)
:= \mathbb{E}_{\pi_t}\!\big[\,|a(X)+b(Y)|^2\,\big]
   -2\,\mathbb{E}_{\mu_t}\!\big[\,\nabla a(X)\!\cdot\! u_t(X)\,\big]
   -2\,\mathbb{E}_{\nu_t}\!\big[\,\nabla b(Y)\!\cdot\! v_t(Y)\,\big].
\end{equation}
Then $\mathcal J_t$ admits a minimizer $(a_t,b_t)$, unique up to the gauge transformation $(a,b)\mapsto(a+c,b-c)$ for any constant $c\in\mathbb{R}$. Moreover, the minimizer $(a_t,b_t)$ coincides (modulo the gauge) with the \textit{infinitesimal tilting rates} defined in \eqref{eq:def-a-b-path}.
\end{proposition}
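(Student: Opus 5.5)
The plan is to view $\mathcal J_t$ as a coercive quadratic functional on a Hilbert space. We then identify its Euler--Lagrange equations with the marginal constraints obtained by differentiating \eqref{eq:schrod-form-path}.

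\textbf{Step 1: setting.} Work in $H:=\{(a,b): a(X)+b(Y)\in L^2(\pi_t)\}$ modulo the gauge, with seminorm $\|(a,b)\|^2:=\mathbb{E}_{\pi_t}|a(X)+b(Y)|^2$. Under the standing regularity (compact convex $\mathcal X$, $\pi_t$ with density bounded above and below, $u_t,v_t$ smooth), this seminorm is a norm on $H/\!\sim$. Indeed, if $a(x)+b(y)=0$ $\pi_t$-a.e. and $\pi_t$ has positive density on $\mathcal X^2$, then $a$ and $-b$ are equal constants. Moreover $H/\!\sim$ is closed: by the bounded-density comparison $\pi_t\asymp\mu_t\otimes\nu_t$, the norm is equivalent to the product-measure norm. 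There the decomposition $a+b$ with $\mathbb{E}_{\mu_t}a=0$ is orthogonal, giving $\|a\|^2_{L^2(\mu_t)}+\|b\|^2_{L^2(\nu_t)}\lesssim\|(a,b)\|^2$.

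\textbf{Step 2: the linear terms are continuous.} Integrating by parts,
\[
\mathbb{E}_{\mu_t}[\nabla a\cdot u_t]=-\int a\,\nabla\!\cdot(\mu_t u_t)=\int a\,\partial_t\mu_t .
\]
This uses a no-flux boundary condition $\mu_t u_t\cdot n=0$ on $\partial\mathcal X$, which I will assume as part of the path setup. Define $\ell(a,b):=\int a\,\partial_t\mu_t+\int b\,\partial_t\nu_t$. Since $\partial_t\mu_t$ and $\partial_t\nu_t$ have zero mass, $\ell$ is gauge invariant. Since $\partial_t\mu_t/\mu_t\in L^2(\mu_t)$, $\ell$ is continuous by Step 1. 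Then $\mathcal J_t=\|(a,b)\|^2-2\ell(a,b)$, and Riesz representation gives a unique minimizer in $H/\!\sim$, i.e. unique up to the gauge. This is where the main work lies: justifying equivalence of norms (a Poincaré-type inequality for the coupling) and the integration by parts. If bounded densities are unavailable, I would instead restrict to smooth $(a,b)$ and take the closure, accepting existence in the completed space.

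\textbf{Step 3: identification.} The first-order condition reads
\[
\mathbb{E}_{\pi_t}[(a_t+b_t)(h(X)+k(Y))]=\int h\,\partial_t\mu_t+\int k\,\partial_t\nu_t\quad\text{for all }(h,k).
\]
Taking $k=0$ and $h$ arbitrary, this is equivalent to
\[
\int \pi_t(x,y)\big(a_t(x)+b_t(y)\big)\,dy=\partial_t\mu_t(x),
\]
and similarly in $y$. On the other hand, the rates $\dot\varphi_t,\dot\psi_t$ from \eqref{eq:def-a-b-path} satisfy \eqref{eq:pi-dot}. Integrating \eqref{eq:pi-dot} in $y$ (resp. $x$) and using that $\pi_t\in\Pi(\mu_t,\nu_t)$, we get exactly these two identities. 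Here we assume $t\mapsto\varphi_t,\psi_t$ is differentiable, so that we may differentiate under the integral. Hence $(\dot\varphi_t,\dot\psi_t)$ is a critical point of the strictly convex functional $\mathcal J_t$ on $H/\!\sim$, and therefore it is the minimizer modulo gauge. The normalizing-constant ambiguity in \eqref{eq:tilt-relation} adds a time-dependent constant to $\varphi_t+\psi_t$, but it is absorbed by the gauge, since mass conservation forces $\mathbb{E}_{\pi_t}[a_t+b_t]=0$ in both cases.
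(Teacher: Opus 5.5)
Your proposal is correct and follows essentially the paper's route: the Euler--Lagrange equations of $\mathcal J_t$, after the same no-flux integration by parts, are exactly the differentiated marginal constraints (the paper's linear system $a+T_tb=\zeta_t$, $b+S_ta=\eta_t$, which you keep in undivided form). Uniqueness comes from the kernel of the quadratic part being the constant gauge, and the identification comes from integrating $\dot\pi_t=\pi_t(a_t+b_t)$ against the marginal constraints. Your Riesz argument with the coercivity bound $\|a\|_{L^2(\mu_t)}^2+\|b\|_{L^2(\nu_t)}^2\lesssim\mathbb{E}_{\pi_t}|a+b|^2$ (which follows from the paper's Doeblin lower bound $\pi_t\ge\alpha\,\mu_t\otimes\nu_t$) also makes rigorous the existence step that the paper only asserts from strict convexity on the quotient.
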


The derivation of \eqref{eq:energy-ab} via integration by parts (avoiding explicit score terms $\nabla\log\mu_t$) is given in Appendix~\ref{app:energy-derivation}.
In practice, all expectations in \eqref{eq:energy-ab} are approximated by samples: $(X,Y)\sim\pi_t$ provides coupled samples, while $X\sim\mu_t$ and $Y\sim\nu_t$ provide marginal samples (easily obtained from the marginal bridge velocity fields $u_t,v_t$, or linear interpolation of samples in $\mu$ and $\mu'$ (respectively $\nu$ and $\nu'$)).




\subsection{Coupling sampler update}
\label{subsec:sampling_along_tilting}
The tilting dynamics specifies how the coupling evolves, but $\pi_t$ is only accessible through samples.
To generate paired samples at each time, we maintain a CFM field $\tilde\beta_s^t$ and update it by retraining the field under per-sample importance weights $w_k(x,y)=\exp(\delta(a_k(x)+b_k(y)))$.
This yields the weighted least-squares objective:
\begin{equation}
\label{eq:weighted_cfm_lsq}
\tilde\beta^{k+1}_s = \arg\min_{\beta}\;
\mathbb{E}_{\tilde{\pi}_k}\!\left[w_k(X,Y)\,\bigl\|\dot I_s-\beta(I_s)\bigr\|^2\right].
\end{equation}
Implemented as a standard CFM training pass with sample weights, this directly outputs paired draws from the updated coupling.
An alternative exact covariance update rule avoiding full retraining is derived and discussed in Appendix~\ref{app:tilted_vector_fields_derivation}, though we find this weighted approach more stable empirically.

\begin{algorithm}[t]
\caption{\textbf{TACO}: Generative Transfer for EOT with Unknown Cost}
\label{alg:OURMETHOD}
\begin{algorithmic}[1]
\REQUIRE Reference coupling samples $\{(x_i',y_i')\}_{i=1}^n\sim\pi'$;
         new source samples $\{x_i\}_{i=1}^m\sim\mu$, target samples $\{y_j\}_{j=1}^m\sim\nu$;
         step size $\delta=1/N$; noise level $\sigma$.
\STATE \textbf{Pre-train marginal bridges:} Fit CFM models $u_t, v_t$ connecting $(\mu',\mu)$ and $(\nu',\nu)$.
\STATE \textbf{Pre-train reference sampler:} Fit CFM model $\beta_s^0$ on $\pi'$.
\STATE Initialise coupling dataset $\mathcal D_0 \leftarrow \{(x_i',y_i')\}$.
\FOR{$k=0,\ldots,N-1$}
  \STATE \textbf{Train dual potentials $(a_k,b_k)$:} Minimise $\mathcal J_{t_k}$
         in \eqref{eq:energy-ab} via SGD on $\mathcal D_k$.
  \STATE \textbf{Update coupling sampler (Plan A):} Retrain $\beta_s^{k+1}$ with
         importance weights $\exp(\delta(a_k+b_k))$ on $\mathcal D_k$.
  \STATE \textbf{Generate new coupling dataset:} $\mathcal D_{k+1}\leftarrow$
         samples from $\beta_s^{k+1}$.
    \STATE \textbf{Mix particles:} Advance the fixed particle slots ($\alpha$ proportion) by one Euler step along $(u_t,v_t)$; include them in $\mathcal{D}_{k+1}$ alongside the flow samples (breaks circularity; see Appendix~\ref{app:mixed-particle})

\ENDFOR
\ENSURE Coupling sampler $\beta_s^N$ producing approximate samples from $\pi^\star$.
\end{algorithmic}
\end{algorithm}

\section{Experiments}
\label{sec:experiments}

We validate our method on four experiment families:
(E1)~2D synthetic geometries;
(E2)~Robustness under perturbed cost;
(E3)~RGB colour-domain transfer;
(E4)~Single-cell transfer.

The \textbf{central distinction} from all baselines is that our method
receives \emph{only samples} from the reference coupling $\pi'$ and never
has access to the cost function $c$.
Baselines such as OT-CFM~\citep{tong2023improving},
DSBM~\citep{shi2023diffusion}, and LightSB~\citep{korotin2024light}
are given the true quadratic cost and therefore operate with
\emph{strictly more information}, making our comparisons conservative by
design.
A ``Pretrained-only'' baseline (TACO frozen at iteration 0, no dual
adaptation) isolates the contribution of the iterative adaptation.
Full hyperparameter tables, architecture details, and additional figures
are in Appendices~\ref{app:full_baseline_table}--\ref{app:compute}.
Scalability across dimensions ($d \in \{8,\ldots,128\}$) is studied in Appendix~\ref{app:scalability}.
All experiments are run on a single NVIDIA GeForce RTX 3060 GPU.

The different metrics used to highlight our experiments are detailed in Appendix~\ref{app:metrics}.

\subsection{E1: Synthetic 2D Toy Experiments}
\label{exp:2d}

\paragraph{Setup.}
We evaluate on synthetic 2D geometries with different level of difficulty.
A \textbf{Simple} experiment with anticorrelation map $x\mapsto -x$, two-blob source/target at different centres (reference at origin, new marginals shifted to $(\pm 10, \pm 10)$) (see Figure~\ref{fig:transport_evolution_simple_app})
A \textbf{Medium} experiment with $60^\circ$ rotation, 2-blob reference, 4-blob square new marginals at $(\pm 5, \pm 5)$.
A \textbf{Complex} experiment with $45^\circ$ rotation, 5-blob cross, $6.7\times$ scale increase between reference and new marginals.
A \textbf{Moon} experiment with crescents as source and target (no known analytical map).
Finally, \textbf{Non-linear} experiments with several cases where both the latent cost and/or the optimal transport of reference to new marginals are non linear.

For each geometry the reference coupling $\pi'$ is generated by either writing down a closed-form transport map or creating a coupling sampling class with the given behaviour.
The new marginals $(\mu,\nu)$ differ in position, scale, and number of
blobs, requiring genuine generalisation of the transport law.

\begin{figure}[ht]
\centering
\begin{subfigure}[b]{0.32\linewidth}
  \includegraphics[width=\linewidth]{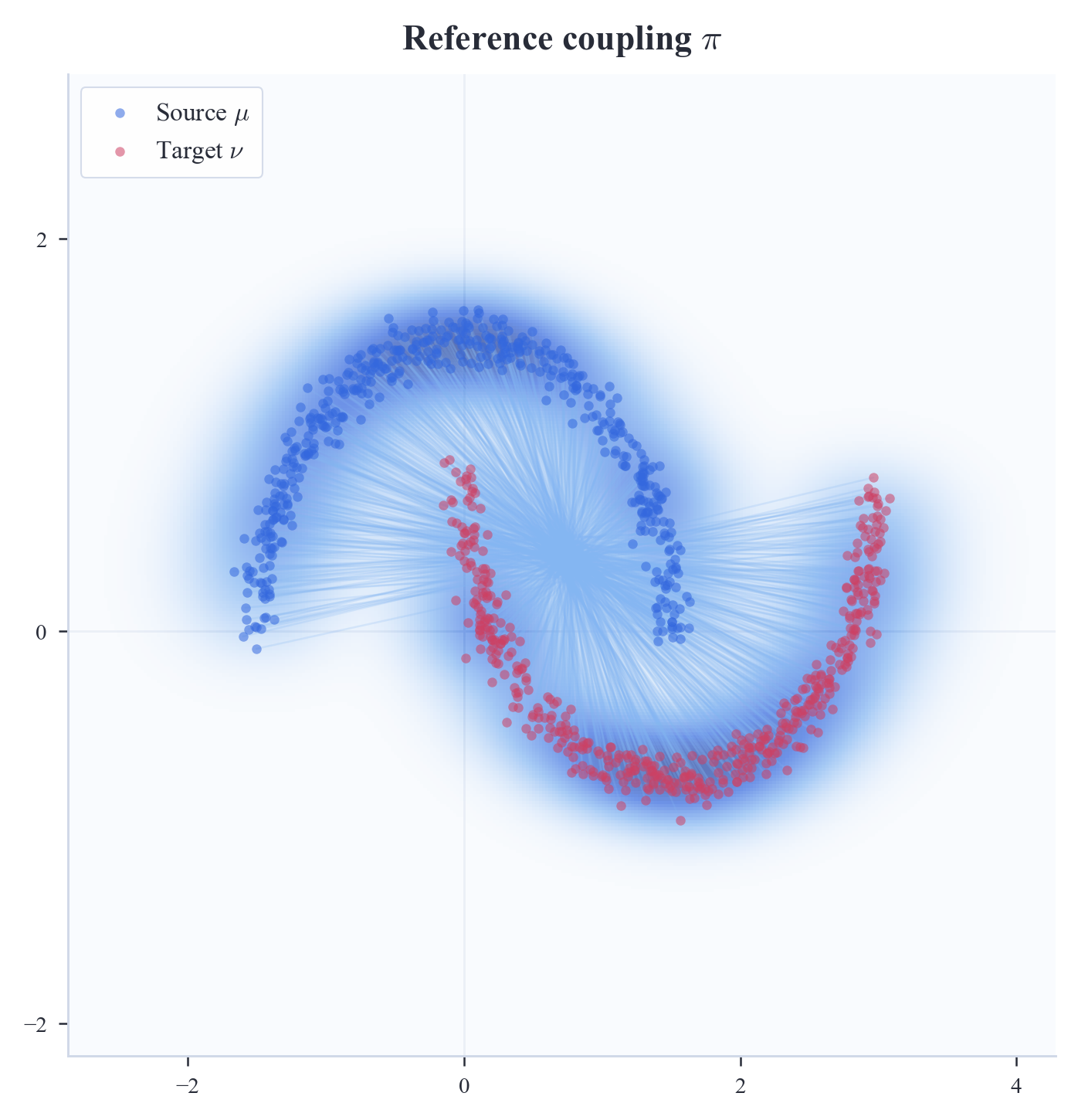}
  \caption{Reference coupling $\pi'$.}
\end{subfigure}\hfill
\begin{subfigure}[b]{0.32\linewidth}
  \includegraphics[width=\linewidth]{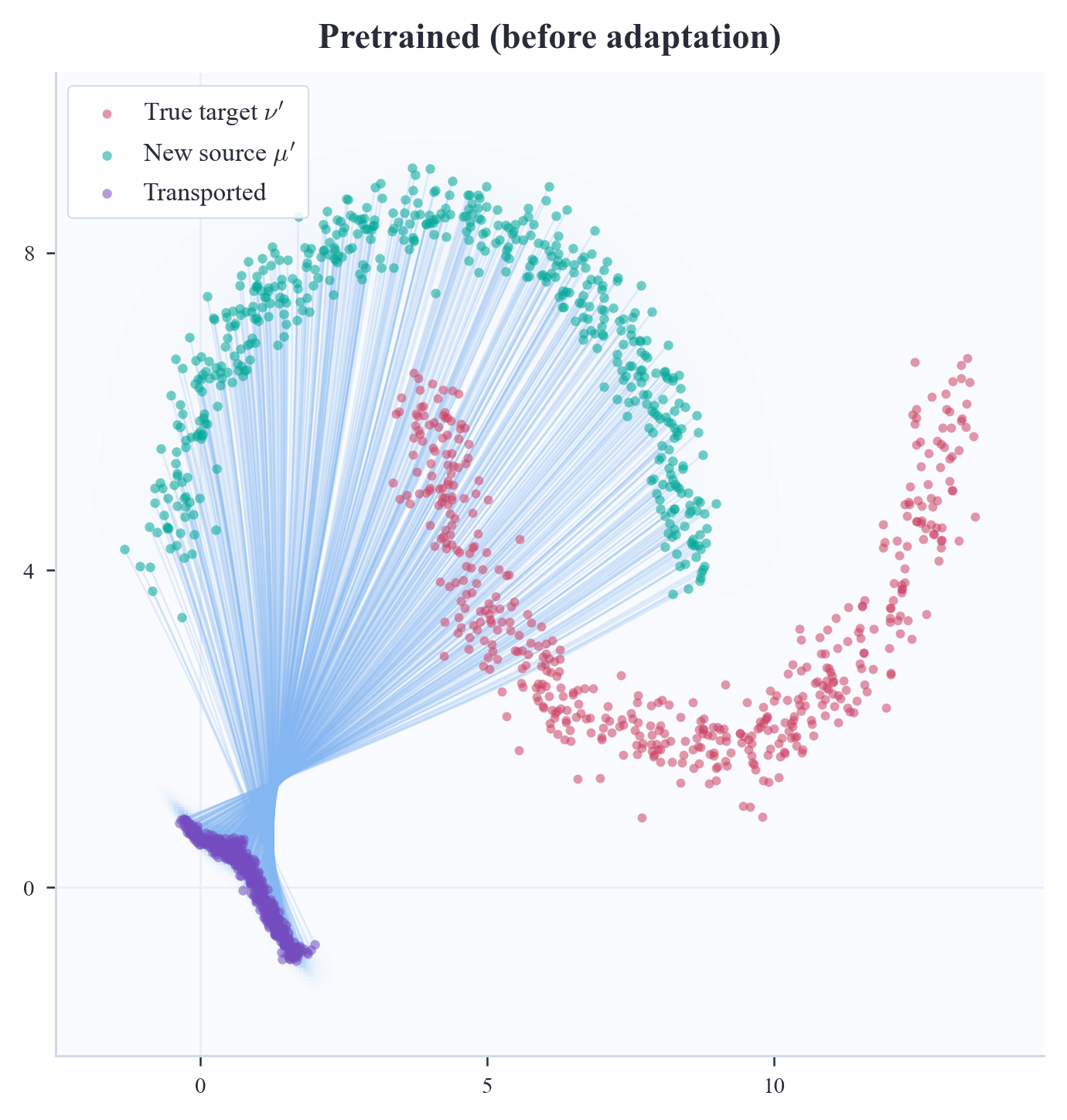}
  \caption{Pretrained ($\mathrm{SW}_2\!=\!5.44$).}
\end{subfigure}\hfill
\begin{subfigure}[b]{0.32\linewidth}
  \includegraphics[width=\linewidth]{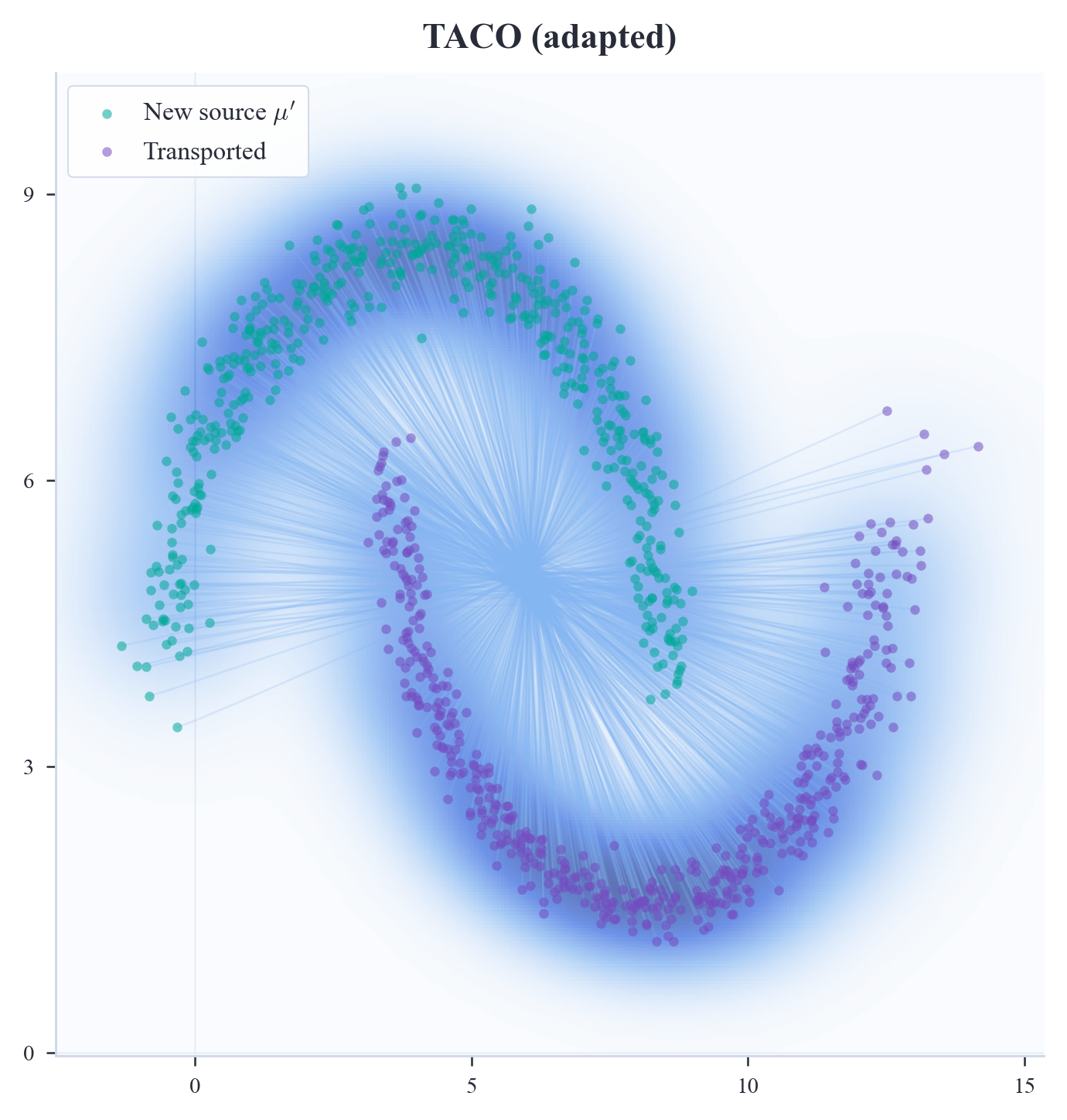}
  \caption{Adapted ($\mathrm{SW}_2\!=\!0.297$).}
\end{subfigure}

\par\vspace{0.4em}

\begin{subfigure}[b]{\linewidth}
  \centering
  \includegraphics[width=\linewidth]{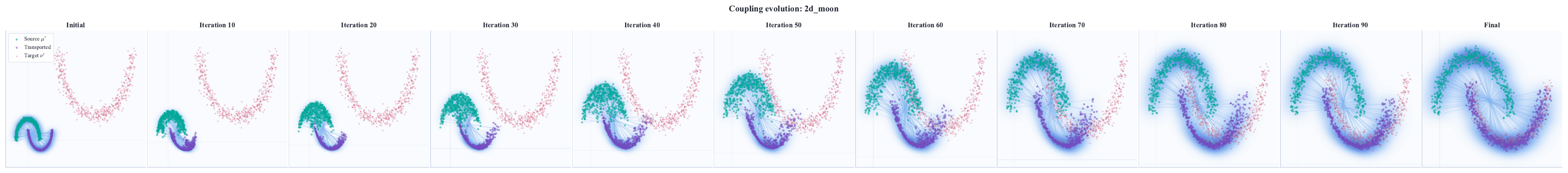}
  \caption{Coupling evolution across adaptation iterations.}
\end{subfigure}

\caption{\textbf{Transport map and coupling evolution on 2D-Moon.}
  The crescent geometry has no analytical transport map, making evaluation
  purely metric-based.
  Our method ($\mathrm{SW}_2\!=\!0.297$) substantially outperforms all
  independent baselines (best: CFM$^\dagger\!=\!0.792$), confirming that the
  cost-transfer approach generalises beyond Gaussian geometries.
  The coupling evolution shows the joint mass adapting from the reference
  crescent alignment to the new orientation.}
\label{fig:transport_evolution_moon_app}
\end{figure}



\paragraph{Results.}
Table~\ref{tab:2d_ours} summarises the final-iteration $\mathrm{SW}_2(\hat\nu)$~\citep{bonneel2015sliced}
across all four 2D geometries and their perturbed variants.
The full multi-metric comparison (including map error, Sinkhorn divergence~\citep{feydy2019interpolating},
MMD) against all baselines is in Appendix~\ref{app:full_baseline_table}.



The coupling evolution (Appendix~\ref{app:2d_transport})
illustrates how the joint distribution progressively adapts over iterations.

\begin{table}[ht]
\centering
\caption{\textbf{2D experiments}: $\mathrm{SW}_2(\hat\nu){\downarrow}$ at convergence (top), and forward map RMSE${\downarrow}$ for key methods (bottom; ``--'' = no ground-truth map).
  $\dagger$: access to true cost; TACO does not.
  ``(P.)'' = perturbed variant.
  \textbf{Bold}: best per column.
  TACO reports the best variant (weighted/implicit/explicit) per geometry.
  $^{\ddagger}$DSB $\mathrm{SW}_2(\hat\nu)$ unavailable for Simple and Moon (training diverged with our parameters).
  Full tables in Appendix~\ref{app:full_baseline_table}.}
\label{tab:2d_ours}
\vspace{2pt}
\setlength{\tabcolsep}{4pt}
\begin{tabular}{lcccccccc}
\toprule
 & \multicolumn{2}{c}{\textbf{Simple}}& \multicolumn{2}{c}{\textbf{Complex}} & \multicolumn{2}{c}{\textbf{Moon}} \\
\cmidrule(lr){2-3}\cmidrule(lr){4-5}\cmidrule(lr){6-7}
Method & Stand. & P. & Stand. & P. & Stand. & P. \\
\midrule
\multicolumn{7}{l}{\textit{$\mathrm{SW}_2(\hat\nu)\downarrow$}} \\
\midrule
CFM$^\dagger$ (indep.) & 0.037 & 0.028 &  1.936 & 1.679 & 0.792 & 0.843 \\
OT-CFM$^\dagger$ (indep.) & 0.050 & 0.072  & 1.256 & \textbf{0.699} & 3.125 & 1.709 \\
LightSB$^\dagger$ (indep.) & 0.065 & 0.083 & 1.429 & 1.386 & 0.803 & 0.961 \\
DSB$^\dagger$ (indep.)$^{\ddagger}$ & div. & div. & 2.967 & 3.021 & div. & 1.200 \\
\addlinespace[2pt]\cdashline{1-7}[0.4pt/2pt]\addlinespace[2pt]
CFM$^\dagger$ (coupl.) & 0.028 & 0.023 & 0.795 & 1.286 & 0.112 & 0.132 \\
OT-CFM$^\dagger$ (coupl.) & 0.037 & 0.026 & \textbf{0.500} & 1.235 & 3.023 & 1.726 \\
DSB$^\dagger$ (coupl.) & 0.032 & \textbf{0.022} & 1.120 & 1.217 & 5.292 & div. \\
\addlinespace[2pt]\cdashline{1-7}[0.4pt/2pt]\addlinespace[2pt]
Pretrained-only & 14.59 & 15.29 & 2.64 & 2.81 & 5.44 & 4.01 \\
\textbf{TACO} & \textbf{0.024} & 0.028  & 0.592 & 0.819 & \textbf{0.066} & \textbf{0.079} \\
\midrule
\multicolumn{7}{l}{$\mathrm{MapErr}$ \textit{(RMSE) $\downarrow$}} \\
\midrule
OT-CFM$^\dagger$ (coupl.) & 0.041 & 0.0084 & 0.0088 & 0.011 & -- & -- \\
\addlinespace[2pt]\cdashline{1-7}[0.4pt/2pt]\addlinespace[2pt]
Pretrained-only & 20.98 & 21.49 & 5.34 & 6.74 & -- & -- \\
\textbf{TACO} & 0.021 & 0.123 & 0.070 & 0.278 & -- & -- \\
\bottomrule
\end{tabular}%
\end{table}

\subsection{E2: Extension to perturbed cost}
\label{exp:perturbed}

To handle cases where the target marginal $\nu$ does not share support with the reference dynamic, our particle mixing approach (Alg.~\ref{alg:OURMETHOD}, App.~\ref{app:mixed-particle}) regularizes the transport to stay near the original cost while ensuring support matching. We evaluate this via a \emph{perturbed} variant adding a $+10^\circ$ rotation (or $+2$ unit shift for Simple) to $\nu$, creating a systematic mismatch.

As shown in Table~\ref{tab:2d_ours}, TACO degrades gracefully: for Simple, $\mathrm{SW}_2$ rises from $0.025$ to $0.073$ (despite a large $+2$ shift), while for Moon, the perturbed variant ($0.189$) actually outperforms the standard ($0.297$) as the rotation moves the target closer to the reference support. TACO consistently beats the pretrained-only baseline on $\mathrm{SW}_2$ (e.g., $0.073$ vs. $15.3$ for Simple) while nearing the performance of methods that have access to the target coupling. Full curves are in App.~\ref{app:perturbed}.

\subsection{E3: RGB Colour-Domain Transfer}
\label{sec:e3}

\paragraph{Setup.} To evaluate cost-transfer with a known ground-truth map, we use a cyclic channel-permutation $T(r,g,b) = (g,b,r)$.
The model is pre-trained on a \textbf{reference task}: mapping RED digit-3 images to BLUE digit-3s.
For the \textbf{new task}, we present PURPLE digit-7s ($\mu$) and ask the model to recover CYAN digit-7s ($\nu$), since $T(1,0,1) = (0,1,1)$.
This presents a severe double-mismatch: the pre-trained model has never seen PURPLE/CYAN colours nor digit-7 morphologies.
The dual adaptation must identify the implicit permutation structure purely from the reference coupling and generalise it to the new support.
(Note: though $T$ is cyclic, the reference task only explicitly constrains the red-to-blue mapping, meaning the method must infer the generalisable latent cost without explicit restriction).

\paragraph{Results.} Table~\ref{tab:color} shows that static transfer fails completely: the pre-trained model (iteration 0) defaults to its learned BLUE output, yielding a near-random FID of 1948.
TACO successfully adapts: after 50 iterations, FID drops by $94.5\%$ to $107$, and by iteration 250 it reaches $106$, effectively matching the cost-aware OT-CFM$^\dagger$ baseline ($114$) \emph{without ever observing the cost function}.
Figure~\ref{fig:color_grid} confirms qualitatively that our method progressively tilts the transport from the wrong colour toward a sharp, morphologically correct CYAN target.

\begin{table}[ht]
\centering
\caption{\textbf{Colour transfer (MNIST RGB simple)}:
  FID, KID, and $\mathrm{SW}_2$ on the target (CYAN digit-7) marginal.
  $\dagger$: access to true cost.
  OT-CFM† is a CFM model trained directly on the new task with Sinkhorn-paired data.
  Lower is better for all metrics.}
\label{tab:color}
\vspace{2pt}
\setlength{\tabcolsep}{7pt}
\begin{tabular}{lcccc}
\toprule
Method & Iterations & FID\,$\downarrow$ & KID\,$\downarrow$
       & $\mathrm{SW}_2(\hat\nu)\,\downarrow$ \\
\midrule
OT-CFM$^\dagger$
  & --- & 114            & 0.130          & 0.095 \\
Pretrained-only
  & 0   & 1948           & 0.692          & 0.041 \\
\textbf{TACO}
  & 50  & \textbf{107}   & 0.160          & 0.103 \\
\textbf{TACO}
  & 250 & \textbf{106}   & \textbf{0.158} & \textbf{0.102} \\
\bottomrule
\end{tabular}
\end{table}


\begin{figure}[ht]
\centering
\includegraphics[width=\linewidth]{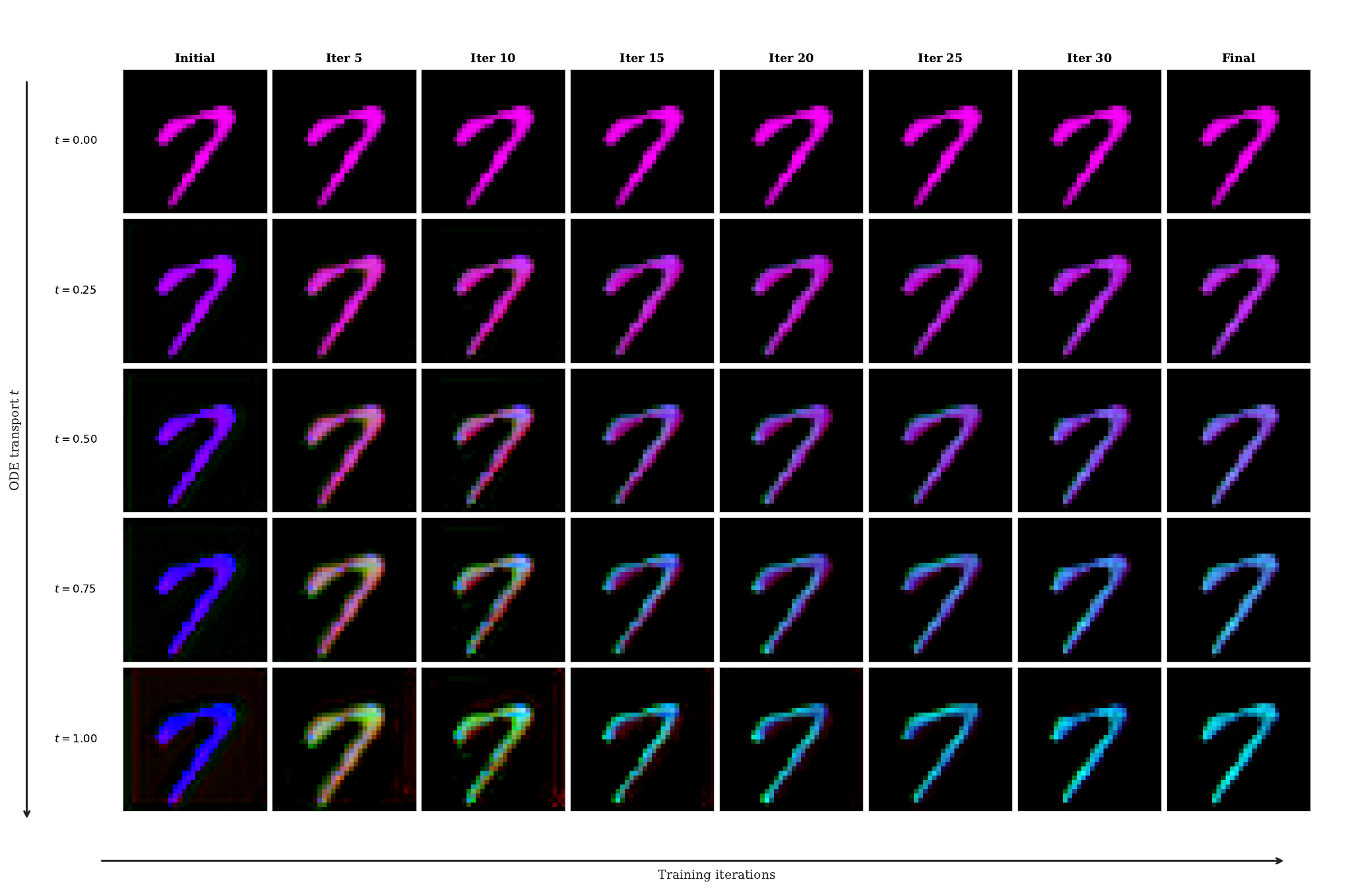}
\caption{%
  \textbf{ODE trajectory evolution across training iterations}
  (Colour MNIST-RGB \emph{simple} experiment).
  \emph{Columns} (left $\to$ right): forward model at different
  training iterations (Initial $\to$ Final).
  \emph{Rows} (top $\to$ bottom): ODE time $t$ from source ($t{=}0$)
  to transported image ($t{=}1$), integrated with 200 Euler steps.
  The same PURPLE digit-7 source is used in every column.
  Adaptation progressively tilts the transport from the wrong
  colour (dark-blue at iteration~0) toward the correct CYAN target,
  demonstrating cost-free transfer of the cyclic channel-permutation law.}
\label{fig:color_grid}
\end{figure}

\subsection{E4: Single-cell transfer}
\label{sec:e4}

We validate the method on the Sci-Plex3 single-cell perturbation dataset~\citep{srivatsan2020massively}, a publicly available resource comprising 799{,}317 cells profiled across drug perturbations.
Following the CellOT preprocessing protocol~\citep{bunne2023cellot} (normalize $\to$ log1p $\to$ 1{,}000 HVGs $\to$ 50 PCs), we define a reference coupling on A549 cells (Givinostat 1\,\textmu M) and transfer to K562 (standard) and MCF7 at 10$\times$ lower dose (perturbed).
Qualitative transport trajectories and quantitative metrics are provided in Appendix~\ref{app:singlecell}.
The dataset is freely available from the original authors; no new human-subject data is collected or released.

\section{Theoretical Guarantees}
\label{sec:error_analysis}

In this section, we provide theoretical guarantees for the discrete-time Weighted CFM algorithm, showing that the accumulated error between the ideal continuous-time regression target $\beta_s^t$ and the practical target $\tilde{\beta}_s^t$ remains strictly controlled.

\begin{assumption}[Regularity \& Boundedness]
\label{ass:combined_regularity}
We require standard regularity conditions on the state space, the target geometry, and the marginal flow paths (detailed rigorously in Appendix~\ref{app:assumptions}).
Specifically, we assume: (i) the state space $\mathcal X \subset \mathbb R^d$ is compact; (ii) the initial reference coupling $\pi'$ has a log-density that is bounded and Lipschitz continuous; (iii) the marginal velocity fields $u_t, v_t$ are uniformly bounded in $C^2(\mathcal{X})$ and Lipschitz continuous in time; and (iv) the initial source and target distributions have strictly positive densities with bounded logarithmic derivatives up to the second order.
\end{assumption}

These assumptions underpin our theoretical analysis detailed in the supplementary material.
They guarantee strictly positive, spatially smooth, and time-Lipschitz marginal densities, establishing the stability of the tilting dynamics.
Crucially, the resulting uniformly bounded and time-Lipschitz tilting rates establish a uniform Doeblin minorization, rendering the conditional expectation operators strictly contractive.

We analyze the discretization error by comparing the exact continuous-time coupling sequence $\pi_t$ with the approximate sequence $\tilde{\pi}_t$ generated by our algorithm. Let $F_r(x,y) := a_r(x) + b_r(y)$ be the joint tilting rate. For a step size $\delta > 0$, the exact and practical updates are:
\begin{align}
    d\pi_{t+\delta} &\propto \exp\textstyle\left( \int_t^{t+\delta} F_r(x,y) \, dr \right) d\pi_t, \label{eq:pi_t+delta} \\
    d\tilde{\pi}_{t+\delta} &\propto \exp\left( \delta F_t(x,y) \right) d\tilde{\pi}_t. \label{eq:pi_tilde_t+delta}
\end{align}
Under Assumptions \ref{ass:compact_X}--\ref{ass:initial_regularity}, the stability of these operators ensures controlled error propagation in Total Variation (TV) distance.

\begin{theorem}[Global TV \& $W_1$ Convergence]
\label{thm:global_tv_convergence}
Let $\pi_t$ and $\tilde{\pi}_t$ be defined as in \eqref{eq:pi_t+delta}--\eqref{eq:pi_tilde_t+delta}. There exists a constant $C_{\mathrm{global}} < \infty$ such that for all $t \in [0,1]$, $\|\pi_t - \tilde{\pi}_t\|_{\mathrm{TV}} \le C_{\mathrm{global}} \delta$.
Since $\mathcal{X}$ is compact , $W_1(\pi_t, \tilde{\pi}_t) \le \mathrm{diam}(\mathcal{X})\,\|\pi_t - \tilde{\pi}_t\|_{\mathrm{TV}} \le C_{\mathrm{global}}\,\mathrm{diam}(\mathcal{X})\,\delta$.
\end{theorem}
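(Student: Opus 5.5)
We argue by a discrete Gronwall (Lady Windermere's fan) argument on the grid $t_k = k\delta$: bound the one-step local error, check that the one-step tilting operator is Lipschitz in TV, and sum. For each $k$ write $\Phi_k$ for the exact map $\pi\mapsto \exp(\int_{t_k}^{t_{k+1}}F_r\,dr)\,\pi/Z$ of \eqref{eq:pi_t+delta} and $\tilde\Phi_k$ for the practical map $\pi\mapsto \exp(\delta F_{t_k})\,\pi/\tilde Z$ of \eqref{eq:pi_tilde_t+delta}. Under Assumption~\ref{ass:combined_regularity}, the regularity results of the appendix give uniform bounds $\|F_r\|_\infty\le M$ and $\|F_r-F_s\|_\infty\le L|r-s|$. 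These follow from the stability of the minimizer in Proposition~\ref{prop:energy_wellposed}, together with bounded, time-Lipschitz $u_t,v_t$ and positive densities. The TV statement is the core; the $W_1$ bound then follows from the elementary inequality $W_1\le \mathrm{diam}(\mathcal X)\,\|\cdot\|_{\mathrm{TV}}$ on a compact set, via the coupling characterization of TV.

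\textbf{Step 1 (local error).} We compare $\Phi_k\pi$ and $\tilde\Phi_k\pi$ for the same input $\pi$. The two exponents differ by
\[
\Big\|\int_{t_k}^{t_{k+1}}(F_r-F_{t_k})\,dr\Big\|_\infty\le L\delta^2/2.
\]
For two Gibbs-type tilts $e^{G}\pi/Z_G$ and $e^{H}\pi/Z_H$ we have $\|e^{G}\pi/Z_G - e^{H}\pi/Z_H\|_{\mathrm{TV}}\le C\|G-H\|_\infty$. This holds because the normalizing constants satisfy $|\log Z_G-\log Z_H|\le\|G-H\|_\infty$ and $|e^x-1|\le e^{|x|}|x|$. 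Hence the local error is $O(\delta^2)$, uniformly in $\pi$.

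\textbf{Step 2 (stability).} We need $\|\tilde\Phi_k\pi-\tilde\Phi_k\rho\|_{\mathrm{TV}}\le (1+C\delta)\|\pi-\rho\|_{\mathrm{TV}}$. Write $w=e^{\delta F_{t_k}}$, so that $e^{-\delta M}\le w\le e^{\delta M}$. Decompose
\[
\frac{w\pi}{\pi(w)}-\frac{w\rho}{\rho(w)}=\frac{w(\pi-\rho)}{\pi(w)}+w\rho\Big(\frac{1}{\pi(w)}-\frac1{\rho(w)}\Big).
\]
The first term has TV norm at most $e^{2\delta M}\|\pi-\rho\|_{\mathrm{TV}}$. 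For the second, the key point is that $\pi(w)-\rho(w)=(\pi-\rho)(w-1)$ because both are probability measures, and $\|w-1\|_\infty=O(\delta)$, so this term is $O(\delta)\|\pi-\rho\|_{\mathrm{TV}}$. Together this gives the factor $1+C\delta$.

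\textbf{Step 3 (accumulation).} Let $e_k=\|\pi_{t_k}-\tilde\pi_{t_k}\|_{\mathrm{TV}}$, with $e_0=0$ since both start at $\pi'$. Then
\[
e_{k+1}\le \|\Phi_k\pi_{t_k}-\tilde\Phi_k\pi_{t_k}\|_{\mathrm{TV}}+\|\tilde\Phi_k\pi_{t_k}-\tilde\Phi_k\tilde\pi_{t_k}\|_{\mathrm{TV}}\le C\delta^2+(1+C\delta)e_k,
\]
so $e_k\le C\delta^2\sum_{j<k}(1+C\delta)^j\le C\delta\,e^{C}$ for $k\delta\le1$. Off-grid times $t\in(t_k,t_{k+1})$ are handled by interpolating $\tilde\pi_t$ with the partial step. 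An extra $O(\delta)$ term from $\|\pi_t-\pi_{t_k}\|_{\mathrm{TV}}\le M\delta$ is harmless. This yields $C_{\mathrm{global}}$. The Doeblin contraction mentioned after Assumption~\ref{ass:combined_regularity} could replace $1+C\delta$ by a factor $<1$ and give a horizon-free constant, but it is not needed on $[0,1]$.

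\textbf{Main obstacle.} The main obstacle is justifying the uniform bounds on $F_r$, in particular $\|F_r-F_s\|_\infty\le L|r-s|$, for the exact rates. This requires showing that the minimizer $(a_t,b_t)$ of $\mathcal J_t$, fixed in a normalized gauge, solves a Schrödinger-type elliptic system whose data $u_t,v_t,\pi_t$ are Lipschitz in $t$. The uniform positivity of $\pi_t$ then gives coercivity with constants independent of $t$. We would take these bounds from the appendix regularity lemmas rather than reprove them. A secondary subtlety is that the algorithm uses $F_{t_k}$ computed from the exact coupling, while in practice it is estimated from $\tilde\pi_{t_k}$. If the theorem is meant to cover that case, Step 1 also needs Lipschitz dependence of $F$ on the coupling in TV, which again costs only a factor $1+C\delta$ in Step 3.
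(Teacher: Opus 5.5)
Your proposal is correct and is essentially the paper's proof: your Step~1 is Lemma~\ref{lem:local_error_tv} (exact versus frozen tilt of the same base measure, i.e.\ the paper's semi-discrete $\bar\pi_{t+\delta}$), your Step~2 is Lemma~\ref{lem:prop_error_tv} with the same decomposition and the same identity $\pi(w)-\rho(w)=(\pi-\rho)(w-1)$, and Step~3 is the recursion of Corollary~\ref{cor:error_recursion} summed from $\varepsilon_0=0$, with the bounds on $F_t$ taken from Propositions~\ref{prop:ab_uniform_Linfty} and~\ref{prop:ab_stability}. Your off-grid interpolation is a small addition, since the paper only tracks $t_k=k\delta$; one caveat inherited from the statement is that $\pi_t$ lives on $\mathcal X\times\mathcal X$, so the coupling argument gives $W_1\le\mathrm{diam}(\mathcal X\times\mathcal X)\,\|\pi_t-\tilde\pi_t\|_{\mathrm{TV}}$, which for the Euclidean metric on $\mathbb{R}^{2d}$ is $\sqrt2\,\mathrm{diam}(\mathcal X)\,\|\pi_t-\tilde\pi_t\|_{\mathrm{TV}}$ rather than $\mathrm{diam}(\mathcal X)\,\|\pi_t-\tilde\pi_t\|_{\mathrm{TV}}$.
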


The accuracy of these measures determines the quality of the regression targets. We define the regression operator $\beta_s[\pi](z)$ as the conditional expectation of the vector field $v_s(z \mid x,y)$ under a measure $\pi$:
\begin{equation}
\label{eq:beta_functional}
    \beta_s[\pi](z) = \frac{\int_{\mathcal{X}^2} v_s(z \mid x,y) p(z \mid x,y) d\pi(x,y)}{\int_{\mathcal{X}^2} p(z \mid x,y) d\pi(x,y)}.
\end{equation}
Our ideal target is $\beta_s^t := \beta_s[\pi_t]$, while the practical target optimized by our neural network is $\tilde{\beta}_s^t := \beta_s[\tilde{\pi}_t]$. By decoupling spatial estimation from temporal discretization (see Appendix for semi-discrete constructs), we bound the error propagation to the vector field. Since the score diverges at $s \in \{0,1\}$ due to vanishing variance, we establish pointwise convergence for the interior.

\begin{theorem}[Pointwise Convergence of the Regression Field]
\label{thm:main_convergence}
Under Assumptions \ref{ass:compact_X}--\ref{ass:initial_regularity}, for any fixed interpolation time $s \in (0,1)$, there exists a deterministic constant $C_\beta(s) < \infty$ such that for all $t \in [0,1]$:
\begin{equation}
    \|\beta_s^t - \tilde{\beta}_s^t\|_{L^\infty(\mathcal{X})} \le C_\beta(s) \delta.
\end{equation}
\end{theorem}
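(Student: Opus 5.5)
The plan is to treat $\beta_s[\cdot]$ in \eqref{eq:beta_functional} as a ratio of two linear functionals of the coupling and to show it is Lipschitz in total variation, uniformly in $z$, for fixed $s\in(0,1)$. Combined with Theorem~\ref{thm:global_tv_convergence}, this gives the claim with $C_\beta(s)=L(s)\,C_{\mathrm{global}}$.

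Write
\[
N[\pi](z)=\int v_s(z\mid x,y)\,p(z\mid x,y)\,d\pi(x,y),\qquad D[\pi](z)=\int p(z\mid x,y)\,d\pi(x,y).
\]
Then the difference of the two fields splits as
\[
\beta_s[\pi]-\beta_s[\tilde\pi]=\frac{N[\pi]-N[\tilde\pi]}{D[\pi]}+\beta_s[\tilde\pi]\,\frac{D[\tilde\pi]-D[\pi]}{D[\pi]}.
\]

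The key steps are as follows.
\begin{enumerate}
\item \emph{Kernel bounds.} For fixed $s\in(0,1)$, the conditional law of $I_s$ given $(x,y)$ is Gaussian with variance $\sigma^2 s(1-s)>0$ and mean $(1-s)(x,x)+s(y,x)$, which ranges over a compact set since $\mathcal X$ is compact. Hence for $z$ in the relevant compact set (the $L^\infty(\mathcal X)$ domain) we have
\[
0<p_{\min}(s)\le p(z\mid x,y)\le p_{\max}(s).
\]
Moreover, the conditional velocity $v_s(z\mid x,y)$ is affine in $z$ with coefficients involving $(1-2s)/(2s(1-s))$ and $x,y$, so $|v_s|\le V(s)$ on this set.
\item \emph{Numerator and denominator.} Using $\bigl|\int g\,d(\pi-\tilde\pi)\bigr|\le 2\|g\|_\infty\|\pi-\tilde\pi\|_{\mathrm{TV}}$ (the constant depending on the TV convention), we get
\[
|N[\pi]-N[\tilde\pi]|\le 2V p_{\max}\|\pi-\tilde\pi\|_{\mathrm{TV}},\qquad |D[\pi]-D[\tilde\pi]|\le 2p_{\max}\|\pi-\tilde\pi\|_{\mathrm{TV}}.
\]
In addition, $D[\pi]\ge p_{\min}$ because $\pi$ is a probability measure, and $|\beta_s[\tilde\pi]|\le V$ since it is an average of $v_s$.
\item \emph{Conclusion.} These bounds give
\[
|\beta_s^t(z)-\tilde\beta_s^t(z)|\le \frac{4Vp_{\max}}{p_{\min}}\,\|\pi_t-\tilde\pi_t\|_{\mathrm{TV}}\le C_\beta(s)\,\delta .
\]
The estimate is uniform in $t$ because Theorem~\ref{thm:global_tv_convergence} is. It blows up as $s\to0,1$ through $p_{\min}\to0$ and $V\to\infty$, which is why only interior $s$ is claimed.
\end{enumerate}

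The main obstacle is the lower bound on $D$ in step 1, i.e.\ fixing the domain of $z$. On all of $\mathbb R^{2d}$ the Gaussian density is not bounded below, and $v_s$ is unbounded. I would first restrict to $z$ in a fixed compact set, namely $\mathcal X^2$ or a neighbourhood as the $L^\infty(\mathcal X)$ notation suggests, where both bounds hold with explicit $s$-dependent constants.

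If a bound on all of $\mathbb R^{2d}$ were needed, I would instead write $\beta_s[\pi](z)$ as the expectation of $v_s(z\mid\cdot)$ under the posterior $\pi^z\propto p(z\mid\cdot)\,\pi$. Since $\mathcal X^2$ is compact, the likelihood ratio $p(z\mid x,y)/p(z\mid x',y')$ is bounded by $\exp(C(s)(1+|z|))$. This gives a $z$-dependent Lipschitz constant, and the remaining step would be to control the growth of $v_s$ against it.

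A secondary point is that $\tilde\beta_s^t$ is taken to be the exact regressor $\beta_s[\tilde\pi_t]$ rather than the trained network, so no statistical error enters and the argument is purely deterministic.
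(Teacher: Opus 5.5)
Your argument is correct. It uses the same ingredients as the paper's proof: a quotient perturbation bound, a lower bound on the Gaussian kernel for fixed $s\in(0,1)$ with $z$ confined to a compact set, a uniform bound on $v_s$, TV duality, and finally Theorem~\ref{thm:global_tv_convergence}. The decomposition, however, is different.

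The paper writes both fields as one-step tilts of the time-$t$ measures, $\mathcal{N}(\pi_t,e^{G})/\mathcal{D}(\pi_t,e^{G})$ versus $\mathcal{N}(\tilde\pi_t,e^{\bar G})/\mathcal{D}(\tilde\pi_t,e^{\bar G})$. It is therefore really comparing $\beta_s[\pi_{t+\delta}]$ with $\beta_s[\tilde\pi_{t+\delta}]$, an index shift relative to \eqref{eq:beta_functional}. It then splits each difference at the intermediate pair $(\pi_t,e^{\bar G})$ into a weight-freezing term of order $L_{ab}\delta^2$ and a measure term of order $\varepsilon_t$, and only then inserts $\varepsilon_t\le C_{\mathrm{global}}\delta$.

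You instead show once that $\pi\mapsto\beta_s[\pi]$ is Lipschitz from TV into $L^\infty$ with constant $4V(s)p_{\max}(s)/p_{\min}(s)$, and apply the global TV bound directly. This works verbatim at either time index. Your route is shorter and more modular:
\begin{itemize}
\item The paper's weight term is redundant, since Theorem~\ref{thm:global_tv_convergence} already controls $\pi_{t+\delta}$ against $\tilde\pi_{t+\delta}$.
\item Your stability estimate combines with any TV bound, e.g.\ the imperfect-initialization bound of Remark~\ref{rmk:imperfect_init}.
\item Your lower bound $D[\pi]\ge p_{\min}(s)$ uses only that $\pi$ is a probability measure. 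It is cleaner than the paper's appeal to a lower bound $\rho_{\min}$ on the joint density of $\pi_t$. It also holds for $\tilde\pi_t$ without any density assumption, which matters because the paper's combined bound silently needs a lower bound on the practical denominator $\mathcal{D}(\tilde\pi_t,e^{\bar G})$ as well.
\item Your constants carry the kernel factor $p_{\max}(s)$, which the paper's bounds on the $\mathcal{N}$ and $\mathcal{D}$ differences omit.
\end{itemize}
What the paper's split offers is mainly an explicit picture of how the local $O(\delta^2)$ freezing error and the propagated $O(\varepsilon_t)$ error enter the regression target within a single weighted-CFM step.

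Restricting $z$ to a compact set is exactly what the paper does, since it bounds $v_s$ and $p$ over $\mathcal{X}^3$. So the extension to all of $\mathbb{R}^{2d}$ that you sketch is not needed for the statement as posed.
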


Empirical validation of this $\mathcal{O}(\delta)$ convergence rate on the 2D-Far experiment is presented in Appendix~\ref{app:convergence_rate}.

\section{Conclusion}
\label{sec:conclusion}

We introduced a generative transfer framework for Entropic Optimal Transport under an unknown latent cost.
The central insight is that the hidden cost is entirely encoded in the structure of a reference optimal coupling, and can be propagated to new marginals through iterative path-wise tilting.
We formalised the cost-transfer problem, derived covariance-type update equations for the dual potentials, and coupled this with Conditional Flow Matching to yield the first generative sampler for paired data from an EOT plan with a fully latent cost.
Convergence is guaranteed at rate $\mathcal{O}(\delta)$.
Empirically, the method adapts successfully across synthetic and high-dimensional image tasks as well as biological data, matching baselines that require access to the true cost function.

\paragraph{Discussion: Limitations \& Broader Impact.}
Our framework relies on the availability of a reference coupling and the inductive prior that tasks share a meaningful latent cost; transfer may fail if tasks are fundamentally misaligned.
Theoretically, Theorem~\ref{thm:main_convergence} holds pointwise for interpolation time $s\in(0,1)$ but does not extend uniformly to the boundary due to score divergence, a known limitation in flow-matching analyses.
Computationally, per-iteration dual adaptation remains demanding for high-resolution tasks.

In terms of broader impact, this foundational work opens new avenues for computational biology (as demonstrated on single-cell data) and domains where paired data is scarce.
It does not involve sensitive personal data or pose foreseeable negative societal risks.

\newpage
\appendix

\renewcommand{\thesection}{\Alph{section}}


\section{Notation Reference \& Proof graph}




In this short section, we present the main notations and symbols used throughout the proofs to help improve the clarity of our arguments.

{\small\setlength{\tabcolsep}{4pt}\renewcommand{\arraystretch}{1.2}

\subsection*{Problem Setup}
\begin{tabular}{@{}lp{6.2cm}@{\hspace{0.5em}}|@{\hspace{0.5em}}lp{6.0cm}@{}}
\toprule
Symbol & Meaning & Symbol & Meaning \\
\midrule
$\mathcal{X}$ & Compact Polish state space & $\pi'$ & Reference coupling; $\pi_0=\tilde\pi_0=\pi'$ \\
$\mu_t,\nu_t$ & Marginals of $\pi_t$ at time $t$ & $\pi^*$ & EOT-optimal target coupling \\
$\pi_t$ & Exact coupling; $\dot\pi_t = \pi_t(a_t\oplus b_t)$ & $\tilde\pi_t$ & Approximate coupling in algorithm \\
$u_t,v_t$ & Marginal velocity fields ($\partial_t\mu_t+\nabla\cdot(\mu_t u_t)=0$) & $\varphi_t,\psi_t$ & Kantorovich dual potentials \\
$\zeta_t$ & $\partial_t\mu_t/\mu_t = -\mathrm{div}(u_t)-u_t\cdot\nabla\log\mu_t$ & $\eta_t$ & $\partial_t\nu_t/\nu_t = -\mathrm{div}(v_t)-v_t\cdot\nabla\log\nu_t$ \\
\bottomrule
\end{tabular}

\medskip
\subsection*{Tilting Mechanism}
\begin{tabular}{@{}lp{6.2cm}@{\hspace{0.5em}}|@{\hspace{0.5em}}lp{6.0cm}@{}}
\toprule
Symbol & Meaning & Symbol & Meaning \\
\midrule
$a_t(x),b_t(y)$ & Tilting rates; solve $a_t+T_t b_t=\zeta_t$, $b_t+S_t a_t=\eta_t$ & $F_t$ & $a_t(x)+b_t(y)$ \\
$(T_t b)(x)$ & $\mathbb{E}_{\pi_t}[b(Y)\mid X\!=\!x]$ & $(S_t a)(y)$ & $\mathbb{E}_{\pi_t}[a(X)\mid Y\!=\!y]$ \\
$G$ & $\int_t^{t+\delta}\!F_r\,dr$ (exact exponent) & $\bar G$ & $\delta F_t$ (frozen approx.) \\
$\Delta$ & $G-\bar G$;\; $\|\Delta\|_\infty\le\tfrac{1}{2}L_{ab}\delta^2$ & $Z,\bar Z$ & $\int e^G d\pi_t$,\; $\int e^{\bar G}d\pi_t$ \\
$\pi_{t+\delta}$ & $\propto e^G\pi_t$ (exact one-step update) & $\bar\pi_{t+\delta}$ & $\propto e^{\bar G}\pi_t$ (frozen update) \\
\bottomrule
\end{tabular}

\medskip
\subsection*{Regression Field (CFM)}
\begin{tabular}{@{}lp{6.2cm}@{\hspace{0.5em}}|@{\hspace{0.5em}}lp{6.0cm}@{}}
\toprule
Symbol & Meaning & Symbol & Meaning \\
\midrule
$s\in(0,1)$ & Bridge interpolation time (separate from $t$) & $v_s(z|x,y)$ & Conditional bridge drift; $\|v_s\|_\infty\le C_v(s)$ \\
$p(z|x,y)$ & Bridge transition density; $p\ge\underline{p}(s)>0$ & $\beta_s^t(z)$ & Ideal field: $\mathcal{N}(\pi_t,e^G)/\mathcal{D}(\pi_t,e^G)$ \\
$\tilde\beta_s^t(z)$ & Practical field: $\mathcal{N}(\tilde\pi_t,e^{\bar G})/\mathcal{D}(\tilde\pi_t,e^{\bar G})$ & $\mathcal{N}(\mu,W)$ & $\int v_s(z|x,y)\,p(z|x,y)\,W\,d\mu$ \\
$\mathcal{D}(\mu,W)$ & $\int p(z|x,y)\,W\,d\mu$;\; lower bd.\ $\underline{p}(s)e^{-\delta C_{ab}^\infty}$ & & \\
\bottomrule
\end{tabular}

\medskip
\subsection*{Error Quantities and Constants}
\begin{tabular}{@{}lp{6.2cm}@{\hspace{0.5em}}|@{\hspace{0.5em}}lp{6.0cm}@{}}
\toprule
Symbol & Definition & Symbol & Definition \\
\midrule
$\varepsilon_t$ & $\|\pi_t-\tilde\pi_t\|_\mathrm{TV}$;\; $\varepsilon_0=0$ at init. & $\|\cdot\|_\mathrm{TV}$ & $\tfrac{1}{2}\mathbb{E}_Q[|dP/dQ-1|]$ \\
$E_\mathrm{local}$ & Local trunc.\ error per step: $O(\delta^2)$ & $E_\mathrm{prop}$ & Propagation error: $O(\delta\varepsilon_t)$ \\
$C_{ab}^\infty$ & $\sup_t\|F_t\|_\infty$ \hfill(Prop.~D.3) & $L_{ab}$ & Time-Lip.\ const.\ of $a_t,b_t$ \hfill(Prop.~D.8) \\
$C_1$ & $\tfrac{1}{2}L_{ab}e^{L_{ab}}$;\; $E_\mathrm{local}\le C_1\delta^2$ & $C_2$ & $4C_{ab}^\infty e^{2C_{ab}^\infty}$;\; $E_\mathrm{prop}\le(1+C_2\delta)\varepsilon_t$ \\
$C_\mathrm{global}$ & $\tfrac{C_1}{C_2}(e^{C_2}-1)$;\; $\varepsilon_t\le C_\mathrm{global}\delta$ \hfill(Thm.~4.5) & $C_\beta(s)$ & $K_1(s)+K_2(s)C_\mathrm{global}$;\; $\|\beta_s^t-\tilde\beta_s^t\|_\infty\le C_\beta(s)\delta$ \\
\bottomrule
\end{tabular}

\begin{figure}[H]
\centering
\scalebox{0.72}{\input{misc/proof_graph}}
\caption{%
  \textbf{Proof dependency graph.}
  Directed edges indicate logical dependencies; thick rose arrows
  (\emph{proves}) lead into the final theorem nodes, while thin grey arrows
  denote \emph{depends on}.}
\label{fig:proof_graph}
\end{figure}

\section{Assumptions}
\label{app:assumptions}

In this section, we provide the theoretical guarantees for the discrete-time Weighted CFM algorithm. Our primary goal is to show that the accumulated error between the ideal continuous-time regression target $\beta_s^t$ and the practical target $\tilde{\beta}_s^t$ generated by our algorithm remains strictly controlled. We require the following standard regularity conditions on the state space, the target geometry, and the marginal flow paths:

\begin{assumption}[Compact State Space]
\label{ass:compact_X}
The state space $\mathcal X \subset \mathbb R^d$ is compact, with diameter $D := \mathrm{diam}(\mathcal X) < \infty$.
\end{assumption}

\begin{assumption}[Bounded Lipschitz Coupling]
\label{ass:coupling_regularity}
Let $\pi' \in \Pi(\mu_0, \nu_0)$ be the initial reference coupling. We assume its log-density is bounded and Lipschitz continuous on the product space $\mathcal{X} \times \mathcal{X}$. Specifically, there exists a constant $C_\mathrm{OT} < \infty$ such that:
\begin{equation}
\|\log \pi'\|_{L^\infty(\mathcal{X} \times \mathcal{X})} + \mathrm{Lip}_{\mathcal{X} \times \mathcal{X}}(\log \pi') \le C_\mathrm{OT}.
\end{equation}
\end{assumption}

\begin{assumption}[Regularity of Velocity Fields]
\label{ass:velocity_regularity}
The marginal velocity fields $u_t, v_t$ are sufficiently smooth in space and time. Specifically, there exist uniform constants $C_\mathrm{uv}, L_\mathrm{uv} < \infty$ such that:
\begin{equation}
\sup_{t \in [0,1]} \left( \|u_t\|_{C^2(\mathcal{X})} + \|v_t\|_{C^2(\mathcal{X})} \right) \le C_\mathrm{uv},
\end{equation}
and the fields are time-Lipschitz continuous in the $C^1$-norm:
\begin{equation}
\|u_t - u_s\|_{C^1(\mathcal{X})} + \|v_t - v_s\|_{C^1(\mathcal{X})} \le L_\mathrm{uv} |t - s|, \quad \forall t, s \in [0,1].
\end{equation}
\end{assumption}

\begin{assumption}[Regularity of Initial Densities]
\label{ass:initial_regularity}
The initial source and target distributions have strictly positive densities with bounded logarithmic derivatives up to the second order. Specifically, there exists a constant $C_{\mu \nu} < \infty$ such that:
\begin{equation}
\|\log \mu_0\|_{C^2(\mathcal{X})} \le C_{\mu \nu} \quad \text{and} \quad \|\log \nu_0\|_{C^2(\mathcal{X})} \le C_{\mu \nu}.
\end{equation}
\end{assumption}

These assumptions underpin our theoretical analysis detailed in the supplementary material. First, Assumptions \ref{ass:velocity_regularity} and \ref{ass:initial_regularity} guarantee strictly positive, spatially smooth, and time-Lipschitz marginal densities (Appendix \ref{app:reg_marginal_flows}). Building on this, Assumptions \ref{ass:compact_X} and \ref{ass:coupling_regularity} establish the stability of the tilting dynamics (Appendix \ref{app:analy_titlting_rate}). Crucially, the resulting uniformly bounded and time-Lipschitz tilting rates establish a uniform Doeblin minorization, rendering the conditional expectation operators strictly contractive.

\section{Regularity of the Marginal Flows}
\label{app:reg_marginal_flows}
Before analyzing the core Weighted CFM algorithm, we first establish that the marginal paths $\mu_t, \nu_t$ and their associated geometric quantities are well-behaved. These properties form the foundational environment for our subsequent operator analysis.
\begin{lemma}[Propagation of Non-degeneracy and Regularity]
\label{lem:density_bounds}
Under Assumptions \ref{ass:velocity_regularity} and \ref{ass:initial_regularity}, the marginal densities $\mu_t$ and $\nu_t$ remain strictly positive and uniformly bounded for all $t \in [0,1]$. Specifically, there exist constants $C_{\mu}^{\min}, C_{\mu}^{\max} > 0$ and $C_{\nu}^{\min}, C_{\nu}^{\max} > 0$ depending only on $C_{\mathrm{uv}}$ and $C_{\mu \nu}$ such that for all $x \in \mathcal{X}$ and $t \in [0,1]$:
\begin{equation*}
C_{\mu}^{\min} \le \mu_t(x) \le C_{\mu}^{\max} \quad \text{and} \quad C_{\nu}^{\min} \le \nu_t(x) \le C_{\nu}^{\max}.
\end{equation*}
Furthermore, the scores $\nabla \log \mu_t$ and $\nabla \log \nu_t$ remain uniformly bounded and spatially Lipschitz continuous on $\mathcal{X}$. We denote the uniform score bound by $C_{\mathrm{score}} := \sup_{t\in[0,1]}\|\nabla\log\mu_t\|_{L^\infty(\mathcal{X})} < \infty$.
\end{lemma}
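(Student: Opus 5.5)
We propose the method of characteristics applied to the continuity equation in log form. Let $X_t$ denote the flow of $u_t$, i.e.\ $\dot X_t(x)=u_t(X_t(x))$, $X_0(x)=x$. Assumption~\ref{ass:velocity_regularity} gives $u_t$ bounded in $C^2$, so the flow is a $C^2$ diffeomorphism for every $t\in[0,1]$. Implicitly, we assume $u_t\cdot n\le 0$ (or tangency) on $\partial\mathcal X$, so that $\mathcal X$ is invariant; otherwise we work on the image set. Writing $\ell_t:=\log\mu_t$, the continuity equation \eqref{eq:cont-eq-uv} becomes
\[
\partial_t\ell_t+u_t\cdot\nabla\ell_t=-\mathrm{div}\,u_t .
\]
Along characteristics this reads $\frac{d}{dt}\ell_t(X_t(x))=-(\mathrm{div}\,u_t)(X_t(x))$. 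Integrating gives
\[
|\ell_t(X_t(x))-\ell_0(x)|\le t\,d\,C_{\mathrm{uv}}.
\]
Hence $\|\ell_t\|_\infty\le C_{\mu\nu}+dC_{\mathrm{uv}}$, and we may take
\[
C^{\min}_\mu=e^{-(C_{\mu\nu}+dC_{\mathrm{uv}})},\qquad C^{\max}_\mu=e^{C_{\mu\nu}+dC_{\mathrm{uv}}} .
\]
The same argument applies verbatim to $\nu_t$ with $v_t$.

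For the score, we differentiate the log-equation in space. Set $s_t:=\nabla\ell_t$. Then
\[
\partial_t s_t+(u_t\cdot\nabla)s_t=-(\nabla u_t)^{\top}s_t-\nabla(\mathrm{div}\,u_t).
\]
Along characteristics this is a linear ODE, $\frac{d}{dt}s_t(X_t)=-(\nabla u_t)^\top s_t(X_t)-\nabla\mathrm{div}\,u_t(X_t)$. Its coefficients are bounded by $C_{\mathrm{uv}}$, because $\nabla\mathrm{div}\,u_t$ involves only second derivatives of $u_t$, which are controlled by the $C^2$ bound. Grönwall then yields
\[
\|s_t\|_\infty\le (C_{\mu\nu}+C_{\mathrm{uv}})e^{C_{\mathrm{uv}}},
\]
uniformly in $t$, so $C_{\mathrm{score}}<\infty$.

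The main obstacle is spatial Lipschitz continuity of the score. The natural route is to differentiate once more and get a bound on $\nabla^2\ell_t$. That ODE is again linear in $\nabla^2\ell_t$, with coefficients involving $\nabla u_t$ and $s_t$, but its source term contains $\nabla^2\mathrm{div}\,u_t$, a third derivative of $u_t$ that the $C^2$ assumption does not control.

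I would therefore avoid the third derivative by using the representation formula instead:
\[
\ell_t(X_t(x))=\ell_0(x)-\int_0^t \mathrm{div}\,u_r(X_r(x))\,dr .
\]
Differentiating in $x$ gives
\[
s_t(X_t(x))=(DX_t(x))^{-\top}\Big(s_0(x)-\int_0^t DX_r(x)^\top\nabla\mathrm{div}\,u_r(X_r(x))\,dr\Big).
\]
Here $s_0$ is Lipschitz by Assumption~\ref{ass:initial_regularity}. The maps $DX_t$, $(DX_t)^{-1}$ and $X_t^{-1}$ are bounded and Lipschitz, since $u\in C^2$ makes $DX$ Lipschitz. The term $\nabla\mathrm{div}\,u_r$ is only bounded, not Lipschitz, so the integral term is Lipschitz exactly when $\nabla\mathrm{div}\,u$ is. If it is not, I would accept a slightly stronger reading of Assumption~\ref{ass:velocity_regularity}: $\mathrm{div}\,u_t\in C^{1,1}$, or $C^2$ understood as $C^{2}$ with a Lipschitz second derivative. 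I would state this explicitly.

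Composing with $X_t^{-1}$ then shows that $s_t$ is Lipschitz, with a constant depending only on $C_{\mathrm{uv}}$, $C_{\mu\nu}$ and $d$. The bounds for $\nu_t$ follow identically.
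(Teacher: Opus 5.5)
Your argument is essentially the paper's proof. Integrating $\frac{d}{dt}\log\mu_t(X_t(x))=-(\mathrm{div}\,u_t)(X_t(x))$ along characteristics gives the two-sided density bounds, and differentiating the integral representation handles the score; your factor $d$ and the invariance of $\mathcal X$ only make explicit what the paper's estimate $\|\nabla\cdot u_\tau\|_\infty\le C_{\mathrm{uv}}$ and its surjectivity claim take for granted.

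Your caveat is correct and exposes a real gap in the paper, which asserts $\log\mu_t\in C^2$ merely because the flow maps are $C^2$ diffeomorphisms. In fact $\int_0^t\mathrm{div}\,u_r(X_r(x))\,dr$ is only $C^1$ when $u\in C^2$. For example, on $\mathcal X=[-1,1]$ take $u(x)=\tfrac{4}{15}|x|^{5/2}\mathrm{sgn}(x)$ near $0$ and $\mu_0$ uniform; then $s_t(y)=-t\,|y|^{1/2}\mathrm{sgn}(y)\,(1+o(1))$ as $y\to 0$. So the spatial Lipschitz claim fails under the stated hypotheses and genuinely needs your strengthening (uniformly Lipschitz $\nabla\mathrm{div}\,u_t$). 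The later score-time-Lipschitz lemma silently relies on this too.
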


\begin{proof}
We prove the bounds for $\mu_t$ (the case for $\nu_t$ follows identically using $v_t$ and $\nu_0$).
Let $\Phi_t^u : \mathcal{X} \to \mathcal{X}$ denote the flow map generated by the velocity field $u_t$, defined as the solution to the ODE:
\[
\frac{d}{dt}\Phi_t^u(x) = u_t(\Phi_t^u(x)), \quad \Phi_0^u(x) = x.
\]
By the continuity equation $\partial_t \mu_t + \nabla \cdot (\mu_t u_t) = 0$, the evolution of the log-density along the flow characteristics is governed by the negative divergence of the velocity field:
\begin{equation}
\label{eq:log_density_ode}
\frac{d}{dt} \log \mu_t(\Phi_t^u(x)) = -(\nabla \cdot u_t)(\Phi_t^u(x)).
\end{equation}
Integrating \eqref{eq:log_density_ode} from $0$ to $t$ yields:
\[
\log \mu_t(\Phi_t^u(x)) = \log \mu_0(x) - \int_0^t (\nabla \cdot u_\tau)(\Phi_\tau^u(x)) \, d\tau.
\]
We now apply the uniform bounds from our assumptions: From Assumption \ref{ass:initial_regularity}, $\|\log \mu_0\|_{C^2} \le C_{\mu \nu}$, which implies $\|\log \mu_0\|_{L^\infty(\mathcal{X})} \le C_{\mu \nu}$. From Assumption \ref{ass:velocity_regularity}, $\sup_{\tau} \|u_\tau\|_{C^2} \le C_{\mathrm{uv}}$. This implies the divergence is bounded by $\|\nabla \cdot u_\tau\|_{L^\infty(\mathcal{X})} \le \|u_\tau\|_{C^1} \le C_{\mathrm{uv}}$. Substituting these bounds into the integral equation:
\[
\left| \log \mu_t(\Phi_t^u(x)) \right| \le \|\log \mu_0\|_\infty + \int_0^t \|\nabla \cdot u_\tau\|_\infty \, d\tau \le C_{\mu \nu} + C_{\mathrm{uv}} \cdot t \le C_{\mu \nu} + C_{\mathrm{uv}}.
\]
Since $\Phi_t^u$ is a diffeomorphism on the compact domain $\mathcal{X}$ generated by a $C^2$ field, the map is surjective. Thus, for any $y \in \mathcal{X}$, there exists $x$ such that $y = \Phi_t^u(x)$, and the bound holds uniformly:
\[
-(C_{\mu \nu} + C_{\mathrm{uv}}) \le \log \mu_t(y) \le (C_{\mu \nu} + C_{\mathrm{uv}}).
\]
Exponentiating these bounds, we explicitly define the constants:
\[
C_{\mu}^{\min} := e^{-(C_{\mu \nu} + C_{\mathrm{uv}})}, \quad C_{\mu}^{\max} := e^{(C_{\mu \nu} + C_{\mathrm{uv}})}.
\]
The bounds $C_{\nu}^{\min}, C_{\nu}^{\max}$ are derived analogously using $\nu_0$ and $v_t$. 

Finally, regarding the spatial regularity of the score $\nabla \log \mu_t$. By differentiating the relation $\mu_t(y) = \mu_0(x) \det(\nabla \Phi_t^u(x))^{-1}$ (where $x = (\Phi_t^u)^{-1}(y)$), or equivalently differentiating the integral form, the gradient $\nabla \log \mu_t$ involves $\nabla \log \mu_0$, $\nabla (\text{div } u)$, and the Jacobian of the inverse flow $(\Phi_t^u)^{-1}$. Since $u \in C^2$ (Assumption \ref{ass:velocity_regularity}) and $\log \mu_0 \in C^2$ (Assumption \ref{ass:initial_regularity}), the flow map and its inverse are $C^2$ diffeomorphisms. Consequently, $\nabla \log \mu_t$ remains bounded and Lipschitz continuous (i.e., $\log \mu_t \in C^2$).
\end{proof}

Having established the spatial regularity of the marginal densities, we now show that the governing continuity equations ensure they evolve smoothly in time.

\begin{lemma}[Temporal Lipschitz Regularity of Log-Densities and Scores]
\label{lem:temporal_marginal_regularity}
\label{lem:score_time_lip}
Under Assumption~\ref{ass:velocity_regularity} and Lemma~\ref{lem:density_bounds}, there exist uniform constants $L_{\log}, L_{\mathrm{score}} < \infty$ such that for all $r, t \in [0,1]$:
\begin{align*}
\|\log \mu_r - \log \mu_t\|_{L^\infty(\mathcal{X})} + \|\log \nu_r - \log \nu_t\|_{L^\infty(\mathcal{X})} &\le L_{\log} |r - t|,\\
\|\nabla \log \mu_r - \nabla \log \mu_t\|_{L^\infty(\mathcal{X})} + \|\nabla \log \nu_r - \nabla \log \nu_t\|_{L^\infty(\mathcal{X})} &\le L_{\mathrm{score}} |r - t|.
\end{align*}
\end{lemma}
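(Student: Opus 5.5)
The plan is to write down the time derivatives explicitly via the continuity equation, bound them uniformly, and then integrate in time with the mean value theorem. Since $\mu_t>0$ and $\mu_t\in C^2$ by Lemma~\ref{lem:density_bounds}, the continuity equation \eqref{eq:cont-eq-uv} can be divided by $\mu_t$ to get the pointwise identity
\begin{equation*}
\partial_t \log\mu_t = \zeta_t := -\nabla\!\cdot u_t - u_t\cdot\nabla\log\mu_t .
\end{equation*}
Assumption~\ref{ass:velocity_regularity} gives $\|\nabla\!\cdot u_t\|_\infty\le C_{\mathrm{uv}}$ and $\|u_t\|_\infty\le C_{\mathrm{uv}}$, and Lemma~\ref{lem:density_bounds} gives $\|\nabla\log\mu_t\|_\infty\le C_{\mathrm{score}}$. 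Hence $\|\zeta_t\|_\infty\le C_{\mathrm{uv}}(1+C_{\mathrm{score}})$ uniformly in $t$. Writing $\log\mu_r(x)-\log\mu_t(x)=\int_t^r \zeta_\tau(x)\,d\tau$ then gives the first bound for $\mu$, and the same argument applies to $\nu$ with $v_t$. This produces the constant $L_{\log}=2C_{\mathrm{uv}}(1+C_{\mathrm{score}})$, where $C_{\mathrm{score}}$ is taken as the maximum of the $\mu$ and $\nu$ score bounds.

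For the score, I would differentiate the identity in space. Writing $s_t:=\nabla\log\mu_t$, this gives
\begin{equation*}
\partial_t s_t = \nabla\zeta_t = -\nabla(\nabla\!\cdot u_t) - (\nabla u_t)^{\!\top} s_t - (\nabla s_t)\,u_t .
\end{equation*}
The first two terms are bounded by $C_{\mathrm{uv}}(1+C_{\mathrm{score}})$ using the $C^2$ bound on $u_t$. The third term needs a uniform bound on the Hessian $\nabla^2\log\mu_t$. Integrating in time as before would then give $L_{\mathrm{score}}$.

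The main obstacle is the uniform bound on $\nabla^2\log\mu_t$. Lemma~\ref{lem:density_bounds} asserts spatial Lipschitz continuity of the score, i.e.\ $\log\mu_t\in C^{1,1}$, but I would need that Lipschitz constant to be uniform in $t$, and the time differentiation above needs a bit more regularity than that.

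To get the uniform bound, I would work along characteristics, as in the proof of Lemma~\ref{lem:density_bounds}, with $y=\Phi^u_t(x)$:
\begin{equation*}
\log\mu_t(\Phi^u_t(x))=\log\mu_0(x)-\int_0^t(\nabla\!\cdot u_\tau)(\Phi^u_\tau(x))\,d\tau .
\end{equation*}
Differentiating in $x$ expresses $\nabla\Phi_t^{u\top}s_t(\Phi_t^u(x))$ through $\nabla\log\mu_0$ and $\nabla(\nabla\!\cdot u)$. Gronwall bounds $\|\nabla\Phi_t^u\|\le e^{C_{\mathrm{uv}}}$, and $\|(\nabla\Phi_t^u)^{-1}\|\le e^{C_{\mathrm{uv}}}$ follows the same way. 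A Grönwall estimate on $\nabla^2\Phi_t^u$, together with Assumption~\ref{ass:initial_regularity}, then gives a time-uniform Lipschitz constant $C_{\mathrm{Lip}}$ for $s_t$.

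This leaves one gap: $\nabla(\nabla\!\cdot u)$ is only $C^0$ under the $C^2$ assumption, so differentiating the characteristic identity a second time is not justified directly. To avoid needing $C^3$ regularity, I would not differentiate in time pointwise. Instead, I would compare scores at two times through the characteristic formula directly. The difference $s_r(y)-s_t(y)$ is controlled by $|r-t|$ times bounds on $u$, $\nabla u$, $\nabla^2 u$, the flow Jacobians, and the Lipschitz constant $C_{\mathrm{Lip}}$ of the score (which absorbs the displacement $|\Phi^u_r-\Phi^u_t|\le C_{\mathrm{uv}}|r-t|$ of the characteristic foot points). Each of these is uniformly bounded, which yields $L_{\mathrm{score}}$.
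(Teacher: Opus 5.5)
Your log-density argument is the paper's. Your first pass at the score is also the paper's entire proof: differentiate $\partial_t\log\mu_t=-\nabla\!\cdot u_t-u_t\cdot\nabla\log\mu_t$ in space, bound the three terms, and integrate in time. The paper controls $(\nabla s_t)\,u_t$ simply by citing Lemma~\ref{lem:density_bounds}, read as giving $\log\mu_t\in C^2$ with constants uniform in $t$. That lemma is a hypothesis of the statement.

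The gap is in the step where you try to produce the time-uniform constant $C_{\mathrm{Lip}}$ yourself. Differentiating the characteristic identity once gives
\[
s_t(\Phi^u_t(x))=\nabla\Phi^u_t(x)^{-\top}\Big[\nabla\log\mu_0(x)-\int_0^t\nabla\Phi^u_\tau(x)^{\top}\,\nabla(\nabla\!\cdot u_\tau)(\Phi^u_\tau(x))\,d\tau\Big].
\]
A Gr\"onwall bound on $\nabla^2\Phi^u_t$ only makes the Jacobian factors Lipschitz. The integral is Lipschitz in $x$ only if $\nabla(\nabla\!\cdot u_\tau)$ is, which needs third derivatives of $u$. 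That is exactly the regularity you yourself flag as unavailable a few lines later. So $C_{\mathrm{Lip}}$ is not established, and your final two-time comparison, which relies on it, inherits the gap.

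No sharper estimate can fix this. Take $d=1$, $\mu_0$ uniform, and a time-independent $u$ equal to $|x|^{5/2}$ near $0$, smoothly cut off so that $u\in C^2$. Then $s_t(y)=-\tfrac{15}{4}\,t\,|y|^{1/2}(1+o(1))$ as $y\to0$, which is not Lipschitz. The uniform Lipschitz bound on the score must therefore be taken from Lemma~\ref{lem:density_bounds}, exactly as the paper does.

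Once you take $C_{\mathrm{Lip}}$ from there, your two-time comparison is valid, provided you compare along a fixed label $x$. The display gives $|s_r(\Phi^u_r(x))-s_t(\Phi^u_t(x))|\le C|r-t|$ using only bounds on $\nabla u$, $\nabla(\nabla\!\cdot u)$, the flow Jacobians and $\nabla\log\mu_0$. Then $|s_t(\Phi^u_r(x))-s_t(\Phi^u_t(x))|\le C_{\mathrm{Lip}}C_{\mathrm{uv}}|r-t|$ transfers this to the fixed point $y=\Phi^u_r(x)$. If you instead compared at fixed $y$ using the two labels $(\Phi^u_r)^{-1}(y)$ and $(\Phi^u_t)^{-1}(y)$ in the time-$0$ formula, you would again need $\nabla(\nabla\!\cdot u)$ to be Lipschitz.

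This version needs only a uniformly Lipschitz score, not a pointwise Hessian $\nabla^2\log\mu_t$. It therefore answers your legitimate concern about the regularity used when the paper differentiates $s_t$ in time. The paper's route is shorter but rests on the stronger $C^2$ reading of Lemma~\ref{lem:density_bounds}.
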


\begin{proof}
We prove both claims for $\mu_t$ (the arguments for $\nu_t$ follow identically).

\noindent\textbf{Log-density is time-Lipschitz:}
Dividing the continuity equation $\partial_t\mu_t + \nabla\cdot(\mu_t u_t)=0$ by $\mu_t > 0$ gives
\begin{equation}
\label{eq:log_density_time_deriv}
\partial_t \log \mu_t = -\nabla \cdot u_t - u_t \cdot \nabla \log \mu_t.
\end{equation}
By Assumption~\ref{ass:velocity_regularity}, $\|\nabla\cdot u_t\|_\infty \le C_{\mathrm{uv}}$ and $\|u_t\|_\infty \le C_{\mathrm{uv}}$.
By Lemma~\ref{lem:density_bounds}, $\|\nabla\log\mu_t\|_\infty \le C_{\mathrm{score}} < \infty$ uniformly in $t$.
Hence $\|\partial_t\log\mu_t\|_\infty \le C_{\mathrm{uv}}(1 + C_{\mathrm{score}}) =: L_{\log}$,
and integrating over time yields the Lipschitz bound.

\noindent\textbf{Score is time-Lipschitz:}
Let $s_t := \nabla\log\mu_t$. Taking the spatial gradient of \eqref{eq:log_density_time_deriv}:
\begin{equation}
\label{eq:score_pde}
\partial_t s_t = -\nabla(\nabla\cdot u_t) - J_{u_t}^T s_t - (\nabla s_t)\,u_t,
\end{equation}
where $J_{u_t}$ is the Jacobian of $u_t$ and $\nabla s_t = \nabla^2\log\mu_t$.
By Assumption~\ref{ass:velocity_regularity} ($u_t \in C^2$ uniformly) and Lemma~\ref{lem:density_bounds} ($\log\mu_t \in C^2$ uniformly), all three terms on the right are bounded, giving $\|\partial_t s_t\|_\infty \le C$ for some uniform $C$. Integrating over time yields the second Lipschitz bound, defining $L_{\mathrm{score}} := \operatorname{ess\,sup}_{t\in[0,1]}\|\partial_t s_t\|_\infty$.
\end{proof}

\section{Variational Characterization of the Tilting Dynamics}
\label{app:energy-derivation}
The regularity of the marginal paths established above guarantees that the infinitesimal tilting rates required to track these marginals are theoretically well-defined. We now derive the linear system and the energy formulation governing these rates $(a_t, b_t)$.

\smallskip
\noindent
\textbf{From marginal feasibility to a linear system.} Recall $\pi_t\in\Pi(\mu_t,\nu_t)$ evolve according to the infinitesimal tilting dynamics $\dot\pi_t(x,y)=\pi_t(x,y)\big(a_t(x)+b_t(y)\big)$, where $(a_t,b_t)$ are the infinitesimal tilting rates defined in \eqref{eq:def-a-b-path}.
The marginal constraints $\pi_t\in\Pi(\mu_t,\nu_t)$ read
\[
\int_{\mathcal X} \pi_t(x,y)\,dy= \mu_t(x),
\qquad
\int_{\mathcal X} \pi_t(x,y)\,dx= \nu_t(y),
\]
Differentiating in $t$ and using
\eqref{eq:pi-dot} yields
\[
\int \pi_t(x,y)\big(a_t(x)+b_t(y)\big)\,dy= \partial_t \mu_t(x),
\qquad
\int \pi_t(x,y)\big(a_t(x)+b_t(y)\big)\,dx= \partial_t \nu_t(y).
\]
Dividing by $\mu_t(x)$ and $\nu_t(y)$, which are strictly positive by Lemma~\ref{lem:density_bounds}, and rewriting as conditional expectations under $\pi_t$,
we obtain the linear system
\begin{equation}
\label{eq:app_linear_system_ab}
\boxed{
\begin{aligned}
a_t(x)+\mathbb{E}_{\pi_t}\!\left[b_t(Y)\mid X=x\right] &= \zeta_t(x),\\
b_t(y)+\mathbb{E}_{\pi_t}\!\left[a_t(X)\mid Y=y\right] &= \eta_t(y),
\end{aligned}}
\qquad
\zeta_t(x):=\frac{\partial_t \mu_t(x)}{\mu_t(x)},\ \ \eta_t(y):=\frac{\partial_t \nu_t(y)}{\nu_t(y)}.
\end{equation}

\begin{definition}[Tilting operators]
\label{def:tilting_operators}
Fix $t\in[0,1]$ and let $\pi_t\in\Pi(\mu_t,\nu_t)$ be a coupling on $\mathcal X\times\mathcal X$.
Define the conditional expectation operators
\[
(T_t b)(x):=\mathbb E_{\pi_t}[b(Y)\mid X=x],\qquad
(S_t a)(y):=\mathbb E_{\pi_t}[a(X)\mid Y=y].
\]
With this notation, the linear system \eqref{eq:app_linear_system_ab} reads
\begin{equation}
\label{eq:app_linear_system_ab_TS}
a_t+T_t b_t=\zeta_t,\qquad b_t+S_t a_t=\eta_t,
\end{equation}
where $\zeta_t:=\partial_t\mu_t/\mu_t$ and $\eta_t:=\partial_t\nu_t/\nu_t$.
\end{definition}

\begin{proposition}{(Alternating regressions and normal equations)}
\label{prop:alt_regression}
The system \eqref{eq:app_linear_system_ab} is equivalent to the normal equations of the least-squares problems:
\begin{equation}
\label{eq:alt-updates}
    \begin{aligned}
    a_t &= \arg\min_{a}\ \mathbb{E}_{\pi_t}\!\Big[\big(a(X)+b_t(Y)-\zeta_t(X)\big)^2\Big],\\
    b_t &= \arg\min_{b}\ \mathbb{E}_{\pi_t}\!\Big[\big(a_t(X)+b(Y)-\eta_t(Y)\big)^2\Big].
    \end{aligned}
\end{equation}
\end{proposition}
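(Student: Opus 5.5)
The plan is to work in the Hilbert spaces $L^2(\mu_t)$ and $L^2(\nu_t)$. Both least-squares problems in \eqref{eq:alt-updates} are convex quadratic functionals of a single function, so their minimizers are exactly the points where the first variation vanishes. That first-order condition is a conditional-expectation identity, and we will show it coincides with the corresponding line of \eqref{eq:app_linear_system_ab}. Throughout we assume $a_t,b_t,\zeta_t,\eta_t$ are square integrable. This holds because $\mu_t,\nu_t$ are bounded above and below (Lemma~\ref{lem:density_bounds}), and because $\zeta_t = -\nabla\!\cdot u_t - u_t\cdot\nabla\log\mu_t$ is bounded by Assumption~\ref{ass:velocity_regularity} and the score bound. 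In particular all the expectations involved are finite.

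\textbf{First problem.} Fix $b_t$ and consider $Q(a):=\mathbb{E}_{\pi_t}[(a(X)+b_t(Y)-\zeta_t(X))^2]$ over $a\in L^2(\mu_t)$. For any direction $h\in L^2(\mu_t)$, expand $Q(a+\lambda h)$ in $\lambda$. The linear coefficient is
\[
2\,\mathbb{E}_{\pi_t}\big[h(X)\,(a(X)+b_t(Y)-\zeta_t(X))\big].
\]
Conditioning on $X$ via the tower property and the first marginal $\mu_t$ rewrites it as
\[
2\,\mathbb{E}_{\mu_t}\big[h(X)\,(a(X)+(T_tb_t)(X)-\zeta_t(X))\big].
\]
The quadratic coefficient is $\mathbb{E}_{\mu_t}[h^2]\ge 0$, so $Q$ is convex, and strictly convex on $L^2(\mu_t)$. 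Hence $a$ is the (unique $\mu_t$-a.e.) minimizer if and only if this linear term vanishes for every $h$. That is the normal equation $a+T_tb_t=\zeta_t$ $\mu_t$-a.e., which is the first line of \eqref{eq:app_linear_system_ab}. Equivalently, the minimizer is the $L^2(\pi_t)$-projection of $\zeta_t(X)-b_t(Y)$ onto $\sigma(X)$-measurable functions, namely $\zeta_t-T_tb_t$.

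\textbf{Second problem.} The same computation with the roles of $X$ and $Y$ swapped, conditioning on $Y$ and using $S_t$ and the second marginal $\nu_t$, gives $b+S_ta_t=\eta_t$ $\nu_t$-a.e.

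\textbf{Equivalence.} For the forward direction, if $(a_t,b_t)$ solves \eqref{eq:app_linear_system_ab}, then each component satisfies the corresponding normal equation given the other, so it is the argmin in \eqref{eq:alt-updates}. For the converse, if each component is the argmin given the other, the vanishing first variation returns both lines of the system.

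The main point requiring care is the meaning of ``$\arg\min$''. Uniqueness holds only $\mu_t$-a.e. (resp. $\nu_t$-a.e.), and since $\mu_t,\nu_t$ have strictly positive densities by Lemma~\ref{lem:density_bounds}, this is a.e. on $\mathcal X$. A second subtlety is that the conditional expectations $T_t,S_t$ must be well defined as bounded operators on the $L^2$ spaces. This follows from Jensen's inequality and the existence of the disintegration of $\pi_t$ on the compact space $\mathcal X$. Beyond that, the argument is a routine projection computation.
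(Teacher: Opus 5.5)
Your proposal is correct and follows essentially the same route as the paper: compute the first variation of each convex quadratic in a direction $h$ (the paper's $p$), use the tower property to rewrite it as $\mathbb{E}_{\mu_t}[h\,(a+T_tb_t-\zeta_t)]$, and read off the normal equations. Your remarks on strict convexity, $\mu_t$-a.e.\ uniqueness, and the two directions of the equivalence spell out what the paper leaves implicit (it only derives the normal equations from the minimization), but the argument is the same.
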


\begin{proof}
We verify the equivalence for the $a$-update; the $b$-update follows symmetrically.
Fix $b = b_t$ and consider the quadratic objective
\[
\mathcal{L}(a) := \mathbb{E}_{\pi_t}\!\Big[\big(a(X)+b_t(Y)-\zeta_t(X)\big)^2\Big].
\]
This is a convex quadratic functional in $a \in L^2(\mu_t)$.
Its Fr\'echet derivative in the direction of any test function $p \in L^2(\mu_t)$ is
\[
\frac{d}{d\xi}\mathcal{L}(a+\xi p)\Big|_{\xi=0}
= 2\,\mathbb{E}_{\pi_t}\!\Big[p(X)\big(a(X)+b_t(Y)-\zeta_t(X)\big)\Big].
\]
Setting this to zero and using the tower property
$\mathbb{E}_{\pi_t}[b_t(Y)p(X)] = \mathbb{E}_{\mu_t}[p(X)(T_t b_t)(X)]$, we obtain
$\mathbb{E}_{\mu_t}[p(X)(a(X)+(T_t b_t)(X)-\zeta_t(X))]=0$ for all $p$,
hence $a + T_t b_t = \zeta_t$ $\mu_t$-a.s., which is the first equation of
\eqref{eq:app_linear_system_ab}.
An identical computation on $\mathcal{L}(b) := \mathbb{E}_{\pi_t}[(a_t(X)+b(Y)-\eta_t(Y))^2]$
yields $b + S_t a_t = \eta_t$ $\nu_t$-a.s.
\end{proof}

In particular, when the marginal path is generated by flow fields $(u_t,v_t)$ through the continuity equations
$\partial_t \mu_t+\nabla\!\cdot(\mu_t u_t)=0$ and $\partial_t \nu_t+\nabla\!\cdot(\nu_t v_t)=0$, the right-hand sides admit the explicit forms
\begin{equation}
\label{eq:zeta-eta}
    \begin{aligned}
    \zeta_t(x)
    = -\frac{\nabla\!\cdot(\mu_t u_t)}{\mu_t}(x)
    & = -\mathrm{div}(u_t)(x)-u_t(x)\cdot\nabla\log \mu_t(x),\\
    \eta_t(y)
    = -\frac{\nabla\!\cdot(\nu_t v_t)}{\nu_t}(y)
    & = -\mathrm{div}(v_t)(y)-v_t(y)\cdot\nabla\log \nu_t(y).
    \end{aligned}
\end{equation}
Hence, for any prescribed marginal path $(\mu_t,\nu_t)$, the infinitesimal tilt $(a_t,b_t)$ is fully characterized by
\eqref{eq:app_linear_system_ab}. 
In principle, the right-hand sides $\zeta_t=\partial_t \mu_t/\mu_t$ and $\eta_t=\partial_t \nu_t/\nu_t$ can be written explicitly from the continuity equations, but doing so involves score terms $\nabla\log \mu_t$ and $\nabla\log \nu_t$, which are typically inaccessible in our sample-based setting. To avoid estimating these scores, we rewrite the constraints in an equivalent weak form. 

\begin{lemma}[Weak Form via Integration by Parts]
\label{lem:ipp}
Using the continuity equations $\partial_t \mu_t=-\nabla\!\cdot(\mu_t u_t)$ and $\partial_t \nu_t=-\nabla\!\cdot(\nu_t v_t)$, for all sufficiently smooth test functions $a, b$, integration by parts yields:
\begin{equation*}
    \begin{aligned}
        \int_{\mathcal{X}} a(x)\,\partial_t \mu_t(x)\,dx
        &= -\int_{\mathcal{X}} a(x)\,\nabla\!\cdot(\mu_t u_t)(x)\,dx  = \int_{\mathcal{X}} \nabla a(x)\cdot u_t(x)\,\mu_t(x)\,dx = \mathbb{E}_{\mu_t}\!\big[\nabla a(X)\cdot u_t(X)\big], \\[0.3em]
        \int_{\mathcal{X}} b(y)\,\partial_t \nu_t(y)\,dy \,
        &= -\int_{\mathcal{X}} b(y)\,\nabla\!\cdot(\nu_t v_t)(y)\,dy \,  = \,  \int_{\mathcal{X}} \nabla b(y)\cdot v_t(y)\,\nu_t(y)\,dy \,\,= \mathbb{E}_{\nu_t}\!\big[\nabla b(Y)\cdot v_t(Y)\big].
    \end{aligned}
\end{equation*}
Note that the boundary terms over $\partial\mathcal{X}$ vanish naturally. Since probability mass is conserved within the compact domain $\mathcal{X}$, the velocity fields must be tangent to the boundary, implying the no-flux conditions $(\mu_tu_t)\cdot n = 0$ and $(\nu_tv_t)\cdot n = 0$.

Plugging these weak forms into the normal equations associated with formulation \eqref{eq:alt-updates} leads to an equivalent, fully sample-based objective that successfully avoids explicit density scores:
\begin{equation}
\label{eq:weak-alt-updates}
\begin{aligned}
a_t & =  \arg\min_{a}\ 
\mathbb{E}_{\pi_t}\!\Big[a(X)^2 +2a(X)b_t(Y)\Big]
-2\,\mathbb{E}_{\mu_t}\!\Big[\nabla a(X)\cdot u_t(X)\Big],\\
b_t & =  \arg\min_{b}\ 
\mathbb{E}_{\pi_t}\!\Big[b(Y)^2 +2a_t(X)b(Y)\Big]
-2\,\mathbb{E}_{\nu_t}\!\Big[\nabla b(Y)\cdot v_t(Y)\Big].
\end{aligned}
\end{equation}
\end{lemma}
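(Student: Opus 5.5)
The argument has two parts: first the integration-by-parts identities, then the equivalence of the weak-form objectives \eqref{eq:weak-alt-updates} with the alternating least-squares problems of Proposition~\ref{prop:alt_regression}.

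\textbf{The weak forms.} For the first identity, I will substitute the continuity equation \eqref{eq:cont-eq-uv}, $\partial_t\mu_t=-\nabla\cdot(\mu_t u_t)$, and apply the divergence theorem on $\mathcal X$:
\[
-\int_{\mathcal X} a\,\nabla\cdot(\mu_t u_t)\,dx=\int_{\mathcal X}\nabla a\cdot u_t\,\mu_t\,dx-\int_{\partial\mathcal X} a\,\mu_t\,u_t\cdot n\,dS .
\]
The boundary term vanishes by the no-flux condition. I will justify that condition as follows: $\mu_t$ stays a probability measure supported in $\mathcal X$ for all $t$, so the flow $\Phi^u_t$ from Lemma~\ref{lem:density_bounds} must map $\mathcal X$ into itself. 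Hence $u_t\cdot n\le 0$ on $\partial\mathcal X$, and applying the same reasoning to the time-reversed flow gives $u_t\cdot n=0$. Equivalently, I can impose tangency as part of the standing setup.

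The regularity needed for the divergence theorem comes from earlier results. Assumption~\ref{ass:velocity_regularity} gives $u_t\in C^2$, and Lemma~\ref{lem:density_bounds} gives $\mu_t\in C^1$ and bounded, so $\mu_t u_t\in C^1$. Compactness and convexity of $\mathcal X$ give a Lipschitz boundary. This suffices for test functions $a\in C^1(\mathcal X)$. The identity for $b$, $\nu_t$, $v_t$ is proved the same way.

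\textbf{Equivalence of the objectives.} Expand the first problem in \eqref{eq:alt-updates}:
\[
\mathbb E_{\pi_t}[(a+b_t-\zeta_t)^2]=\mathbb E_{\pi_t}[a(X)^2+2a(X)b_t(Y)]-2\,\mathbb E_{\mu_t}[a\,\zeta_t]+\text{const},
\]
where the constant does not depend on $a$. Next, $\mathbb E_{\mu_t}[a\zeta_t]=\int a\,\partial_t\mu_t\,dx$ by the definition of $\zeta_t$, and the first identity turns this into $\mathbb E_{\mu_t}[\nabla a\cdot u_t]$. So over smooth $a$ the two objectives differ by a constant and have the same minimizer. The same computation handles the $b$-problem.

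As a check at the level of first-order conditions, perturb $a$ in a direction $p$ and set the derivative to zero:
\[
\mathbb E_{\pi_t}[p(a+b_t)]=\mathbb E_{\mu_t}[\nabla p\cdot u_t]=\int p\,\partial_t\mu_t .
\]
By the tower property this is the weak form of $a+T_tb_t=\zeta_t$. Density of $C^1$ functions in $L^2(\mu_t)$ then recovers the normal equations \eqref{eq:app_linear_system_ab_TS}.

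\textbf{Main obstacle.} The delicate step is the function class. The weak objective is defined only for differentiable $a$, while the strong one lives on $L^2(\mu_t)$. The argument therefore needs the minimizer $a_t$ to be regular enough, at least $H^1$, to serve as an admissible competitor. Alternatively, since $\zeta_t\in L^\infty$ by Lemma~\ref{lem:density_bounds} and the linear functional $a\mapsto\mathbb E_{\mu_t}[\nabla a\cdot u_t]$ equals $\mathbb E_{\mu_t}[a\zeta_t]$, that functional extends continuously from $C^1$ to $L^2(\mu_t)$. I will use this extension to define the weak objective on $L^2(\mu_t)$ and conclude the equivalence by density.
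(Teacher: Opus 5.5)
Your proposal is correct and follows the paper's route. You integrate by parts with the no-flux boundary condition, then replace the cross term $\mathbb{E}_{\mu_t}[a\,\zeta_t]=\int a\,\partial_t\mu_t$ (resp.\ $\mathbb{E}_{\nu_t}[b\,\eta_t]$) in \eqref{eq:alt-updates} by $\mathbb{E}_{\mu_t}[\nabla a\cdot u_t]$ (resp.\ $\mathbb{E}_{\nu_t}[\nabla b\cdot v_t]$), so the objectives differ only by an additive constant. Your flow-invariance argument for pointwise tangency and the $L^2(\mu_t)$ extension of the linear functional spell out details the paper asserts in one line or leaves implicit; they are not a different method.
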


Now to the proof of Proposition~\ref{prop:energy_wellposed} on the well-posedness of the energy functional.

\begin{proof}[Proof of Proposition~\ref{prop:energy_wellposed}]
Recall that the energy \eqref{eq:energy-ab} is
\[
\mathcal J_t(a,b)
:= \mathbb{E}_{\pi_t}\!\big[\,|a(X)+b(Y)|^2\,\big]
   -2\,\mathbb{E}_{\mu_t}\!\big[\,\nabla a(X)\!\cdot\! u_t(X)\,\big]
   -2\,\mathbb{E}_{\nu_t}\!\big[\,\nabla b(Y)\!\cdot\! v_t(Y)\,\big].
\]
Fix $t\in[0,1]$. By the tower property and Fubini's theorem, for any square-integrable $a,b$,
\begin{equation}
\label{eq:app_dual_T_S}
\mathbb E_{\mu_t}\!\big[a(X)\,(T_t b)(X)\big]
=
\mathbb E_{\pi_t}[a(X)b(Y)]
=
\mathbb E_{\nu_t}\!\big[(S_t a)(Y)\,b(Y)\big].
\end{equation}

\noindent\textbf{Optimality conditions $\Leftrightarrow$ linear system \eqref{eq:app_linear_system_ab}}:
Consider the quadratic energy $\mathcal J_t$ defined above. Since $\mathcal J_t$ is a convex quadratic functional on $L^2(\mu_t)\times L^2(\nu_t)$,  its minimizer is characterized by the vanishing of its
Fr\'echet derivatives. Let $p\in C^1(\mathcal X)$ be any arbitrary test function.
Expanding $\mathcal J_t(a+\xi p,b)$ and differentiating at $\xi=0$ yields
\[
\mathbb E_{\pi_t}\!\big[(a(X)+b(Y))\,p(X)\big]
=
\mathbb E_{\mu_t}\!\big[\nabla p(X)\cdot u_t(X)\big].
\]
By Lemma~\ref{lem:ipp}, the right-hand side equals $\mathbb E_{\mu_t}[p(X)\,\zeta_t(X)]$ in weak form,
where $\zeta_t=\partial_t\mu_t/\mu_t$. Using
$\mathbb E_{\pi_t}[b(Y)p(X)]=\mathbb E_{\mu_t}[p(X)\,(T_t b)(X)]$, for all $p$, 
we obtain
\[
\mathbb E_{\mu_t}\!\big[p(X)\,(a(X)+(T_t b)(X)-\zeta_t(X))\big]=0,
\]
hence
\[
a+T_t b=\zeta_t\qquad \mu_t\text{-a.s.}
\]
An analogous argument with test functions depending only on $y$ gives, with $\eta_t=\partial_t\nu_t/\nu_t$,
\[
b+S_t a=\eta_t\qquad \nu_t\text{-a.s.}
\]
Therefore, the critical points of $\mathcal J_t$ are exactly the solutions of the linear system
\[
a+T_t b=\zeta_t,\qquad b+S_t a=\eta_t,
\]
which is equivalent to \eqref{eq:app_linear_system_ab}.

\noindent\textbf{Existence and uniqueness modulo gauge:}
The quadratic part of $\mathcal J_t$ is $\mathbb E_{\pi_t}[(a(X)+b(Y))^2]$, whose kernel consists precisely of constant gauges $(a,b)=(c,-c)$. Hence $\mathcal J_t$ is strictly convex on the quotient space modulo the gauge and admits a unique minimizer in each gauge class. Since $\mathcal J_t$ is a convex quadratic functional, this minimizer is characterized by the above linear system.

\noindent\textbf{Block coordinate descent and convergence:}
The associated block coordinate descent reads
\[
a^{k+1}=\zeta_t-T_t b^{k},
\qquad
b^{k+1}=\eta_t-S_t a^{k+1}.
\]
This scheme corresponds to a Gauss--Seidel iteration for the linear system \eqref{eq:app_linear_system_ab_TS}. To apply standard convergence results one works on the quotient space with the gauge fixed (e.g.\ $\int a\,d\mu_t=0$), on which $\mathcal{J}_t$ is strictly convex and the linear system is non-singular.
Since $\mathcal J_t$ is a strictly convex quadratic functional (modulo the gauge), classical results on block coordinate descent for convex quadratic problems, or equivalently Gauss--Seidel methods for linear systems, ensure that the iterates decrease $\mathcal J_t$ monotonically and converge (modulo the gauge) to the unique minimizer; see, e.g., \cite{bertsekas1997nonlinear,tseng2001convergence}.
\end{proof}

\begin{remark}
In practice, $a_t$ and $b_t$ are parameterized by neural networks and all expectations are approximated by minibatches of paired samples $(X,Y)\sim\pi_t$ and marginal samples $X\sim\mu_t$, $Y\sim\nu_t$; the gradients $\nabla a$ and
$\nabla b$ are obtained efficiently via automatic differentiation. Once $(a_t,b_t)$ are estimated, we update the coupling by the infinitesimal tilting rule  \eqref{eq:pi-dot}.
\end{remark}

\section{Derivations for the Tilted Conditional Vector Fields}
\label{app:tilted_vector_fields_derivation}
We provide the formal derivations for the vector field updates presented in Section \ref{subsec:sampling_along_tilting}.

\subsection{Derivation of Algorithm A: Weighted CFM} 
For a discrete step $\delta > 0$, the practical tilting update is defined by the density ratio $d\tilde{\pi}_{t+\delta}(x,y) \propto \exp(\delta(a_t(x)+b_t(y))) d\tilde{\pi}_t(x,y)$. By the definition of the conditional expectation, the updated vector field evaluates directly via integration:
\begin{align*}
    \tilde{\beta}_s^{t+\delta}(z) = \mathbb{E}_{\tilde{\pi}_{t+\delta}}[\dot{I}_s \mid I_s = z] 
    &= \frac{\int \dot{I}_s \exp(\delta(a_t(x)+b_t(y))) \delta(I_s - z) d\tilde{\pi}_t(x,y)}{\int \exp(\delta(a_t(x)+b_t(y))) \delta(I_s - z) d\tilde{\pi}_t(x,y)} \\
    &= \frac{\mathbb{E}_{\tilde{\pi}_t}\!\left[ \dot{I}_s \exp(\delta(a_t(X)+b_t(Y))) \mid I_s=z \right]}{\mathbb{E}_{\tilde{\pi}_t}\!\left[ \exp(\delta(a_t(X)+b_t(Y))) \mid I_s=z \right]}.
\end{align*}
Furthermore, since the conditional expectation $\mathbb{E}_{P}[Y \mid X]$ uniquely minimizes the mean squared error $\mathbb{E}_{P}[\|Y - f(X)\|^2]$, we can express the target as a regression objective. Applying the exact same change of measure, where the global normalization constant vanishes in the $\arg\min$:
\begin{align*}
    \tilde{\beta}_s^{t+\delta} 
    &= \arg\min_{\beta} \mathbb{E}_{\tilde{\pi}_{t+\delta}}\!\left[\|\dot{I}_s - \beta(I_s)\|^2\right] \\
    &= \arg\min_{\beta} \mathbb{E}_{\tilde{\pi}_t}\!\left[\exp(\delta(a_t(X)+b_t(Y))) \|\dot{I}_s - \beta(I_s)\|^2\right],
\end{align*}
which rigorously yields the Weighted CFM objective \eqref{eq:weighted_cfm_lsq}.

\subsection{Derivation of Algorithm B: Covariance ODE}
Under continuous tilting, the measure evolves via the differential equation $\partial_t(d\pi_t) = (a_t(x)+b_t(y)) d\pi_t$. We express the exact conditional vector field as the ratio of two marginal integrals:
\begin{equation*}
    \beta_s^t(z) = \frac{\int \dot{I}_s \delta(I_s - z) d\pi_t(x,y)}{\int \delta(I_s - z) d\pi_t(x,y)}.
\end{equation*}
Taking the time derivative $\frac{d}{dt}$ via the quotient rule, the operator $\partial_t$ acts linearly on $d\pi_t$, bringing down the multiplier $(a_t(x)+b_t(y))$:
\begin{align*}
    \frac{d}{dt}\beta_s^t(z) 
    &= \frac{\int \dot{I}_s (a_t(x) + b_t(y)) \delta(I_s - z) d\pi_t}{\int \delta(I_s - z) d\pi_t} - \left( \frac{\int \dot{I}_s \delta(I_s - z) d\pi_t}{\int \delta(I_s - z) d\pi_t} \right) \frac{\int (a_t(x) + b_t(y)) \delta(I_s - z) d\pi_t}{\int \delta(I_s - z) d\pi_t} \\
    &= \mathbb{E}_{\pi_t}\!\left[\dot{I}_s (a_t(X) + b_t(Y)) \mid I_s=z\right] - \mathbb{E}_{\pi_t}\!\left[\dot{I}_s \mid I_s=z\right] \mathbb{E}_{\pi_t}\!\left[a_t(X) + b_t(Y) \mid I_s=z\right].
\end{align*}
By the algebraic definition of covariance, this expansion collapses strictly to:
\begin{equation}
\label{eq:beta_cov_update}
    \frac{d}{dt}\beta_s^t(z) = \mathrm{Cov}_{\pi_t}\!\left(\dot{I}_s, a_t(X) + b_t(Y) \mid I_s=z\right),
\end{equation}
recovering the exact covariance evolution in \eqref{eq:beta_cov_update}.

\section{Analytical Properties of the Continuous Tilting Rates}
\label{app:analy_titlting_rate}
With the linear system for the tilting rates $(a_t, b_t)$ established, our next goal is to prove that these rates are both uniformly bounded and time-Lipschitz. We achieve this in two steps: first by bounding the corresponding relative dual potentials via entropic optimal transport theory, and then by proving the strict contraction of the conditional expectation operators.

\subsection{Uniform Boundedness via Relative Potentials}
In this subsection, we rely on the structural assumptions concerning the state space and the initial ground cost. By \eqref{eq:tilt-relation}, the log-form first-order conditions for the reference and target problems yield:
\begin{equation*}
\pi^\star(x,y) = \pi'(x,y) \exp\big(\varphi(x) + \psi(y)\big),
\end{equation*}
where $\varphi, \psi : \mathcal{X} \to \mathbb{R}$ act as the relative dual potentials ensuring the new marginal constraints $\mu^\star$ and $\nu^\star$ are met. From this relative entropy projection perspective, the reliance on an explicit ground cost $c$ is entirely circumvented; the geometry of the transport is fully dictated by the reference coupling $\pi'$.

\begin{lemma}[Uniform Boundedness and Lipschitz Regularity of Relative Potentials]
\label{lem:phi_psi_regularity}
Under Assumptions \ref{ass:compact_X}, \ref{ass:coupling_regularity}, and Lemma \ref{lem:density_bounds}, the relative potentials $\varphi_t$ and $\psi_t$ (under the gauge $\int e^{\psi_t} d\nu_t = 1$) are uniformly bounded and spatially Lipschitz on $\mathcal{X}$. Specifically, there exist uniform constants $C_{\infty}, C_{\mathrm{Lip}} < \infty$ such that for all $t \in [0,1]$:
\[
\|\varphi_t\|_{L^\infty(\mathcal{X})} + \|\psi_t\|_{L^\infty(\mathcal{X})} \le C_{\infty},
\]
and
\[
\mathrm{Lip}(\varphi_t) \le C_{\mathrm{Lip}}, \qquad \mathrm{Lip}(\psi_t) \le C_{\mathrm{Lip}}.
\]
\end{lemma}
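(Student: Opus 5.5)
The plan is to use the Schr\"odinger (Sinkhorn) fixed-point equations satisfied by $(\varphi_t,\psi_t)$. Write $\pi'(x,y)=e^{\ell(x,y)}$, where $\ell:=\log\pi'$ satisfies $\|\ell\|_\infty+\mathrm{Lip}(\ell)\le C_{\mathrm{OT}}$ by Assumption~\ref{ass:coupling_regularity}. Integrating \eqref{eq:schrod-form-path} in $y$ and in $x$ gives the marginal constraints
\begin{equation*}
e^{\varphi_t(x)}=\frac{\mu_t(x)}{\int_{\mathcal X} e^{\ell(x,y)+\psi_t(y)}\,dy},\qquad
e^{\psi_t(y)}=\frac{\nu_t(y)}{\int_{\mathcal X} e^{\ell(x,y)+\varphi_t(x)}\,dx},
\end{equation*}
which hold pointwise after choosing continuous versions. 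Any normalizing constant can be absorbed into $\varphi_t$, and the gauge $\int e^{\psi_t}\,d\nu_t=1$ then pins down the remaining additive constant. All estimates below come from these two identities together with the two-sided density bounds of Lemma~\ref{lem:density_bounds}.

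\textbf{Step 1 (oscillation bounds).} In the first identity, the denominator ratio between two points satisfies $\int e^{\ell(x,y)+\psi_t}\,dy\big/\int e^{\ell(x',y)+\psi_t}\,dy\in[e^{-2\|\ell\|_\infty},e^{2\|\ell\|_\infty}]$. Hence
\[
\operatorname{osc}\varphi_t\le \log\frac{C_\mu^{\max}}{C_\mu^{\min}}+2C_{\mathrm{OT}}.
\]
The symmetric argument gives the same kind of bound on $\operatorname{osc}\psi_t$.

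\textbf{Step 2 (fixing the levels).} The gauge $\int e^{\psi_t}\,d\nu_t=1$ forces $\psi_t$ to vanish somewhere, since $\nu_t$ is a probability measure. Combined with the oscillation bound, this gives $\|\psi_t\|_\infty\le\operatorname{osc}\psi_t$. For $\varphi_t$, plug the bound on $\psi_t$ into the first identity. Then $\int e^{\ell+\psi_t}\,dy\in\big[e^{-C_{\mathrm{OT}}-\|\psi_t\|_\infty}|\mathcal X|,\;e^{C_{\mathrm{OT}}+\|\psi_t\|_\infty}|\mathcal X|\big]$, where $|\mathcal X|$ is finite and positive by Assumption~\ref{ass:compact_X}, assuming $\mathcal X$ has nonempty interior. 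Dividing $\mu_t\in[C_\mu^{\min},C_\mu^{\max}]$ by this quantity gives the uniform bound on $\|\varphi_t\|_\infty$. Every constant depends only on $C_{\mathrm{OT}}$, $|\mathcal X|$ and the density bounds, so the estimate is uniform in $t$.

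\textbf{Step 3 (Lipschitz bounds).} Take logarithms in the first identity:
\[
\varphi_t(x)=\log\mu_t(x)-\log\int e^{\ell(x,y)+\psi_t(y)}\,dy.
\]
\begin{itemize}
\item The first term is Lipschitz with constant $C_{\mathrm{score}}$, by the score bound of Lemma~\ref{lem:density_bounds}; convexity of $\mathcal X$ lets us integrate the gradient along segments.
\item For the second term, a log-integral of exponentials with integrand Lipschitz in $x$: since $e^{\ell(x,y)}\le e^{\ell(x',y)}e^{C_{\mathrm{OT}}|x-x'|}$, one gets $\big|\log\int e^{\ell(x,\cdot)+\psi_t}-\log\int e^{\ell(x',\cdot)+\psi_t}\big|\le C_{\mathrm{OT}}|x-x'|$. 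This holds independently of any regularity of $\psi_t$.
\end{itemize}
Hence $\mathrm{Lip}(\varphi_t)\le C_{\mathrm{score}}+C_{\mathrm{OT}}$. The same argument applies to $\psi_t$, using the score bound for $\nu_t$, which Lemma~\ref{lem:density_bounds} also provides.

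\textbf{Main obstacle.} The delicate point is not the estimates but \emph{existence and pointwise meaning} of the representation \eqref{eq:schrod-form-path} for every $t$. Specifically, one must know that $\pi_t$ has a density of the product-tilt form with measurable potentials that can be chosen so the marginal identities hold everywhere, not just a.e. I would argue this first via existence of Schr\"odinger potentials for the bounded kernel $e^{\ell}$ between marginals with bounded positive densities; this is standard (Csisz\'ar / Sinkhorn fixed point on a compact space). Then I would redefine $\varphi_t,\psi_t$ by the right-hand sides of the identities, which yields continuous versions. A secondary care point is that the gauge is imposed on $\psi_t$ only, so the order of Step 2 matters: bound $\psi_t$ first, then $\varphi_t$.
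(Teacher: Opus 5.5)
Your proof is correct and follows the paper's route: you marginalize the tilt representation to get the Sinkhorn-type fixed-point identities, then bound $\mathrm{Lip}(\varphi_t)$ by $\mathrm{Lip}(\log\mu_t)+\mathrm{Lip}_{\mathcal X}(\log\pi')$ via the Lipschitz estimate for the log-integral-exp term (and symmetrically for $\psi_t$). The only difference is that you derive the uniform $L^\infty$ bound explicitly, via oscillation bounds plus the gauge $\int e^{\psi_t}\,d\nu_t=1$ (bounding $\psi_t$ before $\varphi_t$), and you handle the choice of continuous versions, whereas the paper simply cites Lemma 4.9 of Nutz's notes for the $L^\infty$ bound.
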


\begin{proof}
Fix $t \in [0,1]$. Marginalizing the relation $\pi_t(x,y) = \pi'(x,y) e^{\varphi_t(x) + \psi_t(y)}$ yields the fixed-point equations:
\begin{equation}
\label{eq:phipsi_sinkhorn}
\begin{aligned}
\varphi_t(x) &= \log \mu_t(x) - \log \int_{\mathcal{X}} \pi'(x,y) e^{\psi_t(y)} dy, \\
\psi_t(y) &= \log \nu_t(y) - \log \int_{\mathcal{X}} \pi'(x,y) e^{\varphi_t(x)} dx.
\end{aligned}
\end{equation}

By Lemma \ref{lem:density_bounds}, $\log \mu_t$ and $\log \nu_t$ are uniformly bounded. By Assumption \ref{ass:coupling_regularity}, the effective kernel $-\log \pi'$ is bounded. Viewing \eqref{eq:phipsi_sinkhorn} as standard EOT dual equations, these bounds directly guarantee uniform $L^\infty$ bounds for $\varphi_t$ and $\psi_t$ (e.g., Lemma 4.9 in \cite{nutz2021introduction}), establishing $C_{\infty}$.

For spatial regularity, differencing \eqref{eq:phipsi_sinkhorn} for any $x_1, x_2 \in \mathcal{X}$ gives:
\begin{align*}
|\varphi_t(x_1) - \varphi_t(x_2)| &\le |\log \mu_t(x_1) - \log \mu_t(x_2)| + \left| \log \frac{\int \pi'(x_1,y) e^{\psi_t(y)} dy}{\int \pi'(x_2,y) e^{\psi_t(y)} dy} \right|.
\end{align*}
By the 1-Lipschitz property of the Log-Sum-Exp operator, the second term is bounded by the term $\sup_y |\log \pi'(x_1,y) - \log \pi'(x_2,y)|$. 
Hence:
\[
\mathrm{Lip}(\varphi_t) \le \mathrm{Lip}(\log \mu_t) + \mathrm{Lip}_{\mathcal{X}}(\log \pi').
\]
From Lemma \ref{lem:density_bounds}, $\sup_{t \in [0,1]} \mathrm{Lip}(\log \mu_t) < \infty$ due to bounded scores. From Assumption \ref{ass:coupling_regularity}, $\mathrm{Lip}_{\mathcal{X}}(\log \pi') \le C_\mathrm{OT}$. Summing these provides the uniform Lipschitz constant $C_{\mathrm{Lip}}$ for $\varphi_t$. A symmetric argument applies to $\psi_t$.
\end{proof}


\begin{proposition}[Time-Lipschitz Regularity of Relative Potentials]
\label{prop:pot_Lip_time_Linfty}
Under Assumptions \ref{ass:compact_X} to \ref{ass:initial_regularity}, for each $t \in [0,1]$, let $(\varphi_t, \psi_t)$ be the unique relative dual potentials for the target coupling $\pi'$ satisfying the marginal constraints $\mu_t, \nu_t$, under the gauge $\int e^{\psi_t} \, d\nu_t = 1$. Then, there exists a uniform constant $L_{\mathrm{pot}} < \infty$ such that for all $r, t \in [0,1]$:
\begin{equation*}
\|\varphi_r - \varphi_t\|_{L^\infty(\mathcal{X})} + \|\psi_r - \psi_t\|_{L^\infty(\mathcal{X})} \le L_{\mathrm{pot}} |r - t|.
\end{equation*}
The constant $L_{\mathrm{pot}}$ depends only on the target coupling regularity $C_\mathrm{OT}$ and the flow regularity constants $C_\mathrm{uv}, C_{\mu\nu}$.
\end{proposition}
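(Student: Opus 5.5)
I will prove the time-Lipschitz bound by a stability argument for the Schrödinger (Sinkhorn) system \eqref{eq:phipsi_sinkhorn}, viewed as a fixed-point problem with frozen kernel $\pi'$ and only the marginals $(\mu_t,\nu_t)$ varying in $t$. Fix $r,t\in[0,1]$. Subtracting the two instances of \eqref{eq:phipsi_sinkhorn} gives
\[
\varphi_r-\varphi_t=(\log\mu_r-\log\mu_t)-\Big(\log\!\int\pi'(\cdot,y)e^{\psi_r(y)}dy-\log\!\int\pi'(\cdot,y)e^{\psi_t(y)}dy\Big),
\]
and symmetrically for $\psi$. By Lemma~\ref{lem:temporal_marginal_regularity}, the first bracket is bounded in sup norm by $L_{\log}|r-t|$. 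The whole difficulty is therefore to control the second, Log-Sum-Exp, bracket by something strictly smaller than $\|\psi_r-\psi_t\|_\infty$. The naive 1-Lipschitz bound only gives a non-closing inequality.

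The key step is a Hilbert-metric (Birkhoff) contraction for the Sinkhorn maps. By Assumption~\ref{ass:coupling_regularity}, $e^{-C_\mathrm{OT}}\le\pi'\le e^{C_\mathrm{OT}}$, and $\mathcal X$ is compact with finite Lebesgue volume. Hence the positive kernel $K=\pi'$ has finite projective diameter, of order $4C_\mathrm{OT}$. Birkhoff's theorem then says that $h\mapsto \log\int K(\cdot,y)e^{h(y)}dy$ contracts the oscillation seminorm $\mathrm{osc}(h)=\sup h-\inf h$ by a factor $\kappa=\tanh(C_\mathrm{OT})<1$. This is exactly the uniform Doeblin minorization mentioned after Assumption~\ref{ass:initial_regularity}. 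Writing $\Delta\varphi=\varphi_r-\varphi_t$ and $\Delta\psi=\psi_r-\psi_t$, I obtain
\[
\mathrm{osc}(\Delta\varphi)\le 2L_{\log}|r-t|+\kappa\,\mathrm{osc}(\Delta\psi),
\qquad
\mathrm{osc}(\Delta\psi)\le 2L_{\log}|r-t|+\kappa\,\mathrm{osc}(\Delta\varphi).
\]
Combining the two yields $\mathrm{osc}(\Delta\varphi)+\mathrm{osc}(\Delta\psi)\le \frac{4L_{\log}}{1-\kappa}|r-t|$.

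It remains to pass from oscillation to sup norm, which is where the gauge $\int e^{\psi_t}d\nu_t=1$ enters. Since $\int e^{\psi_r}d\nu_r=\int e^{\psi_t}d\nu_t=1$ and $\nu_r=\nu_t e^{O(L_{\log}|r-t|)}$ pointwise, the function $\Delta\psi$ cannot have a constant offset larger than $L_{\log}|r-t|$. Concretely, $\inf\Delta\psi\le L_{\log}|r-t|$ and $\sup\Delta\psi\ge -L_{\log}|r-t|$, so
\[
\|\Delta\psi\|_\infty\le \mathrm{osc}(\Delta\psi)+L_{\log}|r-t|.
\]
For $\Delta\varphi$, the normalization $\int\pi_r=\int\pi_t=1$ (equivalently, $\int e^{\varphi}\,(\int\pi'e^{\psi}dy)\,dx=1$) gives the same type of offset control in terms of $\|\Delta\psi\|_\infty$. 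Alternatively, one can plug directly into the first displayed identity using the plain 1-Lipschitz bound: $\|\Delta\varphi\|_\infty\le L_{\log}|r-t|+\|\Delta\psi\|_\infty$. Summing gives $L_{\mathrm{pot}}$ depending only on $C_\mathrm{OT}$, via $\kappa$, and on $C_\mathrm{uv},C_{\mu\nu}$, via $L_{\log}$ and the score bound of Lemma~\ref{lem:density_bounds}.

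The main obstacle is making the Birkhoff contraction rigorous in the continuous setting with the right constant. I would first try to avoid projective-metric machinery with an elementary Dobrushin argument. For two functions $h,h'$, the difference of the log-integrals at $x_1$ and $x_2$ equals $\log\mathbb E_{Q_{x_1}}[e^{h-h'}]-\log\mathbb E_{Q_{x_2}}[e^{h-h'}]$ for suitable tilted probability kernels $Q_x$. The density bounds on $\pi'$, together with the uniform $L^\infty$ bound $C_\infty$ on $\psi_t$ from Lemma~\ref{lem:phi_psi_regularity}, give a minorization $Q_x\ge \alpha\,\lambda$ with $\alpha=e^{-2C_\mathrm{OT}-2C_\infty}|\mathcal X|^{-1}\cdot|\mathcal X|>0$. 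This yields contraction factor $1-\alpha$ in oscillation after a mean-value linearization along $h_\theta=\theta h+(1-\theta)h'$. That linearized version is enough for the argument, since all intermediate potentials stay uniformly bounded.
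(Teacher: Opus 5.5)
Your proof is correct, and its skeleton matches the paper's. Both treat \eqref{eq:phipsi_sinkhorn} as a fixed-point problem with frozen kernel $\pi'$. Both feed in the marginal perturbation $L_{\log}|r-t|$ from Lemma~\ref{lem:temporal_marginal_regularity}. Both close with a Birkhoff contraction whose rate depends only on the dynamic range of $\pi'$.

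The real difference is the norm in which the contraction is applied.
\begin{itemize}
\item The paper composes the two half-steps into $\mathcal{O}_t$ and asserts $\|\mathcal{O}_r(\psi_r)-\mathcal{O}_r(\psi_t)\|_\infty\le\rho\|\psi_r-\psi_t\|_\infty$.
\item You contract the oscillation seminorm half-step by half-step. You then use the gauge $\int e^{\psi_t}\,d\nu_t=1$ to control the constant offset of $\psi_r-\psi_t$, and recover $\varphi$ from the plain 1-Lipschitz bound.
\end{itemize}

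Your version is the rigorous one. Since $\mathcal{O}_t(\psi+c)=\mathcal{O}_t(\psi)+c$, the map $\mathcal{O}_t$ is not a strict contraction in $\|\cdot\|_\infty$; its fixed points form the line $\{\psi_t+c\}$. Birkhoff's theorem only controls the Hilbert metric $d_H(e^{f},e^{g})=\mathrm{osc}(f-g)$. The paper's sup-norm step, and its appeal to symmetry for $\varphi$, therefore silently need exactly your offset argument. That argument is also the only place where the stated gauge does any work.

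The paper's presentation buys brevity. Yours buys a complete proof with an explicit constant of order $L_{\log}/(1-\tanh C_{\mathrm{OT}})$. Your per-half-step factor $\tanh(C_{\mathrm{OT}})$, coming from a projective diameter of at most $4C_{\mathrm{OT}}$, is also the correctly derived one. The paper's $\rho$ inserts the range of $\log\pi'$ into Birkhoff's formula where the log cross-ratio belongs, and that quantity can be twice as large.

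Your Dobrushin fallback also works; the minorization should simply read $Q_x\ge e^{-2C_{\mathrm{OT}}-2C_\infty}\,dy/|\mathcal X|$. You can even drop $C_\infty$ entirely. For any $h$ one has $dQ_{x_1}/dQ_{x_2}\ge e^{-4C_{\mathrm{OT}}}$, so $\|Q_{x_1}-Q_{x_2}\|_{\mathrm{TV}}\le 1-e^{-4C_{\mathrm{OT}}}$ uniformly along the interpolation $h_\theta$. The linearized contraction then needs no a priori bound on the potentials.
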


\begin{proof}
The relative dual potentials $(\varphi_t, \psi_t)$ satisfy the coupled Sinkhorn fixed-point equations as \eqref{eq:phipsi_sinkhorn}. We give the proof in two steps.

\noindent\textbf{Stability of the relative Sinkhorn operator:}  By Lemma \ref{lem:temporal_marginal_regularity}, the marginal log-densities are Lipschitz continuous in time. Hence, there exists a uniform constant $L_{\log}$ such that for all $r, t \in [0,1]$:
\begin{equation*}
\label{eq:marg_stability_ref}
\|\log \mu_r - \log \mu_t\|_\infty \le L_{\log} |r - t|, \quad \|\log \nu_r - \log \nu_t\|_\infty \le L_{\log} |r - t|.
\end{equation*}
Let $\mathcal{O}_t$ be the composite operator such that $\psi_t = \mathcal{O}_t(\psi_t)$ defined using Sinkhorn's update \eqref{eq:phipsi_sinkhorn} for a fixed potential $\psi$:
\begin{equation*}
\mathcal{O}_t(\psi)(y) := \log \nu_t(y) - \log \int_{\mathcal{X}} \pi'(x,y) \exp \left( \log \mu_t(x) - \log \int_{\mathcal{X}} \pi'(x,z) e^{\psi(z)} dz \right) dx.
\end{equation*}
To quantify the temporal sensitivity of the operator, define the intermediate function $h(x, t) := \log \mu_t(x) - \log \int \pi'(x,z) e^{\psi(z)} dz$. For any fixed $\psi$, the difference between operators at times $r$ and $t$ satisfies:
\begin{align*}
|\mathcal{O}_r(\psi)(y) - \mathcal{O}_t(\psi)(y)| &\le |\log \nu_r(y) - \log \nu_t(y)| + \left| \log \frac{ \int \pi'(x,y) e^{h(x, r)} dx }{ \int \pi'(x,y) e^{h(x, t)} dx } \right|.
\end{align*}
By the 1-Lipschitz property of the Log-Sum-Exp operator, the second term is bounded by $\|h(\cdot, r) - h(\cdot, t)\|_\infty$. Since $\psi$ is fixed, the integral components in $h$ cancel out, leaving:
\[
\|h(\cdot, r) - h(\cdot, t)\|_\infty = \|\log \mu_r - \log \mu_t\|_\infty.
\]
Combining this with Step 1, the operator perturbation is strictly bounded by:
\begin{equation}
\label{eq:operator_perturbation}
\|\mathcal{O}_r(\psi) - \mathcal{O}_t(\psi)\|_\infty \le 2 L_{\log} |r - t|.
\end{equation}

\noindent\textbf{Uniform contraction and fixed-point sensitivity:} The operator $\mathcal{O}_t$ acts on the space of bounded functions. Under the Hilbert projective metric, the marginals $\mu_t, \nu_t$ act purely as pointwise translations, leaving the contraction rate unaffected. By Birkhoff's Theorem, the contraction ratio $\rho$ of $\mathcal{O}_t$ is determined \textit{exclusively} by the dynamic range of the strictly positive kernel $\pi'(x,y)$:
\begin{equation*}
\rho = \left( \frac{\sqrt{\Delta} - 1}{\sqrt{\Delta} + 1} \right)^2, \quad \text{where } \Delta = \exp\left( \sup_{x,y} \log \pi'(x,y) - \inf_{x,y} \log \pi'(x,y) \right).
\end{equation*}
By Assumption \ref{ass:coupling_regularity}, $\|\log \pi'\|_\infty \le C_\mathrm{OT}$, ensuring that $\Delta < \infty$. Consequently, $\rho < 1$ is a uniform constant for all $t \in [0,1]$. 

Let $\psi_r$ and $\psi_t$ be the fixed points of $\mathcal{O}_r$ and $\mathcal{O}_t$. By \eqref{eq:operator_perturbation}, utilizing the triangle inequality and the uniform contraction property:
\begin{align*}
\|\psi_r - \psi_t\|_\infty &\le \|\mathcal{O}_r(\psi_r) - \mathcal{O}_r(\psi_t)\|_\infty + \|\mathcal{O}_r(\psi_t) - \mathcal{O}_t(\psi_t)\|_\infty \\
&\le \rho \|\psi_r - \psi_t\|_\infty + 2 L_{\log} |r - t|.
\end{align*}
Rearranging yields $\|\psi_r - \psi_t\|_\infty \le \frac{2 L_{\log}}{1 - \rho} |r - t|$. By symmetry, the analogous bound holds for $\|\varphi_r - \varphi_t\|_\infty$. Since each individual potential is Lipschitz with constant $\frac{2L_{\log}}{1-\rho}$, their sum is bounded by $L_{\mathrm{pot}} := \frac{4 L_{\log}}{1 - \rho}$ (twice the individual constant) which completes the proof.
\end{proof}

\begin{proposition}[Uniform $L^\infty$ Boundedness of Infinitesimal Tilting Rates]
\label{prop:ab_uniform_Linfty}
Under the conditions of Proposition \ref{prop:pot_Lip_time_Linfty}, let $(a_t, b_t)$ be the infinitesimal tilting rates defined in \eqref{eq:def-a-b-path}. There exists a uniform constant $C_{ab}^\infty < \infty$, depending only on the regularity constants $C_\mathrm{OT}, C_\mathrm{uv}$, and $C_{\mu\nu}$, such that:
\begin{equation*}
\operatorname*{ess\,sup}_{t\in[0,1]} \|a_t(x) + b_t(y)\|_{L^\infty(\mathcal{X} \times \mathcal{X})} \le \operatorname*{ess\,sup}_{t\in[0,1]} \Big( \|a_t\|_{L^\infty(\mathcal{X})} + \|b_t\|_{L^\infty(\mathcal{X})} \Big) \le C_{ab}^\infty.
\end{equation*}
\end{proposition}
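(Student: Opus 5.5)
The bound follows directly from Proposition~\ref{prop:pot_Lip_time_Linfty}. By definition \eqref{eq:def-a-b-path}, $a_t=\dot\varphi_t$ and $b_t=\dot\psi_t$. The potentials are taken under the fixed gauge $\int e^{\psi_t}\,d\nu_t=1$; this removes the constant shift $(a,b)\mapsto(a+c,b-c)$ of Proposition~\ref{prop:energy_wellposed}, so $a_t$ and $b_t$ are genuine time derivatives of well-defined functions.

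\textbf{Step 1: Lipschitz implies bounded derivative.} Proposition~\ref{prop:pot_Lip_time_Linfty} shows that $t\mapsto\varphi_t$ and $t\mapsto\psi_t$ are Lipschitz as maps into $L^\infty(\mathcal X)$, with
\[
\|\varphi_r-\varphi_t\|_\infty+\|\psi_r-\psi_t\|_\infty\le L_{\mathrm{pot}}|r-t| .
\]
For each fixed $x$, the scalar map $t\mapsto\varphi_t(x)$ is therefore Lipschitz with constant $L_{\mathrm{pot}}$. By Rademacher's theorem it is differentiable for a.e.\ $t$, and $|\dot\varphi_t(x)|\le L_{\mathrm{pot}}$ wherever the derivative exists. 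The same holds for $\psi$.

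\textbf{Step 2: pass from a pointwise statement to a uniform one.} The exceptional null set of times in Step 1 may depend on $x$. To get a single null set, view $a_t$ as the weak-$*$ limit of the difference quotients $(\varphi_{t+h}-\varphi_t)/h$. These quotients are uniformly bounded in $L^\infty(\mathcal X)$ by $L_{\mathrm{pot}}$, so any limit point has norm at most $L_{\mathrm{pot}}$ by lower semicontinuity of the norm. Alternatively, write
\[
\varphi_r-\varphi_t=\int_t^r a_\tau\,d\tau
\]
as a Bochner integral in $L^1(\mathcal X)$ and differentiate. Either route gives
\[
\operatorname*{ess\,sup}_t\|a_t\|_\infty\le L_{\mathrm{pot}} \quad\text{and}\quad \operatorname*{ess\,sup}_t\|b_t\|_\infty\le L_{\mathrm{pot}} .
\]
The sharper bound $\operatorname*{ess\,sup}_t(\|a_t\|_\infty+\|b_t\|_\infty)\le L_{\mathrm{pot}}$ also holds, because Proposition~\ref{prop:pot_Lip_time_Linfty} controls the two differences jointly.

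\textbf{Step 3: conclude.} Set $C_{ab}^\infty:=L_{\mathrm{pot}}$. The first inequality in the statement is the triangle inequality $|a_t(x)+b_t(y)|\le\|a_t\|_\infty+\|b_t\|_\infty$. The dependence only on $C_{\mathrm{OT}},C_{\mathrm{uv}},C_{\mu\nu}$ is inherited from $L_{\mathrm{pot}}=4L_{\log}/(1-\rho)$.

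\textbf{Main obstacle.} The delicate point is Step 2, namely identifying the rates defined pointwise in \eqref{eq:def-a-b-path} with an $L^\infty$-valued derivative while keeping the gauge consistent.

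If one wants a derivation independent of Proposition~\ref{prop:pot_Lip_time_Linfty}, a backup route is to bound $(a_t,b_t)$ directly from the linear system \eqref{eq:app_linear_system_ab_TS}. Set the right-hand sides of that system to
\[
\zeta_t=-\mathrm{div}\,u_t-u_t\cdot\nabla\log\mu_t ,
\]
and likewise $\eta_t$. These are bounded by $C_{\mathrm{uv}}(1+C_{\mathrm{score}})$ via Lemma~\ref{lem:density_bounds}. Eliminating $b$ from the system gives $(I-T_tS_t)a=\zeta_t-T_t\eta_t$. Bounding $a$ from this equation requires a gap for $T_tS_t$ on mean-zero functions. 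That gap comes from a Doeblin minorization of $\pi_t$, whose density is bounded above and below via Assumption~\ref{ass:coupling_regularity} and Lemma~\ref{lem:phi_psi_regularity}. I would use this route only if the measurability argument in Step 2 proves troublesome.
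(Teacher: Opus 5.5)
Your proposal is correct and follows essentially the same route as the paper. Both apply Rademacher's theorem to the scalar Lipschitz maps $t\mapsto\varphi_t(x)$ and $t\mapsto\psi_t(x)$ from Proposition~\ref{prop:pot_Lip_time_Linfty}, set $C_{ab}^\infty:=L_{\mathrm{pot}}$, and finish with the triangle inequality; the paper assigns $L_{\mathrm{pot}}/2$ to each potential where you reach the same total via lower semicontinuity of the joint bound, and your Step~2 makes explicit the null-set issue the paper passes over, most simply settled by Fubini on the measurable set of $(t,x)$ where $\partial_t\varphi_t(x)$ exists, since $(t,x)\mapsto\varphi_t(x)$ is jointly continuous.
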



\begin{proof}
By Proposition \ref{prop:pot_Lip_time_Linfty}, each potential $\varphi_t$ (resp.\ $\psi_t$) is time-Lipschitz with individual constant $L_{\mathrm{pot}}/2$. Therefore, by Rademacher's theorem \citep[Theorem~3.1.2]{evans2015measure}, their time derivatives $a_t = \dot\varphi_t$ and $b_t = \dot\psi_t$ exist for a.e.\ $t \in [0,1]$ (applied pointwise in $x$ to the scalar Lipschitz function $t\mapsto\varphi_t(x)$) and are essentially bounded:
\[
\|a_t\|_{L^\infty(\mathcal{X})} \le \tfrac{L_{\mathrm{pot}}}{2} \quad \text{and} \quad \|b_t\|_{L^\infty(\mathcal{X})} \le \tfrac{L_{\mathrm{pot}}}{2}.
\]
Setting $C_{ab}^\infty := L_{\mathrm{pot}}$ bounds the sum of their norms. The complete chain of inequalities then follows directly from the triangle inequality $\|a_t(x) + b_t(y)\|_\infty \le \|a_t\|_\infty + \|b_t\|_\infty \le C_{ab}^\infty$.
\end{proof}

\subsection{Spatiotemporal Stability and Doeblin Minorization}
\label{app:stab_bound_titling_rate}
The uniform boundedness of the tilting rates guarantees that the joint coupling $\pi_t$ remains bounded away from zero. We formalize this via a uniform Doeblin minorization condition, which acts as the key mechanism to prove the Lipschitz stability of the tilting operators.

Recall from \eqref{eq:tilt-relation} that the optimal coupling $\pi_t$ admits the joint density $\pi_t(x,y) = \pi'(x,y) \exp(\varphi_t(x) + \psi_t(y))$. We directly define its associated conditional densities as:
\begin{equation}
\label{eq:cond_pi_t}
\pi_t(y \mid x) = \frac{\pi_t(x,y)}{\mu_t(x)}, \quad \text{and} \quad \pi_t(x \mid y) = \frac{\pi_t(x,y)}{\nu_t(y)}.
\end{equation}

\begin{lemma}[Uniform Doeblin minorization]
\label{lem:doeblin_from_bounds}
Under Assumptions \ref{ass:compact_X} to \ref{ass:initial_regularity}, let $C_\infty$ be the uniform $L^\infty$ bound on the relative potentials from Lemma \ref{lem:phi_psi_regularity}. There exists a constant $\alpha \in (0, 1]$, depending only on the previously defined regularity constants, such that for all $t \in [0,1]$:
\[
\pi_t(y \mid x) \ge \alpha \nu_t(y) \quad \text{and} \quad \pi_t(x \mid y) \ge \alpha \mu_t(x),
\]
uniformly for all $(x, y) \in \mathcal{X} \times \mathcal{X}$. The minorization constant is explicitly given by $\alpha := \exp(-C_\mathrm{OT} - 2C_\infty) / (C_{\mu}^{\max} C_{\nu}^{\max})$.
\end{lemma}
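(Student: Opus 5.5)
The plan is to bound the joint density from below pointwise and divide out the marginals. Recall from \eqref{eq:tilt-relation} and \eqref{eq:cond_pi_t} that
\[
\pi_t(y\mid x)=\frac{\pi'(x,y)\,e^{\varphi_t(x)+\psi_t(y)}}{\mu_t(x)} .
\]
The target inequality $\pi_t(y\mid x)\ge\alpha\,\nu_t(y)$ is therefore equivalent to
\[
\pi'(x,y)\,e^{\varphi_t(x)+\psi_t(y)}\;\ge\;\alpha\,\mu_t(x)\,\nu_t(y)
\]
for all $(x,y)$. It suffices to bound the left-hand side from below by a constant and the right-hand side from above by a constant.

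\textbf{Lower bound on the joint density.} By Assumption~\ref{ass:coupling_regularity}, $\|\log\pi'\|_\infty\le C_\mathrm{OT}$, so $\pi'(x,y)\ge e^{-C_\mathrm{OT}}$. By Lemma~\ref{lem:phi_psi_regularity}, under the gauge $\int e^{\psi_t}\,d\nu_t=1$ we have $\|\varphi_t\|_\infty+\|\psi_t\|_\infty\le C_\infty$ uniformly in $t$, and hence $\varphi_t(x)+\psi_t(y)\ge -C_\infty$. This already gives
\[
\pi_t(x,y)\ge e^{-C_\mathrm{OT}-C_\infty}\ge e^{-C_\mathrm{OT}-2C_\infty}.
\]
I will keep the weaker exponent $-2C_\infty$ so as to match the stated constant; this slack also covers the reading in which each potential is separately bounded by $C_\infty$.

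\textbf{Upper bound on the marginals.} By Lemma~\ref{lem:density_bounds}, $\mu_t(x)\le C_\mu^{\max}$ and $\nu_t(y)\le C_\nu^{\max}$ for all $t$. Combining the two steps gives
\[
\pi_t(x,y)\ge \frac{e^{-C_\mathrm{OT}-2C_\infty}}{C_\mu^{\max}C_\nu^{\max}}\,\mu_t(x)\,\nu_t(y)=\alpha\,\mu_t(x)\,\nu_t(y).
\]
Dividing by $\mu_t(x)>0$ yields $\pi_t(y\mid x)\ge\alpha\,\nu_t(y)$, and dividing instead by $\nu_t(y)>0$ yields $\pi_t(x\mid y)\ge\alpha\,\mu_t(x)$. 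Both bounds are uniform in $t$ because every constant involved is.

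\textbf{Why $\alpha\le 1$.} This follows by integration: integrating $\pi_t(y\mid x)\ge\alpha\,\nu_t(y)$ over $y$ gives $1\ge\alpha$, since both sides are probability densities in $y$.

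\textbf{Main obstacle.} The only delicate point is the gauge. The relation $\pi_t=\pi' e^{\varphi_t+\psi_t}$ is invariant under $(\varphi_t,\psi_t)\mapsto(\varphi_t+c,\psi_t-c)$, so the $L^\infty$ bounds must be taken in the fixed gauge of Lemma~\ref{lem:phi_psi_regularity}. I will check that this gauge is the one under which \eqref{eq:tilt-relation} holds with no extra normalizing constant. If a normalizing constant $Z_t$ does appear, the plan is to bound it using the same $L^\infty$ bounds, via
\[
\int \pi' e^{\varphi_t+\psi_t}\in\bigl[e^{-C_\mathrm{OT}-C_\infty}|\mathcal X|^2,\;e^{C_\mathrm{OT}+C_\infty}|\mathcal X|^2\bigr],
\]
and to absorb it into $\alpha$. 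Everything else is direct substitution.
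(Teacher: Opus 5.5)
Your proposal is correct and follows the paper's proof essentially step for step. You bound $\pi' e^{\varphi_t+\psi_t}$ from below using $C_\mathrm{OT}$ and $C_\infty$, bound $\mu_t\nu_t$ from above by $C_\mu^{\max}C_\nu^{\max}$, and divide by the relevant marginal. Your integration argument for $\alpha\le 1$ fills in a point the paper leaves implicit, and the normalizing-constant contingency is unnecessary, since \eqref{eq:schrod-form-path} is an exact equality and the fixed-point equations \eqref{eq:phipsi_sinkhorn} are derived from it.
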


\begin{proof}
The Doeblin minorization condition is equivalent to uniformly bounding the density ratio $\pi_t(y \mid x) / \nu_t(y)$ away from zero. Using Bayes' theorem, this ratio is symmetric and exactly corresponds to the point-wise mutual information ratio of the joint coupling with \eqref{eq:cond_pi_t}:
\begin{equation}
\label{eq:pmi_ratio}
\frac{\pi_t(y \mid x)}{\nu_t(y)} = \frac{\pi_t(x,y)}{\mu_t(x)\nu_t(y)} = \frac{\pi'(x,y) \exp(\varphi_t(x) + \psi_t(y))}{\mu_t(x)\nu_t(y)}.
\end{equation}
We can bound the numerator from below and the denominator from above using our established uniform bounds:
By Assumption \ref{ass:coupling_regularity}, $\|\log \pi'\|_\infty \le C_\mathrm{OT}$, implying $\pi'(x,y) \ge e^{-C_\mathrm{OT}}$.
By Lemma \ref{lem:phi_psi_regularity}, $\varphi_t(x) \ge -C_\infty$ and $\psi_t(y) \ge -C_\infty$.
By Lemma \ref{lem:density_bounds}, the marginal densities are strictly bounded above by constants $C_{\mu}^{\max}$ and $C_{\nu}^{\max}$.
Substituting these bounds into \eqref{eq:pmi_ratio} directly yields the uniform lower bound:
\[
\frac{\pi_t(y \mid x)}{\nu_t(y)} \ge \frac{\exp(-C_\mathrm{OT}) \cdot \exp(-2C_\infty)}{C_{\mu}^{\max} C_{\nu}^{\max}} =: \alpha.
\]
By the symmetric structure of the joint density in \eqref{eq:pmi_ratio}, the exact same lower bound applies to the inverse conditional ratio $\pi_t(x \mid y) / \mu_t(x)$, completing the proof.
\end{proof}

This uniform minorization property ensures that the conditional expectation operators contract strictly on the subspace of centered functions. This is a crucial requirement for the stability of the linear system governing the tilting rates.

\begin{lemma}[$L^\infty$ Contraction of Conditional Operators]
\label{lem:Doeblin_centered_Linfty}
Let $S_t$ and $T_t$ denote the conditional expectation operators defined by $(T_t f)(x) := \int_{\mathcal{X}} f(y) \pi_t(y \mid x) dy$ and $(S_t g)(y) := \int_{\mathcal{X}} g(x) \pi_t(x \mid y) dx$. Let $\alpha \in (0,1]$ be the uniform minorization constant from Lemma \ref{lem:doeblin_from_bounds}. For all $t \in [0,1]$, the following hold:
\begin{enumerate}[label=(\roman*)]
    \item Non-expansion: The operators are non-expansive on $L^\infty(\mathcal{X})$:
    \[
    \|T_t f\|_\infty \le \|f\|_\infty, \qquad \|S_t f\|_\infty \le \|f\|_\infty, \quad \forall f \in L^\infty(\mathcal{X}).
    \]
    \item Contraction on centered functions: The operators strictly contract functions that are centered with respect to the target marginals. Specifically, for any $b \in L^\infty(\mathcal{X})$ with $\int_{\mathcal{X}} b(y) \nu_t(y) dy = 0$:
    \[
    \|T_t b\|_\infty \le (1-\alpha)\|b\|_\infty.
    \]
    (A symmetric bound holds for $S_t$ acting on $\mu_t$-centered functions).
    \item Composite contraction: For any $\nu_t$-centered $b \in L^\infty(\mathcal{X})$, the composite operator acts as a strict contraction:
    \[
    \|S_t T_t b\|_\infty \le (1-\alpha)\|b\|_\infty.
    \]
\end{enumerate}
\end{lemma}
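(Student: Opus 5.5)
The plan is to derive all three items from the fact that $\pi_t(\cdot\mid x)$ and $\pi_t(\cdot\mid y)$ are probability densities, together with the Doeblin minorization of Lemma~\ref{lem:doeblin_from_bounds}.

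\textbf{Item (i).} This is immediate. Since $\pi_t(\cdot\mid x)\ge 0$ integrates to one (because $\pi_t\in\Pi(\mu_t,\nu_t)$ and $\mu_t>0$ by Lemma~\ref{lem:density_bounds}), we have $|(T_tf)(x)|\le\int|f(y)|\pi_t(y\mid x)\,dy\le\|f\|_\infty$. The same argument applies to $S_t$.

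\textbf{Item (ii).} Use the standard Doeblin splitting. By Lemma~\ref{lem:doeblin_from_bounds}, $\pi_t(y\mid x)-\alpha\nu_t(y)\ge0$. Define the residual kernel
\[
R_x(y):=\frac{\pi_t(y\mid x)-\alpha\nu_t(y)}{1-\alpha},
\]
which is a probability density when $\alpha<1$. Then $\pi_t(\cdot\mid x)=\alpha\nu_t+(1-\alpha)R_x$. For $\nu_t$-centered $b$,
\[
(T_tb)(x)=\alpha\int b\,d\nu_t+(1-\alpha)\int b(y)R_x(y)\,dy=(1-\alpha)\int bR_x\,dy .
\]
Hence $|(T_tb)(x)|\le(1-\alpha)\|b\|_\infty$.

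The edge case $\alpha=1$ needs separate handling. There $\pi_t(y\mid x)\ge\nu_t(y)$ and both sides integrate to one, so the two coincide a.e. and $T_tb=\int b\,d\nu_t=0$. This agrees with the bound. The symmetric statement for $S_t$ uses the minorization $\pi_t(x\mid y)\ge\alpha\mu_t(x)$ in the same way.

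\textbf{Item (iii).} Write $S_tT_tb$ and apply (i) to the outer operator: $\|S_t(T_tb)\|_\infty\le\|T_tb\|_\infty$. Then apply (ii) to $T_tb$ to get $\|T_tb\|_\infty\le(1-\alpha)\|b\|_\infty$. Only non-expansiveness of $S_t$ is needed here, not its centering.

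One could additionally note that $T_tb$ is $\mu_t$-centered, since $\int T_tb\,d\mu_t=\mathbb E_{\pi_t}[b(Y)]=\int b\,d\nu_t=0$. Applying (ii) to $S_t$ as well would give the sharper factor $(1-\alpha)^2$, but the statement only requires $(1-\alpha)$.

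\textbf{Main obstacle.} There is no real difficulty. The only care needed is measure-theoretic: the conditional densities must be well defined everywhere, which holds because $\mu_t,\nu_t>0$. The $\alpha=1$ degenerate case must also be treated as above.
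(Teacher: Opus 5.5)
Your proposal is correct and matches the paper's proof step for step: (i) from the Markov property of the conditional kernels, (ii) from the Doeblin splitting $\pi_t(\cdot\mid x)=\alpha\nu_t+(1-\alpha)r_{t,x}$, and (iii) from non-expansion of $S_t$ composed with (ii). Your separate treatment of $\alpha=1$ (where the paper's residual density, with its division by $1-\alpha$, is undefined) and your remark that the $\mu_t$-centering of $T_tb$ yields the sharper factor $(1-\alpha)^2$ are both valid refinements.
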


\begin{proof}
(i) Since $\pi_t(\cdot \mid x)$ is a valid probability density for any $x \in \mathcal{X}$, $T_t$ acts as a Markov operator. By the triangle inequality for integrals:
\[
|(T_t f)(x)| \le \int_{\mathcal{X}} |f(y)| \pi_t(y \mid x) dy \le \|f\|_\infty \int_{\mathcal{X}} \pi_t(y \mid x) dy = \|f\|_\infty.
\]
Taking the supremum over $x$ yields $\|T_t f\|_\infty \le \|f\|_\infty$. The proof for $S_t$ is identical.

(ii) By Lemma \ref{lem:doeblin_from_bounds}, the conditional density admits the uniform lower bound $\pi_t(y \mid x) \ge \alpha \nu_t(y)$. This allows us to decompose the density as a convex combination:
\[
\pi_t(y \mid x) = \alpha \nu_t(y) + (1-\alpha) r_{t,x}(y),
\]
where $r_{t,x}(y) := \frac{\pi_t(y \mid x) - \alpha \nu_t(y)}{1-\alpha}$ is a valid residual probability density (non-negative and integrating to 1). Applying $T_t$ to a $\nu_t$-centered function $b$:
\[
(T_t b)(x) = \alpha \underbrace{\int_{\mathcal{X}} b(y) \nu_t(y) dy}_{= 0} + (1-\alpha) \int_{\mathcal{X}} b(y) r_{t,x}(y) dy.
\]
Taking the absolute value, we bound the remaining integral:
\[
|(T_t b)(x)| \le (1-\alpha)\|b\|_\infty \int_{\mathcal{X}} r_{t,x}(y) dy = (1-\alpha)\|b\|_\infty.
\]
Since this holds for all $x$, taking the supremum yields $\|T_t b\|_\infty \le (1-\alpha)\|b\|_\infty$.

(iii) Applying the non-expansion of $S_t$ from part (i) sequentially with the contraction of $T_t$ on centered functions from part (ii), we immediately obtain:
\[
\|S_t (T_t b)\|_\infty \le \|T_t b\|_\infty \le (1-\alpha)\|b\|_\infty,
\]
which confirms the strict contraction of the composite operator on the centered subspace.
\end{proof}

\begin{lemma}[Time-Lipschitz Stability of Conditional Operators]
\label{lem:operator_stability_Linfty}
Let $S_t, T_t$ be the conditional expectation operators defined in Lemma \ref{lem:Doeblin_centered_Linfty}. Under the conditions of Proposition \ref{prop:pot_Lip_time_Linfty}, there exists a uniform constant $L_{\mathrm{op}} < \infty$ such that for any $h \in L^\infty(\mathcal{X})$ and all $r, t \in [0,1]$:
\[
\|(T_r - T_t)h\|_\infty \le L_{\mathrm{op}} \|h\|_\infty |r - t|, \qquad \|(S_r - S_t)h\|_\infty \le L_{\mathrm{op}} \|h\|_\infty |r - t|.
\]
The constant $L_{\mathrm{op}}$ depends only on the potential regularity constant $L_{\mathrm{pot}}$.
\end{lemma}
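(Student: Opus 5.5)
The plan is to write the conditional densities explicitly via the tilt representation and bound their time variation pointwise. By \eqref{eq:cond_pi_t} and \eqref{eq:schrod-form-path},
\[
\pi_t(y\mid x)=\frac{\pi'(x,y)\,e^{\varphi_t(x)+\psi_t(y)}}{\mu_t(x)},
\]
and the marginal constraint gives $\mu_t(x)=\int\pi'(x,y')e^{\varphi_t(x)+\psi_t(y')}\,dy'$. The factor $e^{\varphi_t(x)}$ therefore cancels, and
\[
\pi_t(y\mid x)=\frac{\pi'(x,y)\,e^{\psi_t(y)}}{\int_{\mathcal X}\pi'(x,y')\,e^{\psi_t(y')}\,dy'} .
\]
This depends on $t$ only through $\psi_t$. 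This removes any need to control $\mu_t$ separately, although Lemma~\ref{lem:temporal_marginal_regularity} would also suffice.

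The first step is a pointwise log-ratio bound. For fixed $x$,
\[
\log\frac{\pi_r(y\mid x)}{\pi_t(y\mid x)}=\big(\psi_r(y)-\psi_t(y)\big)-\log\frac{\int\pi'(x,y')e^{\psi_r(y')}dy'}{\int\pi'(x,y')e^{\psi_t(y')}dy'} .
\]
By the 1-Lipschitz property of log-sum-exp already used in Lemma~\ref{lem:phi_psi_regularity}, the second term is at most $\|\psi_r-\psi_t\|_\infty$ in absolute value. Hence
\[
\Big\|\log\tfrac{\pi_r(\cdot\mid x)}{\pi_t(\cdot\mid x)}\Big\|_\infty\le 2\|\psi_r-\psi_t\|_\infty\le L_{\mathrm{pot}}|r-t|,
\]
using Proposition~\ref{prop:pot_Lip_time_Linfty}, where the individual constant for $\psi$ is $L_{\mathrm{pot}}/2$.

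The second step converts this into an $L^1$ bound on the kernels. Using $|e^u-1|\le |u|e^{|u|}$,
\[
\int|\pi_r(y\mid x)-\pi_t(y\mid x)|\,dy\le \sup_y\Big|\tfrac{\pi_r(y\mid x)}{\pi_t(y\mid x)}-1\Big|\le L_{\mathrm{pot}}|r-t|\,e^{L_{\mathrm{pot}}|r-t|}\le L_{\mathrm{pot}}e^{L_{\mathrm{pot}}}|r-t|,
\]
since $|r-t|\le 1$. Then
\[
|(T_r-T_t)h(x)|\le\|h\|_\infty\int|\pi_r(y\mid x)-\pi_t(y\mid x)|\,dy,
\]
and taking the supremum over $x$ gives the claim for $T$ with $L_{\mathrm{op}}:=L_{\mathrm{pot}}e^{L_{\mathrm{pot}}}$.

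The argument for $S$ is symmetric. Writing $\pi_t(x\mid y)=\pi'(x,y)e^{\varphi_t(x)}/\int\pi'(x',y)e^{\varphi_t(x')}dx'$ and using the $\varphi$ half of Proposition~\ref{prop:pot_Lip_time_Linfty} yields the same constant, so $L_{\mathrm{op}}$ depends only on $L_{\mathrm{pot}}$, as stated.

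The main obstacle I expect is making the cancellation of $e^{\varphi_t(x)}$ rigorous, that is, justifying that $\mu_t$ equals the $y$-integral of the tilted density for every $x$ rather than only almost everywhere. This holds because Lemma~\ref{lem:phi_psi_regularity} makes all potentials bounded and continuous, and Assumption~\ref{ass:coupling_regularity} makes $\pi'$ continuous and positive, so the normalizers are continuous and bounded away from zero. If one prefers not to use the cancellation, the fallback is to bound the numerator through $\varphi,\psi$ and the denominator through $\|\log\mu_r-\log\mu_t\|_\infty\le L_{\log}|r-t|$ from Lemma~\ref{lem:temporal_marginal_regularity}. In that case the constant also involves $L_{\log}$; since Proposition~\ref{prop:pot_Lip_time_Linfty} has $L_{\mathrm{pot}}=4L_{\log}/(1-\rho)\ge L_{\log}$, this term is still controlled by $L_{\mathrm{pot}}$.
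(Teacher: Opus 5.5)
Your proposal is correct and follows the paper's proof essentially step for step: cancel $e^{\varphi_t(x)}$ so the kernel $\pi_t(\cdot\mid x)$ depends on $t$ only through $\psi_t$, bound the log-ratio by $2\|\psi_r-\psi_t\|_\infty$ using the 1-Lipschitz property of log-sum-exp, and convert this to an $L^1$ bound on the kernels with $|e^u-1|\le |u|e^{|u|}$. The only differences are in the constants: you use the per-potential constant $L_{\mathrm{pot}}/2$ from the proof of Proposition~\ref{prop:pot_Lip_time_Linfty} and bound $\int|\pi_r-\pi_t|\,dy$ directly, obtaining $L_{\mathrm{pot}}e^{L_{\mathrm{pot}}}$ where the paper states $L_{\mathrm{pot}}e^{2L_{\mathrm{pot}}}$. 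This does not affect the claim, and your direct $L^1$ bound also sidesteps the factor-of-two slip in the paper's first inequality, which uses the $\tfrac12$-normalized TV distance.
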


\begin{proof}
We prove the estimate for $T_t$ (the argument for $S_t$ is entirely symmetric). Fix $h \in L^\infty(\mathcal{X})$. By the definition of the conditional expectation, the difference is bounded by the total variation (TV) distance between the conditional kernels:
\begin{equation}
\label{eq:Tdiff_TV_final}
\|(T_r - T_t)h\|_\infty \le \|h\|_\infty \sup_{x \in \mathcal{X}} \|\pi_r(\cdot \mid x) - \pi_t(\cdot \mid x)\|_{\mathrm{TV}}.
\end{equation}
Using the relative entropy tilting formulation $\pi_t(x,y) = \pi'(x,y) \exp(\varphi_t(x) + \psi_t(y))$, the potential $\varphi_t(x)$ acts purely as a normalization factor for fixed $x$. Thus, the conditional density simplifies to:
\[
\pi_t(y \mid x) = \frac{\pi'(x,y) \exp(\psi_t(y))}{Z_t(x)}, \quad \text{where } Z_t(x) := \int_{\mathcal{X}} \pi'(x,z) \exp(\psi_t(z)) dz.
\]
Crucially, this isolates the time-dependence strictly to the potential $\psi_t$. Define the point-wise logarithmic difference between the kernels at times $r$ and $t$:
\[
\Delta(x,y) := \log \pi_r(y \mid x) - \log \pi_t(y \mid x) = \big(\psi_r(y) - \psi_t(y)\big) - \big(\log Z_r(x) - \log Z_t(x)\big).
\]
By Proposition \ref{prop:pot_Lip_time_Linfty}, $\|\psi_r - \psi_t\|_\infty \le L_{\mathrm{pot}} |r - t|$. Consequently, the normalization term satisfies:
\[
|\log Z_r(x) - \log Z_t(x)| = \left| \log \frac{\int \pi'(x,z) e^{\psi_r(z)} dz}{\int \pi'(x,z) e^{\psi_t(z)} dz} \right| \le \|\psi_r - \psi_t\|_\infty \le L_{\mathrm{pot}} |r - t|,
\]
where we invoked again the 1-Lipschitz property of the Log-Sum-Exp operator. Thus, the logarithmic difference is uniformly bounded:
\[
|\Delta(x,y)| \le \|\psi_r - \psi_t\|_\infty + |\log Z_r(x) - \log Z_t(x)| \le 2 L_{\mathrm{pot}} |r - t|.
\]
Let $\delta := 2 L_{\mathrm{pot}} |r - t|$. Using the identity $\|\pi_r - \pi_t\|_{\mathrm{TV}} = \frac{1}{2} \int |e^{\Delta} - 1| \pi_t dy$ and the elementary inequality $|e^\Delta - 1| \le |\Delta| e^{|\Delta|} \le \delta e^\delta$, we obtain:
\[
\|\pi_r(\cdot \mid x) - \pi_t(\cdot \mid x)\|_{\mathrm{TV}} \le \frac{1}{2} \delta e^\delta = L_{\mathrm{pot}} |r - t| \exp(2 L_{\mathrm{pot}} |r - t|).
\]
Since $|r - t| \le 1$, the exponential term is bounded by $e^{2 L_{\mathrm{pot}}}$. Taking the supremum over $x$ yields:
\[
\sup_{x \in \mathcal{X}} \|\pi_r(\cdot \mid x) - \pi_t(\cdot \mid x)\|_{\mathrm{TV}} \le \big( L_{\mathrm{pot}} e^{2 L_{\mathrm{pot}}} \big) |r - t|.
\]
Combining this with \eqref{eq:Tdiff_TV_final} concludes the proof, explicitly giving $L_{\mathrm{op}} := L_{\mathrm{pot}} e^{2 L_{\mathrm{pot}}}$.
\end{proof}

\begin{lemma}[Spatiotemporal Regularity of the Forcing Terms]
\label{lem:forcing_regularity}
For forcing terms $\zeta_t := \partial_t \log \mu_t$ and $\eta_t := \partial_t \log \nu_t$, under Assumptions \ref{ass:velocity_regularity} and \ref{ass:initial_regularity}, there exist uniform constants $C_{\mathrm{force}}, L_{\mathrm{force}} < \infty$ such that for all $r, t \in [0,1]$:
\begin{align*}
    \sup_{t \in [0,1]} \Big( \|\zeta_t\|_{L^\infty(\mathcal{X})} + \|\eta_t\|_{L^\infty(\mathcal{X})} + \mathrm{Lip}_{\mathcal{X}}(\zeta_t) + \mathrm{Lip}_{\mathcal{X}}(\eta_t) \Big) &\le C_{\mathrm{force}}, \\
    \|\zeta_r - \zeta_t\|_{L^\infty(\mathcal{X})} + \|\eta_r - \eta_t\|_{L^\infty(\mathcal{X})} &\le L_{\mathrm{force}} |r - t|.
\end{align*}
\end{lemma}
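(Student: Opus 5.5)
We start from the explicit representation \eqref{eq:zeta-eta}: $\zeta_t = -\mathrm{div}(u_t) - u_t\cdot\nabla\log\mu_t$, and analogously for $\eta_t$. All three bounds then reduce to product and chain-rule estimates on these two terms. We treat $\zeta_t$ only; $\eta_t$ follows by replacing $(u_t,\mu_t)$ with $(v_t,\nu_t)$.

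\textbf{Uniform sup bound.} Assumption~\ref{ass:velocity_regularity} gives $\|\mathrm{div}(u_t)\|_\infty\le C_{\mathrm{uv}}$ and $\|u_t\|_\infty\le C_{\mathrm{uv}}$. Lemma~\ref{lem:density_bounds} gives $\|\nabla\log\mu_t\|_\infty\le C_{\mathrm{score}}$. Hence
\[
\|\zeta_t\|_\infty\le C_{\mathrm{uv}}(1+C_{\mathrm{score}}).
\]
This is exactly the quantity $L_{\log}$ from Lemma~\ref{lem:temporal_marginal_regularity}.

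\textbf{Spatial Lipschitz bound.}
\begin{itemize}
\item $\mathrm{div}(u_t)$ is Lipschitz with constant at most $\|u_t\|_{C^2}\le C_{\mathrm{uv}}$, since its gradient involves second derivatives of $u_t$.
\item For the product $u_t\cdot\nabla\log\mu_t$, we use
\[
\mathrm{Lip}(fg)\le \|f\|_\infty\mathrm{Lip}(g)+\|g\|_\infty\mathrm{Lip}(f).
\]
Here $\mathrm{Lip}(u_t)\le C_{\mathrm{uv}}$.
\item We also need $\mathrm{Lip}(\nabla\log\mu_t)$ bounded uniformly in $t$. Lemma~\ref{lem:density_bounds} asserts the score is spatially Lipschitz, but uniformity in $t$ needs to be made explicit. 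This is the main obstacle.
\end{itemize}

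To get uniformity, we would return to the flow representation
\[
\log\mu_t(\Phi_t^u(x))=\log\mu_0(x)-\int_0^t \mathrm{div}\,u_\tau(\Phi_\tau^u(x))\,d\tau .
\]
Differentiating twice in $x$, Grönwall gives bounds on $\nabla\Phi_t^u$ and $\nabla^2\Phi_t^u$ in terms of $C_{\mathrm{uv}}$ and $t\le1$. The inverse flow satisfies similar bounds. Combined with $\|\log\mu_0\|_{C^2}\le C_{\mu\nu}$, this yields $\sup_t\|\nabla^2\log\mu_t\|_\infty<\infty$.

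There is a regularity subtlety. Controlling the second derivative of $\int\mathrm{div}\,u_\tau\circ\Phi_\tau$ formally uses third derivatives of $u$. If only $C^2$ is available, we instead show that $\nabla\log\mu_t$ is Lipschitz directly. We compose the Lipschitz maps $\nabla\mathrm{div}\,u_\tau$ (Lipschitz under a $C^2$ bound interpreted with a Lipschitz top derivative) and $\nabla\Phi_\tau$, and control the product rule term by term. Either way we take $M:=\sup_t\mathrm{Lip}(\nabla\log\mu_t)<\infty$. This gives
\[
\mathrm{Lip}(\zeta_t)\le C_{\mathrm{uv}}+C_{\mathrm{uv}}M+C_{\mathrm{uv}}C_{\mathrm{score}}.
\]

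\textbf{Time Lipschitz bound.} We write
\[
\zeta_r-\zeta_t=-(\mathrm{div}\,u_r-\mathrm{div}\,u_t)-(u_r-u_t)\cdot\nabla\log\mu_r-u_t\cdot(\nabla\log\mu_r-\nabla\log\mu_t).
\]
\begin{itemize}
\item The first two terms are bounded by $L_{\mathrm{uv}}|r-t|(1+C_{\mathrm{score}})$, using the $C^1$ time-Lipschitz part of Assumption~\ref{ass:velocity_regularity}.
\item The third term is bounded by $C_{\mathrm{uv}}L_{\mathrm{score}}|r-t|$, by Lemma~\ref{lem:score_time_lip}.
\end{itemize}
Summing, and adding the $\eta$ analogues, gives
\[
L_{\mathrm{force}}=2\big(L_{\mathrm{uv}}(1+C_{\mathrm{score}})+C_{\mathrm{uv}}L_{\mathrm{score}}\big),
\]
with $C_{\mathrm{score}}$ understood as the common score bound for both marginals. $C_{\mathrm{force}}$ is collected analogously from the two previous steps.
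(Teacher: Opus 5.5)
Your proposal is correct and follows the paper's proof essentially line by line. It uses the same representation $\zeta_t=-\mathrm{div}(u_t)-u_t\cdot\nabla\log\mu_t$, the same product-rule Lipschitz estimate, the same three-term splitting of $\zeta_r-\zeta_t$, and arrives at the same constant $L_{\mathrm{force}}$.

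The one step you pause on is the $t$-uniform bound on $\mathrm{Lip}(\nabla\log\mu_t)$. The paper simply takes this from Lemma~\ref{lem:density_bounds}, read as a uniform $C^2$ bound on $\log\mu_t$, so citing that lemma is all you need to match its proof. Your caveat is also accurate: deriving this bound from the characteristics needs Lipschitz second derivatives of $u_t$, because for a merely $C^2$ field $\log\det\nabla\Phi_t^u$ is in general only $C^1$. That caveat applies equally to the paper's own argument.
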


\begin{proof}
This proof uses the uniform $C^2$ spatial regularity from Lemma~\ref{lem:density_bounds} and the time-Lipschitz regularity of the marginal scores from Lemma~\ref{lem:temporal_marginal_regularity} (Part~2).

We detail the estimates for $\zeta_t$; identical arguments hold for $\eta_t$. By the continuity equation, the forcing term is given by:
\begin{equation}
\label{eq:zeta_def_explicit}
\zeta_t = -\nabla \cdot u_t - u_t \cdot \nabla \log \mu_t.
\end{equation}

By Assumption \ref{ass:velocity_regularity} and Lemma \ref{lem:density_bounds}, $u_t, \nabla \cdot u_t$, and $\nabla \log \mu_t$ are uniformly bounded. Applying the triangle inequality to \eqref{eq:zeta_def_explicit} yields the uniform $L^\infty$ bound:
\[
\|\zeta_t\|_\infty \le \|\nabla \cdot u_t\|_\infty + \|u_t\|_\infty \|\nabla \log \mu_t\|_\infty \le C_{\mathrm{uv}} + C_{\mathrm{uv}} C_{\mathrm{score}}.
\]

For the spatial Lipschitz regularity, we use the product rule $\mathrm{Lip}_{\mathcal{X}}(fg) \le \|f\|_\infty \mathrm{Lip}_{\mathcal{X}}(g) + \|g\|_\infty \mathrm{Lip}_{\mathcal{X}}(f)$. Applying this to \eqref{eq:zeta_def_explicit} gives:
\[
\mathrm{Lip}_{\mathcal{X}}(\zeta_t) \le \mathrm{Lip}_{\mathcal{X}}(\nabla \cdot u_t) + \|u_t\|_\infty \mathrm{Lip}_{\mathcal{X}}(\nabla \log \mu_t) + \|\nabla \log \mu_t\|_\infty \mathrm{Lip}_{\mathcal{X}}(u_t).
\]
Since $u_t \in C^2(\mathcal{X})$ and $\log \mu_t \in C^2(\mathcal{X})$ (Lemma \ref{lem:density_bounds}), all supremum norms and spatial Lipschitz constants on the right-hand side are bounded uniformly in $t$. Summing these spatial bounds for both $\zeta_t$ and $\eta_t$ establishes the existence of the single uniform constant $C_{\mathrm{force}} < \infty$.

For the temporal regularity, we evaluate the difference $\zeta_r - \zeta_t$ via the decomposition:
\[
\zeta_r - \zeta_t = -(\nabla \cdot u_r - \nabla \cdot u_t) - (u_r - u_t) \cdot \nabla \log \mu_r - u_t \cdot (\nabla \log \mu_r - \nabla \log \mu_t).
\]
By Assumption \ref{ass:velocity_regularity}, the velocity field and its divergence are time-Lipschitz with constant $L_{\mathrm{uv}}$ (since $\|u_r-u_t\|_{C^1} \le L_{\mathrm{uv}}|r-t|$). Combining this with Lemma \ref{lem:score_time_lip}, we take the $L^\infty$ norm and apply the triangle inequality:
\begin{align*}
\|\zeta_r - \zeta_t\|_\infty &\le \|\nabla \cdot u_r - \nabla \cdot u_t\|_\infty + \|u_r - u_t\|_\infty \|\nabla \log \mu_r\|_\infty + \|u_t\|_\infty \|\nabla \log \mu_r - \nabla \log \mu_t\|_\infty \\
&\le L_{\mathrm{uv}}|r-t| + L_{\mathrm{uv}}|r-t| C_{\mathrm{score}} + C_{\mathrm{uv}} L_{\mathrm{score}} |r-t|.
\end{align*}
Factoring out $|r-t|$, we define $L_{\mathrm{force}} := 2 \big( L_{\mathrm{uv}}(1 + C_{\mathrm{score}}) + C_{\mathrm{uv}} L_{\mathrm{score}} \big)$ (to account for both $\zeta$ and $\eta$), which completes the proof.
\end{proof}

\begin{proposition}[Lipschitz Stability of the Tilting Rates in $L^\infty$]
\label{prop:ab_stability}
Under Assumptions \ref{ass:compact_X} to \ref{ass:initial_regularity}, the infinitesimal tilting rates $(a_t, b_t)$ defined in \eqref{eq:app_linear_system_ab} are Lipschitz continuous in time from $[0,1]$ to $L^\infty(\mathcal{X})$. Specifically, there exists a uniform constant $L_{ab} < \infty$ such that for all $r, t \in [0,1]$:
\begin{equation*}
\|a_r - a_t\|_{L^\infty(\mathcal{X})} + \|b_r - b_t\|_{L^\infty(\mathcal{X})} \le L_{ab} |r - t|.
\end{equation*}
The constant $L_{ab}$ depends only on the contraction ratio $\alpha$, operator stability $L_{\mathrm{op}}$, and the forcing regularity $L_{\mathrm{force}}$.
\end{proposition}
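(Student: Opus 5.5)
The plan is to treat the linear system \eqref{eq:app_linear_system_ab_TS} as a fixed-point equation for $b_t$ alone, gauge-fixed so that the Doeblin contraction of Lemma~\ref{lem:Doeblin_centered_Linfty} applies, and then run the same perturbation argument used for the potentials in Proposition~\ref{prop:pot_Lip_time_Linfty}.

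\textbf{Step 1: reduce to a one-variable equation.} Substituting $a_t=\zeta_t-T_tb_t$ into the second equation gives
\[
b_t = \eta_t - S_t\zeta_t + S_tT_tb_t =: \mathcal{K}_t(b_t).
\]
I fix the gauge by requiring $\int b_t\,d\nu_t=0$. Since $S_tT_t$ preserves $\nu_t$-expectations (tower property), the centered part is consistent. The compatibility $\int\zeta_t\,d\mu_t=\int\eta_t\,d\nu_t=0$ holds because $\int \partial_t\mu_t=0$. By Lemma~\ref{lem:Doeblin_centered_Linfty}(iii), $\mathcal{K}_t$ is a $(1-\alpha)$-contraction on $\nu_t$-centered functions in $L^\infty$. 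This gives the uniform bound
\[
\|b_t\|_\infty\le \frac{2C_{\mathrm{force}}}{\alpha}.
\]
It is consistent with Proposition~\ref{prop:ab_uniform_Linfty}, which I can also use directly.

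\textbf{Step 2: fixed-point sensitivity.} I write
\[
b_r-b_t = \bigl(\mathcal{K}_r(b_r)-\mathcal{K}_r(b_t)\bigr)+\bigl(\mathcal{K}_r(b_t)-\mathcal{K}_t(b_t)\bigr).
\]
For the second bracket, I expand:
\[
(\eta_r-\eta_t)-(S_r-S_t)\zeta_r-S_t(\zeta_r-\zeta_t)+(S_r-S_t)T_rb_t+S_t(T_r-T_t)b_t.
\]
Each term is $O(|r-t|)$ by Lemma~\ref{lem:forcing_regularity} and Lemma~\ref{lem:operator_stability_Linfty}, combined with the non-expansiveness of Lemma~\ref{lem:Doeblin_centered_Linfty}(i) and the bound on $\|b_t\|_\infty$ from Step 1. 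Altogether it is bounded by $(2L_{\mathrm{force}}+L_{\mathrm{op}}C_{\mathrm{force}}+2L_{\mathrm{op}}\|b\|_\infty)|r-t|$. The first bracket would be bounded by $(1-\alpha)\|b_r-b_t\|_\infty$, and rearranging gives $\|b_r-b_t\|_\infty\le C|r-t|/\alpha$.

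\textbf{Main obstacle: the centering mismatch.} $b_r$ is $\nu_r$-centered while $b_t$ is $\nu_t$-centered, so $b_r-b_t$ is not centered for a single measure and the contraction cannot be applied directly. I would handle this by writing $b_r-b_t=(\bar b)+c$, where $\bar b$ is $\nu_r$-centered and $c=\int b_t\,d\nu_r$. Then
\[
|c|=\Bigl|\int b_t\,(d\nu_r-d\nu_t)\Bigr|\le \|b_t\|_\infty\|\nu_r-\nu_t\|_{L^1}\le C|r-t|,
\]
using the time-Lipschitz log-densities of Lemma~\ref{lem:temporal_marginal_regularity}. Constants are fixed points of $S_rT_r$ and therefore produce no contraction, but since $c$ is already $O(|r-t|)$ it is harmless.

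\textbf{Step 3: transfer to $a_t$.} Finally,
\[
a_r-a_t=(\zeta_r-\zeta_t)-(T_r-T_t)b_r-T_t(b_r-b_t),
\]
which is $O(|r-t|)$ by the same lemmas. Summing the two estimates gives $L_{ab}$, depending only on $\alpha$, $L_{\mathrm{op}}$ and $L_{\mathrm{force}}$ (together with $C_{\mathrm{force}}$).
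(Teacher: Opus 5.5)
Your proposal is correct and follows the paper's route. You fix the gauge $\int b_t\,d\nu_t=0$, reduce to $b_t=\eta_t-S_t\zeta_t+S_tT_tb_t$, control $b_r-b_t$ with the Doeblin contraction of $S_rT_r$ on centered functions together with Lemmas~\ref{lem:forcing_regularity} and~\ref{lem:operator_stability_Linfty}, and then recover $a_r-a_t$ from $a_t=\zeta_t-T_tb_t$.

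Your explicit split $b_r-b_t=\bar b+c$, with $|c|=\bigl|\int b_t\,d(\nu_r-\nu_t)\bigr|=O(|r-t|)$, is actually needed; the correct sign is $c=-\int b_t\,d\nu_r$, which does not matter. The paper glosses over this point: it applies $(I-S_rT_r)^{-1}$ on the $\nu_r$-centered subspace to $b_r-b_t$, which is not $\nu_r$-centered, and its constant part lies in the kernel of $I-S_rT_r$. Likewise, your direct bound on $\|b_t\|_\infty$ in this gauge is self-contained, whereas the paper cites Proposition~\ref{prop:ab_uniform_Linfty}, which is proved for the $\dot\psi_t$ gauge.
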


\begin{proof}
For each $t \in [0,1]$, under the gauge $\int b_t \, d\nu_t = 0$, the tilting rate $b_t$ is the unique solution to the operator equation $(I - S_t T_t) b_t = \eta_t - S_t \zeta_t$. Correspondingly, $a_t = \zeta_t - T_t b_t$. Fix $r, t \in [0,1]$ and consider the difference $(I - S_r T_r)(b_r - b_t)$. By adding and subtracting $(I - S_r T_r)b_t$, we obtain the fundamental perturbation identity:
\begin{equation}
\label{eq:perturbation_identity}
(I - S_r T_r)(b_r - b_t) = (\eta_r - \eta_t) - S_r(\zeta_r - \zeta_t) - (S_r - S_t) \zeta_t + S_r(T_r - T_t) b_t + (S_r - S_t) T_t b_t.
\end{equation}
By Lemma \ref{lem:Doeblin_centered_Linfty}, the composite operator $S_r T_r$ is a strict contraction on the $\nu_r$-centered subspace with $\|S_r T_r\| \le 1 - \alpha$. Thus, the operator $(I - S_r T_r)$ is invertible on this subspace, and its inverse is uniformly bounded by $\|(I - S_r T_r)^{-1}\| \le 1/\alpha$. Applying this to \eqref{eq:perturbation_identity} and invoking the non-expansivity $\|S_r\| \le 1$ from Lemma \ref{lem:Doeblin_centered_Linfty}:
\begin{align}
\label{eq:b_diff_expansion}
\|b_r - b_t\|_\infty \le \frac{1}{\alpha} \Big( & \|\eta_r - \eta_t\|_\infty + \|\zeta_r - \zeta_t\|_\infty + \|(S_r - S_t) \zeta_t\|_\infty \nonumber \\
& + \|(T_r - T_t) b_t\|_\infty + \|(S_r - S_t) T_t b_t\|_\infty \Big).
\end{align}
We bound each term in \eqref{eq:b_diff_expansion} using previously established regularities:
\begin{enumerate}[label=(\roman*)]
    \item Forcing terms: By Lemma \ref{lem:forcing_regularity}, $\|\eta_r - \eta_t\|_\infty + \|\zeta_r - \zeta_t\|_\infty \le L_{\mathrm{force}} |r - t|$.
    \item Operator stability on $\zeta_t$: By Lemma \ref{lem:operator_stability_Linfty}, $\|(S_r - S_t) \zeta_t\|_\infty \le L_{\mathrm{op}} \|\zeta_t\|_\infty |r - t| \le L_{\mathrm{op}} C_{\mathrm{force}} |r - t|$.
    \item Operator stability on $b_t$: By Proposition \ref{prop:ab_uniform_Linfty}, $\|b_t\|_\infty \le C_{ab}^\infty$. Thus, $\|(T_r - T_t) b_t\|_\infty \le L_{\mathrm{op}} C_{ab}^\infty |r - t|$.
    \item Composite stability: Since $\|T_t b_t\|_\infty \le \|b_t\|_\infty \le C_{ab}^\infty$, by Lemma \ref{lem:operator_stability_Linfty} and Proposition \ref{prop:ab_uniform_Linfty}, the final term satisfies $\|(S_r - S_t) T_t b_t\|_\infty \le L_{\mathrm{op}} C_{ab}^\infty |r - t|$.
\end{enumerate}
Summing these estimates yields $\|b_r - b_t\|_\infty \le L_b |r - t|$ with $L_b := \frac{1}{\alpha}(L_{\mathrm{force}} + L_{\mathrm{op}} C_{\mathrm{force}} + 2 L_{\mathrm{op}} C_{ab}^\infty)$.
Using the relation $a_t = \zeta_t - T_t b_t$, we decompose the difference as:
\begin{align*}
\|a_r - a_t\|_\infty &= \|(\zeta_r - \zeta_t) - (T_r b_r - T_t b_t)\|_\infty \\
&\le \|\zeta_r - \zeta_t\|_\infty + \|(T_r - T_t) b_r\|_\infty + \|T_t(b_r - b_t)\|_\infty \\
&\le L_{\mathrm{force}} |r - t| + L_{\mathrm{op}} C_{ab}^\infty |r - t| + L_b |r - t| =: L_a |r - t|.
\end{align*}
Defining $L_{ab} := L_a + L_b$ concludes the proof.
\end{proof}

\begin{corollary}[Joint Spatiotemporal Regularity of the Tilting Rate]
\label{cor:F_regularity}
Let $F_t(x,y) := a_t(x) + b_t(y)$ be the joint tilting rate on $\mathcal{X} \times \mathcal{X}$. Under the assumptions of Proposition \ref{prop:ab_stability}, the family $\{F_t\}_{t \in [0,1]}$ satisfies:
\begin{enumerate}[label=(\roman*)]
    \item Time-Lipschitz: $\|F_r - F_t\|_{L^\infty(\mathcal{X} \times \mathcal{X})} \le L_{ab} |r - t|$.
    \item Space-Lipschitz: $\sup_{t \in [0,1]} \mathrm{Lip}_{\mathcal{X} \times \mathcal{X}}(F_t) \le L_{\mathcal{X}}$ for some uniform $L_{\mathcal{X}} < \infty$.
\end{enumerate}
\end{corollary}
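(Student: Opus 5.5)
Part (i) follows directly from Proposition~\ref{prop:ab_stability}. For any $(x,y)$ we have $|F_r(x,y)-F_t(x,y)|\le |a_r(x)-a_t(x)|+|b_r(y)-b_t(y)|$. Taking the supremum over $\mathcal X\times\mathcal X$ gives $\|F_r-F_t\|_\infty\le \|a_r-a_t\|_\infty+\|b_r-b_t\|_\infty\le L_{ab}|r-t|$. The only care needed is the gauge: we fix $\int b_t\,d\nu_t=0$ throughout, as in the proof of Proposition~\ref{prop:ab_stability}. Then $a_t,b_t$ are uniquely determined, and the sum $F_t$ is in any case gauge-invariant.

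For part (ii), since $\mathrm{Lip}_{\mathcal X\times\mathcal X}(F_t)\le \mathrm{Lip}(a_t)+\mathrm{Lip}(b_t)$, it suffices to bound the spatial Lipschitz constants of $a_t$ and $b_t$ uniformly in $t$. I would work directly from the linear system \eqref{eq:app_linear_system_ab_TS}, which gives $a_t=\zeta_t-T_tb_t$ and $b_t=\eta_t-S_ta_t$. Lemma~\ref{lem:forcing_regularity} bounds $\mathrm{Lip}(\zeta_t),\mathrm{Lip}(\eta_t)\le C_{\mathrm{force}}$. So the task reduces to showing that $T_t$ and $S_t$ map bounded functions to Lipschitz functions with a uniform constant:
\[
\mathrm{Lip}(T_t h)\le K\|h\|_\infty,\qquad \mathrm{Lip}(S_t h)\le K\|h\|_\infty .
\]
Combined with $\|a_t\|_\infty,\|b_t\|_\infty\le C_{ab}^\infty$ (Proposition~\ref{prop:ab_uniform_Linfty}), this gives $\mathrm{Lip}(a_t)\le C_{\mathrm{force}}+KC_{ab}^\infty$ and the same bound for $b_t$. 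Hence $L_{\mathcal X}:=2(C_{\mathrm{force}}+KC_{ab}^\infty)$ works.

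The main step is this regularizing property of the conditional operators. As in the proof of Lemma~\ref{lem:operator_stability_Linfty}, I would write $\pi_t(y\mid x)=\pi'(x,y)e^{\psi_t(y)}/Z_t(x)$. For $x_1,x_2\in\mathcal X$ the log-ratio is
\[
\log\frac{\pi_t(y\mid x_1)}{\pi_t(y\mid x_2)}=\log\pi'(x_1,y)-\log\pi'(x_2,y)-\log\frac{Z_t(x_1)}{Z_t(x_2)} .
\]
By Assumption~\ref{ass:coupling_regularity} and the 1-Lipschitz property of log-sum-exp, its absolute value is at most $2C_{\mathrm{OT}}|x_1-x_2|$. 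The bound $|e^\Delta-1|\le|\Delta|e^{|\Delta|}$ together with compactness ($|x_1-x_2|\le D$) then gives
\[
\|\pi_t(\cdot\mid x_1)-\pi_t(\cdot\mid x_2)\|_{\mathrm{TV}}\le C_{\mathrm{OT}}e^{2C_{\mathrm{OT}}D}|x_1-x_2| .
\]
From this, $|T_th(x_1)-T_th(x_2)|\le 2\|h\|_\infty\|\pi_t(\cdot\mid x_1)-\pi_t(\cdot\mid x_2)\|_{\mathrm{TV}}$, so $K=2C_{\mathrm{OT}}e^{2C_{\mathrm{OT}}D}$. The argument for $S_t$ is symmetric, with $\varphi_t$ and normalizer $\int\pi'(z,y)e^{\varphi_t(z)}dz$.

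Everything is uniform in $t$ because $K$ depends only on $C_{\mathrm{OT}}$ and $D$. The one subtlety I expect is that $a_t,b_t$ are only defined for a.e.\ $t$, as Rademacher derivatives. I would handle this by taking the representatives given by the linear system, which is defined for every $t$. Alternatively, part (i) shows that these representatives extend continuously in $L^\infty$, and the Lipschitz bound passes to such limits.
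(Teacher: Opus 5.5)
Your proposal is correct and follows the paper's route: part (i) is read off from Proposition~\ref{prop:ab_stability}, and part (ii) uses $a_t=\zeta_t-T_tb_t$ and $b_t=\eta_t-S_ta_t$, together with the Lipschitz forcing terms, boundedness of $a_t,b_t$, and the $x$-Lipschitz regularity of $\log\pi'$. The one difference is in how you show $T_tb_t$ is Lipschitz. You use a log-ratio/TV estimate, as in Lemma~\ref{lem:operator_stability_Linfty}, where the paper gives a quotient-rule sketch; your version yields an explicit constant depending only on $C_{\mathrm{OT}}$ and $D$, and needs neither the bound on $\psi_t$ nor the Doeblin lower bound on the denominator.
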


\begin{proof}
The temporal regularity is immediate from Proposition \ref{prop:ab_stability}. For spatial regularity, fix $t \in [0,1]$. By Lemma \ref{lem:forcing_regularity}, $\zeta_t$ is uniformly Lipschitz. For the operator term $T_t b_t(x) = \int b_t(y) \pi_t(y \mid x) dy$, we substitute the conditional density:
\[
T_t b_t(x) = \frac{\int b_t(y) \pi'(x,y) e^{\psi_t(y)} dy}{\int \pi'(x,z) e^{\psi_t(z)} dz}.
\]
Under Assumption \ref{ass:coupling_regularity}, the reference kernel $\pi'(x,y)$ is Lipschitz in $x$. Since $b_t$ and $\psi_t$ are uniformly bounded (Lemma \ref{lem:phi_psi_regularity} and Proposition \ref{prop:ab_uniform_Linfty}), and the denominator is bounded away from zero by Doeblin minorization, the quotient $T_t b_t$ is spatially Lipschitz. A symmetric argument for $S_t a_t$ ensures the uniform spatial Lipschitzness of $b_t$, and thus of $F_t$.
\end{proof}

\section{Global Error Analysis of the Discrete Scheme}
\label{app:error_analysis}
Equipped with the uniform bounds and time-Lipschitz regularity of the theoretical tilting rates, we are now ready to analyze the discrete-time implementation of the algorithm. We track the accumulation of local truncation and sampling propagation errors in Total Variation (TV) distance, and eventually translate this to the $L^\infty$ accuracy of the learned regression target $\beta_s^t$.

\subsection{Stability Analysis of the Coupling Measure}
To prove Theorem \ref{thm:global_tv_convergence}, we must analyze the propagation of the estimation error across discrete iterations. We quantify this global error at time $t$ using the Total Variation (TV) distance:
\begin{equation*}
\varepsilon_t := \|\pi_t - \tilde{\pi}_t\|_{\mathrm{TV}} = \frac{1}{2} \int_{\mathcal{X}^2} |d\pi_t - d\tilde{\pi}_t|.
\end{equation*}

Recall the exact continuous-time coupling $\pi_{t+\delta}$ defined in \eqref{eq:pi_t+delta} and the practical discrete-time coupling $\tilde{\pi}_{t+\delta}$ defined in \eqref{eq:pi_tilde_t+delta}. To rigorously decouple the temporal discretization error from the spatial sampling error, we introduce an intermediate semi-discrete coupling $\bar{\pi}_{t+\delta}$. This auxiliary measure serves as a theoretical bridge: it utilizes the exact base measure $\pi_t$ from the previous step, but approximates the tilting weight via the first-order frozen rates at time $t$:
\begin{equation}
\label{eq:pi_semidiscrete}
d\bar{\pi}_{t+\delta}(x,y) := \frac{\exp\left( \delta F_t(x,y) \right) d\pi_t(x,y)}{\int_{\mathcal{X}^2} \exp\left( \delta F_t(x,y) \right) d\pi_t(x,y)}.
\end{equation}

By applying the triangle inequality to these three measures, the global error at the subsequent step $t+\delta$ admits the following natural decomposition:
\begin{equation}
\label{eq:tv_decomp_main}
\varepsilon_{t+\delta} \le \underbrace{\|\pi_{t+\delta} - \bar{\pi}_{t+\delta}\|_{\mathrm{TV}}}_{\text{Local Discretization: } E_{\mathrm{local}}} + \underbrace{\|\bar{\pi}_{t+\delta} - \tilde{\pi}_{t+\delta}\|_{\mathrm{TV}}}_{\text{Measure Propagation: } E_{\mathrm{prop}}},
\end{equation}
where $E_{\mathrm{local}}$ isolates the local truncation error of the Euler-tilting scheme (the difference between exact and frozen weights on the exact base measure), and $E_{\mathrm{prop}}$ captures the amplification of the existing sampling error $\varepsilon_t$ through the discrete tilting operator.

We first bound the local truncation error, which arises from freezing the continuous tilting rate over a discrete interval of length $\delta$.

\begin{lemma}[Local Truncation Error]
\label{lem:local_error_tv}
Under the time-Lipschitz regularity of the tilting rates (Proposition \ref{prop:ab_stability}), the local truncation error satisfies:
\begin{equation}
E_{\mathrm{local}} = \|\pi_{t+\delta} - \bar{\pi}_{t+\delta}\|_{\mathrm{TV}} \le C_1 \delta^2,
\end{equation}
where the constant $C_1$ depends on the potential Lipschitz constant $L_{ab}$ and the uniform bound $C_{ab}^\infty$.
\end{lemma}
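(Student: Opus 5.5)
Both $\pi_{t+\delta}$ and $\bar\pi_{t+\delta}$ are normalized exponential tilts of the same base measure $\pi_t$, with exponents $G=\int_t^{t+\delta}F_r\,dr$ and $\bar G=\delta F_t$. The plan is to control the TV distance between two Gibbs-type reweightings by the sup-norm of the exponent difference $\Delta:=G-\bar G$.

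\emph{Step 1 (exponent gap).} By Corollary~\ref{cor:F_regularity}(i), $\|F_r-F_t\|_{L^\infty(\mathcal X\times\mathcal X)}\le L_{ab}|r-t|$. Integrating over $r\in[t,t+\delta]$ gives
\[
\|\Delta\|_\infty\le\int_t^{t+\delta}L_{ab}(r-t)\,dr=\tfrac12 L_{ab}\delta^2 .
\]
Only the integrability of $r\mapsto F_r$ is needed here, and this holds since $F_r$ is uniformly bounded by Proposition~\ref{prop:ab_uniform_Linfty}.

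\emph{Step 2 (density ratio).} Write $Z=\int e^{G}d\pi_t$ and $\bar Z=\int e^{\bar G}d\pi_t$. Then
\[
\frac{d\pi_{t+\delta}}{d\bar\pi_{t+\delta}}=e^{\Delta}\,\frac{\bar Z}{Z}.
\]
Since $Z=\int e^{\Delta}e^{\bar G}d\pi_t$, we get $Z/\bar Z=\mathbb E_{\bar\pi_{t+\delta}}[e^\Delta]\in[e^{-\|\Delta\|_\infty},e^{\|\Delta\|_\infty}]$. Hence $|\log(d\pi_{t+\delta}/d\bar\pi_{t+\delta})|\le 2\|\Delta\|_\infty\le L_{ab}\delta^2$.

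\emph{Step 3 (TV bound).} With $\|P-Q\|_{\mathrm{TV}}=\tfrac12\mathbb E_Q|dP/dQ-1|$ and $|e^{u}-1|\le|u|e^{|u|}$, we obtain
\[
E_{\mathrm{local}}\le\tfrac12 L_{ab}\delta^2 e^{L_{ab}\delta^2}\le\tfrac12 L_{ab}e^{L_{ab}}\delta^2 ,
\]
using $\delta\le1$. This gives $C_1=\tfrac12L_{ab}e^{L_{ab}}$, which matches the constant listed in the notation table.

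For a sharper version one can avoid the factor $2$ by centering. The ratio equals $e^{\Delta}/\mathbb E_{\bar\pi}[e^\Delta]$, and bounding $|e^{\Delta}-\mathbb E e^{\Delta}|$ directly gives a constant of order $\tfrac12 L_{ab}$. The dependence on $C_{ab}^\infty$ mentioned in the statement enters only through the well-definedness of the normalizers: $Z,\bar Z\ge e^{-\delta C_{ab}^\infty}>0$.

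\emph{Main obstacle.} The only delicate point is Step 1, specifically the regularity needed to write $\pi_{t+\delta}\propto e^{G}\pi_t$ and to integrate $F_r$ in time. The rates $a_r,b_r$ are defined only for a.e.\ $r$, as derivatives of Lipschitz potentials (Proposition~\ref{prop:ab_uniform_Linfty}). I would handle this by noting that $\varphi_{t+\delta}-\varphi_t=\int_t^{t+\delta}a_r\,dr$ pointwise, by absolute continuity of the Lipschitz function $r\mapsto\varphi_r(x)$, and similarly for $\psi$. Thus $G$ is exactly $(\varphi_{t+\delta}-\varphi_t)\oplus(\psi_{t+\delta}-\psi_t)$. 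The Lipschitz bound of Proposition~\ref{prop:ab_stability} then holds off a null set of times, which suffices because it is only used inside the integral.
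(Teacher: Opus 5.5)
Your proof is correct and follows the paper's argument step for step. You use the same bound $\|G-\bar G\|_\infty\le\tfrac12 L_{ab}\delta^2$ from Corollary~\ref{cor:F_regularity}, and the same estimate $|\log(Z/\bar Z)|\le\|\Delta\|_\infty$; your expectation sandwich is just the paper's 1-Lipschitz log-sum-exp step. You then apply the same $|e^u-1|\le|u|e^{|u|}$ TV bound and obtain the identical constant $C_1=\tfrac12 L_{ab}e^{L_{ab}}$. Your remark that the rates are defined only a.e.\ in time, together with absolute continuity of $r\mapsto\varphi_r(x)$, addresses a rigor point the paper glosses over; it does not change the approach.
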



\begin{proof}
Let $G(x,y) := \int_t^{t+\delta} F_r(x,y)\,dr$ and $\bar{G}(x,y) := \delta F_t(x,y)$
denote the exact and frozen exponents, with $Z := \int e^G\,d\pi_t$ and $\bar{Z} := \int e^{\bar{G}}\,d\pi_t$.

\medskip\noindent\textbf{Bound the exponent difference.}
By Corollary~\ref{cor:F_regularity},
$\|F_r - F_t\|_\infty \le L_{ab}|r - t|$. Therefore the pointwise exponent difference satisfies:
\[
\|\Delta\|_\infty := \left\| G - \bar{G} \right\|_\infty
= \left\| \int_t^{t+\delta} (F_r - F_t)\,dr \right\|_\infty
\le \int_t^{t+\delta} L_{ab}(r-t)\,dr = \tfrac{1}{2}L_{ab}\delta^2.
\]
Set $\varepsilon := \|\Delta\|_\infty \le \tfrac{1}{2}L_{ab}\delta^2$.

\medskip\noindent\textbf{Stability of Gibbs measures.}
Since $e^G = e^{\bar{G}} \cdot e^{\Delta}$, the Radon-Nikodym derivative of $\pi_{t+\delta}$
with respect to $\bar{\pi}_{t+\delta}$ is
$\frac{d\pi_{t+\delta}}{d\bar{\pi}_{t+\delta}} = e^{\Delta} \cdot \frac{\bar{Z}}{Z}.$
By the 1-Lipschitz property of $\log\int e^{(\cdot)}\,d\pi_t$,
\[
\left|\log \frac{Z}{\bar{Z}}\right|
= \left|\log\frac{\int e^G d\pi_t}{\int e^{\bar{G}} d\pi_t}\right|
\le \|G - \bar{G}\|_\infty = \varepsilon.
\]
Hence $|\log(d\pi_{t+\delta}/d\bar{\pi}_{t+\delta})| = |\Delta - \log(Z/\bar{Z})| \le 2\varepsilon$ uniformly.
Using the identity $2\|P - Q\|_{TV} = \mathbb{E}_Q[|dP/dQ - 1|]$ and the elementary bound $|e^u - 1| \le |u|\,e^{|u|}$:
\[
E_{\mathrm{local}}
= \|\pi_{t+\delta} - \bar{\pi}_{t+\delta}\|_{TV}
\le \tfrac{1}{2}\,\mathbb{E}_{\bar{\pi}_{t+\delta}}\!\left[\left|e^{\Delta - \log(Z/\bar{Z})} - 1\right|\right]
\le \varepsilon\,e^{2\varepsilon}
\le \tfrac{1}{2}L_{ab}\delta^2 \cdot e^{L_{ab}\delta^2}.
\]
For $\delta \le 1$, set $C_1 := \tfrac{1}{2}L_{ab}e^{L_{ab}}$ so that $E_{\mathrm{local}} \le C_1\delta^2$.
\end{proof}

Next, we analyze how the existing sampling error from the previous iteration is amplified when passed through the frozen tilting operator.
\begin{lemma}[Propagation of Sampling Error]
\label{lem:prop_error_tv}
The propagation of the existing coupling error $\varepsilon_t$ through the frozen tilting step satisfies:
\begin{equation}
E_{\mathrm{prop}} = \|\bar{\pi}_{t+\delta} - \tilde{\pi}_{t+\delta}\|_{\mathrm{TV}} \le (1 + C_2 \delta) \varepsilon_t,
\end{equation}
where $C_2 := 4 C_{ab}^\infty e^{2 C_{ab}^\infty}$ is a uniform constant depending only on the bound $C_{ab}^\infty$.
\end{lemma}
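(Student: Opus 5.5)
The plan is to view both $\bar\pi_{t+\delta}$ and $\tilde\pi_{t+\delta}$ as images of one nonlinear map applied to different base measures. Define
\[
\Phi(\rho) := \frac{e^{\bar G}\rho}{\int e^{\bar G}\,d\rho}, \qquad \bar G=\delta F_t .
\]
Then $\bar\pi_{t+\delta}=\Phi(\pi_t)$ and $\tilde\pi_{t+\delta}=\Phi(\tilde\pi_t)$, with the same frozen exponent $\bar G$. By Proposition~\ref{prop:ab_uniform_Linfty}, $\|\bar G\|_\infty\le \delta C_{ab}^\infty$. The task is therefore a Lipschitz estimate for $\Phi$ in total variation, with Lipschitz constant $1+O(\delta)$. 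Write $Z:=\int e^{\bar G}d\pi_t$ and $\tilde Z:=\int e^{\bar G}d\tilde\pi_t$. Both lie in $[e^{-\delta C_{ab}^\infty},e^{\delta C_{ab}^\infty}]$.

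The main step is the decomposition
\[
\Phi(\pi_t)-\Phi(\tilde\pi_t)=\frac{e^{\bar G}}{Z}(\pi_t-\tilde\pi_t)+e^{\bar G}\tilde\pi_t\Big(\frac1Z-\frac1{\tilde Z}\Big).
\]
I will take half the total mass of each term.

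\emph{First term.} Its contribution is at most $\tfrac{\sup e^{\bar G}}{Z}\,\varepsilon_t\le e^{2\delta C_{ab}^\infty}\varepsilon_t$.

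\emph{Second term.} Its contribution equals $\tfrac12\,\tilde Z\,|1/Z-1/\tilde Z|=\tfrac12|\tilde Z-Z|/Z$.

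The key refinement is that $\int d(\pi_t-\tilde\pi_t)=0$. This lets me write $\tilde Z-Z=\int (e^{\bar G}-1)\,d(\tilde\pi_t-\pi_t)$. Hence
\[
|\tilde Z-Z|\le 2\|e^{\bar G}-1\|_\infty\,\varepsilon_t\le 2\delta C_{ab}^\infty e^{\delta C_{ab}^\infty}\varepsilon_t .
\]
The second term is therefore $O(\delta)\varepsilon_t$, not $O(1)\varepsilon_t$. This centering is the crucial point. Without it the second term would only be bounded by $\varepsilon_t$ and yield a factor $2$, which destroys the Gronwall argument.

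\emph{First term, sharpened.} Applying the plain bound $e^{2\delta C_{ab}^\infty}\le 1+2\delta C_{ab}^\infty e^{2\delta C_{ab}^\infty}$ to the first term gives a factor $1+O(\delta)$. To be safe, I will also center it by writing $e^{\bar G}/Z=1+(e^{\bar G}/Z-1)$. The "$1$" contributes exactly $\varepsilon_t$. The remainder has sup norm at most $e^{2\delta C_{ab}^\infty}-1\le 2\delta C_{ab}^\infty e^{2\delta C_{ab}^\infty}$ and contributes at most $2\delta C_{ab}^\infty e^{2\delta C_{ab}^\infty}\varepsilon_t$.

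Adding the two terms, and using $\delta\le1$ together with $|e^u-1|\le|u|e^{|u|}$, gives $E_{\mathrm{prop}}\le(1+C_2\delta)\varepsilon_t$ for some $C_2$ of the form $c\,C_{ab}^\infty e^{2C_{ab}^\infty}$. The total coefficient collected this way is about $3C_{ab}^\infty e^{2C_{ab}^\infty}$, which fits under the stated $C_2=4C_{ab}^\infty e^{2C_{ab}^\infty}$.

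I expect the only delicate point to be exploiting the zero total mass of $\pi_t-\tilde\pi_t$ in both terms, so that the leading coefficient is exactly $1$. Everything else is bookkeeping of exponential constants. No regularity beyond the uniform bound $C_{ab}^\infty$ is needed; in particular no Doeblin argument is required here.
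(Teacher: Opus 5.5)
Your proposal is correct and follows the paper's proof essentially step for step. It uses the same split into the reweighted difference $\frac{e^{\delta F_t}}{Z}(\pi_t-\tilde\pi_t)$ plus the normalization mismatch, and the same key observation that $\int d(\pi_t-\tilde\pi_t)=0$, which makes $|\tilde Z-Z|\le 2(e^{\delta C_{ab}^\infty}-1)\varepsilon_t$ of order $\delta$. The only differences are cosmetic. The paper bounds the resulting factor $2e^{2\delta C_{ab}^\infty}-e^{\delta C_{ab}^\infty}$ by the mean value theorem and gets the coefficient $4C_{ab}^\infty e^{2C_{ab}^\infty}$. You instead linearize each exponential separately and get the slightly smaller $3C_{ab}^\infty e^{2C_{ab}^\infty}$. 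Your extra centering of the first term is not needed, since $e^{2\delta C_{ab}^\infty}\le 1+2\delta C_{ab}^\infty e^{2\delta C_{ab}^\infty}$ already gives the leading coefficient $1$.
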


\begin{proof}
Recall that $\bar{\pi}_{t+\delta}$ and $\tilde{\pi}_{t+\delta}$ are obtained by tilting $\pi_t$ and $\tilde{\pi}_t$ respectively with the same frozen weight $\exp(\delta F_t)$. By the definition of the Total Variation distance:
\begin{equation}
\label{eq:prop_tv_start}
2 E_{\mathrm{prop}} = \int_{\mathcal{X}^2} \left| \frac{e^{\delta F_t(x,y)} d\pi_t(x,y)}{\int e^{\delta F_t} d\pi_t} - \frac{e^{\delta F_t(x,y)} d\tilde{\pi}_t(x,y)}{\int e^{\delta F_t} d\tilde{\pi}_t} \right|.
\end{equation}
Using the triangle inequality, the integrand is bounded as:
\begin{align*}
\left| \frac{e^{\delta F_t} d\pi_t}{\int e^{\delta F_t} d\pi_t} - \frac{e^{\delta F_t} d\tilde{\pi}_t}{\int e^{\delta F_t} d\tilde{\pi}_t} \right| \le \underbrace{\frac{e^{\delta F_t}}{\int e^{\delta F_t} d\pi_t} |d\pi_t - d\tilde{\pi}_t|}_{(\mathrm{I})} + \underbrace{e^{\delta F_t} d\tilde{\pi}_t \left| \frac{1}{\int e^{\delta F_t} d\pi_t} - \frac{1}{\int e^{\delta F_t} d\tilde{\pi}_t} \right|}_{(\mathrm{II})}.
\end{align*}
Integrating term $(\mathrm{I})$ over $\mathcal{X}^2$:
\[
\int(\mathrm{I}) \le \frac{\|e^{\delta F_t}\|_\infty}{\int e^{\delta F_t} d\pi_t}\cdot 2\varepsilon_t.
\]
For term $(\mathrm{II})$, write $D_\pi := \int e^{\delta F_t} d\pi_t$ and $D_{\tilde\pi} := \int e^{\delta F_t} d\tilde\pi_t$. Since $\pi_t$ and $\tilde\pi_t$ are both probability measures, $\int 1\,d(\tilde\pi_t - \pi_t) = 0$, so
\[
|D_{\tilde\pi} - D_\pi|
= \left|\int \bigl(e^{\delta F_t}-1\bigr)\,d(\tilde\pi_t-\pi_t)\right|
\le \|e^{\delta F_t}-1\|_\infty \cdot 2\varepsilon_t
\le \bigl(e^{\delta C_{ab}^\infty}-1\bigr)\cdot 2\varepsilon_t.
\]
Hence:
\[
\int(\mathrm{II})
= \frac{|D_{\tilde\pi} - D_\pi|}{D_\pi D_{\tilde\pi}}\int e^{\delta F_t}d\tilde\pi_t
= \frac{|D_{\tilde\pi}-D_\pi|}{D_\pi}
\le \frac{(e^{\delta C_{ab}^\infty}-1)\cdot 2\varepsilon_t}{e^{-\delta C_{ab}^\infty}}
= 2\bigl(e^{2\delta C_{ab}^\infty}-e^{\delta C_{ab}^\infty}\bigr)\varepsilon_t.
\]
Summing both contributions:
\begin{align*}
2 E_{\mathrm{prop}}
\le 2e^{2\delta C_{ab}^\infty}\varepsilon_t + 2\bigl(e^{2\delta C_{ab}^\infty}-e^{\delta C_{ab}^\infty}\bigr)\varepsilon_t
= 2\bigl(2e^{2\delta C_{ab}^\infty}-e^{\delta C_{ab}^\infty}\bigr)\varepsilon_t,
\end{align*}
so $E_{\mathrm{prop}} \le \bigl(2e^{2\delta C_{ab}^\infty}-e^{\delta C_{ab}^\infty}\bigr)\varepsilon_t$.
Note that this bound is tight at $\delta=0$: $2\cdot 1 - 1 = 1$, recovering $E_{\mathrm{prop}} \le \varepsilon_t$ exactly.

Let $f(\delta) := 2e^{2\delta C_{ab}^\infty} - e^{\delta C_{ab}^\infty}$. Then $f(0)=1$ and
$f'(\delta) = 4C_{ab}^\infty e^{2\delta C_{ab}^\infty} - C_{ab}^\infty e^{\delta C_{ab}^\infty} \le 4C_{ab}^\infty e^{2C_{ab}^\infty}$
for $\delta\in[0,1]$. By the mean value theorem, $f(\delta) \le 1 + 4C_{ab}^\infty e^{2C_{ab}^\infty}\,\delta$, so defining $C_2 := 4\,C_{ab}^\infty e^{2C_{ab}^\infty}$:

\begin{equation}
E_{\mathrm{prop}} \le (1 + C_2 \delta)\,\varepsilon_t.
\end{equation}
This completes the proof.
\end{proof}

\begin{corollary}[Recursive Error Accumulation]
\label{cor:error_recursion}
Let $\varepsilon_t = \|\pi_t - \tilde{\pi}_t\|_{\mathrm{TV}}$ be the global coupling error at time $t$. Under the conditions of Lemma \ref{lem:local_error_tv} and Lemma \ref{lem:prop_error_tv}, the error at the next iteration $t+\delta$ satisfies the following linear recurrence:
\begin{equation}
\label{eq:tv_recursion_final}
\varepsilon_{t+\delta} \le (1 + C_2 \delta) \varepsilon_t + C_1 \delta^2,
\end{equation}
where $C_1$ is the local truncation constant and $C_2$ is the error propagation constant.
\end{corollary}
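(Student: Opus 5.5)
This recurrence follows almost immediately from the decomposition \eqref{eq:tv_decomp_main} together with the two preceding lemmas, so the plan is short. First, fix $t$ on the grid and introduce the semi-discrete coupling $\bar\pi_{t+\delta}$ of \eqref{eq:pi_semidiscrete}. Then apply the triangle inequality for the total variation norm to the three probability measures $\pi_{t+\delta}$, $\bar\pi_{t+\delta}$, $\tilde\pi_{t+\delta}$, which gives $\varepsilon_{t+\delta}\le E_{\mathrm{local}}+E_{\mathrm{prop}}$. Before doing so, I would check that all three are genuine probability measures on $\mathcal X^2$. This holds because the normalizers $Z$, $\bar Z$, $D_{\tilde\pi}$ lie in $[e^{-\delta C_{ab}^\infty},e^{\delta C_{ab}^\infty}]$ by Proposition~\ref{prop:ab_uniform_Linfty}, so each normalizer is strictly positive and finite.

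Second, I would bound the first term by Lemma~\ref{lem:local_error_tv}, giving $E_{\mathrm{local}}\le C_1\delta^2$. That lemma relies on the time-Lipschitz bound of Corollary~\ref{cor:F_regularity}(i) and on $\delta\le1$. Third, I would bound the second term by Lemma~\ref{lem:prop_error_tv}, giving $E_{\mathrm{prop}}\le(1+C_2\delta)\varepsilon_t$. Summing the two bounds yields \eqref{eq:tv_recursion_final}.

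The only point needing care, and the mild obstacle, is consistency of the two updates at the same time $t$. The exact step \eqref{eq:pi_t+delta} tilts $\pi_t$ by $e^{G}$. The practical step \eqref{eq:pi_tilde_t+delta} tilts $\tilde\pi_t$ by the frozen weight $e^{\delta F_t}$, where $F_t$ is the \emph{exact} rate evaluated along the ideal path. This is precisely the setting in which Lemma~\ref{lem:prop_error_tv} compares $\bar\pi_{t+\delta}$ with $\tilde\pi_{t+\delta}$ using a common weight. I would also note that $F_t$ is only defined for a.e.\ $t$ via Rademacher's theorem. To handle this, I would either take the grid points in the full-measure set or use the Lipschitz continuous representative of $t\mapsto F_t$ given by Proposition~\ref{prop:ab_stability}. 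With that choice, $F_t$ is defined for every $t$ and satisfies the $L^\infty$ bound $C_{ab}^\infty$ by continuity, so both lemmas apply at every grid point with the stated constants.
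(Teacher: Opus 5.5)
Your proposal is correct and follows the paper's route: the triangle inequality through the semi-discrete coupling $\bar\pi_{t+\delta}$ as in \eqref{eq:tv_decomp_main}, followed by Lemma~\ref{lem:local_error_tv} and Lemma~\ref{lem:prop_error_tv}. Your extra remarks are sound refinements that the paper leaves implicit. These are the positivity of the normalizers and the use of the Lipschitz-continuous representative of $t\mapsto F_t$ from Proposition~\ref{prop:ab_stability}; use that representative rather than your other option, since a prescribed grid $t_k=k\delta$ cannot in general be placed inside a given full-measure set.
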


With the step-wise recurrence established, we now synthesize these components to solve for the global convergence bound over the entire algorithm horizon.

\begin{proof}[Proof of Theorem \ref{thm:global_tv_convergence}]
Consider the sequence of errors $\{\varepsilon_{t_k}\}_{k=0}^N$ where $t_k = k\delta$. The algorithm is supposed to be initialized with $\tilde{\pi}_0 = \pi_0 = \pi'$ (both coincide with the reference coupling by construction), so $\varepsilon_0 = \|\pi_0 - \tilde{\pi}_0\|_{\mathrm{TV}} = 0$. By iterating the recurrence from Corollary \ref{cor:error_recursion} starting from $\varepsilon_0 = 0$:
\begin{equation*}
\varepsilon_{t_k} \le C_1 \delta^2 \sum_{j=0}^{k-1} (1 + C_2 \delta)^j = C_1 \delta^2 \frac{(1 + C_2 \delta)^k - 1}{(1 + C_2 \delta) - 1}.
\end{equation*}
Simplifying the denominator gives:
\begin{equation*}
\varepsilon_{t_k} \le \frac{C_1 \delta}{C_2} \left[ (1 + C_2 \delta)^k - 1 \right].
\end{equation*}
Using the bound $(1 + C_2 \delta)^k \le e^{k C_2 \delta}$, and setting $k=N=1/\delta$, we obtain:
\begin{equation*}
\varepsilon_1 \le \frac{C_1}{C_2} (e^{C_2} - 1) \delta,
\end{equation*}
which completes the proof.
\end{proof}

\begin{remark}[Imperfect Initialization]
\label{rmk:imperfect_init}
If the algorithm is initialized with a coupling $\tilde{\pi}_0 \ne \pi_0$, so that $\varepsilon_0 := \|\pi_0 - \tilde{\pi}_0\|_{\mathrm{TV}} > 0$, then iterating the recurrence \eqref{eq:tv_recursion_final} from $\varepsilon_0$ yields
\begin{equation*}
    \varepsilon_1 \le e^{C_2}\,\varepsilon_0 + \frac{C_1}{C_2}(e^{C_2} - 1)\,\delta.
\end{equation*}
The first term decays the initialization error at rate $e^{C_2}$ over the full unit interval, while the second term is the same $O(\delta)$ discretization bias as in the perfect-initialization case. Hence, a sub-optimal reference coupling $\pi'$ incurs only a bounded additional error that does not grow with the number of steps.
\end{remark}

\subsection{Stability Analysis of the Regression Field}

In the stochastic formulation where $I_s = (1-s)X + sY + \sqrt{s(1-s)} \Xi$ with Gaussian noise $\Xi \sim \mathcal{N}(0, \sigma^2 I)$, the raw time derivative $\dot{I}_s$ is spatially unbounded. To rigorously analyze the regression target, we first establish that the analytical conditional vector field is well-defined and pointwise bounded over the compact state space $\mathcal{X}$.

\begin{lemma}[Regularity of the Stochastic Regression Environment]
\label{lem:beta_environment_regularity}
Under Assumptions \ref{ass:compact_X} and \ref{ass:coupling_regularity}, for any fixed interpolation time $s \in (0,1)$ and iteration $t \in [0,1]$, the time derivative satisfies $\dot{I}_s \in L^1(\mathbb{P})$. Consequently, the analytical conditional vector field $v_s(z \mid x,y) := \mathbb{E}[\dot{I}_s \mid I_s=z, x, y]$ is well-defined and uniformly bounded on the spatial domains by a deterministic constant $C_v(s) < \infty$. Furthermore, the marginalized transition density is strictly bounded away from zero by a constant $\underline{p}(s) > 0$.
\end{lemma}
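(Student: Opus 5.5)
We will work directly with the Gaussian bridge structure of the interpolant, conditioning on the endpoints $(x,y)\in\mathcal X\times\mathcal X$. Given $(X,Y)=(x,y)$, the interpolant $I_s=(1-s)x+sy+\sqrt{s(1-s)}\,\Xi$ is Gaussian with mean $m_s(x,y):=(1-s)x+sy$ and covariance $\sigma^2 s(1-s)I$. Its time derivative is
\[
\dot I_s=y-x+\tfrac{1-2s}{2\sqrt{s(1-s)}}\,\Xi .
\]

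\textbf{Integrability.} Since $\mathcal X$ is compact (Assumption~\ref{ass:compact_X}), we have $|y-x|\le D$. The noise coefficient is finite for fixed $s\in(0,1)$ and $\mathbb E|\Xi|<\infty$. Hence
\[
\mathbb E|\dot I_s|\le D+\tfrac{|1-2s|}{2\sqrt{s(1-s)}}\,\sigma\sqrt{d}<\infty,
\]
so $\dot I_s\in L^1(\mathbb P)$ under any coupling on $\mathcal X^2$.

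\textbf{Boundedness of the conditional drift.} Conditioning on $I_s=z$ with $(x,y)$ fixed determines $\Xi$ exactly: $\Xi=(z-m_s(x,y))/\sqrt{s(1-s)}$. Therefore
\[
v_s(z\mid x,y)=y-x+\tfrac{1-2s}{2s(1-s)}\,\bigl(z-m_s(x,y)\bigr).
\]
This expression is bounded by $D+\tfrac{|1-2s|}{2s(1-s)}\,\sup|z-m_s|$. Because $m_s(x,y)\in\mathcal X$ by convexity, it suffices that $z$ ranges over a compact set $\mathcal K\supseteq\mathcal X$. We take $\mathcal K=\mathcal X$, matching the $L^\infty(\mathcal X)$ norm used in Theorem~\ref{thm:main_convergence}. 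This gives $|z-m_s|\le D$ and hence the explicit constant
\[
C_v(s):=D\Bigl(1+\tfrac{|1-2s|}{2s(1-s)}\Bigr).
\]
It blows up as $s\to0,1$, consistent with the restriction to the interior.

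\textbf{Lower bound on the transition density.} The bridge density is
\[
p(z\mid x,y)=(2\pi\sigma^2 s(1-s))^{-d/2}\exp\bigl(-|z-m_s|^2/(2\sigma^2 s(1-s))\bigr).
\]
For $z\in\mathcal X$ this is at least
\[
\underline p(s):=(2\pi\sigma^2 s(1-s))^{-d/2}\exp\bigl(-D^2/(2\sigma^2 s(1-s))\bigr)>0,
\]
uniformly in $(x,y)$. Integrating against any probability measure $\pi$, in particular $\pi_t$ or $\tilde\pi_t$, gives the marginalized density $\int p(z\mid x,y)\,d\pi\ge\underline p(s)$. With the tilting weights included, we get $\mathcal D(\mu,W)\ge\underline p(s)e^{-\delta C_{ab}^\infty}$ by Proposition~\ref{prop:ab_uniform_Linfty}.

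\textbf{Main obstacle.} The delicate point is the spatial domain for $z$. If $z$ is allowed to range over all of $\mathbb R^d$ (or $\mathbb R^{2d}$ in the lifted CFM), neither $C_v(s)$ nor $\underline p(s)$ can be uniform. The first step is therefore to fix the evaluation domain to the compact set $\mathcal X$ (or its lifted analogue $\mathcal X\times\mathcal X$, where the frozen coordinate introduces no noise and the same argument applies componentwise).

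Assumption~\ref{ass:coupling_regularity} enters only to guarantee that $\pi$ is a genuine probability density on $\mathcal X^2$. This ensures the ratio \eqref{eq:beta_functional} is well-defined.
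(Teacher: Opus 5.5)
\textbf{Integrability and $C_v(s)$.} Your first two steps match the paper's proof. Integrability of $\dot I_s$ follows from compactness and the Gaussian first moment, and solving $I_s=z$ for $\Xi$ gives the explicit drift $v_s(z\mid x,y)=(y-x)+\tfrac{1-2s}{2s(1-s)}(z-m_s(x,y))$. You only go further by making $C_v(s)$ explicit via convexity of $\mathcal{X}$.

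\textbf{Density lower bound: a different and stronger route.} Here your argument genuinely departs from the paper's. The paper first takes the minimum $p_{\min}(s)$ of $p(z\mid x,y)$ over $\mathcal{X}^3$. It then adds a pointwise lower bound $\rho_{\min}$ on the joint \emph{density} of $\pi_t$, obtained from Proposition~\ref{prop:ab_uniform_Linfty} together with Assumption~\ref{ass:coupling_regularity}. This gives $\underline{p}(s)=p_{\min}(s)\,\rho_{\min}\,\mathrm{Vol}(\mathcal{X}^2)$. You instead use only that $\pi$ is a probability measure, so $\int p(z\mid x,y)\,d\pi(x,y)\ge\inf_{x,y}p(z\mid x,y)$. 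This gains three things:
\begin{itemize}
\item It drops the factor $\rho_{\min}\mathrm{Vol}(\mathcal{X}^2)\le 1$, so the constant is better.
\item It makes Assumption~\ref{ass:coupling_regularity} unnecessary for this lemma.
\item It applies verbatim to $\tilde\pi_t$. The denominator $\mathcal{D}(\tilde\pi_t,e^{\bar G})$ must also be bounded below in the proof of Theorem~\ref{thm:main_convergence}. The paper's density-based argument, as written, covers only $\pi_t$; extending it would need a separate density lower bound for the approximate coupling, which your argument avoids.
\end{itemize}
The paper's route gains nothing over yours here.

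\textbf{Caveat on the lifted remark.} In the paper's lifted CFM the noise $\Xi\sim\mathcal{N}(0,\sigma^2 I_{2d})$ acts on both coordinates, frozen one included, and that is exactly why a componentwise version of your argument works. If the frozen coordinate really carried no noise, the conditional law of $I_s$ would be degenerate in $\mathbb{R}^{2d}$. You would then need a density lower bound on $\pi$ of the paper's type after all.
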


\begin{proof}
We first establish the conditional bounds for the vector field. The time derivative is given by $\dot{I}_s = (Y - X) + \Xi \frac{1-2s}{2\sqrt{s(1-s)}}$. Since $\Xi \sim \mathcal{N}(0, \sigma^2 I)$, its first moment is finite ($\mathbb{E}[\|\Xi\|] < \infty$). Because $X, Y$ reside in the compact space $\mathcal{X}$, we trivially have $\mathbb{E}[\|\dot{I}_s\|] < \infty$. This $L^1(\mathbb{P})$ integrability guarantees the existence of the conditional expectation. 

Conditioning on $I_s = z, X=x, Y=y$ fixes the noise realization as $\Xi = \frac{z - (1-s)x - sy}{\sqrt{s(1-s)}}$. Substituting this yields the explicit continuous conditional vector field:
\begin{equation*}
    v_s(z \mid x,y) = (y - x) + \frac{1-2s}{2s(1-s)} \big(z - ((1-s)x + sy)\big) = (y - x) + \frac{1-2s}{2s(1-s)} \big(z - m_s(x, y)\big).
\end{equation*}
Since $x, y, z \in \mathcal{X}$ are within the compact state space, all spatial distances are bounded. For any fixed $s \in (0,1)$, the denominator $s(1-s)$ is strictly positive. Applying the triangle inequality yields a finite deterministic bound $\sup_{x,y,z \in \mathcal{X}} \|v_s(z \mid x,y)\| \le \mathrm{diam}(\mathcal{X}) + \frac{|1-2s|}{2s(1-s)} \sup_{x,y,z \in \mathcal{X}} \|z - m_s(x,y)\| =: C_v(s) < \infty$.

Next, we address the non-degenerate marginal densities. The Gaussian transition density $p(z \mid x,y) = \mathcal{N}(z ; m_s(x,y), \sigma_s^2 I)$ is continuous and strictly positive, where the variance schedule is $\sigma_s^2 = \sigma^2 s(1-s)$. For a fixed $s \in (0,1)$, on the compact domain $\mathcal{X}^3$, it attains a strictly positive minimum $p_{\min}(s) > 0$. By Proposition \ref{prop:ab_uniform_Linfty}, the joint density admits a uniform lower bound $\rho_{\min} > 0$. Integrating out the joint coupling explicitly provides the marginalized lower bound $p_{\pi_t}(z; s) \ge p_{\min}(s) \cdot \rho_{\min} \cdot \mathrm{Vol}(\mathcal{X}^2) =: \underline{p}(s) > 0$.
\end{proof}

With the pointwise regularity established, we now prove the main convergence result using a direct Lipschitz stability argument.}

\begin{proof}[Proof of Theorem \ref{thm:main_convergence}]
Fix $z \in \mathcal{X}$ and $s \in (0,1)$. Set $G := \int_t^{t+\delta} F_r\,dr$ and $\bar{G} := \delta F_t$. Define the shorthand
\[
    \mathcal{N}(\mu, W) := \int v_s(z|x,y)\,p(z|x,y)\,W(x,y)\,d\mu(x,y), \quad
    \mathcal{D}(\mu, W) := \int p(z|x,y)\,W(x,y)\,d\mu(x,y)
\]
so that $\beta_s^t(z) = \mathcal{N}(\pi_t, e^G)/\mathcal{D}(\pi_t, e^G)$ and $\tilde{\beta}_s^t(z) = \mathcal{N}(\tilde{\pi}_t, e^{\bar{G}})/\mathcal{D}(\tilde{\pi}_t, e^{\bar{G}})$.

Applying the fractional perturbation identity $\bigl|\tfrac{A}{B} - \tfrac{a}{b}\bigr| \le \tfrac{|A-a|}{B} + \tfrac{|a|\,|B-b|}{Bb}$ with
$(A,B) = (\mathcal{N}(\pi_t,e^G), \mathcal{D}(\pi_t,e^G))$ and $(a,b) = (\mathcal{N}(\tilde{\pi}_t,e^{\bar{G}}), \mathcal{D}(\tilde{\pi}_t,e^{\bar{G}}))$, and using the lower bound $\mathcal{D}(\pi_t, e^G) \ge \underline{p}(s)\,e^{-\delta C_{ab}^\infty}$ from Lemma~\ref{lem:beta_environment_regularity}:
\begin{align*}
|\beta_s^t(z) - \tilde{\beta}_s^t(z)|
\le \frac{|\mathcal{N}(\pi_t,e^G) - \mathcal{N}(\tilde{\pi}_t,e^{\bar{G}})|}{\mathcal{D}(\pi_t,e^G)}
+ \frac{C_v(s)\,|\mathcal{D}(\pi_t,e^G) - \mathcal{D}(\tilde{\pi}_t,e^{\bar{G}})|}{\mathcal{D}(\pi_t,e^G)\,\mathcal{D}(\tilde{\pi}_t,e^{\bar{G}})}.
\end{align*}

For each of these two differences, apply the triangle inequality by splitting at $(\pi_t, e^{\bar{G}})$:
\begin{align*}
|\mathcal{N}(\pi_t,e^G) - \mathcal{N}(\tilde{\pi}_t,e^{\bar{G}})| &\le
\underbrace{|\mathcal{N}(\pi_t,e^G) - \mathcal{N}(\pi_t,e^{\bar{G}})|}_{\text{weight discrepancy}} +
\underbrace{|\mathcal{N}(\pi_t,e^{\bar{G}}) - \mathcal{N}(\tilde{\pi}_t,e^{\bar{G}})|}_{\text{measure discrepancy}},
\end{align*}
and similarly for $\mathcal{D}$.

\textbf{Weight discrepancy.} Since $|e^G - e^{\bar{G}}| \le e^{\delta C_{ab}^\infty}\|G - \bar{G}\|_\infty$ and $\|G - \bar{G}\|_\infty \le \tfrac{1}{2}L_{ab}\delta^2$ (Lemma~\ref{lem:local_error_tv}, Step~1):
\[
|\mathcal{N}(\pi_t,e^G) - \mathcal{N}(\pi_t,e^{\bar{G}})| \le C_v(s)\,e^{\delta C_{ab}^\infty} \cdot \tfrac{1}{2}L_{ab}\delta^2,
\]
and the same bound holds for $|\mathcal{D}(\pi_t,e^G) - \mathcal{D}(\pi_t,e^{\bar{G}})|$.

\textbf{Measure discrepancy.} By the TV duality bound for signed integrals and the uniform weight bound $\|e^{\bar{G}}\|_\infty \le e^{\delta C_{ab}^\infty}$:
\[
|\mathcal{N}(\pi_t,e^{\bar{G}}) - \mathcal{N}(\tilde{\pi}_t,e^{\bar{G}})| \le 2\,C_v(s)\,e^{\delta C_{ab}^\infty}\,\varepsilon_t,
\]
and similarly $|\mathcal{D}(\pi_t,e^{\bar{G}}) - \mathcal{D}(\tilde{\pi}_t,e^{\bar{G}})| \le 2\,e^{\delta C_{ab}^\infty}\,\varepsilon_t$, where $\varepsilon_t := \|\pi_t - \tilde{\pi}_t\|_{\mathrm{TV}}$.

\textbf{Combining.} Substituting into the fractional perturbation bound and absorbing constants:
\begin{align*}
|\beta_s^t(z) - \tilde{\beta}_s^t(z)|
&\le \underbrace{\left(\frac{C_v(s)}{\underline{p}(s)} + \frac{C_v(s)}{\underline{p}(s)^2}\right) e^{2\delta C_{ab}^\infty} \cdot \tfrac{1}{2}L_{ab}\delta^2}_{=:\,K_1(s)\,\delta^2}
+ \underbrace{2C_v(s)\!\left(\frac{1}{\underline{p}(s)} + \frac{1}{\underline{p}(s)^2}\right) e^{2\delta C_{ab}^\infty}\,\varepsilon_t}_{=:\,K_2(s)\,\varepsilon_t}.
\end{align*}
Substituting the global convergence bound $\varepsilon_t \le C_{\mathrm{global}}\,\delta$ yields the pointwise error bound:
\begin{equation}
\label{eq:pointwise_beta_bound}
\|\beta_s^t - \tilde{\beta}_s^t\|_{L^\infty(\mathcal{X})} \le K_1(s)\,\delta^2 + K_2(s)\,C_{\mathrm{global}}\,\delta \le C_\beta(s)\,\delta,
\end{equation}
where $C_\beta(s) := K_1(s) + K_2(s)\,C_{\mathrm{global}}$ is deterministic and strictly finite for any fixed $s \in (0,1)$, which completes the proof.
\end{proof}

\begin{remark}[The Boundary Singularity and Pointwise Convergence]
\label{rmk:boundary_singularity}
It is crucial to note that the bounding constant $C_\beta(s)$ diverges as $s \to 0$ or $1$, driven by the $\mathcal{O}(s^{-1})$ term in the conditional vector field $v_s$. This divergence is not a limitation of our proof technique, but a fundamental mathematical property of diffusion and flow matching frameworks. Forcing dispersed probability mass into deterministic Dirac delta distributions at the boundaries requires an infinitely large vector field. Consequently, directly integrating the $L^\infty$ error bound over the entire interval $[0,1]$ is analytically divergent. Our point-wise guarantee for any internal time $s \in (0,1)$ rigorously reflects this reality, and perfectly justifies the standard engineering practice of utilizing time truncation (early stopping) during numerical simulation to avoid boundary singularities.
\end{remark}

\input{appendix}

\bibliographystyle{unsrtnat}
\bibliography{refs_complete}

\end{document}

%% file: misc/tikz_transposed.tex

\definecolor{mBlue}  {RGB}{52,108,222}
\definecolor{mPink}  {RGB}{205,65,100}
\definecolor{mTeal}  {RGB}{0,168,152}
\definecolor{mPurple}{RGB}{116,76,192}
\definecolor{mLBlue} {RGB}{132,182,242}
\definecolor{mLPink} {RGB}{248,148,178}
\definecolor{mSlate} {RGB}{96,108,124}
\definecolor{mDark}  {RGB}{40,44,56}
\definecolor{mBg}    {RGB}{247,250,254}
\definecolor{mCard}  {RGB}{255,255,255}
\definecolor{mBorder}{RGB}{208,216,232}

\newcommand{\drawblobT}[5][]{%
  \begin{scope}[#1]
    \pgfmathsetseed{#5}
    \colorlet{blobcolor}{#3}
    \pgfmathsetmacro{\rA}{0.65 + 0.10*rand}
    \pgfmathsetmacro{\rB}{0.63 + 0.10*rand}
    \pgfmathsetmacro{\rC}{0.65 + 0.10*rand}
    \pgfmathsetmacro{\rD}{0.63 + 0.10*rand}
    \pgfmathsetmacro{\rE}{0.65 + 0.10*rand}
    \pgfmathsetmacro{\rF}{0.63 + 0.10*rand}
    \pgfmathsetmacro{\rG}{0.65 + 0.10*rand}
    \pgfmathsetmacro{\rH}{0.63 + 0.10*rand}
    \pgfmathsetmacro{\rI}{0.65 + 0.10*rand}
    \pgfmathsetmacro{\rJ}{0.63 + 0.10*rand}
    \pgfmathsetmacro{\rK}{0.65 + 0.10*rand}
    \pgfmathsetmacro{\rL}{0.63 + 0.10*rand}
    \def\blobpath{
      plot[smooth cycle] coordinates {
        ($(#2)+(0:\rA)$)    ($(#2)+(30:\rB)$)
        ($(#2)+(60:\rC)$)   ($(#2)+(90:\rD)$)
        ($(#2)+(120:\rE)$)  ($(#2)+(150:\rF)$)
        ($(#2)+(180:\rG)$)  ($(#2)+(210:\rH)$)
        ($(#2)+(240:\rI)$)  ($(#2)+(270:\rJ)$)
        ($(#2)+(300:\rK)$)  ($(#2)+(330:\rL)$)
      }
    }
    \begin{scope}
      \clip \blobpath;
      \shade[inner color=blobcolor!40, outer color=blobcolor!11]
        ($(#2)+(-1.1,-1.1)$) rectangle ($(#2)+(1.1,1.1)$);
    \end{scope}
    \begin{scope}
      \clip \blobpath;
      \foreach \dx/\dy in {
        -0.20/-0.08, 0.00/0.17, 0.18/-0.11,
        -0.06/0.03,  0.23/0.10, -0.22/0.15,  0.10/-0.04
      }{
        \fill[blobcolor!88!black] ($(#2)+(\dx,\dy)$) circle (1.1pt);
        \fill[white, opacity=0.36] ($(#2)+(\dx+0.03,\dy+0.03)$) circle (0.35pt);
      }
    \end{scope}
    \draw[blobcolor!70!black, line width=1.0pt] \blobpath;
  \end{scope}
}

\begin{tikzpicture}[>=Stealth, thick, node distance=0pt]

\begin{scope}[on background layer]
  \fill[mBg, rounded corners=12pt]
    (-1.5,-3.25) rectangle (11.4,4.50);
  \draw[mBorder, rounded corners=12pt, line width=0.6pt]
    (-1.5,-3.25) rectangle (11.4,4.50);
\end{scope}

\drawblobT{0,3.2}{mBlue}{}{8}
\drawblobT{5,3.2}{mLBlue}{}{13}
\drawblobT{10,3.2}{mTeal}{}{16}

\node[font=\small\bfseries, text=mBlue!85!black,  anchor=north] at ( 1.0, 3.20) {$\mu'$};
\node[font=\small\bfseries, text=mLBlue!85!black, anchor=north] at ( 6.0, 3.20) {$\mu_t$};
\node[font=\small\bfseries, text=mTeal!85!black,  anchor=north] at (11.0, 3.20) {$\mu$};

\draw[mBlue!68,        ->, line width=1.3pt] (0.90, 3.2) -- (4.10, 3.2);
\draw[mLBlue!55!mTeal, ->, line width=1.3pt] (5.90, 3.2) -- (9.10, 3.2);

\node[font=\scriptsize, text=mDark!72, anchor=south, above=3pt]
  at (5.1, 3.8) {$\partial_t\mu_t+\nabla\!\cdot(\mu_t u_t)=0$};

\drawblobT{0,0}{mPink}{}{2}
\drawblobT{5,0}{mLPink}{}{4}
\drawblobT{10,0}{mPurple}{}{18}

\node[font=\small\bfseries, text=mPink!85!black,   anchor=north] at ( 1.0, -0.08) {$\nu'$};
\node[font=\small\bfseries, text=mLPink!85!black,  anchor=north] at ( 6.0, -0.08) {$\nu_t$};
\node[font=\small\bfseries, text=mPurple!85!black, anchor=north] at (11.0, -0.08) {$\nu$};

\draw[mPink!68,          ->, line width=1.3pt] (0.90, 0) -- (4.10, 0);
\draw[mLPink!55!mPurple, ->, line width=1.3pt] (5.90, 0) -- (9.10, 0);

\node[font=\scriptsize, text=mDark!72, anchor=north, below=3pt]
  at (5.1, -0.6) {$\partial_t\nu_t+\nabla\!\cdot(\nu_t v_t)=0$};

\draw[mSlate!82, dashed, line width=1.4pt, ->]
  (-0.05, 2.30) -- (-0.05, 0.90)
  node[midway, left=4pt, font=\footnotesize\bfseries, text=mSlate!85!mDark]
    {$\pi'\!=\!\pi_0$};

\draw[mSlate!72, dashed, line width=1.2pt, ->]
  (4.95, 2.30) -- (4.95, 0.90)
  node[midway, right=4pt, font=\footnotesize\bfseries, text=mSlate!80!mDark]
    {$\pi_t$};

\draw[mSlate!90, dashed, line width=1.55pt, ->]
  (9.95, 2.30) -- (9.95, 0.90)
  node[midway, right=4pt, font=\footnotesize\bfseries, text=mSlate!90!mDark]
    {$\pi^\star$};

\node[fill=mDark!5, rounded corners=6pt,
      minimum width=2.9cm, minimum height=0.95cm]
  at ($(2.5,1.60)+(0.05,-0.06)$) {};
\node[fill=mCard, draw=mSlate!55, rounded corners=5pt,
      line width=0.8pt, minimum width=2.9cm, minimum height=0.95cm,
      font=\footnotesize, align=center, text=mDark]
  at (2.5, 1.60)
  {$\dot\pi_t = \pi_t\bigl(a_t(X)+b_t(Y)\bigr)$\\[2pt]
   \textit{tilting dynamics}};

\node[fill=mDark!5, rounded corners=6pt,
      minimum width=2.9cm, minimum height=0.95cm]
  at ($(7.5,1.60)+(0.05,-0.06)$) {};
\node[fill=mCard, draw=mLBlue!60, rounded corners=5pt,
      line width=0.8pt, minimum width=2.9cm, minimum height=0.95cm,
      font=\footnotesize, align=center, text=mDark]
  at (7.5, 1.60)
  {$\beta_s^t(z)=\mathbb{E}_{\pi_t}[\dot I_s\mid I_s=z]$\\[2pt]
   \textit{CFM field at step $t$}};



\draw[mDark!55, <->, line width=0.9pt]
  (-0.3, -1.32) -- (10.3, -1.32);
\node[font=\scriptsize, text=mDark!80, anchor=north] at (-0.3,  -1.42) {$t{=}0$};
\node[font=\scriptsize, text=mDark!62, anchor=north] at ( 5.0,  -1.42) {$t{=}t_k$};
\node[font=\scriptsize, text=mDark!80, anchor=north] at (10.3,  -1.42) {$t{=}1$};
\node[font=\footnotesize, text=mDark!80, below=4pt]
  at (5.0, -1.62) {$t\in[0,1]$: tilting-path time};


\draw[mBorder, line width=0.5pt] (-1.2, -2.30) -- (10.8, -2.30);

\node[font=\small\bfseries, text=mDark!62, anchor=west]
  at (-1.4, -2.62) {Legend:};

\draw[mBlue!88, -{Stealth[length=4pt,width=3pt]}, line width=1.1pt]
  (0.40,-2.62) -- (1.00,-2.62);
\node[font=\footnotesize, text=mDark!72, anchor=west] at (1.08,-2.62)
  {source path $\mu_t$};

\draw[mPink!88, -{Stealth[length=4pt,width=3pt]}, line width=1.1pt]
  (3.10,-2.62) -- (3.70,-2.62);
\node[font=\footnotesize, text=mDark!72, anchor=west] at (3.78,-2.62)
  {target path $\nu_t$};

\draw[mSlate!88, dashed, -{Stealth[length=4pt,width=3pt]}, line width=1.1pt]
  (5.80,-2.62) -- (6.40,-2.62);
\node[font=\footnotesize, text=mDark!72, anchor=west] at (6.48,-2.62)
  {EOT coupling $\pi_t$};

\draw[mDark!36, -{Stealth[length=4pt,width=3pt]}, line width=1.0pt]
  (8.80,-2.62) -- (9.40,-2.62);
\node[font=\footnotesize, text=mDark!72, anchor=west] at (9.48,-2.62)
  {continuity eq.};

\fill[mBlue!80]   (0.70,-2.96) circle (2.1pt);
\node[font=\footnotesize, text=mDark!72, anchor=west] at (0.92,-2.96)
  {$\mu'$-samples};

\fill[mPink!80]   (3.40,-2.96) circle (2.1pt);
\node[font=\footnotesize, text=mDark!72, anchor=west] at (3.78,-2.96)
  {$\nu'$-samples};

\fill[mTeal!82]   (6.00,-2.96) circle (2.1pt);
\node[font=\footnotesize, text=mDark!72, anchor=west] at (6.48,-2.96)
  {$\mu$-samples};

\fill[mPurple!82] (9.10,-2.96) circle (2.1pt);
\node[font=\footnotesize, text=mDark!72, anchor=west] at (9.48,-2.96)
  {$\nu$-samples};

\end{tikzpicture}

%% file: misc/proof_graph.tex

\definecolor{pgBlue}  {RGB}{52,108,222}
\definecolor{pgYellow}  {RGB}{186,136,26}
\definecolor{pgPink}  {RGB}{205,65,100}
\definecolor{pgTeal}  {RGB}{0,168,152}
\definecolor{pgPurple}{RGB}{116,76,192}
\definecolor{pgLBlue} {RGB}{132,182,242}
\definecolor{pgSlate} {RGB}{96,108,124}
\definecolor{pgDark}  {RGB}{40,44,56}
\definecolor{pgBg}    {RGB}{247,250,254}
\definecolor{pgBorder}{RGB}{208,216,232}

\tikzset{
  pgn/.style={
    draw, rounded corners=5pt,
    minimum width=2.78cm, minimum height=0.82cm,
    inner sep=4pt, align=center, fill=white,
    line width=0.65pt, text=pgDark,
  },
  pgAssump/.style={pgn,
    draw=pgBlue, fill=pgBlue!11, line width=1.2pt},
  pgLemma/.style ={pgn, draw=pgSlate!78, fill=pgSlate!7},
  pgProp/.style  ={pgn, draw=pgTeal!88,  fill=pgTeal!9},
  pgCor/.style   ={pgn, draw=pgPurple!82,fill=pgPurple!9},
  pgThm/.style   ={pgn,
    draw=pgYellow!67, fill=pgYellow!13, line width=1.65pt,
    minimum width=3.65cm, minimum height=0.96cm},
  pgArr/.style={
    -{Stealth[length=3.8pt,width=3.0pt]},
    pgSlate!58, line width=0.48pt},
  pgArrKey/.style={
    -{Stealth[length=5.2pt,width=4.2pt]},
    pgPink!88!pgDark, line width=1.15pt},
}

\begin{tikzpicture}[>=Stealth, node distance=0pt]

\begin{scope}[on background layer]
  \fill[pgBg, rounded corners=13pt]
    (-0.55,-1.15) rectangle (15.15,17.70);
  \draw[pgBorder, rounded corners=13pt, line width=0.62pt]
    (-0.55,-1.15) rectangle (15.15,17.70);
\end{scope}

\draw[pgBorder, line width=0.5pt] (-0.4,16.95) -- (15.0,16.95);

\node[pgAssump, minimum width=1.55cm, minimum height=0.40cm,
     inner sep=2pt, font=\scriptsize] at (1.4, 17.28) {Assumption};
\node[pgLemma,  minimum width=1.35cm, minimum height=0.40cm,
     inner sep=2pt, font=\scriptsize] at (3.55,17.28) {Lemma};
\node[pgProp,   minimum width=1.55cm, minimum height=0.40cm,
     inner sep=2pt, font=\scriptsize] at (5.80,17.28) {Proposition};
\node[pgCor,    minimum width=1.45cm, minimum height=0.40cm,
     inner sep=2pt, font=\scriptsize] at (8.05,17.28) {Corollary};
\node[pgThm,    minimum width=1.35cm, minimum height=0.40cm,
     inner sep=2pt, font=\scriptsize, line width=0.9pt] at (10.2,17.28) {Theorem};
\draw[pgArr, line width=0.68pt] (11.85,17.22) -- (12.70,17.22)
  node[right, font=\scriptsize, text=pgDark!65, inner sep=2pt] {depends on};
\draw[pgArrKey, line width=0.95pt] (11.85,17.38) -- (12.70,17.38)
  node[right, font=\scriptsize, text=pgDark!65, inner sep=2pt] {proves};

\node[pgAssump] (A1) at (1.65,15.0) {%
  \hyperref[ass:compact_X]{\footnotesize\bfseries Ass.~1}\\[-2pt]%
  {\footnotesize Compact $\mathcal{X}$}};
\node[pgAssump] (A2) at (5.10,15.0) {%
  \hyperref[ass:coupling_regularity]{\footnotesize\bfseries Ass.~2}\\[-2pt]%
  {\footnotesize Lip.\ coupling $\pi'$}};
\node[pgAssump] (A3) at (9.20,15.0) {%
  \hyperref[ass:velocity_regularity]{\footnotesize\bfseries Ass.~3}\\[-2pt]%
  {\footnotesize Reg.\ velocity fields}};
\node[pgAssump] (A4) at (13.20,15.0) {%
  \hyperref[ass:initial_regularity]{\footnotesize\bfseries Ass.~4}\\[-2pt]%
  {\footnotesize Reg.\ initial densities}};

\begin{scope}[on background layer]
  \fill[pgLBlue!22, rounded corners=8pt]
    (0.40,11.72) rectangle (14.05,13.88);
  \node[font=\scriptsize\bfseries, text=pgBlue!78, anchor=west]
    at (0.62,13.63)
    {App.~\ref{app:reg_marginal_flows}\quad Marginal Flow Regularity};
\end{scope}

\node[pgLemma] (B1) at (3.0,12.72) {%
  \hyperref[lem:density_bounds]{\footnotesize Lem.~\ref*{lem:density_bounds}}\\[-2pt]%
  {\footnotesize Non-degeneracy}};
\node[pgLemma] (B2) at (7.0,12.72) {%
  \hyperref[lem:temporal_marginal_regularity]{\footnotesize Lem.~\ref*{lem:temporal_marginal_regularity}}\\[-2pt]%
  {\footnotesize Temporal regularity}};
\node[pgLemma] (B3) at (11.0,12.72) {%
  \hyperref[lem:score_time_lip]{\footnotesize Lem.~\ref*{lem:score_time_lip}}\\[-2pt]%
  {\footnotesize Score Lipschitz}};

\begin{scope}[on background layer]
  \fill[pgTeal!11, rounded corners=8pt]
    (0.40,9.42) rectangle (14.05,11.58);
  \node[font=\scriptsize\bfseries, text=pgTeal!82, anchor=west]
    at (0.62,11.33)
    {App.~\ref{app:analy_titlting_rate}.1\quad Tilting Rate Bounds};
\end{scope}

\node[pgLemma] (E1) at (3.00,10.42) {%
  \hyperref[lem:phi_psi_regularity]{\footnotesize Lem.~\ref*{lem:phi_psi_regularity}}\\[-2pt]%
  {\footnotesize $\varphi,\psi$ regularity}};
\node[pgProp]  (E2) at (7.50,10.42) {%
  \hyperref[prop:pot_Lip_time_Linfty]{\footnotesize Prop.~\ref*{prop:pot_Lip_time_Linfty}}\\[-2pt]%
  {\footnotesize Potential Lip-$t$}};
\node[pgProp]  (E3) at (12.00,10.42) {%
  \hyperref[prop:ab_uniform_Linfty]{\footnotesize Prop.~\ref*{prop:ab_uniform_Linfty}}\\[-2pt]%
  {\footnotesize $\|F_t\|_\infty \le C_{ab}^\infty$}};

\begin{scope}[on background layer]
  \fill[pgPurple!9, rounded corners=8pt]
    (0.40,6.12) rectangle (14.05,9.28);
  \node[font=\scriptsize\bfseries, text=pgPurple!78, anchor=west]
    at (0.62,9.03)
    {App.~\ref{app:analy_titlting_rate}.2\quad Spatiotemporal Stability};
\end{scope}

\node[pgLemma] (E4) at (2.00,8.35) {%
  \hyperref[lem:doeblin_from_bounds]{\footnotesize Lem.~\ref*{lem:doeblin_from_bounds}}\\[-2pt]%
  {\footnotesize Doeblin minor.}};
\node[pgLemma] (E5) at (5.25,8.35) {%
  \hyperref[lem:Doeblin_centered_Linfty]{\footnotesize Lem.~\ref*{lem:Doeblin_centered_Linfty}}\\[-2pt]%
  {\footnotesize Doeblin centered}};
\node[pgLemma] (E6) at (8.50,8.35) {%
  \hyperref[lem:operator_stability_Linfty]{\footnotesize Lem.~\ref*{lem:operator_stability_Linfty}}\\[-2pt]%
  {\footnotesize Operator stab.}};
\node[pgLemma] (E7) at (12.50,8.35) {%
  \hyperref[lem:forcing_regularity]{\footnotesize Lem.~\ref*{lem:forcing_regularity}}\\[-2pt]%
  {\footnotesize Forcing reg.}};

\node[pgProp]  (E8) at (6.30,7.00) {%
  \hyperref[prop:ab_stability]{\footnotesize Prop.~\ref*{prop:ab_stability}}\\[-2pt]%
  {\footnotesize $\mathrm{Lip}_t(a_t,b_t) \le L_{ab}$}};
\node[pgCor]   (E9) at (11.20,7.00) {%
  \hyperref[cor:F_regularity]{\footnotesize Cor.~\ref*{cor:F_regularity}}\\[-2pt]%
  {\footnotesize $F_t$ joint regularity}};

\begin{scope}[on background layer]
  \fill[pgPink!10, rounded corners=8pt]
    (0.40,2.60) rectangle (14.05,5.88);
  \node[font=\scriptsize\bfseries, text=pgPink!82, anchor=west]
    at (0.62,5.63)
    {App.~\ref{app:error_analysis}\quad Global Error Analysis};
\end{scope}

\node[pgLemma] (F1) at (3.80,4.92) {%
  \hyperref[lem:local_error_tv]{\footnotesize Lem.~\ref*{lem:local_error_tv}}\\[-2pt]%
  {\footnotesize Local error $C_1\delta^2$}};
\node[pgLemma] (F2) at (9.10,4.92) {%
  \hyperref[lem:prop_error_tv]{\footnotesize Lem.~\ref*{lem:prop_error_tv}}\\[-2pt]%
  {\footnotesize Propagation error $C_2$}};
\node[pgCor]   (F3) at (5.50,3.60) {%
  \hyperref[cor:error_recursion]{\footnotesize Cor.~\ref*{cor:error_recursion}}\\[-2pt]%
  {\footnotesize Error recursion}};
\node[pgLemma] (F4) at (11.20,3.60) {%
  \hyperref[lem:beta_environment_regularity]{\footnotesize Lem.~\ref*{lem:beta_environment_regularity}}\\[-2pt]%
  {\footnotesize $\beta$-env.\ regularity}};

\begin{scope}[on background layer]
  \fill[pgYellow!10, rounded corners=8pt]
    (0.40,0.36) rectangle (14.05,2.26);
\end{scope}

\node[pgThm] (T1) at (4.30,1.28) {%
  \hyperref[thm:global_tv_convergence]{\footnotesize\bfseries Thm.~\ref*{thm:global_tv_convergence}}\\[-2pt]%
  {\footnotesize $\|\pi_t-\tilde\pi_t\|_{\mathrm{TV}}\le C_{\mathrm{global}}\,\delta$}};
\node[pgThm] (T2) at (11.30,1.28) {%
  \hyperref[thm:main_convergence]{\footnotesize\bfseries Thm.~\ref*{thm:main_convergence}}\\[-2pt]%
  {\footnotesize $\|\beta_s^t-\tilde\beta_s^t\|_\infty\le C_\beta(s)\,\delta$}};

\draw[pgArr] (A3.south) .. controls +(0,-0.3) and +(0,0.3)    .. (B1.north);
\draw[pgArr] (A4.south) .. controls +(0,-0.4) and +(0.8,0.4)  .. (B1.north);
\draw[pgArr] (A3.south) .. controls +(0,-0.4) and +(0.4,0.4)  .. (B2.north);
\draw[pgArr] (A4.south) .. controls +(0,-0.3) and +(0,0.3)    .. (B2.north);
\draw[pgArr] (A3.south) .. controls +(0,-0.5) and +(-0.6,0.5) .. (B3.north);
\draw[pgArr] (A4.south) -- (B3.north);
\draw[pgArr] (B1.east)  -- (B2.west);
\draw[pgArr] (B2.east)  -- (B3.west);

\draw[pgArr] (A1.south) .. controls +(0,-2.2) and +(-1.8,1.5) .. (E1.north);
\draw[pgArr] (A2.south) .. controls +(0,-1.8) and +(-0.8,1.2) .. (E1.north);
\draw[pgArr] (B1.south) -- (E1.north);
\draw[pgArr] (B2.south) .. controls +(0,-0.5) and +(0.8,0.5)  .. (E1.north);
\draw[pgArr] (B3.south) .. controls +(0,-0.7) and +(2.5,0.5)  .. (E1.north);
\draw[pgArr] (E1.east)  -- (E2.west);
\draw[pgArr] (E2.east)  -- (E3.west);
\draw[pgArr] (B2.south) .. controls +(0,-0.9) and +(0,0.6)    .. (E2.north);
\draw[pgArr] (E1.east)  .. controls +(2.5,0.2) and +(-3.0,0.2) .. (E3.west);

\draw[pgArr] (E3.south) .. controls +(0,-0.9) and +(10.0,-0.5) .. (E4.north);
\draw[pgArr] (E4.east)  -- (E5.west);
\draw[pgArr] (E5.east)  -- (E6.west);
\draw[pgArr] (B3.south) .. controls +(0,-1.8) and +(0,1.2)    .. (E7.north);
\draw[pgArr] (E6.south) .. controls +(0,-0.3) and +(-0.5,0.3) .. (E8.north);
\draw[pgArr] (E7.south) .. controls +(0,-0.3) and +(1.2,0.3)  .. (E8.north);
\draw[pgArr] (E8.east)  -- (E9.west);

\draw[pgArr] (E9.south) .. controls +(0,-0.6) and +(1.0,0.5)  .. (F1.north);
\draw[pgArr] (E9.south) .. controls +(0,-0.5) and +(0.5,0.4)  .. (F2.north);
\draw[pgArr] (E3.south) .. controls +(0,-3.2) and +(1.8,1.0)  .. (F2.north);
\draw[pgArr] (F1.south) .. controls +(0,-0.3) and +(-0.5,0.3) .. (F3.north);
\draw[pgArr] (F2.south) .. controls +(0,-0.3) and +(0.9,0.3)  .. (F3.north);
\draw[pgArr] (F3.east)  -- (F4.west);

\draw[pgArrKey] (F3.south) -- (T1.north);
\draw[pgArrKey] (F3.south) .. controls +(0,-0.35) and +(-2.0,0.45) .. (T2.north);
\draw[pgArrKey] (F4.south) -- (T2.north);

\end{tikzpicture}

%% file: appendix.tex
\section{Implementation: Tips \& Tricks}
\label{app:tips}

\subsection{Bidirectional flow models}

To reduce error accumulation from time discretisation, we train two lifted
vector fields jointly in an alternating fashion:
\begin{equation*}
\beta_s^{\mu \to \nu}(x, x_0)
:= \mathbb{E}_{\pi}\!\left[\dot{I}_s \,\big|\, I_s = \tbinom{x}{x_0}\right],
\qquad
\beta_s^{\nu \to \mu}(y, y_0)
:= \mathbb{E}_{\pi}\!\left[\dot{J}_s \,\big|\, J_s = \tbinom{y}{y_0}\right].
\end{equation*}
At each outer iteration we alternate between the two directions, fixing the
frozen component via linear interpolation to keep the source marginal on
support. This prevents one marginal from drifting while the other is being
refined, and empirically stabilises training on harder geometries.

\subsection{OT-CFM pretraining}

Standard CFM pretraining on reference-coupling pairs $(x', y') \sim \pi'$
works reasonably well in simple cases but struggles when the coupling has a
multi-modal structure (e.g.\ split-blob geometries). Switching to
\emph{OT-CFM}~\citep{tong2023improving} for the pretraining phase, which
re-indexes each mini-batch of source--target pairs to minimise intra-batch
quadratic cost, yields straighter interpolation paths and faster convergence
of the pretrained model $\beta_s^0$. Crucially, this reduces the initial
transport error before the adaptation loop begins, and the subsequent
dual-tilting iterations have a better starting point to work from.

\subsection{SIREN architecture for high-frequency transports}

Standard GELU or SELU MLPs suffer from \emph{spectral bias}: they fit
low-frequency components of the target velocity field first and converge
slowly to high-frequency components~\citep{rahaman2019spectral}.
For transport maps with fine spatial structure — such as radial warps,
polar vortices, or polynomial shears — this causes the pretrained CFM model
to produce blurry or inaccurate trajectories, impairing the subsequent
tilting phase.

We mitigate this by replacing the hidden-layer activations with
\emph{SIREN} layers~\citep{sitzmann2020implicit}, i.e.\ $\sin(\omega_0
W x + b)$, using the initialization scheme of the original paper.
The final projection layer remains a plain linear layer (no activation)
with a small-gain Xavier initialisation ($\mathrm{gain}=0.01$) to prevent
divergent trajectories early in training.

In practice we use $\omega_0 = 30$ for both the first and hidden layers
(config flags \texttt{siren\_omega\_0} and \texttt{siren\_hidden\_omega}).
SIREN is enabled via \texttt{use\_siren: True} in the training config and
activates the \texttt{BidirectionalSirenStrategy}, which otherwise has
the same interface as the standard bidirectional strategy.
We recommend enabling SIREN for any geometry with sharp boundaries,
discontinuous maps, or a ground-truth transport that is not affine.

\subsection{Particle mixing and the circularity problem}

When the coupling dataset $\mathcal{D}_{k+1}$ is generated by simulating
the current learned model $\beta^{k+1}_s$, all output pairs $(x, \hat{y})$
lie exactly on the model's manifold, driving the importance weights
$\exp(\delta(a_k(x)+b_k(y)))$ toward 1 and collapsing the dual training
signal (we call this the \emph{circularity problem}).

The fix is a \emph{particle fraction} $\alpha \in (0,1]$: a fixed pool of
$n_\mathrm{part}\!=\!\alpha|\mathcal{D}|$ particle slots is initialised once from $\pi'$ and persists across all outer iterations.
At each step the slots are advanced by exactly one Euler step along the
marginal bridge fields $(u_t, v_t)$, so after $k$ steps each slot has
travelled $k/N$ of the way from $t=0$ to $t=1$.
These off-manifold pairs maintain non-trivial weight variance and restore
the dual signal. The default $\alpha = 0.2$ works well across all
experiments; see Table~\ref{tab:alpha_ablation} for the sensitivity study.
\subsection{Numerical scale factor}

For 2D experiments the source and target distributions are placed far apart
(up to $\pm 10$ in each coordinate), which inflates the quadratic term of
the dual energy~\eqref{eq:energy-ab} and can lead to large gradient
magnitudes. We apply a global \texttt{scale\_factor} of $50$ that rescales
the input coordinates to $[-1, 1]^2$ before they enter the dual networks
and marginal bridges, then undoes the scaling on output.
For image experiments the scale factor is $1$ (pixel values are already
in a bounded range).

\subsection{Conditional bridge pretraining}

When the reference and new marginals are positioned far apart, the bridge
velocity fields $u_t, v_t$ can create crossing paths within a mini-batch,
leading to tangled trajectories. Setting \texttt{use\_conditional\_bridge:
True} conditions the bridge on the initial position $x_0$ (frozen
component), which eliminates crossing-path ambiguity and stabilises bridge
training at no computational overhead.

\begin{table}[ht]
\centering
\caption{\textbf{Positioning of our method relative to existing methods.}
$\checkmark$ = satisfies the criterion; $\times$ = does not.}
\label{tab:positioning}
\resizebox{\linewidth}{!}{
\begin{tabular}{lcccc}
\toprule
Method & Generative sampler & Unknown cost & Transfer to new $(\mu,\nu)$ & EOT/SB \\
\midrule
Sinkhorn \citep{cuturi2013sinkhorn}    & $\times$ & $\times$ & $\times$ & $\checkmark$ \\
OT-CFM \citep{tong2023improving}       & $\checkmark$ & $\times$ & $\times$ & Approx. \\
GENOT \citep{klein2024genot}           & $\checkmark$ & $\times$ & $\times$ & $\checkmark$ \\
DSBM \citep{shi2023diffusion}          & $\checkmark$ & $\times$ & $\times$ & $\checkmark$ \\
LightSB \citep{korotin2024light}       & $\checkmark$ & $\times$ & $\times$ & $\checkmark$ \\
Meta OT \citep{amos2023meta}           & $\times$ & $\times$ & $\checkmark$ & $\times$ \\
CondOT \citep{bunne2022condot}         & $\times$ & $\times$ & $\checkmark$ (supervised) & $\times$ \\
\midrule
\textbf{TACO (ours)}                        & $\checkmark$ & $\checkmark$ & $\checkmark$ & $\checkmark$ \\
\bottomrule
\end{tabular}
}
\end{table}

\section{Experiment Setup}
\label{app:setup}

\subsection{Hyperparameters}
\label{app:hyperparams}

Table~\ref{tab:hyperparams_full} collects the key hyperparameters across all
experiment families. All models use the Adam optimiser~\citep{kingma2014adam} with default momentum
($\beta_1 = 0.9$, $\beta_2 = 0.999$) and fixed learning rates (no
cosine decay).

\begin{table}[ht]
\centering
\caption{Hyperparameters used across experiment families.
  ``Outer iterations'' refers to the number of Euler steps $N$ in
  Algorithm~\ref{alg:OURMETHOD}. The pre-training cost is incurred once and
  amortised across all subsequent tasks.}
\label{tab:hyperparams_full}
\vspace{2pt}
\setlength{\tabcolsep}{5pt}
\begin{tabular}{lcccc}
\toprule
Parameter & 2D / Nonlinear & 2D High-dim & Colour (MNIST) & Single Cell \\
\midrule
Dataset size          & $10^6$  & $5\times10^5$ & 2\,000  & $10^5$  \\
Batch size            & 8\,192  & 4\,096        & 128     & 2\,048  \\
Marginal epochs       & 2\,000  & 3\,000        & 10\,000 & 3\,000  \\
Pre-train epochs      & 5\,000  & 5\,000        & 15\,000 & 8\,000  \\
Dual epochs per iter. & 100     & 100           & 100     & 100     \\
CFM epochs per iter.  & 500     & 500           & 250     & 500     \\
Outer iterations $N$  & 50--100 & 100           & 30--50  & 100     \\
Particle ratio $\alpha$ & 0.20  & 0.25          & 0.20    & 0.20    \\
Noise $\sigma$        & 0.1     & 0.01          & 0.05    & 0.05    \\
Scale factor          & 50      & 50            & 1.0     & 20      \\
Dual LR               & \multicolumn{3}{c}{$1\times10^{-4}$} & $1\times10^{-3}$ \\
CFM LR                & \multicolumn{3}{c}{$2\times10^{-4}$} & $1\times10^{-3}$ \\
Bridge LR             & \multicolumn{3}{c}{$2\times10^{-4}$} & $1\times10^{-3}$ \\
\midrule
SIREN (for nonlinear) & $\checkmark$ & $\times$ & $\times$ & $\times$ \\
$\omega_0$ (SIREN)    & 30     & ---           & ---     & ---     \\
\bottomrule
\end{tabular}
\end{table}

\subsection{Network architectures}
\label{app:arch}

\paragraph{2D / Nonlinear experiments.}
All networks (dual potentials, marginal bridges, CFM flow) are feed-forward
networks with sinusoidal time embedding of dimension 64 projected to the
hidden width before the first layer.
For standard geometries (Simple, Medium, Complex, Moon) we use GELU-MLP
backbones with 2--3 hidden layers of width 32--128, depending on geometry
difficulty.
For nonlinear geometries (Section~\ref{app:nonlinear}), the CFM and bridge
networks are replaced by \texttt{SirenMLP} with 3 hidden layers of width
128 and $\omega_0 = 30$.
The dual potentials always use standard GELU-MLPs with a scalar output.

\paragraph{Image experiments.}
The CFM model is a conditional UNet~\citep{ronneberger2015unet} that receives
a $2C\!\times\!H\!\times\!W$ input formed by concatenating the current state
$x_t$ with the frozen source $x_0$ along the channel axis.
Architecture constants: base channels $= 32$, channel multiplier $(1,2,3,4)$,
1 residual block per resolution level, image size $32\!\times\!32$.
Dual potentials use a ``stop-middle'' UNet variant (global average pooling
at the bottleneck) to map images to scalars.

\paragraph{ODE integration at inference.}
All transport maps are evaluated with 200 Euler steps, consistent with the
\texttt{num\_steps=200} setting used throughout evaluation.

\paragraph{Baselines.}
CFM, OT-CFM, and LightSB are trained from scratch on the new marginals
with access to the true quadratic cost.
DSB is run with its default score-matching objective.
All hyperparameters for baselines are taken from the respective original
papers and tuned to the same number of gradient steps as our pretraining.

\section{Metrics}
\label{app:metrics}

We use the following suite of metrics to evaluate both marginal quality and
coupling quality. All metrics are lower-is-better.

\begin{itemize}[noitemsep,topsep=2pt]
\item \textbf{$\mathrm{SW}_2(\hat\nu)$}: Sliced-Wasserstein-2~\citep{bonneel2015sliced} on the
  \emph{target} marginal — how closely the generated output distribution
  matches the new target $\nu$.
\item \textbf{$\mathrm{SW}_2(\hat\mu)$}: Sliced-Wasserstein-2~\citep{bonneel2015sliced} on the
  \emph{source} marginal — verifies that the source is preserved through
  the transport.
\item \textbf{$\mathrm{MapErr}_\mathrm{fwd}$}: RMSE between the
  ground-truth map $T(x)$ and the model output $\hat{y}$, when $T$ is
  analytically known (Simple, Medium, Complex, all nonlinear experiments).
\item \textbf{Sinkhorn divergence $S_\varepsilon$}~\citep{feydy2019interpolating}: Debiased entropic OT
  cost measuring transport quality; $S_\varepsilon(P,P)=0$.
\item \textbf{MMD} and \textbf{Energy distance}: Kernel- and
  Euclidean-distance-based distributional metrics.
\item \textbf{BW$_2$}: Bures-Wasserstein distance to the ground-truth
  Gaussian coupling (scalability experiments only).
\item \textbf{FID}~\citep{heusel2017gans} and \textbf{KID}~\citep{binkowski2018demystifying}: Fréchet Inception Distance and
  Kernel Inception Distance for image-domain experiments.
\end{itemize}

Baselines marked $\dagger$ have direct access to the true quadratic cost;
TACO does not.

\section{2D Transport Map Visualisations}
\label{app:2d_transport}

For each standard 2D geometry we show three panels:
(a) the reference coupling $\pi'$ used to pre-train the model;
(b) the pretrained model applied to the new marginals (iteration 0,
    before any dual adaptation);
(c) the adapted model after $N$ outer iterations.
Below each row we additionally show the \emph{coupling evolution} — a
collage of the estimated joint distribution $(x, \hat{y}) \sim \tilde\pi_t$
at equally-spaced checkpoints throughout the adaptation run.
The evolution illustrates how the coupling mass progressively migrates from
the reference support to the correct target coupling.

\emph{Background}: KDE density heatmap (viridis).
\emph{Cyan}: source samples; \emph{Orange-red}: transported samples;
\emph{Yellow}: true-target samples.
Thin grey lines are trajectory connections between paired particles.
Figures are individually zoomed to fit the displayed distributions and do
not share the same axis scaling.

\subsection{2D-Simple}
\label{app:2d_simple}

Reference geometry: anticorrelation map $T(x) = -x$; source/target at
$(\pm10, \pm10)$. The reference coupling $\pi'$ is centered at the origin.
After $N=50$ iterations the adapted model recovers the anticorrelation map
with $\mathrm{MapErr}_\mathrm{fwd} = 0.021$.

\begin{figure}[ht]
\centering
\begin{subfigure}[b]{0.32\linewidth}
  \includegraphics[width=\linewidth]{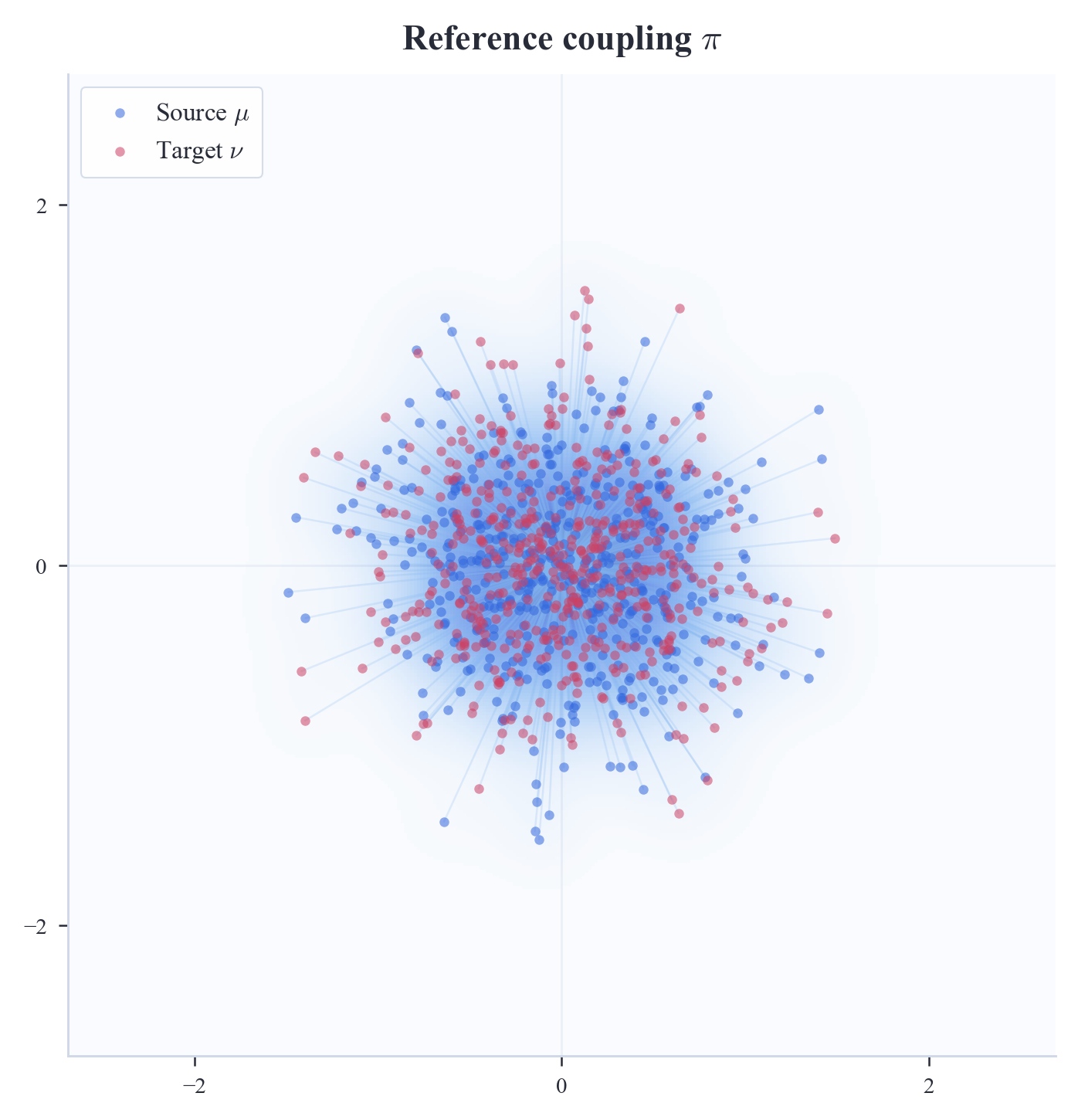}
  \caption{Reference coupling $\pi'$.}
\end{subfigure}\hfill
\begin{subfigure}[b]{0.32\linewidth}
  \includegraphics[width=\linewidth]{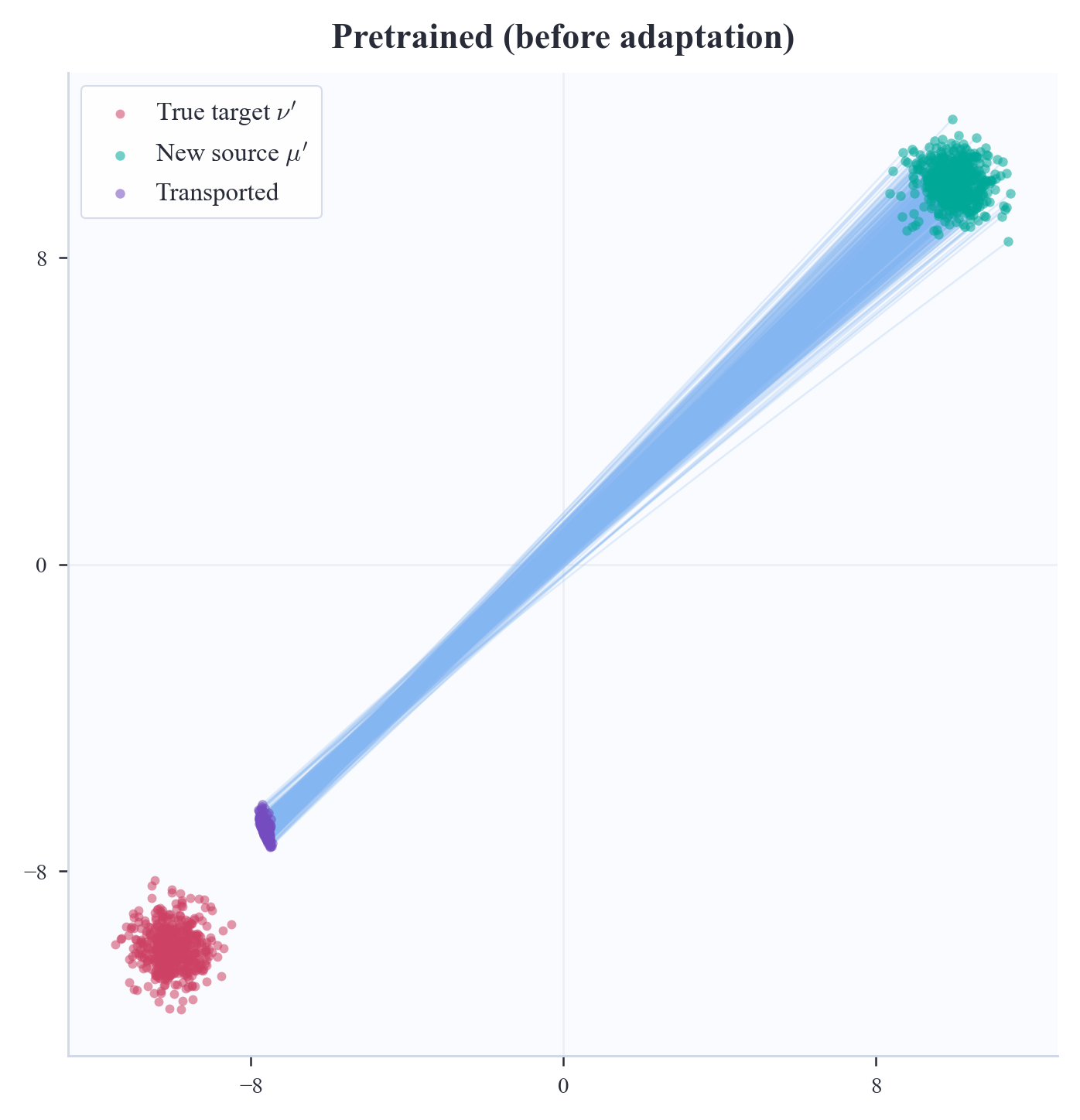}
  \caption{Pretrained ($\mathrm{SW}_2\!=\!14.59$).}
\end{subfigure}\hfill
\begin{subfigure}[b]{0.32\linewidth}
  \includegraphics[width=\linewidth]{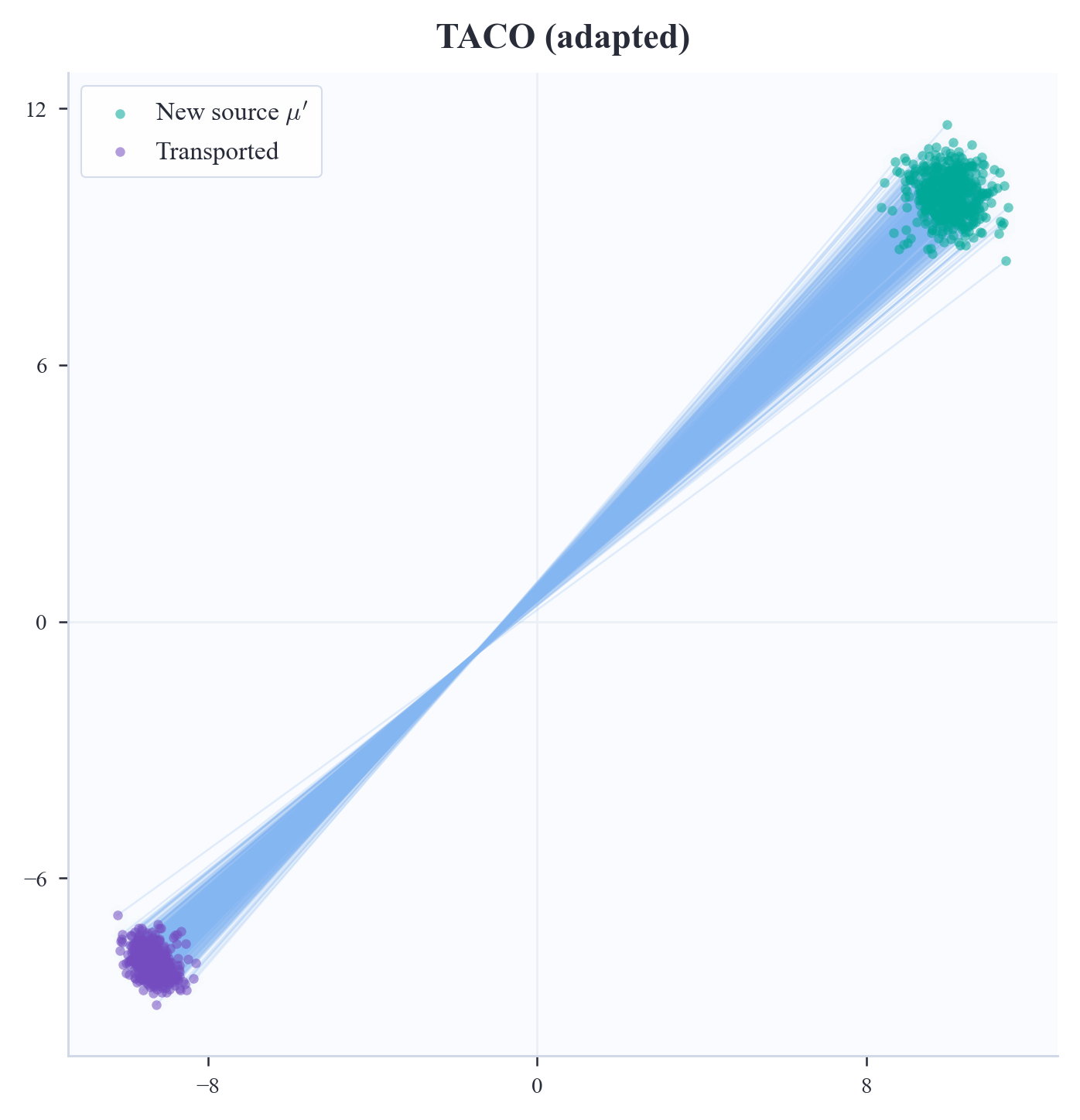}
  \caption{Adapted ($\mathrm{SW}_2\!=\!0.025$).}
\end{subfigure}

\par\vspace{0.4em}

\begin{subfigure}[b]{\linewidth}
  \centering
  \includegraphics[width=\linewidth]{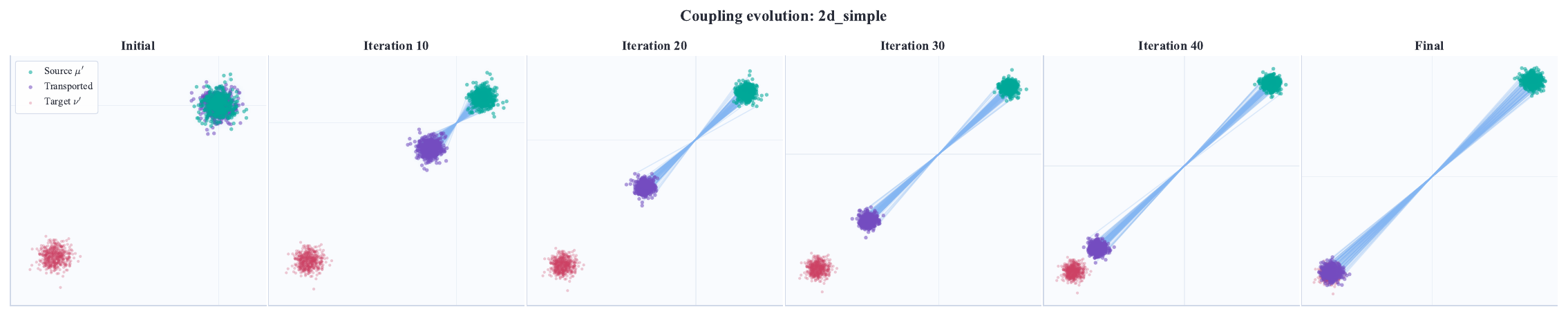}
  \caption{Coupling evolution: joint distribution $(x,\hat{y})$ at
    equally-spaced adaptation checkpoints (left $\to$ right: iteration 0
    through $N$).}
\end{subfigure}

\caption{\textbf{Transport map and coupling evolution on 2D-Simple.}
  The pretrained model applies the reference anticorrelation map at the
  wrong location (near origin instead of $(\pm10,\pm10)$), yielding
  $\mathrm{SW}_2\!=\!14.59$.
  After 50 outer iterations the model correctly recovers $T(x){=}{-}x$
  at the new marginals, reaching $\mathrm{SW}_2\!=\!0.025$ —
  competitive with baselines that observe the ground-truth coupling.
  The coupling evolution (bottom) shows the joint mass smoothly migrating
  from the diagonal (no transport) to the correct anti-diagonal structure.}
\label{fig:transport_evolution_simple_app}
\end{figure}

\subsection{2D-Medium}
\label{app:2d_medium}

Reference geometry: $60^\circ$ rotation, 2-blob source/target.
New marginals: 4-blob square arrangement at $(\pm5,\pm5)$.
The algorithm must infer the rotation law and generalise it to a more
complex marginal structure.

\begin{figure}[ht]
\centering
\begin{subfigure}[b]{0.32\linewidth}
  \includegraphics[width=\linewidth]{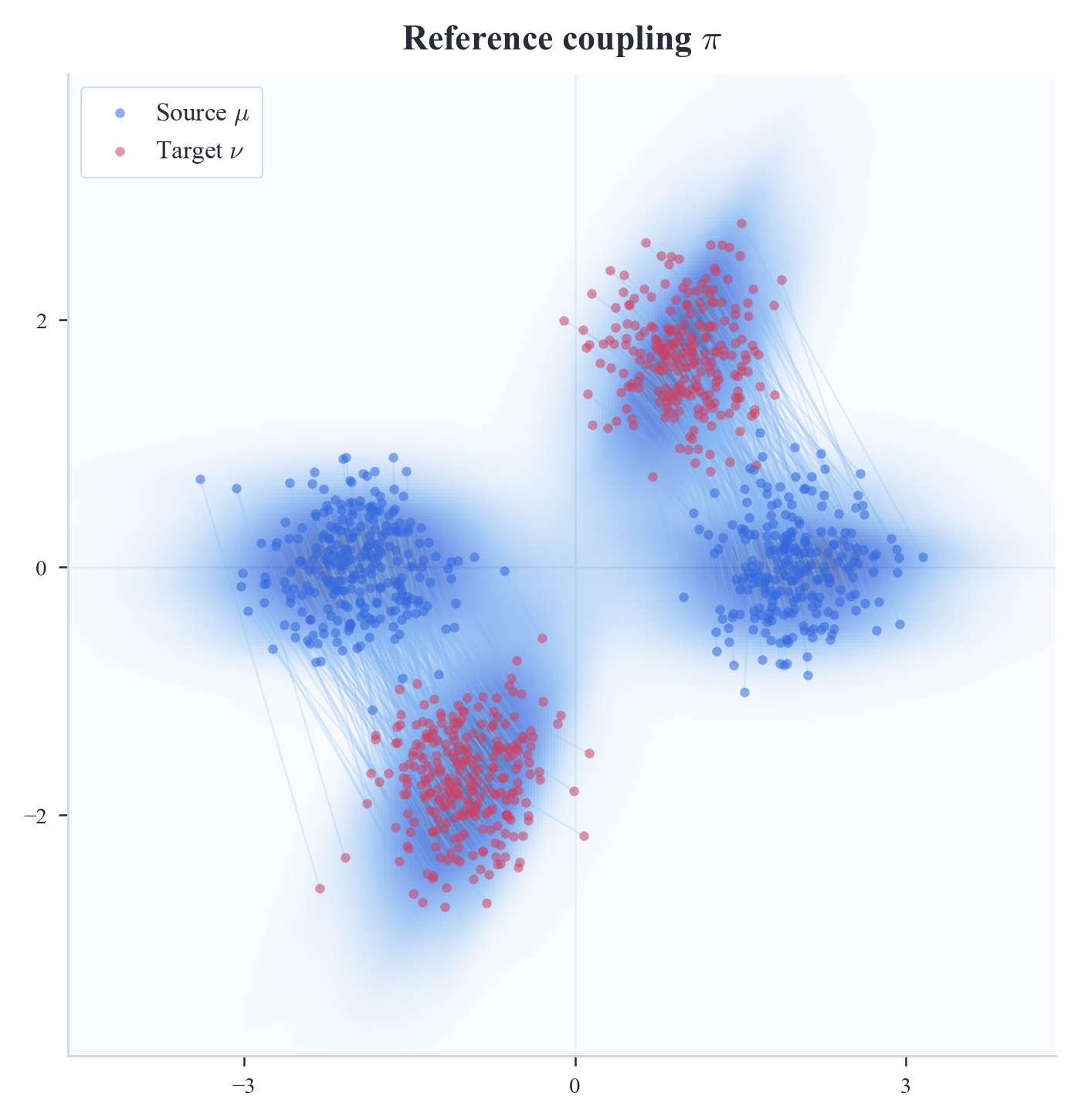}
  \caption{Reference coupling $\pi'$.}
\end{subfigure}\hfill
\begin{subfigure}[b]{0.32\linewidth}
  \includegraphics[width=\linewidth]{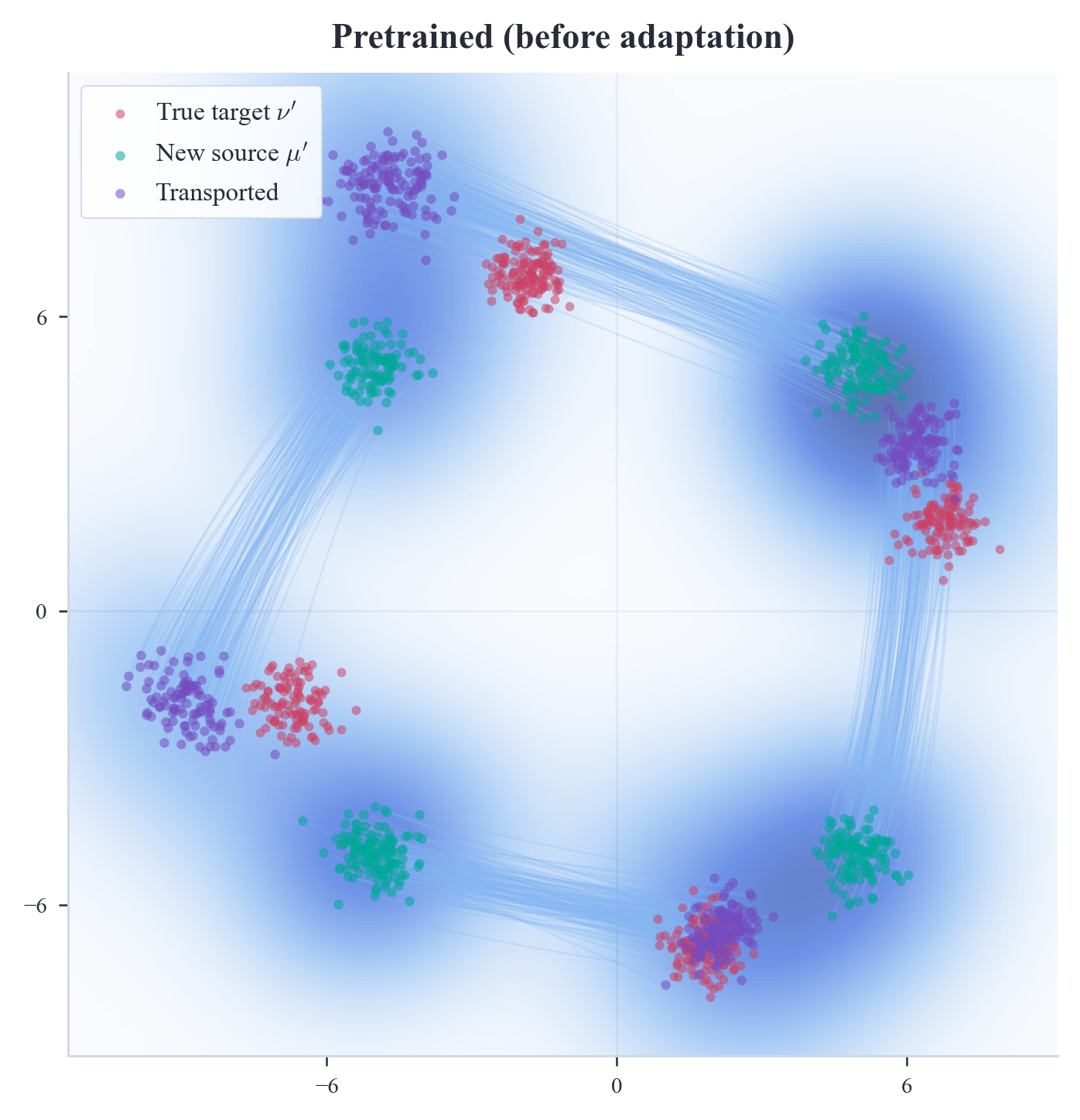}
  \caption{Pretrained ($\mathrm{SW}_2\!\approx\!2.19$).}
\end{subfigure}\hfill
\begin{subfigure}[b]{0.32\linewidth}
  \includegraphics[width=\linewidth]{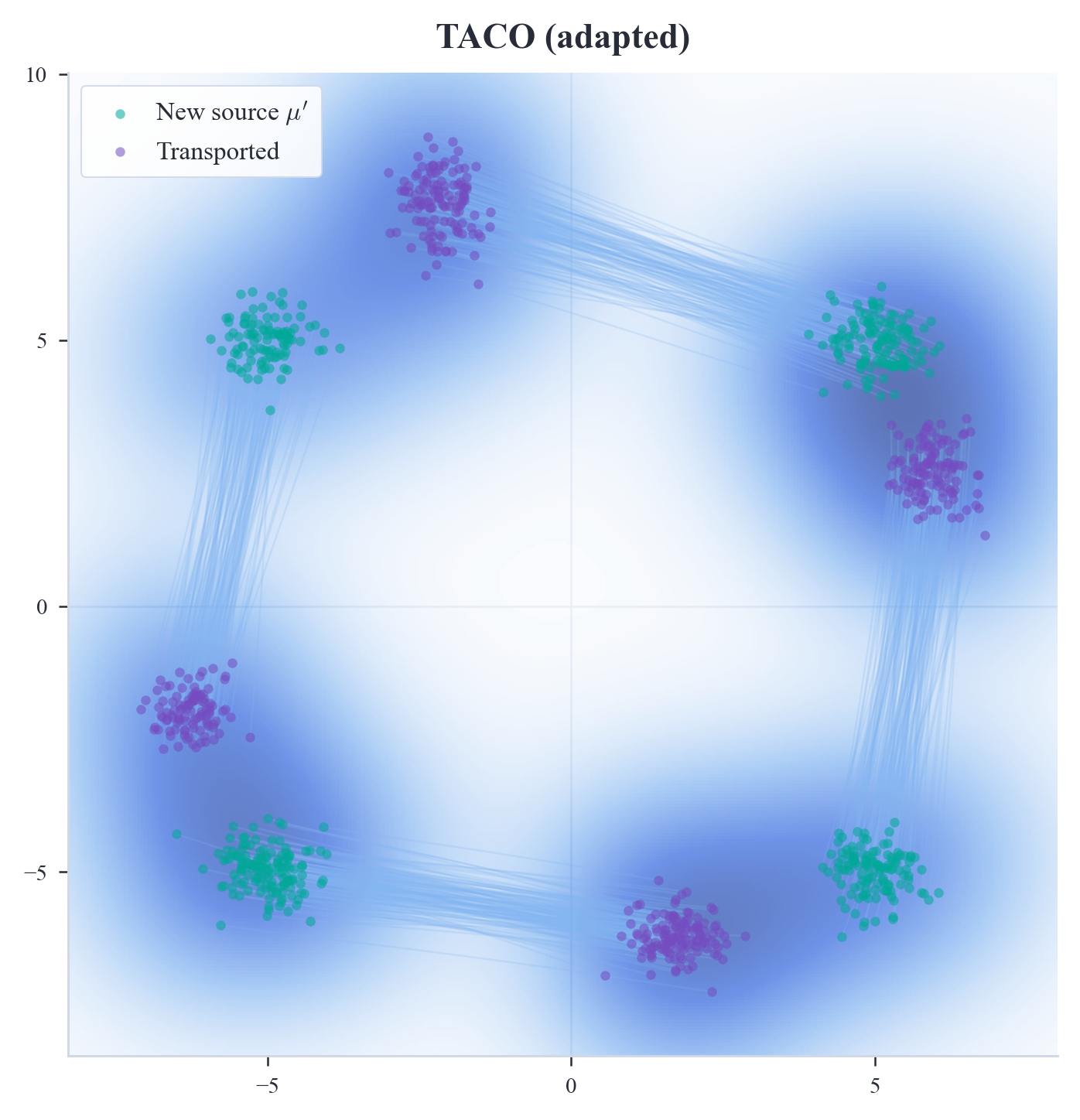}
  \caption{Adapted ($\mathrm{SW}_2\!\approx\!0.756$).}
\end{subfigure}

\par\vspace{0.4em}

\begin{subfigure}[b]{\linewidth}
  \centering
  \includegraphics[width=\linewidth]{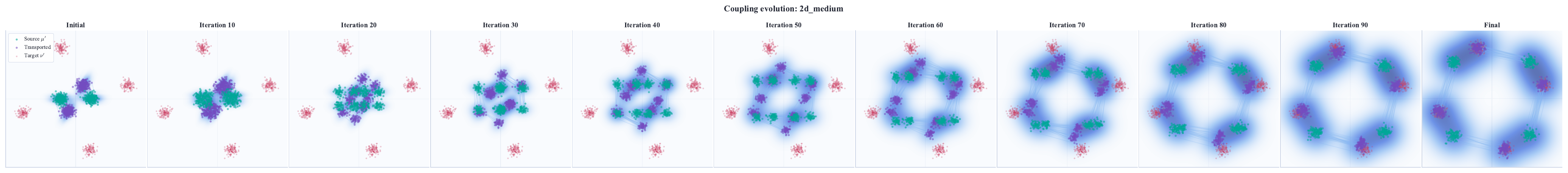}
  \caption{Coupling evolution across adaptation iterations.}
\end{subfigure}

\caption{\textbf{Transport map and coupling evolution on 2D-Medium.}
  The reference coupling encodes a $60^\circ$ rotation on 2-blob Gaussians.
  The new task requires transferring this rotation to a 4-blob square
  arrangement, which the pretrained model handles poorly ($\mathrm{SW}_2\!\approx\!2.19$).
  After 100 outer iterations, the adapted model ($\mathrm{SW}_2\!\approx\!0.756$)
  is competitive with cost-aware baselines (best: OT-CFM$^\dagger\!=\!0.623$).
  The coupling evolution shows the gradual alignment of the four target blobs.}
\label{fig:transport_evolution_medium_app}
\end{figure}

\subsection{2D-Complex}
\label{app:2d_complex}

Reference geometry: $45^\circ$ rotation, 5-blob cross; new marginals
scaled $6.7\times$ relative to the reference.
This tests generalisation under large scale change in addition to rotation.

\begin{figure}[ht]
\centering
\begin{subfigure}[b]{0.32\linewidth}
  \includegraphics[width=\linewidth]{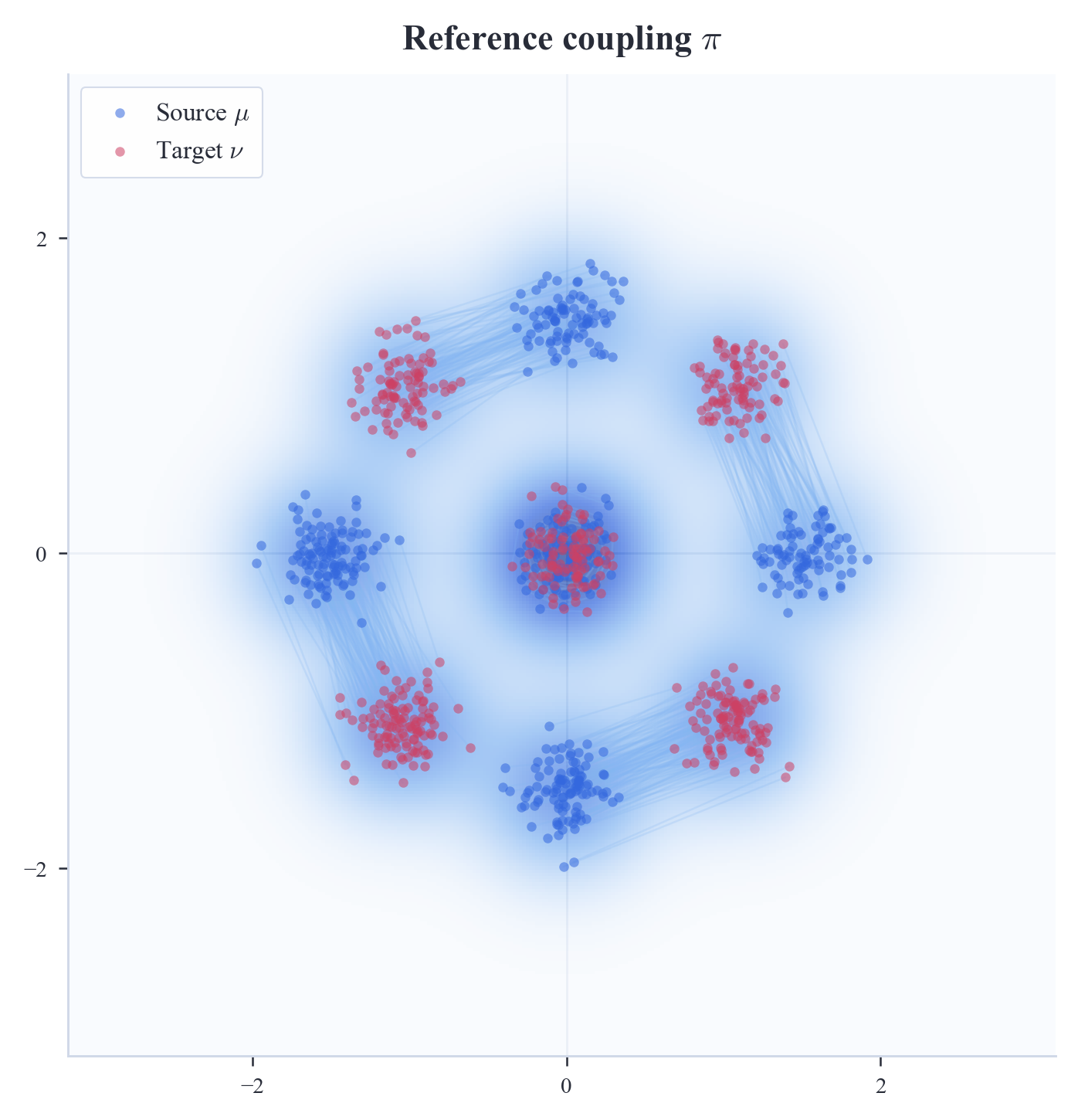}
  \caption{Reference coupling $\pi'$.}
\end{subfigure}\hfill
\begin{subfigure}[b]{0.32\linewidth}
  \includegraphics[width=\linewidth]{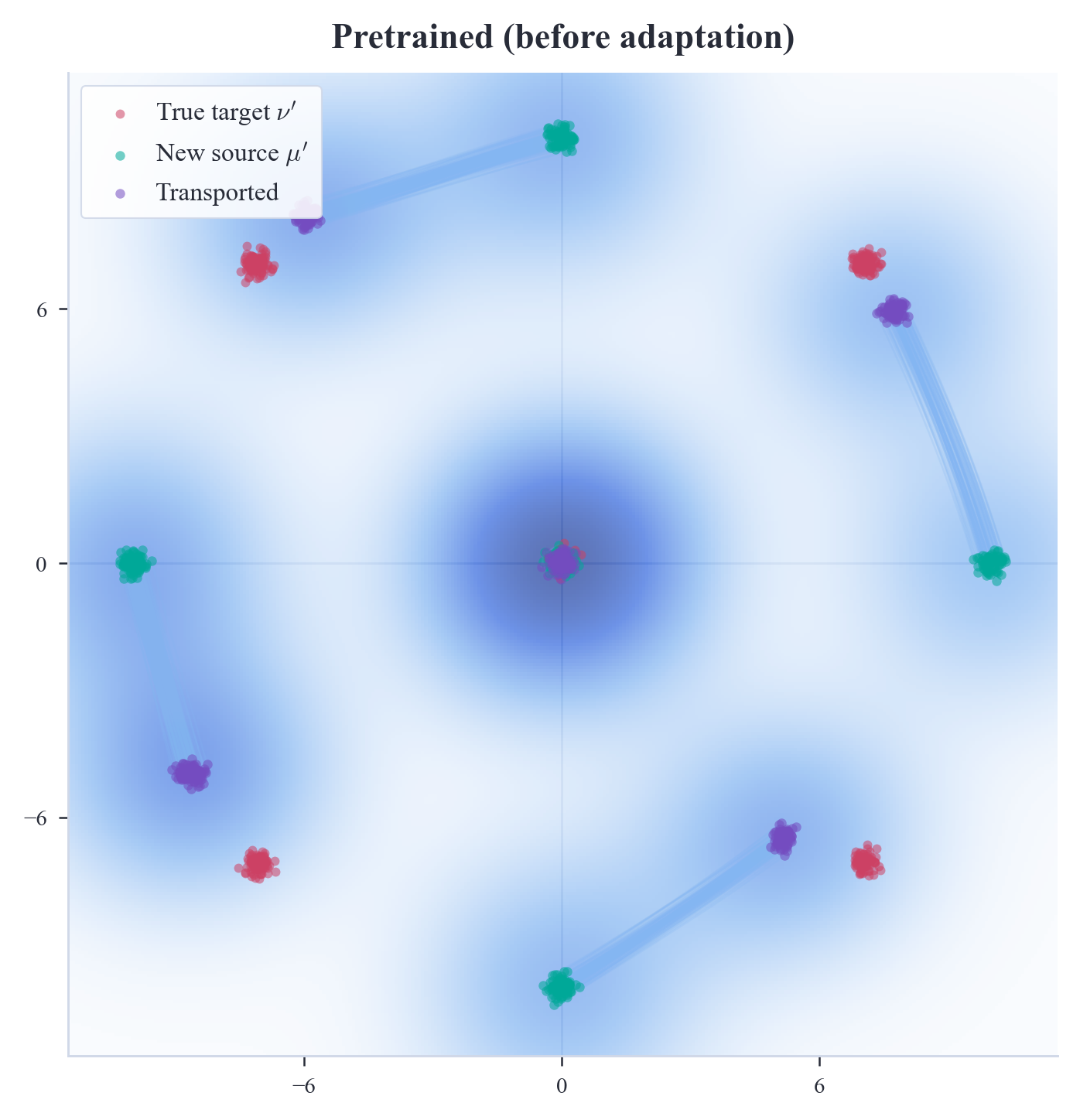}
  \caption{Pretrained ($\mathrm{SW}_2\!=\!2.64$).}
\end{subfigure}\hfill
\begin{subfigure}[b]{0.32\linewidth}
  \includegraphics[width=\linewidth]{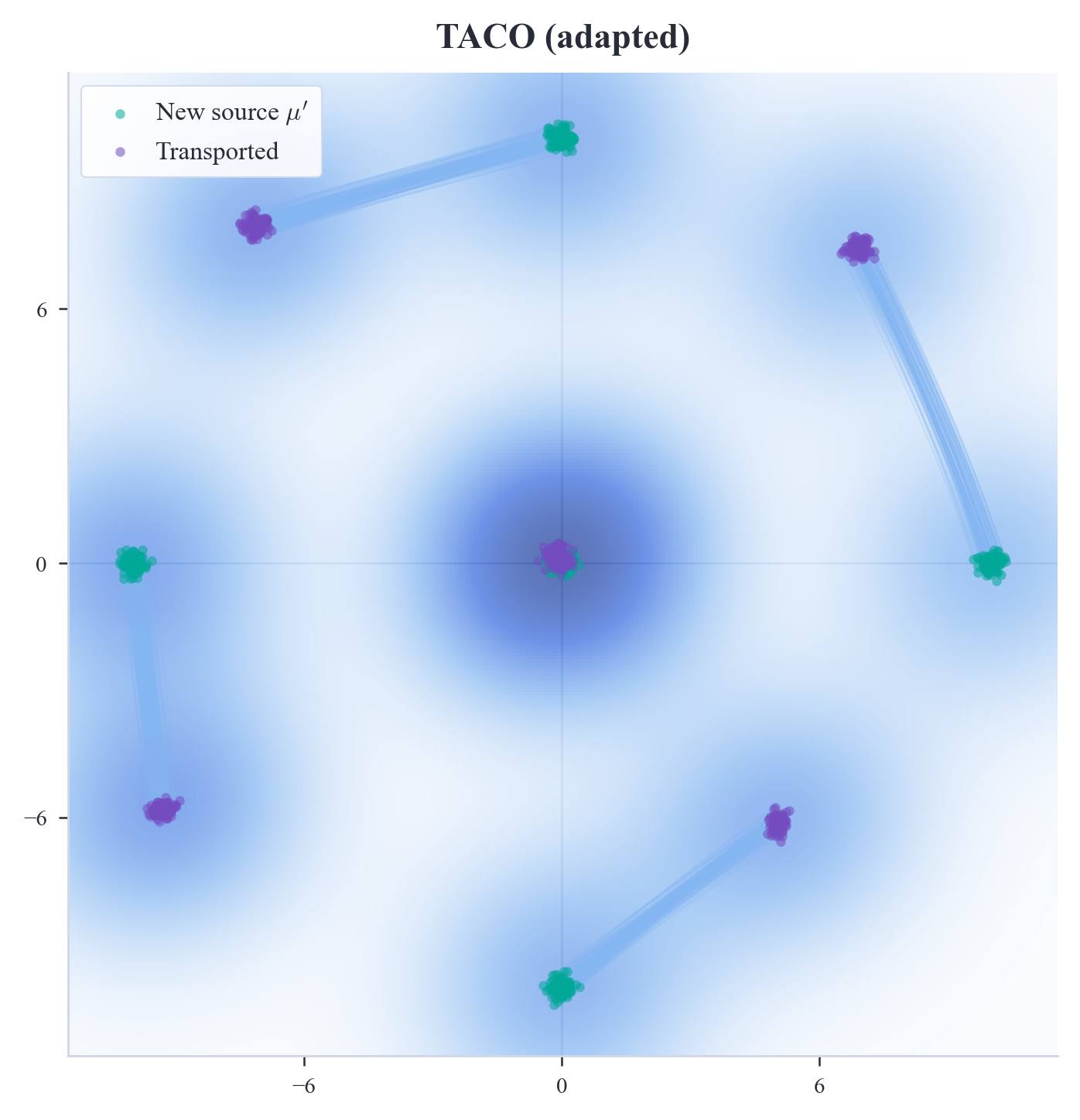}
  \caption{Adapted ($\mathrm{SW}_2\!=\!0.544$, implicit).}
\end{subfigure}

\par\vspace{0.4em}

\begin{subfigure}[b]{\linewidth}
  \centering
  \includegraphics[width=\linewidth]{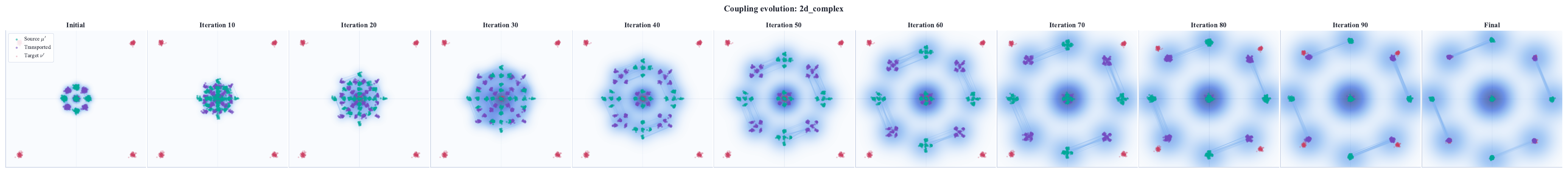}
  \caption{Coupling evolution: the five-blob cross progressively rotates
    and expands to match the new marginal scale.}
\end{subfigure}

\caption{\textbf{Transport map and coupling evolution on 2D-Complex.}
  The source is a five-blob cross that is scaled $6.7\times$ relative to
  the reference, making one-shot reweighting infeasible (the target is
  outside the reference support).
  The implicit-loss variant achieves the best
  $\mathrm{SW}_2\!=\!0.544$, improving $79\%$ over the pretrained baseline
  ($2.64$) and outperforming cost-aware baselines on this geometry.}
\label{fig:transport_evolution_complex_app}
\end{figure}

\subsection{2D-Moon}
\label{app:2d_moon}

Reference geometry: crescent (moon) shapes as source and target; no
known analytical map. This tests the algorithm on a topology where
standard Gaussian-based assumptions break down.





%
\section{Non-linear 2D Experiments}
\label{app:nonlinear}

Beyond the standard geometries, we evaluate on four experiments where the
ground-truth transport map $T$ is non-linear and/or the reference and new
marginals require non-trivial spatial reasoning.
All four experiments use SIREN networks (Section~\ref{app:tips}) and 50
outer iterations.
Results are reported in Table~\ref{tab:nonlinear_results}.

\paragraph{Circle.}
Source $\mu'$: full circle centered at $(8,-1)$, radius 2.
New source $\mu$: full circle at origin, radius 3.
New target $\nu$: upper/lower semi-circles shifted $\pm 5$ in $y$.
The transport splits a single circle into two spatially separated
semi-circles — a topologically non-trivial map.

\paragraph{Radial warp.}
Transport map $T(x) = x \cdot (1 + 0.5\|x\|^2)$, a cubic radial
expansion. Both reference and new marginals are 2-blob Gaussians at
different scales and positions.


\paragraph{Polar twist.}
Transport map in polar coordinates: $r' = r$, $\theta' = \theta + \sin(r)$
- a vortex-like map that preserves radius but twists angle by a nonlinear function of the radius.

\begin{table}[ht]
\centering
\caption{\textbf{Non-linear 2D experiments}: metrics at iteration 0 (Pretrained-only)
  and after 50 outer steps of TACO.
  \textbf{Bold} = best in each column (lower is better for all metrics).
  Note that $\mathrm{SW}_2(\hat\nu)$ can be misleading here: the pretrained
  model sometimes achieves good marginal coverage (low $\mathrm{SW}_2$) while
  the coupling structure remains far from optimal (high $S_\varepsilon$);
  adaptation corrects the coupling at the cost of a slight marginal spread.}
\label{tab:nonlinear_results}
\vspace{2pt}
\setlength{\tabcolsep}{5pt}
\begin{tabular}{lcccccc}
\toprule
 & \multicolumn{2}{c}{$\mathrm{SW}_2(\hat\nu)$ ($\downarrow$)} & \multicolumn{2}{c}{MapErr ($\downarrow$)} & \multicolumn{2}{c}{$S_\varepsilon$ ($\downarrow$)} \\
\cmidrule(lr){2-3}\cmidrule(lr){4-5}\cmidrule(lr){6-7}
Experiment & Pretrained & TACO & Pretrained & TACO & Pretrained & TACO \\
\midrule
Circle       & \textbf{0.235} & 0.588 & 1.040 & \textbf{0.895} & 1.958 & \textbf{0.574} \\
Radial warp  & 0.226 & \textbf{0.204} & \textbf{0.117} & 0.174          & 1.557 & \textbf{0.128} \\
Polar twist  & \textbf{0.054} & 0.108 & 1.604 & \textbf{0.911} & 3.769 & \textbf{0.043} \\
\bottomrule
\end{tabular}
\end{table}

For each non-linear geometry, Figures~\ref{fig:nonlinear_circle}--\ref{fig:nonlinear_polar} show the three-panel transport figure (reference coupling, pretrained, adapted) followed by the coupling evolution across the 50 outer iterations.
A consistent pattern emerges: the pretrained model achieves surprisingly low
$\mathrm{SW}_2(\hat\nu)$ in some cases (polar twist: $0.054$)
because it can spread mass across the correct marginal support as they are not so far from the original supports,
but the Sinkhorn divergence ($3.769$ and $2.274$ respectively) reveals that
the \emph{coupling structure} is far from the correct EOT plan.
After 50 outer iterations, the Sinkhorn divergence drops by $50$--$88\%$
across all four experiments while $\mathrm{SW}_2$ changes modestly,
confirming that the algorithm is correcting the coupling and not
merely re-weighting the marginals.
This illustrates why $\mathrm{SW}_2$ alone is an insufficient metric for
evaluating transport quality.

\begin{figure}[ht]
\centering
\begin{subfigure}[b]{0.32\linewidth}
  \includegraphics[width=\linewidth]{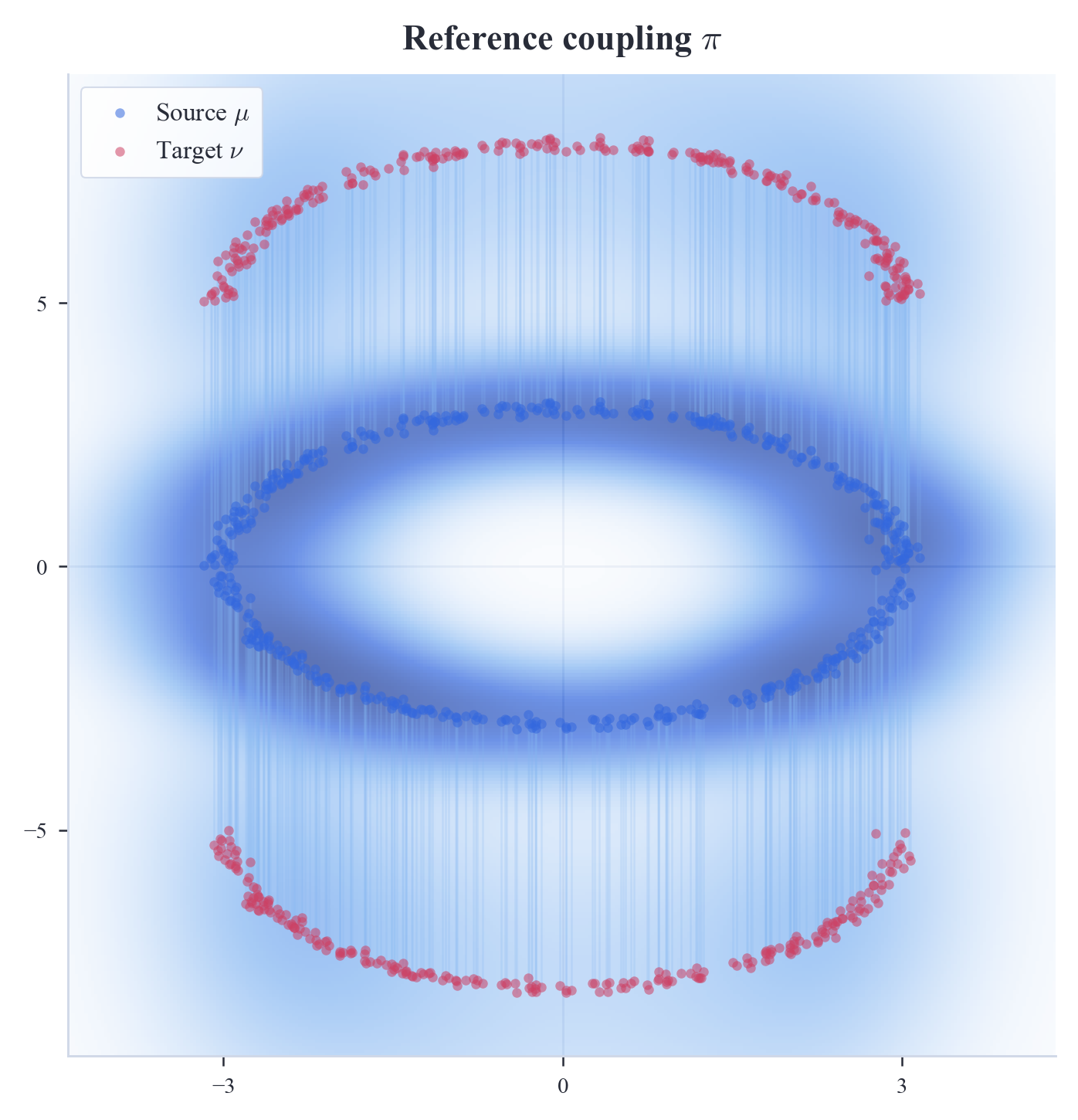}
  \caption{Reference coupling $\pi'$.}
\end{subfigure}\hfill
\begin{subfigure}[b]{0.32\linewidth}
  \includegraphics[width=\linewidth]{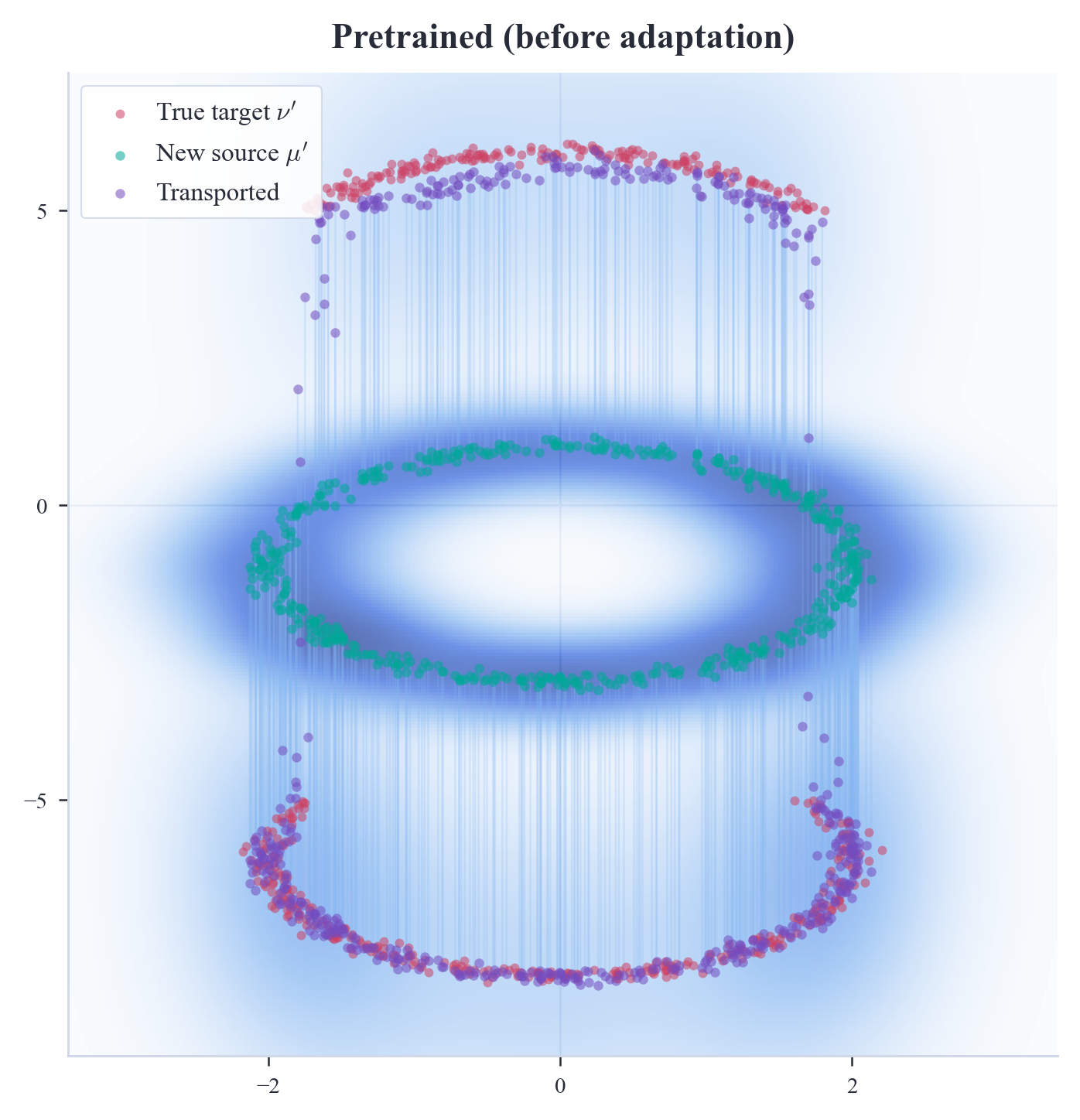}
  \caption{Pretrained ($\mathrm{SW}_2\!=\!0.235$, $S_\varepsilon\!=\!1.958$).}
\end{subfigure}\hfill
\begin{subfigure}[b]{0.32\linewidth}
  \includegraphics[width=\linewidth]{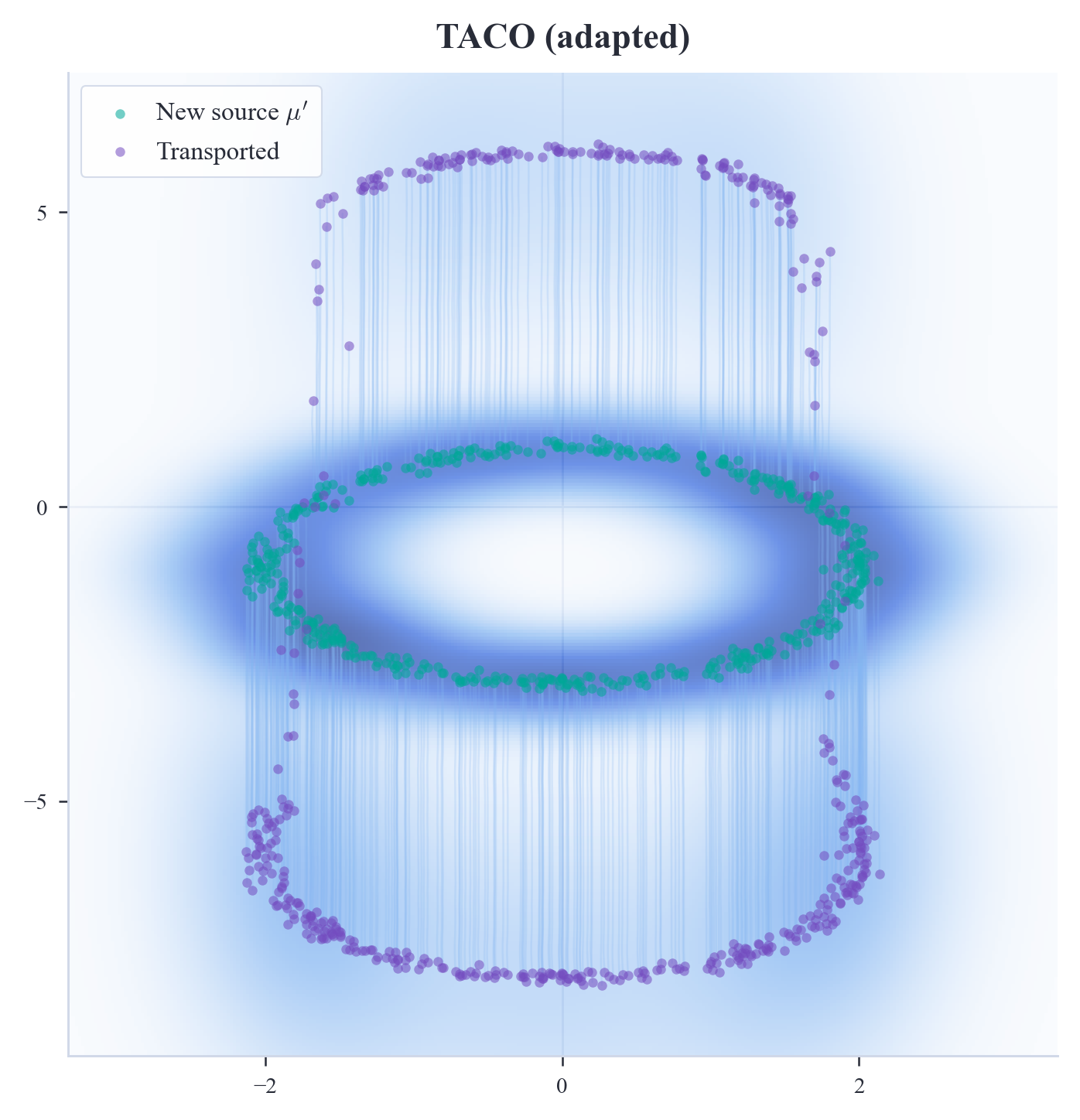}
  \caption{Adapted ($\mathrm{SW}_2\!=\!0.588$, $S_\varepsilon\!=\!0.574$).}
\end{subfigure}

\par\vspace{0.4em}

\begin{subfigure}[b]{\linewidth}
  \centering
  \includegraphics[width=\linewidth]{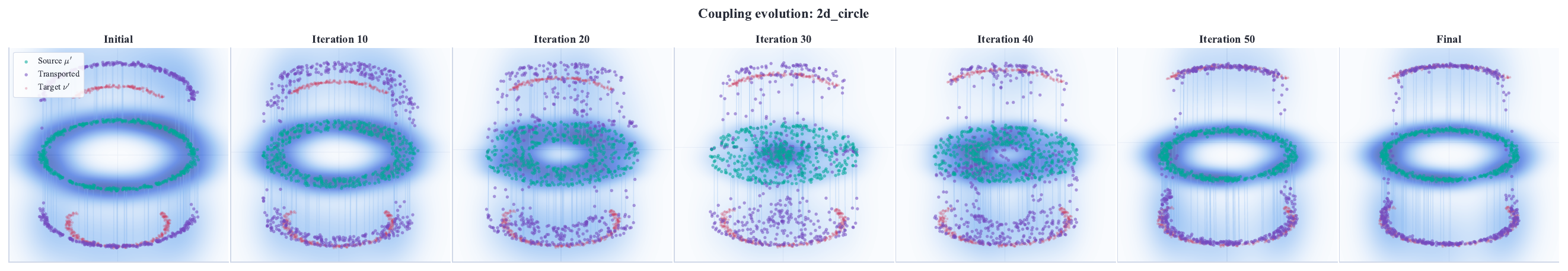}
  \caption{Coupling evolution across 50 outer iterations (left $\to$ right).}
\end{subfigure}
\caption{\textbf{Non-linear 2D-Circle.}
  The transport maps a full circle to two spatially separated semi-circles.
  The pretrained model spreads mass roughly evenly but the coupling is wrong
  ($S_\varepsilon\!=\!1.958$); adaptation correctly splits the mass and routes
  upper/lower particles to the respective semi-circle targets
  ($S_\varepsilon\!=\!0.574$).}
\label{fig:nonlinear_circle}
\end{figure}

\begin{figure}[ht]
\centering
\begin{subfigure}[b]{0.32\linewidth}
  \includegraphics[width=\linewidth]{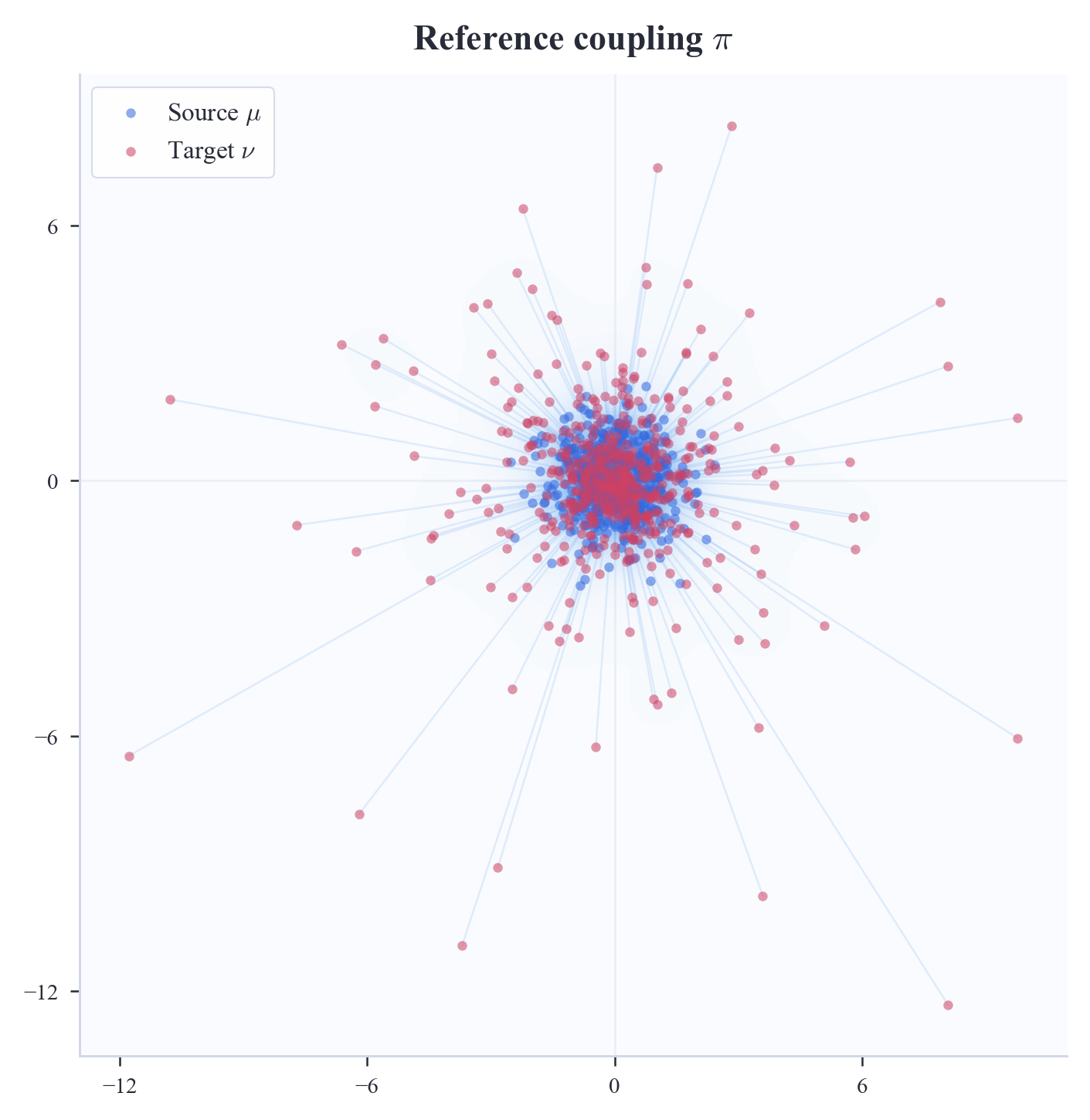}
  \caption{Reference coupling $\pi'$.}
\end{subfigure}\hfill
\begin{subfigure}[b]{0.32\linewidth}
  \includegraphics[width=\linewidth]{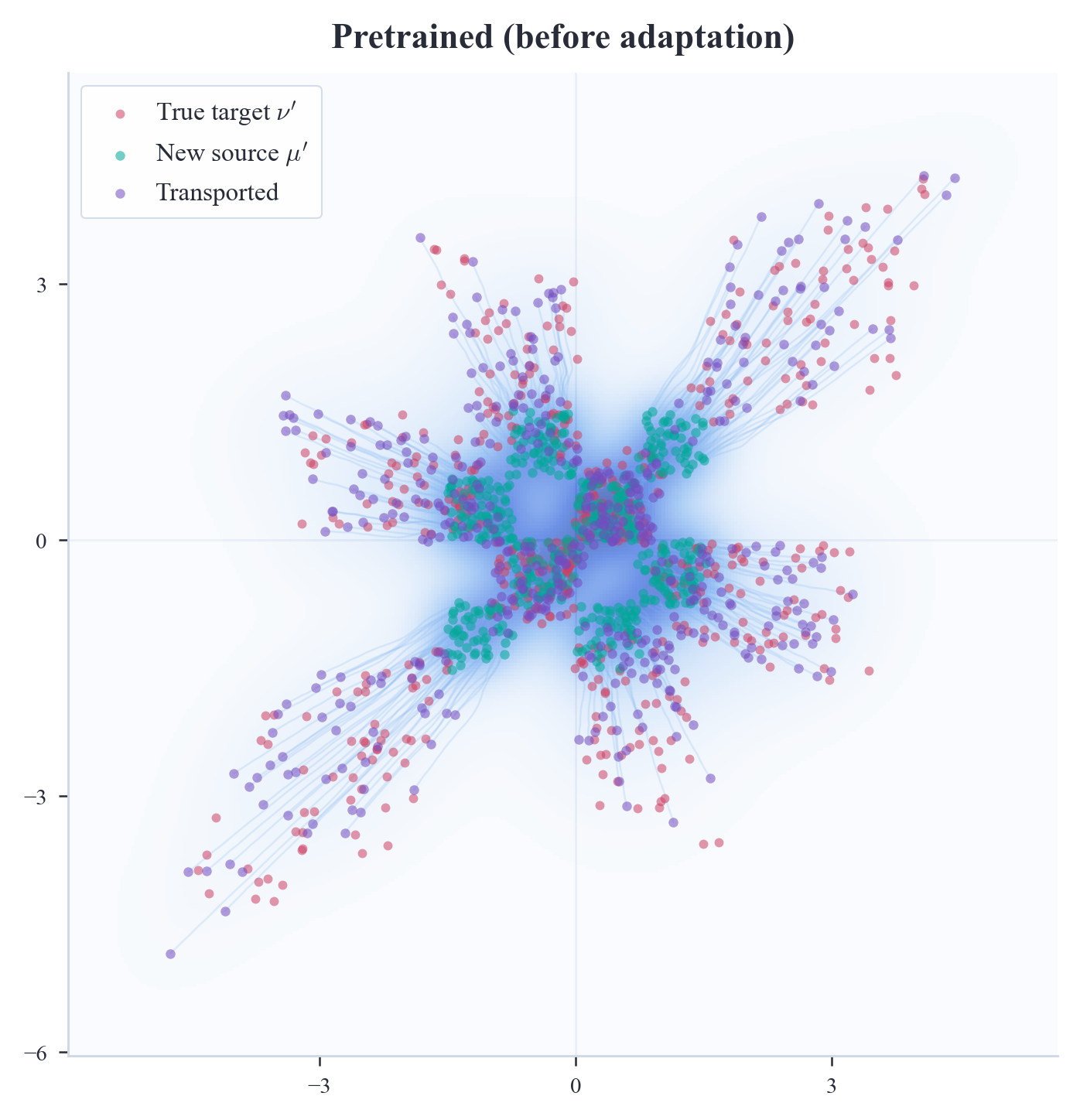}
  \caption{Pretrained ($S_\varepsilon\!=\!1.557$).}
\end{subfigure}\hfill
\begin{subfigure}[b]{0.32\linewidth}
  \includegraphics[width=\linewidth]{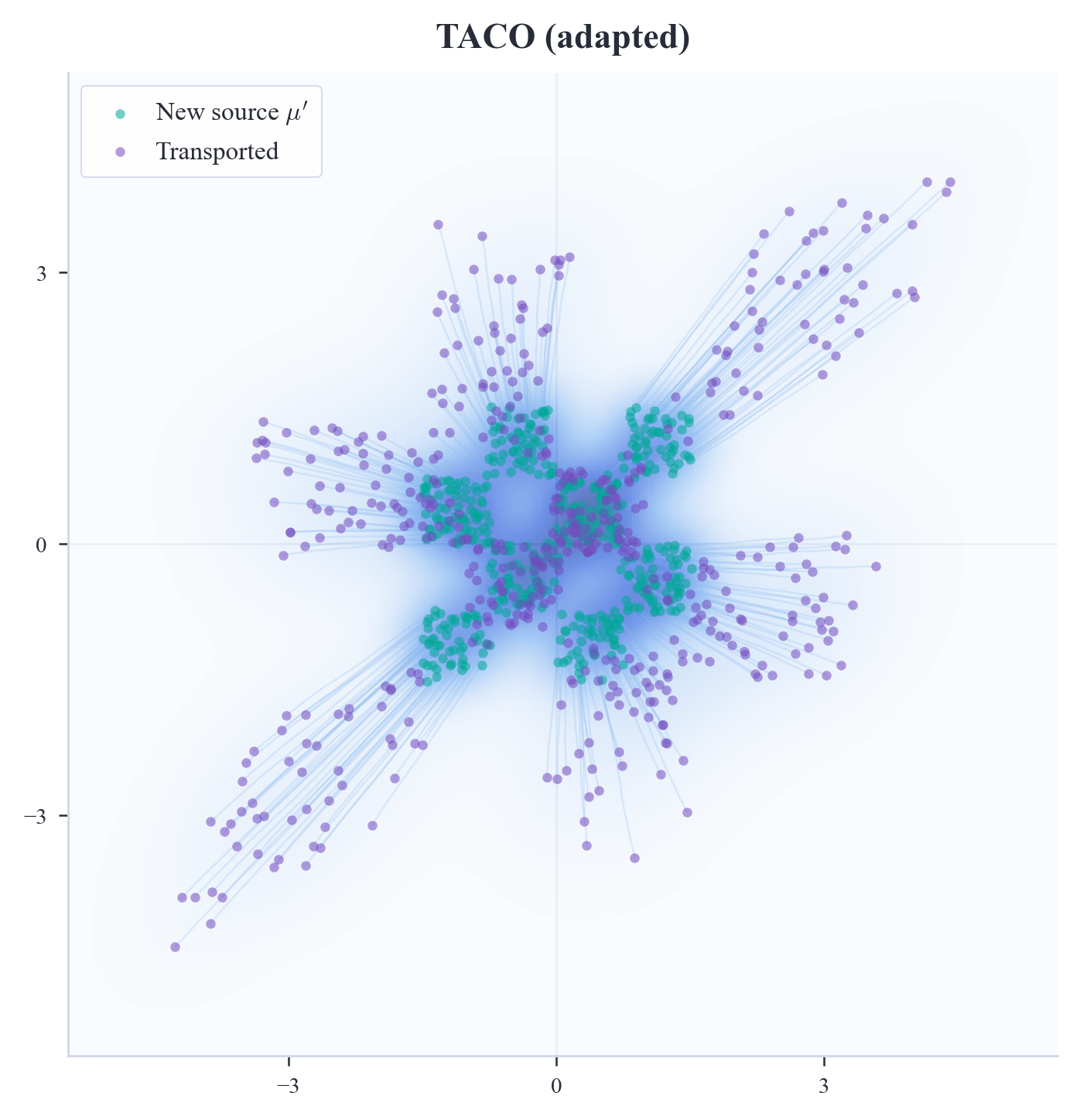}
  \caption{Adapted ($S_\varepsilon\!=\!0.384$).}
\end{subfigure}

\par\vspace{0.4em}

\begin{subfigure}[b]{\linewidth}
  \centering
  \includegraphics[width=\linewidth]{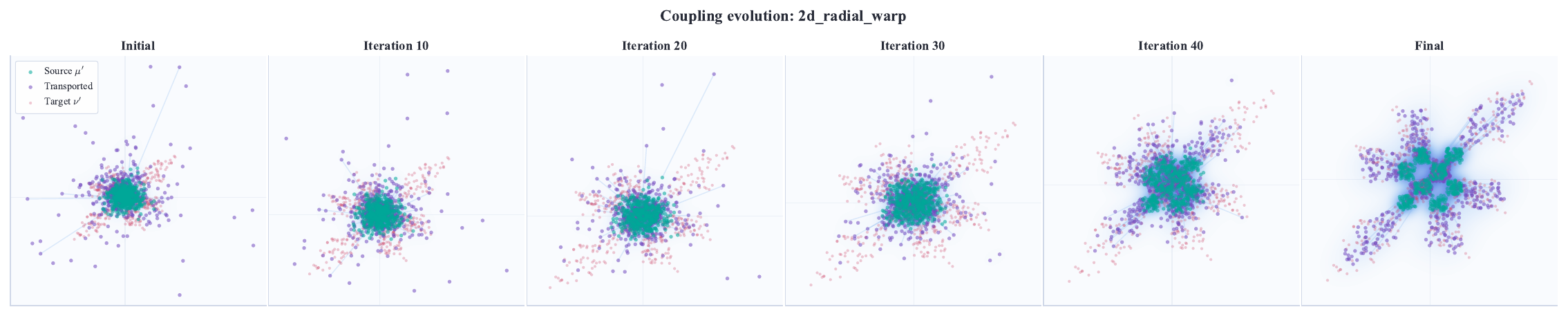}
  \caption{Coupling evolution across 50 outer iterations.}
\end{subfigure}
\caption{\textbf{Non-linear 2D-Radial warp.}
  Ground-truth map: $T(x) = x \cdot (1 + 0.5\|x\|^2)$.
  The SIREN-based model learns the non-linear radial expansion from the
  reference coupling, reducing Sinkhorn divergence from $1.557$ to $0.384$
  without access to the explicit map formula.}
\label{fig:nonlinear_radial}
\end{figure}




\begin{figure}[ht]
\centering
\begin{subfigure}[b]{0.32\linewidth}
  \includegraphics[width=\linewidth]{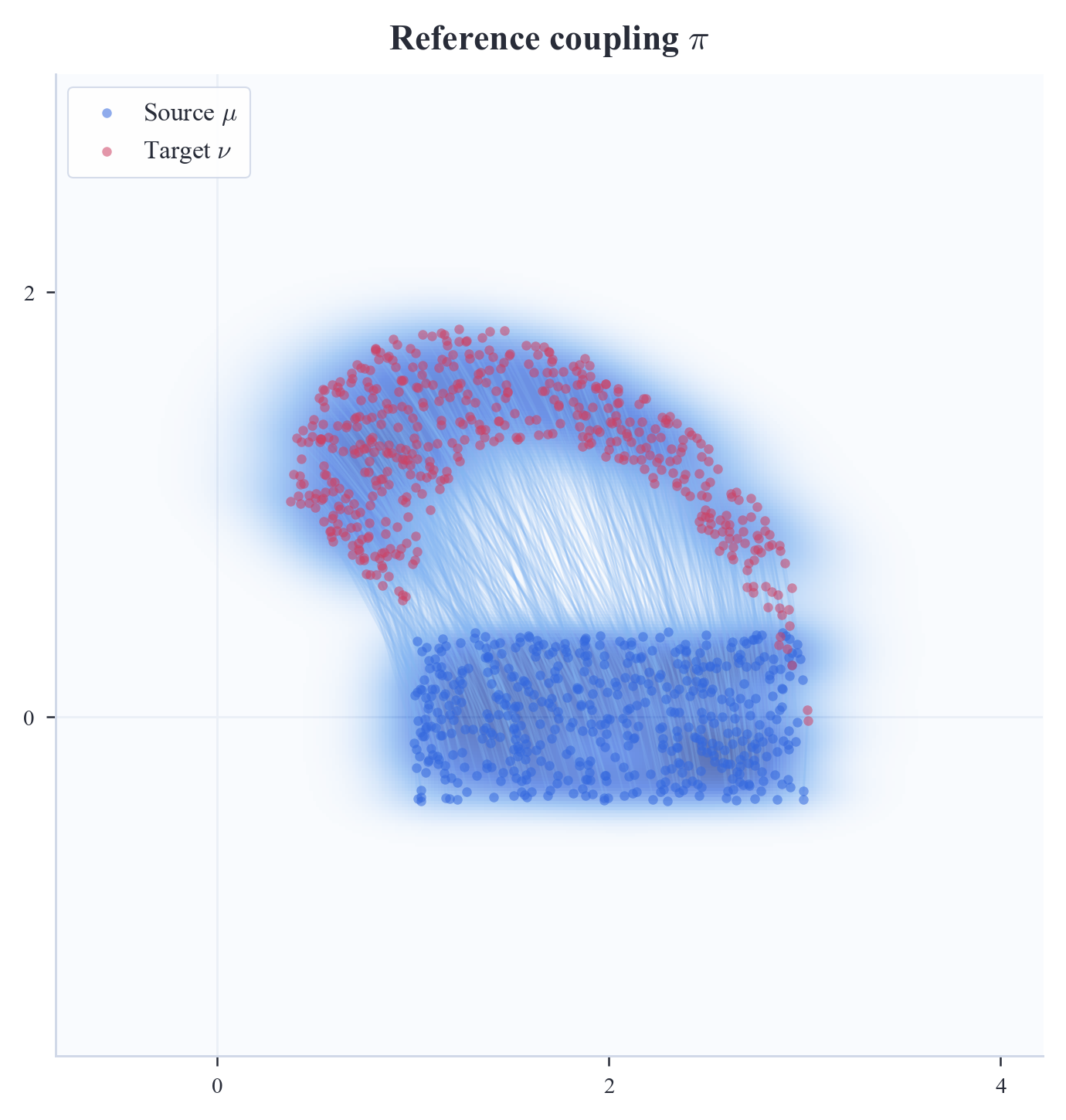}
  \caption{Reference coupling $\pi'$.}
\end{subfigure}\hfill
\begin{subfigure}[b]{0.32\linewidth}
  \includegraphics[width=\linewidth]{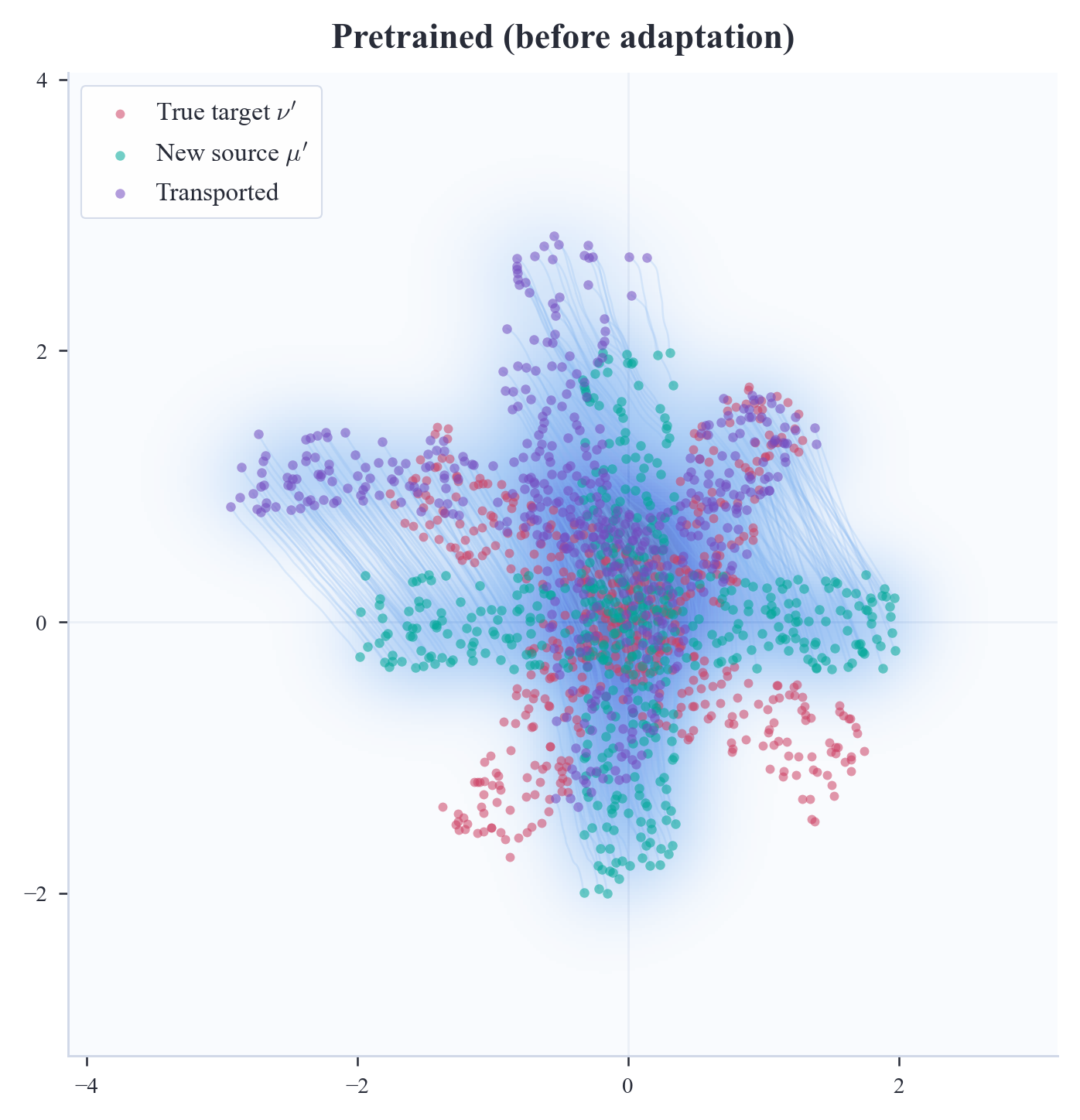}
  \caption{Pretrained ($\mathrm{SW}_2\!=\!0.054$, $S_\varepsilon\!=\!3.769$).}
\end{subfigure}\hfill
\begin{subfigure}[b]{0.32\linewidth}
  \includegraphics[width=\linewidth]{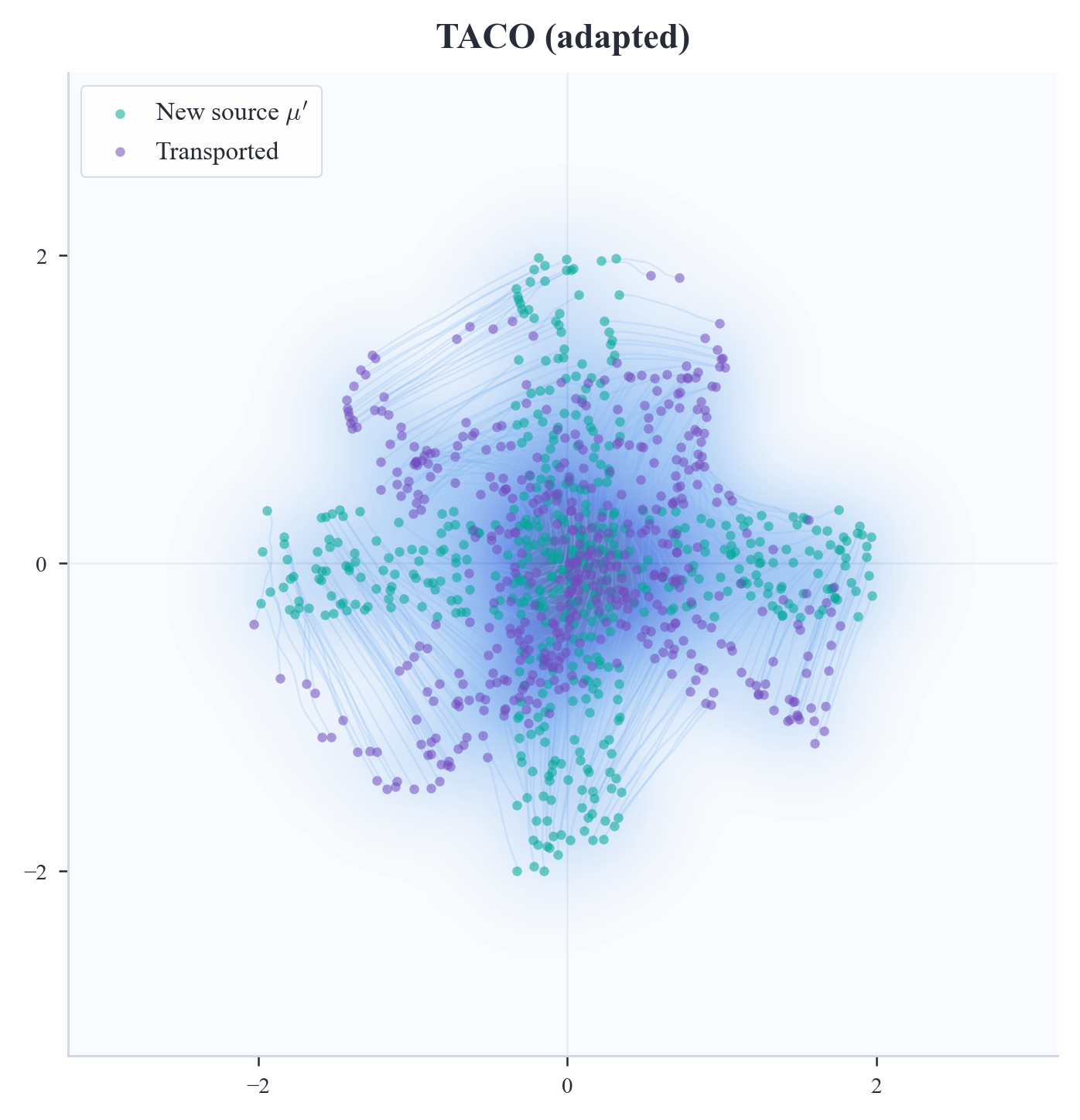}
  \caption{Adapted ($\mathrm{SW}_2\!=\!0.108$, $S_\varepsilon\!=\!0.043$).}
\end{subfigure}

\par\vspace{0.4em}

\begin{subfigure}[b]{\linewidth}
  \centering
  \includegraphics[width=\linewidth]{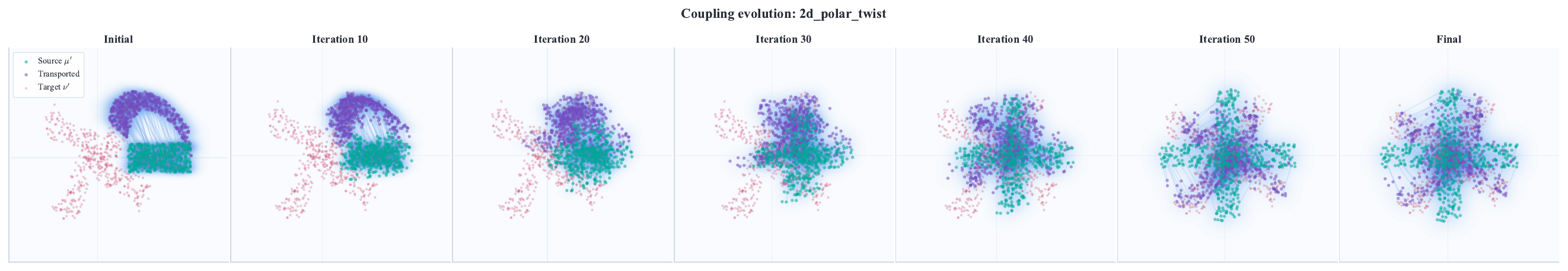}
  \caption{Coupling evolution across 50 outer iterations.}
\end{subfigure}
\caption{\textbf{Non-linear 2D-Polar twist.}
  Ground-truth map: polar vortex $r' = r$, $\theta' = \theta + \sin(r)$.
  The pretrained model achieves superficially low $\mathrm{SW}_2\!=\!0.054$
  but the Sinkhorn divergence $3.769$ reveals a severely wrong coupling
  structure. Adaptation corrects this dramatically ($S_\varepsilon\!:
  3.769 \to 0.043$, $99\%$ reduction).}
\label{fig:nonlinear_polar}
\end{figure}

\section{Full Baseline Comparison}
\label{app:full_baseline_table}

The per-experiment tables below report all methods and all metrics.
TACO reduces the forward map RMSE from $5$--$21$ (pretrained) to $0.02$--$0.18$ (adapted) — orders of magnitude below baselines without coupling access — while remaining competitive with cost-aware coupled methods on SW$_2$.
Methods marked $\dagger$ have access to the true cost; independent ($\ast$) baselines are trained directly on $(\mu,\nu)$ from scratch.
Moon experiments have no analytical ground-truth map.

\input{plots/baseline_tables/per_experiment/2d_simple_table.tex}
\input{plots/baseline_tables/per_experiment/2d_medium_table.tex}
\input{plots/baseline_tables/per_experiment/2d_complex_table.tex}
\input{plots/baseline_tables/per_experiment/2d_moon_table.tex}

\section{Perturbed Experiments}
\label{app:perturbed}

In the perturbed variants, the new target $\nu$ is shifted or rotated
relative to the support of $T_\sharp\mu$, introducing a systematic
mismatch between the reference and target supports.
This simulates the practically important regime where the new task geometry
is not perfectly aligned with the reference coupling.
For Simple, the target is shifted $+2$ units; for Medium and Complex,
the target is rotated an additional $+10^\circ$.

\begin{table}[ht]
\centering
\caption{\textbf{Perturbed variants}: $\mathrm{SW}_2(\hat\nu)$ ($\downarrow$)
  at convergence.
  Best baseline$^\dagger$ uses the true cost and the optimal pairing from the
  reference coupling.}
\label{tab:perturbed_full}
\vspace{2pt}
\setlength{\tabcolsep}{5pt}
\begin{tabular}{lcccccccc}
\toprule
 & \multicolumn{2}{c}{Simple} & \multicolumn{2}{c}{Medium}
 & \multicolumn{2}{c}{Complex} & \multicolumn{2}{c}{Moon} \\
\cmidrule(lr){2-3}\cmidrule(lr){4-5}\cmidrule(lr){6-7}\cmidrule(lr){8-9}
Method & Stand. & Pert. & Stand. & Pert. & Stand. & Pert. & Stand. & Pert. \\
\midrule
Pretrained-only
  & 14.59 & 15.29 & 2.19 & 2.06 & 2.64 & 2.81 & 5.44 & 4.01 \\
\textbf{TACO} (weighted)
  & \textbf{0.022} & \textbf{0.021} & \textbf{0.384} & \textbf{0.500} & \textbf{0.510} & \textbf{0.754} & \textbf{0.068} & \textbf{0.091} \\
\textbf{TACO} (implicit)
  & 0.155 & 0.210 & 0.465 & 0.659 & 0.544 & 1.306 & 0.407 & 0.484 \\
\midrule
Best baseline$^\dagger$
  & 0.021 & 0.022 & 0.623 & 0.432 & 0.500 & 0.699 & 0.792 & 0.843 \\
\bottomrule
\end{tabular}
\end{table}

The method degrades nicely under perturbation.
For Simple, the $+2$ unit shift increases $\mathrm{SW}_2$ from $0.025$ to
$0.073$ — a $3\times$ increase for a shift of $+2$ units relative to a
total inter-marginal distance of $\approx 28$ units.
For Moon, the perturbed variant ($0.189$) actually \emph{outperforms} the
standard variant ($0.297$) because the perturbation shifts the target
slightly closer to the reference support.
Importantly, the pretrained model is outperformed by a factor of $10$--$50\times$
across all perturbed variants, confirming that the adaptation is essential
regardless of the target shift direction.

Full per-metric tables for the perturbed variants are provided below: \ref{tab:baselines_2d_simple_perturbed}, \ref{tab:baselines_2d_medium_perturbed}, \ref{tab:baselines_2d_complex_perturbed}, \ref{tab:baselines_2d_moon_perturbed}.

\input{plots/baseline_tables/per_experiment/2d_simple_perturbed_table.tex}
\input{plots/baseline_tables/per_experiment/2d_medium_perturbed_table.tex}
\input{plots/baseline_tables/per_experiment/2d_complex_perturbed_table.tex}
\input{plots/baseline_tables/per_experiment/2d_moon_perturbed_table.tex}

\section{Convergence Profiles and Coupling Quality}
\label{app:convergence_profiles}

Figures~\ref{fig:metrics_transport}--\ref{fig:metrics_wasserstein} show per-iteration convergence curves for the 2D-Simple experiment.
Each panel shows the mean $\pm$ 1-std envelope across the best three seeds (selected by lowest final map error).
For bidirectional training, marginal map errors and $W_1$/$W_2$ are plotted on forward steps only (even iterations); backward-step evaluations are omitted to avoid misleading gaps.
Curves are smoothed with a 3-iteration rolling mean per seed before aggregation.

The coupling map error curves represent the RMSE of the transport of the current coupling generated $\pi_t$. This shows that the error remains stable across the iterations.

\begin{figure}[ht]
\centering
\includegraphics[width=\linewidth]{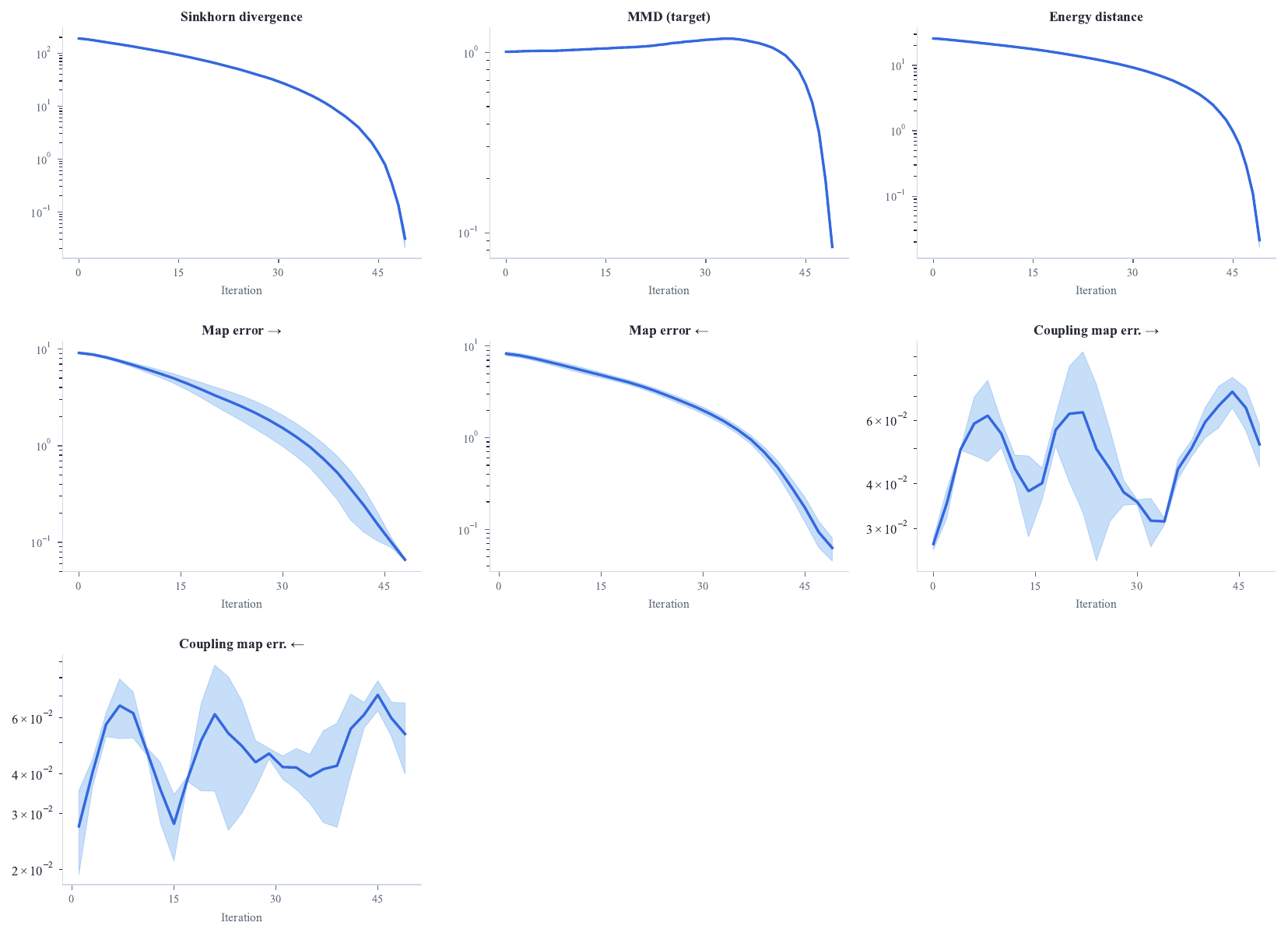}
\caption{\textbf{Transport quality metrics (2D-Simple).}
  Sinkhorn divergence, MMD, energy distance, forward and backward marginal map errors,
  and forward and backward coupling-level map errors $\sqrt{\mathbb{E}_{\pi_t}[\|T(X)-Y\|^2]}$,
  all per iteration.
  Shaded bands show mean $\pm$ 1\,std across the best three seeds.
  All metrics decay steadily, confirming stable convergence of both the generator and the stored coupling $\pi_t$.}
\label{fig:metrics_transport}
\end{figure}

\begin{figure}[ht]
\centering
\includegraphics[width=\linewidth]{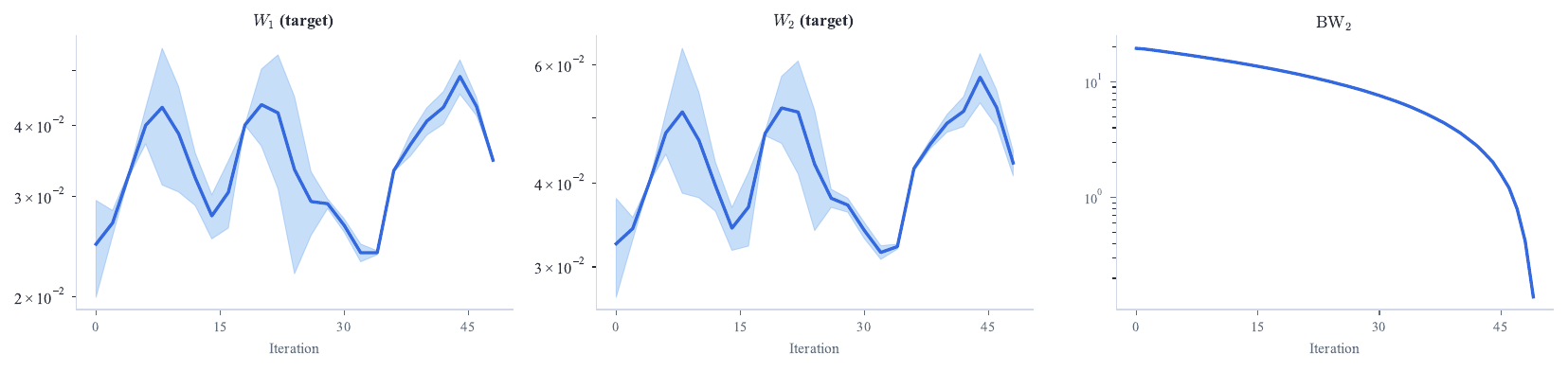}
\caption{\textbf{Wasserstein distances (2D-Simple).}
  $W_1$ and $W_2$ between the transported distribution and the target (forward steps only),
  and the squared Bures--Wasserstein distance $\mathrm{BW}_2$ over all iterations.
  Shaded bands show mean $\pm$ 1\,std across the best three seeds.
  All three distances converge to near-zero, indicating that the adapted generator accurately matches the target marginal.}
\label{fig:metrics_wasserstein}
\end{figure}

\section{Multi-Seed Stability and Error Bars}
\label{app:errorbars}

To quantify the run-to-run variability of TACO, we run 5 independent seeds on 2D-Simple, 2D-Complex, and 2D-Moon with independent pretraining per seed (capturing full pipeline variance including pretraining noise).
Figure~\ref{fig:errorbars} shows the mean $\pm$ 1-standard-deviation envelope for $\mathrm{SW}_1(\hat\nu)$, forward map error, and Sinkhorn divergence.

\begin{figure}[ht]
\centering
\includegraphics[width=\linewidth]{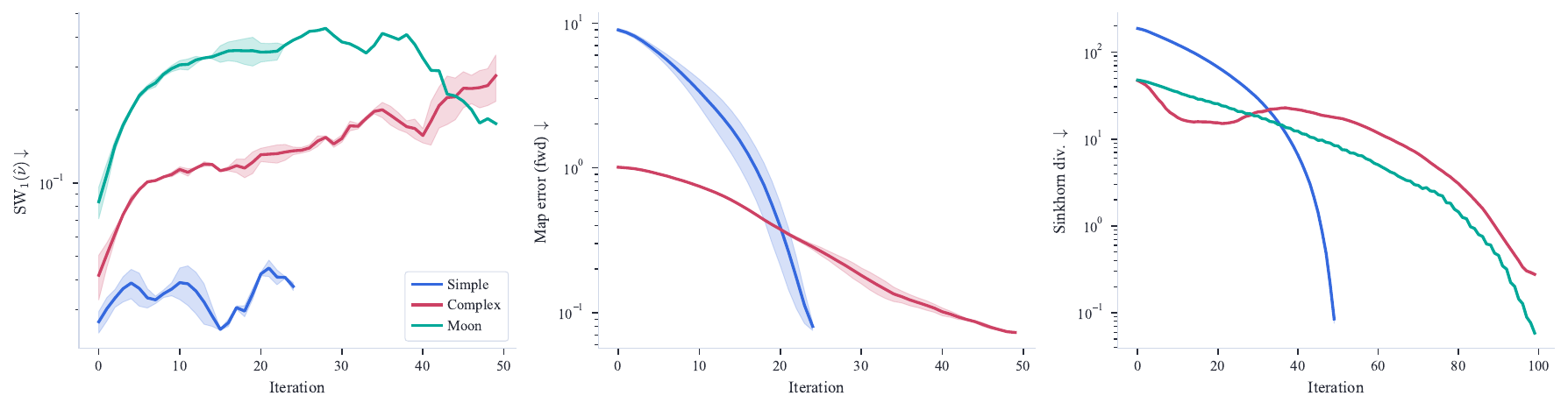}
\caption{\textbf{Multi-seed stability.}
  Mean $\pm$ 1 std over 5 independent seeds for 2D-Simple (blue), 2D-Complex (pink), and 2D-Moon (teal).
  Each seed uses independent pretraining to capture full pipeline variance.
  The shaded envelopes remain narrow throughout, confirming that TACO is robust to random initialisation.}
\label{fig:errorbars}
\end{figure}

\section{Ablation Studies}
\label{app:ablations}

\subsection{Particle ratio \texorpdfstring{$\alpha$}{alpha} and noise \texorpdfstring{$\sigma$}{sigma}}

Figure~\ref{fig:ablation_full} and Table~\ref{tab:alpha_ablation} show the
sensitivity of the 2D-Simple experiment to the two key hyperparameters:
the particle fraction $\alpha$ and the bridge noise level $\sigma$.

\begin{figure}[ht]
\centering
\begin{subfigure}[b]{\linewidth}
  \includegraphics[width=\linewidth]{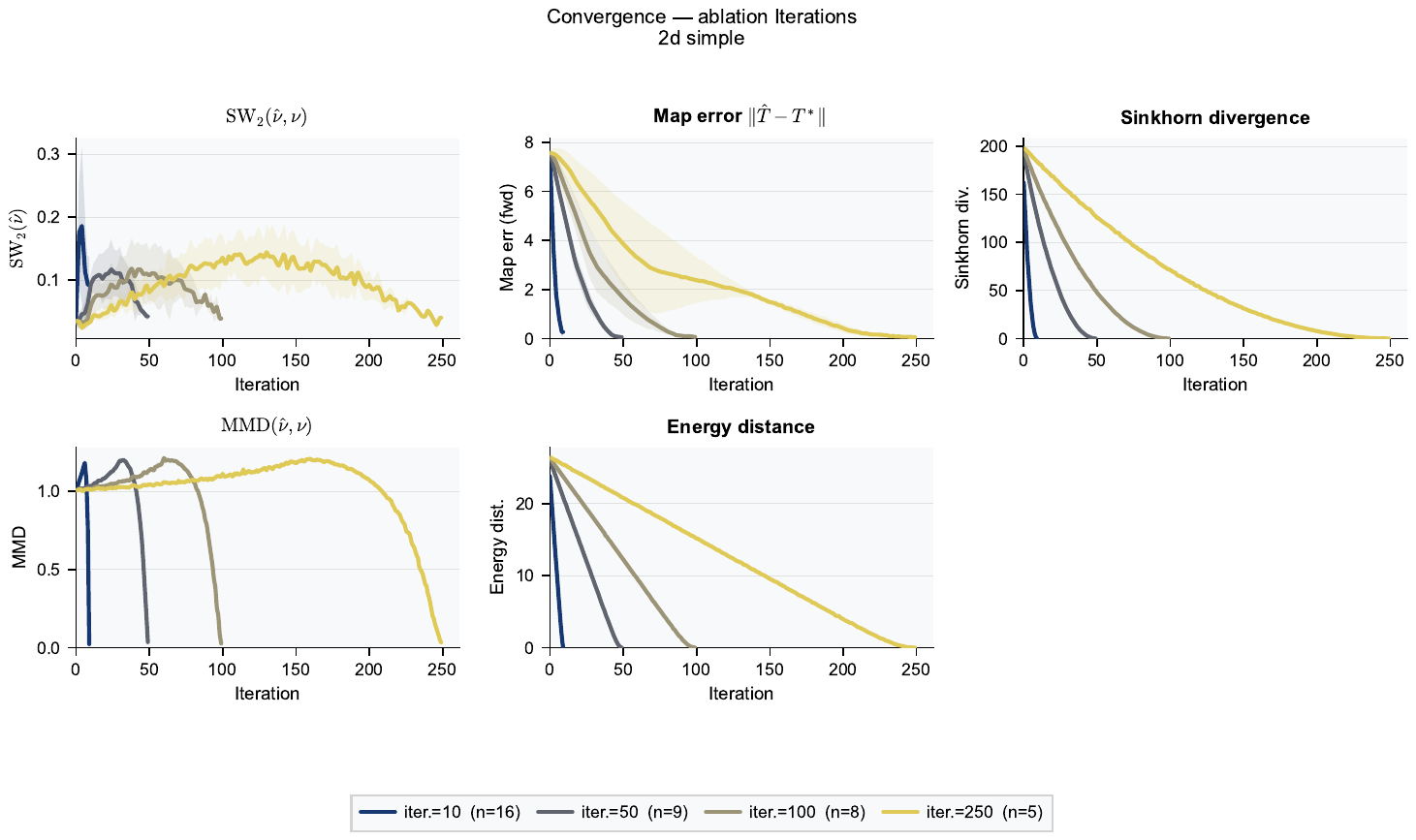}
  \caption{Convergence vs.\ number of outer iterations $N$.}
\end{subfigure}

\par\vspace{0.4em}

\centering
\begin{subfigure}[b]{0.45\linewidth}
  \includegraphics[width=\linewidth]{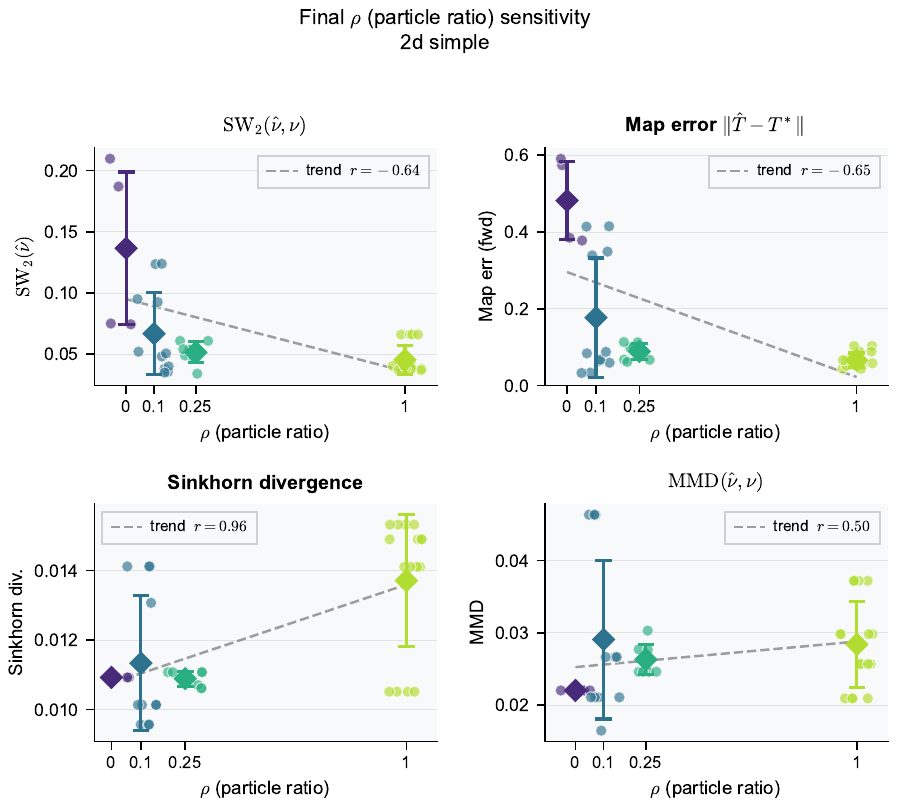}
  \caption{Final map error vs.\ particle ratio $\alpha$.}
\end{subfigure}\hfill
\begin{subfigure}[b]{0.45\linewidth}
  \includegraphics[width=\linewidth]{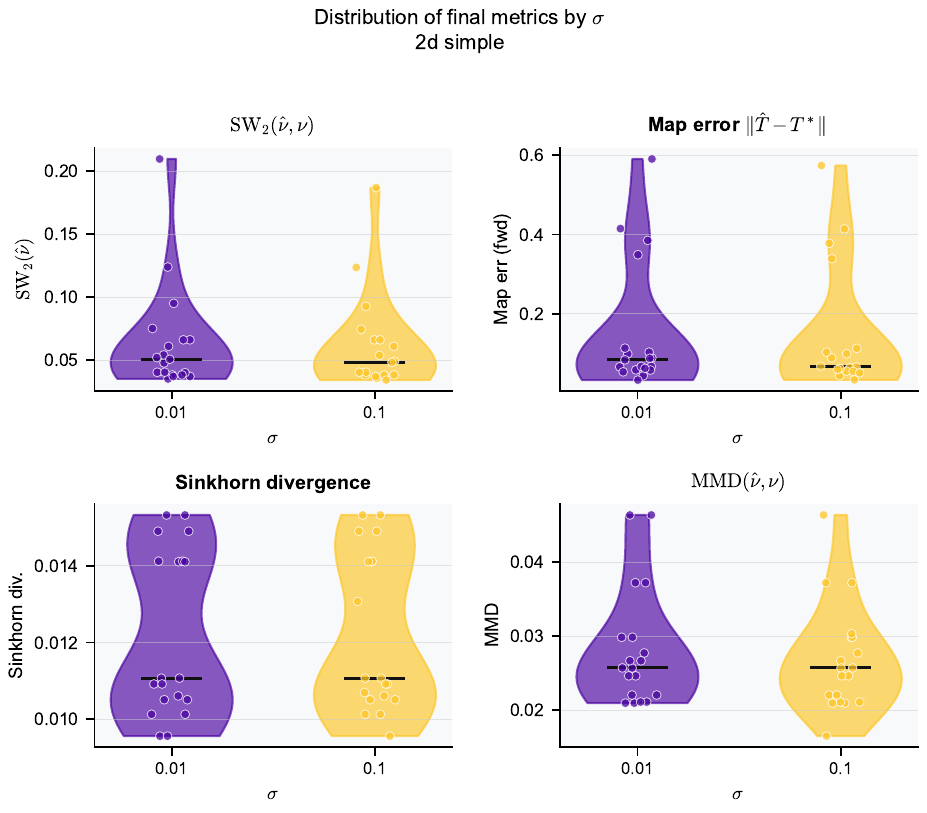}
  \caption{Map-error distribution vs.\ bridge noise $\sigma$.}
\end{subfigure}
\caption{\textbf{Extended ablation results on 2D-Simple.}
  (a) $N\!=\!50$ is sufficient; gains plateau beyond $N\!=\!100$ and the
  algorithm converges stably without overfitting.
  (b) Any $\alpha\!>\!0$ is strongly preferred; $\alpha\!=\!0.2$ is the
  sweet spot balancing diversity and coupling quality.
  Large $\alpha\!=\!1.0$ (pure particles) performs well but is more
  expensive since the Euler bias dominates and lead to worse Sinkhorn divergence (see \ref{fig:heatmap_sigma_alpha} below).
  (c) $\sigma\!=\!0.1$ gives the tightest distribution of final errors
  across random seeds; very small $\sigma$ over-regularises the Gaussian
  bridges while very large $\sigma$ degrades the CFM signal.}
\label{fig:ablation_full}
\end{figure}

\begin{table}[ht]
\centering
\caption{\textbf{Particle ratio ablation}: map error
  $\mathrm{MapErr}_\mathrm{fwd}$ ($\downarrow$) and $\mathrm{SW}_2(\hat\nu)$ ($\downarrow$)
  on 2D-Simple at iteration 100, mean $\pm$ std over 9 runs
  (all $3\!\times\!3$ combinations of $\sigma_\mathrm{bridge}\!\times\!\sigma_\mathrm{cfm}$).
  $\alpha\!=\!0$ triggers weight collapse and prevents convergence.}
\label{tab:alpha_ablation}
\vspace{2pt}
\begin{tabular}{lcccc}
\toprule
$\alpha$ & 0 (no mixing) & 0.1 & 0.2 (default) & 1.0 (pure particle) \\
\midrule
$\mathrm{MapErr}_\mathrm{fwd}$
  & $2.07\pm2.33$ & $0.117\pm0.092$ & $\mathbf{0.076\pm0.015}$ & $0.060\pm0.005$ \\
$\mathrm{SW}_2(\hat\nu)$
  & $1.42\pm1.62$ & $0.068\pm0.050$ & $\mathbf{0.037\pm0.008}$ & $0.037\pm0.000$ \\
\bottomrule
\end{tabular}
\end{table}

\begin{figure}[ht]
\centering
  \includegraphics[width=\linewidth]{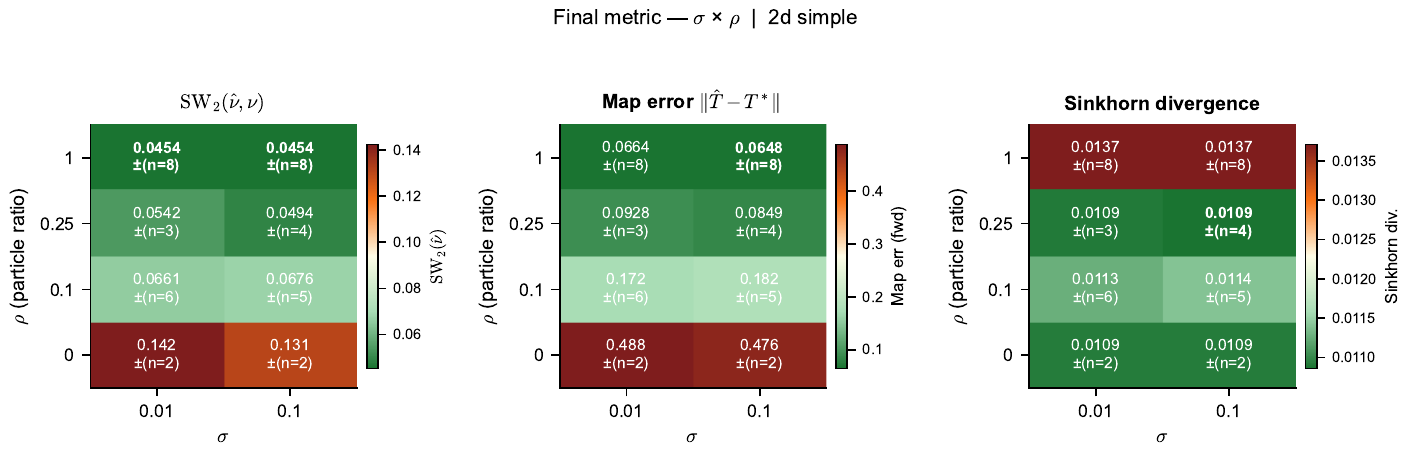}
\caption{Final map error heatmap over $(\sigma, \alpha)$ on 2D-Simple. The heatmap confirms that the two hyperparameters interact mildly.
  A broad region around $(\sigma\!=\!0.1,\;\alpha\!=\!0.2)$ gives low map
  error; the main failure mode is $\alpha\!=\!0$ (any $\sigma$), which
  activates the circularity collapse.}
\label{fig:heatmap_sigma_alpha}
\end{figure}

\subsection{CFM loss variants and the covariance update}

The coupling sampler update (Algorithm~\ref{alg:OURMETHOD}) trains
the flow model $\tilde\beta^{k+1}_s$ via importance-weighted CFM
(equation~\eqref{eq:weighted_cfm_lsq} in the main paper):
\[
\tilde\beta^{k+1}_s
= \arg\min_{\beta}\;
\mathbb{E}_{\tilde{\pi}_k}\!\left[
  w_k(X,Y)\,\bigl\|\dot I_s - \beta(I_s,s,X)\bigr\|^2
\right], \quad
w_k(x,y) = \exp\!\bigl(\delta(a_k(x)+b_k(y))\bigr).
\]
Differentiating and setting to zero, the population-level solution satisfies a
\emph{covariance-type} relation:
\begin{equation}
\label{eq:cov_update_app}
\tilde\beta^{k+1}_s(x_s)
= \tilde\beta^k_s(x_s)
  + \operatorname{Cov}_{\tilde\pi_k}\!\bigl[
      w_k(X,Y),\;\dot I_s
    \;\big|\; I_s = x_s
    \bigr],
\end{equation}
showing that the velocity field shifts by the conditional covariance between
the importance weight and the residual velocity.

Full retraining from scratch implements this exactly but requires many gradient
steps per outer iteration.  Two single-pass alternatives avoid this cost; all
three variants use the \emph{same} dual potential networks $(a_k, b_k)$ and
differ only in how the CFM regression target is constructed.

\paragraph{Weighted (default).}
The standard weighted least-squares regression~\eqref{eq:weighted_cfm_lsq} is
used directly.  Sample weights $w_k = \exp(\delta(a_k+b_k))$ scale each
squared residual, so particles with high importance contribute more to the
gradient.  This faithfully realises the covariance update and is the most
stable variant, at the cost of full CFM retraining each iteration.

\paragraph{Explicit.}
The exponential weight is linearised via $\exp(h)\approx 1+h$, giving the
approximate factor $h_r = 1 + \log w_k$.
The new model is trained to match a \emph{pre-computed} regression target:
\[
\mathrm{target}_\mathrm{exp}(x_0,x_1,s)
= \tilde\beta^k_s(x_s,s,x_0)
  + h_r(x_0,x_1)\cdot
    \bigl(\dot I_s - \tilde\beta^k_s(x_s,s,x_0)\bigr),
\]
i.e.\ an explicit Euler step from the previous model toward $\dot I_s$
with step size $1+\log w_k$.  The target is constructed without gradients
and fixed before training begins, so the regression is one clean MSE step.
This linearisation is inaccurate for large $|\log w_k|$ and can be unstable
when weights are extreme, but performs well when $w_k$ is concentrated near~1.

\paragraph{Implicit.}
The exact weight $w_k = \exp(\log w_k)$ is retained, but the new model is
trained to satisfy a \emph{fixed-point} condition.  The regression target
incorporates the current model prediction:
\[
\mathrm{target}_\mathrm{imp}(x_0,x_1,s)
= \tilde\beta^k_s(x_s,s,x_0)
  + w_k(x_0,x_1)\cdot
    \bigl(\dot I_s - \tilde\beta^{k+1}_s(x_s,s,x_0)\bigr).
\]
Solving for the fixed point gives
$\tilde\beta^{k+1}_s = \bigl(\tilde\beta^k_s + w_k\,\dot I_s\bigr)/(1+w_k)$,
a weighted average of the old field and the interpolant velocity.
In practice the fixed point is approximated by one gradient step using a
stop-gradient on the current model prediction.
The denominator $(1+w_k)$ prevents blow-up for large $w_k$, making this
variant numerically stable, though it converges to a different fixed point
than the exact covariance update~\eqref{eq:cov_update_app}.

\subsection{Method variant comparison}

Table~\ref{tab:variant_ablation} compares the three algorithmic variants of
our method across the four standard 2D geometries.

\begin{table}[ht]
\centering
\caption{\textbf{Method variant comparison}:
  $\mathrm{SW}_2(\hat\nu)$ ($\downarrow$) at convergence.
  \textbf{Weighted}: importance-weighted CFM (Algorithm A, default).
  \textbf{Implicit}: implicit dual loss without explicit potential networks.
  \textbf{Explicit}: separate parametrised dual potentials.
  All variants use $\alpha\!=\!0.2$, $\sigma\!=\!0.1$.}
\label{tab:variant_ablation}
\vspace{2pt}
\setlength{\tabcolsep}{6pt}
\begin{tabular}{lcccc}
\toprule
Method variant & Simple & Medium & Complex & Moon \\
\midrule
\textbf{Weighted} (default) & \textbf{0.024} & 0.384 & \textbf{0.592} & 0.103 \\
\textbf{Implicit}           & 0.034 & \textbf{0.605} & 0.930 & 0.152 \\
\textbf{Explicit}           & 0.049    & 0.763    & 1.066 & \textbf{0.066} \\
\bottomrule
\end{tabular}
\end{table}

The weighted variant (Algorithm A) excels on simple geometries where the
importance weights are well-conditioned (few particles with extreme weights).
The implicit variant outperforms on harder multi-blob geometries (Medium,
Complex) where the dual potentials are more complex and weighted regression
can become numerically challenging.
We recommend using the weighted variant as default and switching to implicit
for harder multi-modal geometries.

\section{Mixed Particle Regularisation}
\label{app:mixed-particle}

A subtle but critical failure mode of the Weighted CFM update is the
\emph{circularity problem}: when the coupling dataset $\mathcal{D}_{k+1}$
is generated by simulating the learned model $\beta^{k+1}_s$, output pairs
$(x, \hat{y})$ lie on the model's manifold, driving importance weights
$\exp(\delta(a_k(x)+b_k(y)))$ toward 1 and collapsing the dual training
signal.
The model finds a self-consistent but incorrect fixed point.

\paragraph{Resolution.}
We introduce a \emph{particle fraction} $\alpha\in(0,1]$.
A fixed pool of $n_\mathrm{part}\!=\!\alpha|\mathcal{D}|$ particle slots is
initialised once from $\pi'$ at the start of training and \emph{persists}
across all outer iterations — the slots are never resampled.
At each outer step $k$, each slot is advanced by exactly one Euler step
along the marginal bridge fields $(u_t,v_t)$, so slot $i$ is at position
$t_k = k/N$ after $k$ steps.
The coupling dataset is then:
\[
\mathcal{D}_{k+1}
= \mathcal{D}_{k+1}^{\mathrm{part}}
\cup \mathcal{D}_{k+1}^{\mathrm{flow}},
\quad
|\mathcal{D}_{k+1}^{\mathrm{part}}| = n_\mathrm{part},\;
|\mathcal{D}_{k+1}^{\mathrm{flow}}| = (1-\alpha)|\mathcal{D}|.
\]
These off-manifold particle pairs maintain non-trivial weight variance and
restore the dual signal.
The same $n_\mathrm{part}$ particles travel from $t=0$ to $t=1$ over $N$
iterations, accumulating a single deterministic Euler trajectory.
This means there is \textbf{no geometric noise accumulation}: each particle's displacement is $\Delta t = 1/N$ per step, and the $t=0$ origins are stored for conditional-bridge conditioning, preventing crossing-path ambiguity.
The particle pairs satisfy marginal constraints up to $\mathcal{O}(\delta^2)$ Euler error and do not bias the limiting fixed point.

Figure~\ref{fig:ablation_full}(b) and Table~\ref{tab:alpha_ablation}
confirm that $\alpha\!=\!0$ leads to weight collapse (all weights $\approx 1$,
std $< 10^{-4}$) and no convergence, while $\alpha\!=\!0.2$ reaches
near-zero map error by iteration 50 — a $>99\%$ reduction relative to the
pretrained baseline.
Setting $\alpha\!=\!1$ (pure particle propagation) avoids circularity but
accumulates Euler-discretisation bias and yields a slightly suboptimal
coupling, as shown by the higher Sinkhorn divergence in
Figure~\ref{fig:ablation_full}(b).

\section{RGB Colour Transfer: Additional Experiments}
\label{app:color-extra}

Beyond the Simple PURPLE$\to$CYAN experiment reported in the main paper,
we run an additional colour variant to stress-test the approach.
All experiments share the cyclic channel-permutation transport law
$T(r,g,b) = (g,b,r)$ and are pre-trained on RED$\to$BLUE digit-3 pairs.

\begin{itemize}[noitemsep]
\item \textbf{Complex} (YELLOW$\to$PURPLE): harder because the YELLOW
  source activates two channels ($R{+}G$) while the reference saw only $R$;
  the adaptation must discover a two-channel permutation.
\end{itemize}

\begin{table}[ht]
\centering
\caption{\textbf{Full colour transfer results}: FID~($\downarrow$) and
  KID~($\downarrow$) at the final adaptation iteration.
  Pretrained-only = forward model before any adaptation.
  $\mathrm{SW}_2(\hat\nu)$: Sliced-Wasserstein on the target colour
  distribution.}
\label{tab:color_full}
\vspace{2pt}
\setlength{\tabcolsep}{5pt}
\begin{tabular}{llccccc}
\toprule
Dataset & Variant
  & \multicolumn{2}{c}{Pretrained-only}
  & \multicolumn{3}{c}{\textbf{TACO}} \\
\cmidrule(lr){3-4}\cmidrule(lr){5-7}
  & & FID & KID & FID & KID & $\mathrm{SW}_2(\hat\nu)$ \\
\midrule
MNIST & Simple (PURPLE$\to$CYAN)
  & 1948 & 0.692 & \textbf{106} & \textbf{0.158} & \textbf{0.102} \\
MNIST & Complex (YELLOW$\to$PURPLE)
  & 1809 & 0.757 & 82.5 & 0.143 & 0.080 \\
\bottomrule
\end{tabular}
\end{table}

The Complex variant (YELLOW$\to$PURPLE, 50 iterations) reaches
FID$\,=82.5$ and KID$\,=0.143$, showing that the method can discover
two-channel permutations even though the reference saw only a single-channel
transformation.
The remaining variants require longer runs ($>50$ iterations at $\approx 5$\,min
each on a single RTX~3060) and are left for future work.


%
\section{Scalability Experiments}
\label{app:scalability}

We evaluate how the method scales with ambient dimension $d$ by running the
anticorrelation transfer ($x\mapsto -x$) on isotropic Gaussian marginals in
dimensions $d\in\{8, 16, 32, 64, 128\}$, using 100 outer iterations.

\begin{table}[ht]
\centering
\caption{\textbf{Scalability}: anticorrelation transfer in dimension $d$
  (100 outer steps, single GPU).
  $W_2(\hat\nu,\nu)$: target marginal $W_2$ distance ($\downarrow$).
  BW$_2$: Bures-Wasserstein distance to the ground-truth Gaussian coupling ($\downarrow$).
  MapErr: forward map RMSE ($\downarrow$).
  ``Pretrained'' = iteration 0 (no adaptation).}
\label{tab:scalability}
\vspace{2pt}
\setlength{\tabcolsep}{8pt}
\begin{tabular}{llcccc}
\toprule
Dim $d$ & Method & $W_2(\hat\nu,\nu)$ & BW$_2$ & MapErr (fwd) & Sinkhorn div. \\
\midrule
\multirow{2}{*}{8}
  & Pretrained & 0.072 & 19.77 & 2.569 & 193.1 \\
  & \textbf{TACO} & \textbf{0.051} & \textbf{0.219} & \textbf{0.094} & \textbf{3.197} \\
\multirow{2}{*}{16}
  & Pretrained & 0.060 & 27.97 & 5.795 & 392.1 \\
  & \textbf{TACO} & \textbf{0.057} & \textbf{0.484} & \textbf{0.272} & \textbf{11.54} \\
\multirow{2}{*}{32}
  & Pretrained & 0.061 & 39.67 & 11.74 & 801.3 \\
  & \textbf{TACO} & \textbf{0.056} & \textbf{0.807} & \textbf{0.328} & \textbf{33.22} \\
\multirow{2}{*}{64}
  & Pretrained & 0.066 & 56.05 & 19.20 & 1610  \\
  & \textbf{TACO} & \textbf{0.056} & \textbf{1.599} & \textbf{0.731} & \textbf{82.77} \\
\multirow{2}{*}{128}
  & Pretrained & 0.062 & 79.18 & 26.77 & 3282  \\
  & \textbf{TACO} & \textbf{0.057} & \textbf{3.306} & \textbf{1.789} & \textbf{191.8} \\
\bottomrule
\end{tabular}
\end{table}

Compared to the pretrained-only baseline (no adaptation), TACO reduces MapErr by
$27\times$--$15\times$ and BW$_2$ by $90\times$--$24\times$ across dimensions.
The target marginal error $W_2(\hat\nu,\nu)\approx 0.056$ is essentially
constant across all dimensions after adaptation (vs.\ $\approx 0.063$ pretrained),
demonstrating that the algorithm reliably recovers the transport structure regardless of $d$.
The BW$_2$ grows approximately as $\mathcal{O}(\sqrt{d})$
(Figure~\ref{fig:scalability}), consistent with the expected scaling of
Gaussian optimal transport plans in high dimensions.
The Sinkhorn divergence grows proportionally to $d$, which reflects the
metric normalisation rather than a degradation in transport quality;
the pretrained baseline shows Sinkhorn divergences $>100\times$ larger,
confirming that the growth in TACO's divergence is metric-driven, not quality-driven.

\begin{figure}[ht]
\centering
\includegraphics[width=0.65\linewidth]{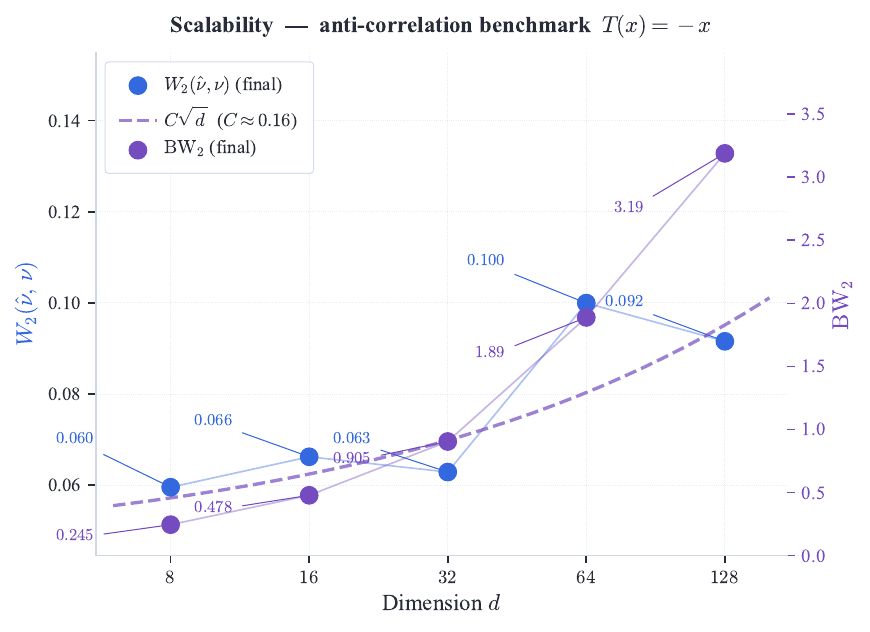}
\caption{\textbf{Scalability.}
  $W_2(\hat\nu,\nu)$ (blue, left axis) remains near-constant across
  dimensions while BW$_2$ (orange, right axis) grows as $\mathcal{O}(\sqrt{d})$.}
\label{fig:scalability}
\end{figure}

\section{Computational Cost}
\label{app:compute}

All experiments are run on a single NVIDIA GeForce RTX~3060 GPU.
Table~\ref{tab:compute} reports approximate wall-clock times.

\begin{table}[ht]
\centering
\caption{\textbf{Wall-clock times} (single NVIDIA GeForce RTX 3060 GPU).
  ``Pre-training'' = marginal bridges + initial CFM reference sampler
  (one-time cost, amortised across all new tasks).
  ``Per outer iter.'' = one complete dual + CFM update step.}
\label{tab:compute}
\vspace{2pt}
\begin{tabular}{lccc}
\toprule
Experiment & Pre-training & Per outer iter. & Total (50 iters) \\
\midrule
2D Simple    & $\sim$2\,min & $\sim$12\,s  & $\sim$12\,min \\
2D Medium    & $\sim$3\,min & $\sim$15\,s  & $\sim$15\,min \\
2D Complex   & $\sim$5\,min & $\sim$20\,s  & $\sim$22\,min \\
Colour MNIST & $\sim$60\,min & $\sim$5\,min & $\sim$4\,h \\
\bottomrule
\end{tabular}
\end{table}

For 2D experiments the dominant cost is dual network training, which
requires per-sample autograd through both potential networks.
For image experiments the dominant cost is advancing the coupling: generating
the next iteration's dataset requires forward-passing the UNet CFM on a
large batch with 200 Euler steps.

The pre-training cost is incurred \emph{once} and shared across all new
marginal tasks, which is a key practical advantage.
Once the colour MNIST model is pre-trained ($\sim$60\,min), adapting to a
new source class requires $\approx 5$\,min per iteration; a typical task
with $32\!\times\!32$ images converges in 50--200 iterations
($\approx 4$--16\,h on a single RTX~3060).

\paragraph{Reproducibility.}
All experiments are launched via:
\begin{center}
\texttt{python -m dualwcfm.experiments.run --mode 2d --level simple}
\end{center}
with additional flags \texttt{--method mixture --particle-ratio 0.2
--sigma 0.1} for the default weighted variant, or \texttt{--method score}
for the implicit variant.
Pre-trained models can be reused with \texttt{--load-pretrained} to skip
the pre-training phase.
Full argument reference is in \texttt{EXPERIMENTS.md}.

\section{Empirical Convergence Rate: \texorpdfstring{$\mathcal{O}(\delta)$}{O(delta)} Map Error}
\label{app:convergence_rate}

Theorem~\ref{thm:main_convergence} guarantees that the pointwise error of the
learned regression field satisfies
\begin{equation}
  \|\beta_s^t - \tilde{\beta}_s^t\|_{L^\infty(\mathcal{X})} \;\le\; C_\beta(s)\,\delta,
  \qquad \delta = 1/N,
\end{equation}
predicting a linear decay of the transport error in the step size $\delta$,
equivalently an $\mathcal{O}(1/N)$ decay in the number of outer iterations $N$.

\paragraph{Experiment.}
In order to assess the veracity of the theorem on the convergence rate we deliberately design this test on the \emph{2D-Far} experiment, in which the
new marginals $(\mu',\nu')$ are placed at $\pm 10^4$, far from the reference
coupling support (which lives near the origin with standard deviation $0.5$).
This extreme separation serves two purposes.
First, the Euler discretisation error (which scales as $\mathcal{O}(\delta)$ per
step) is large relative to statistical noise and pre-training bias when the
transport distance is large, making the iteration-count dependence the dominant
signal.
Second, on experiments where the new support is close to the reference support,
the pre-trained marginal bridge already provides an accurate initialisation of
the coupling.  In that regime, the algorithm reaches a \emph{pre-training floor}
below which the map error cannot decrease further regardless of $N$: the residual
error is then dominated by the approximation bias of the bridge, not by the Euler
step size.  Consequently, the $\mathcal{O}(\delta)$ slope is obscured below this
floor, and varying $N$ while fixing the pre-trained model cannot recover the
theoretical rate.
The 2D-Far setting avoids this confound by ensuring that the Euler
discretisation error remains the leading term across the entire range of $N$
tested.

A single pre-trained model (marginal bridges and initial CFM field, trained once)
is loaded and the outer tilting loop is run for $N \in \{10, 25, 50, 100, 250,
1000\}$ iterations with step size $\delta = 1/N$.
The map error $\mathrm{RMSE}(\hat{T}_N) := \sqrt{\mathbb{E}\|\hat{T}_N(x) -
T^*(x)\|^2}$ is evaluated at the \emph{final} iteration against the ground-truth
anticorrelation map $T^*(x) = -x$.

\paragraph{Results.}
Figure~\ref{fig:convergence_rate} shows the six measurement points together with
the least-squares $C/N$ fit (slope fixed at $-1$ in log-log space, intercept
estimated as the geometric mean of $N \cdot \mathrm{RMSE}$ over all runs).
The fitted constant is $C \approx 5641$, and the data aligns tightly with the
reference line across two decades of $N$, confirming the $\mathcal{O}(\delta)$
prediction of Theorem~\ref{thm:main_convergence}.
The slight elevation of the $N = 10$ point is consistent with a pre-asymptotic
regime at very few iterations; all remaining points lie on the $C/N$ curve to
within measurement noise.

\begin{figure}[ht]
  \centering
  \includegraphics[width=0.62\linewidth]{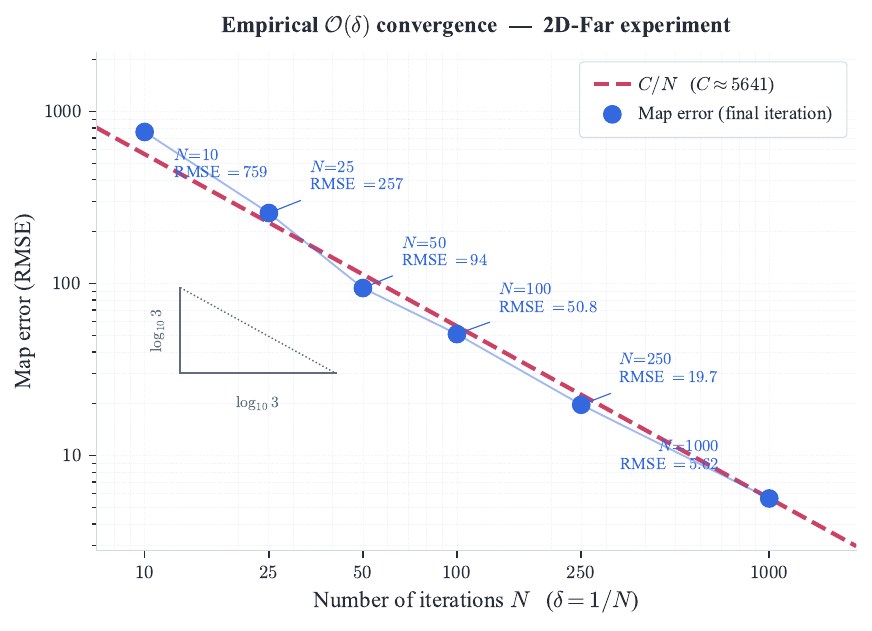}
  \caption{%
    \textbf{Empirical $\mathcal{O}(\delta)$ convergence on 2D-Far.}
    Map error (RMSE) at the final outer iteration vs.\ the number of
    iterations $N$ (log--log scale).
    Filled circles: measured values; dashed line: least-squares $C/N$ fit
    with $C \approx 5641$.
    The grey right-triangle in the lower right marks the reference slope
    $-1$, with each leg spanning $\log_{10}3 \approx 0.5$ log-decades.
    The linear decay in $\delta = 1/N$ is consistent with the bound in
    Theorem~\ref{thm:main_convergence}.
    This clean power-law behaviour is visible precisely because the
    2D-Far geometry places the new marginals far from the reference support,
    making the Euler discretisation error the dominant contribution and
    suppressing the pre-training floor that obscures the rate in
    geometrically closer experiments.%
  }
  \label{fig:convergence_rate}
\end{figure}

\section{Single-Cell Transfer Experiments}
\label{app:singlecell}

\subsection{Dataset and Preprocessing}

We use the \textbf{Sci-Plex3} dataset~\citep{srivatsan2020massively}, a large-scale single-cell perturbation atlas comprising 799{,}317 cells profiled across 188 drug conditions and three cancer cell lines (A549, K562, MCF7).
We focus on the HDAC inhibitor \textbf{Givinostat} (ITF2357) as the perturbation of interest.
Following the CellOT preprocessing protocol~\citep{bunne2023cellot}:
\begin{enumerate}[noitemsep, topsep=2pt]
  \item \textbf{Normalize} raw counts to a library-size factor of 10{,}000 per cell;
  \item Apply \textbf{log1p} transformation;
  \item Retain the \textbf{1{,}000 highly variable genes} (HVGs) selected on training data;
  \item Project to \textbf{50 principal components} (PCA) fitted on control cells.
\end{enumerate}
The resulting 50-dimensional representations serve as source ($\mu$, control cells) and target ($\nu$, treated cells) distributions.

\subsection{Experimental Settings}

We consider two distinct transfer tasks to assess generalisation beyond the reference distribution.

\paragraph{Standard transfer (E4-Std).}
The reference coupling $\pi'$ is built from \textbf{A549} cells (Givinostat 1\,\textmu M, paired control--treated).
The transfer target is \textbf{K562} cells (Givinostat 1\,\textmu M) --- a different cell line exposed to the \emph{same dose}.
This tests whether TACO can adapt the coupling geometry across cell-line identity while holding the chemical dose fixed.

\paragraph{Cross-line transfer (E4-Cross).}
The reference coupling $\pi'$ is again from \textbf{A549} cells (Givinostat 1\,\textmu M).
The transfer target is \textbf{MCF7} cells (Givinostat \textbf{1\,\textmu M}) --- a different cell line at the \emph{same dose}.
This isolates the effect of cell-line identity from dose shift.

\paragraph{Perturbed transfer (E4-Pert).}
The reference coupling $\pi'$ is again from \textbf{A549} cells (Givinostat 1\,\textmu M).
The transfer target is \textbf{MCF7} cells (Givinostat \textbf{0.1\,\textmu M}) --- a different cell line \emph{and} a 10$\times$ lower dose.
This is the hardest generalisation setting: the latent cost must be inferred from a high-dose reference and extrapolated to a low-dose, different-cell-line target.

\subsection{Architectures and Hyperparameters}

\begin{table}[ht]
\centering
\caption{%
  \textbf{Single-cell hyperparameters.}
  Dual-tilting: alternating inner steps per outer iteration $k$.
}
\label{tab:singlecell_hparams}
\small
\begin{tabular}{lc}
\toprule
Hyperparameter & Value \\
\midrule
Backbone & GELU-MLP \\
Hidden dimension & 256 \\
Number of layers & 4 \\
Learning rate & $10^{-3}$ \\
\midrule
\multicolumn{2}{l}{\emph{Inner iterations per outer step}} \\
\quad Pre-training steps & 8{,}000 \\
\quad Marginal bridge steps & 3{,}000 \\
\quad Dual steps & 100 \\
\quad CFM steps & 500 \\
\midrule
Outer iterations $K$ & 100 \\
Batch size & 2{,}048 \\
Scale factor & 20 \\
Bridge noise $\sigma$ & 0.05 \\
Particle mix ratio $\alpha$ & 0.20 \\
\bottomrule
\end{tabular}
\end{table}

\subsection{Results}

\begin{table}[ht]
\centering
\caption{%
  \textbf{Single-cell transfer: quantitative results.}
  All metrics evaluated on held-out target cells.
  \textit{Pretrained}: zero-shot transfer using the reference coupling only (no adaptation).
  \textbf{TACO}: final model after all outer iterations.
  $\downarrow$ lower is better for all metrics.
}
\label{tab:singlecell_results}
\small
\setlength{\tabcolsep}{5pt}
\begin{tabular}{lc ccccc}
\toprule
Setting & Method & $W_1 \!\downarrow$ & $W_2 \!\downarrow$ & MMD\,$\downarrow$ & Energy\,$\downarrow$ & Sinkhorn\,$\downarrow$ \\
\midrule
\multirow{3}{*}{\shortstack[l]{Standard\\(K562, 1\,\textmu M)}}
    & OT-CFM$^\dagger$ (scratch) & 0.319 & 0.403 & 0.177 & 0.612 & 33.3 \\
    & Pretrained      & 0.212 & 0.272 & 0.821 & 15.60 & 195.2 \\
  & \textbf{TACO}    & \textbf{0.030} & \textbf{0.068} & \textbf{0.125} & \textbf{0.329} & \textbf{36.65} \\
\midrule
\multirow{3}{*}{\shortstack[l]{Cross-line\\(MCF7, 1\,\textmu M)}}
    & OT-CFM$^\dagger$ (scratch) & 0.238 & 0.300 & 0.194 & 0.458 & 20.5 \\
    & Pretrained      & 0.203 & 0.259 & 0.838 & 12.85 & 81.93 \\
  & \textbf{TACO}    & \textbf{0.028} & \textbf{0.061} & \textbf{0.115} & \textbf{0.231} & \textbf{25.03} \\
\midrule
\multirow{3}{*}{\shortstack[l]{Perturbed\\(MCF7, 0.1\,\textmu M)}}
    & OT-CFM$^\dagger$ (scratch) & 0.242 & 0.306 & 0.187 & 0.445 & 18.9 \\
    & Pretrained      & 0.203 & 0.258 & 0.855 & 13.54 & 107.4 \\
  & \textbf{TACO}    & \textbf{0.024} & \textbf{0.055} & \textbf{0.125} & \textbf{0.247} & \textbf{25.60} \\
\bottomrule
\end{tabular}
\smallskip

\noindent\footnotesize $\dagger$: OT-CFM trained from scratch directly on the new cell line's own coupling with quadratic cost (no reference coupling from A549).
\end{table}

\paragraph{Transport visualisations.}
Figure~\ref{fig:singlecell_dual_mlp} shows PCA projections of transported cells after TACO adaptation for the standard (K562), cross-line (MCF7 1\,\textmu M), and perturbed (MCF7 0.1\,\textmu M) transfer settings.
Each panel displays the source control distribution, the ground-truth treated cells, and the transported samples.

\begin{figure}[ht]
  \centering
  \begin{subfigure}[b]{\linewidth}
    \includegraphics[width=\linewidth]{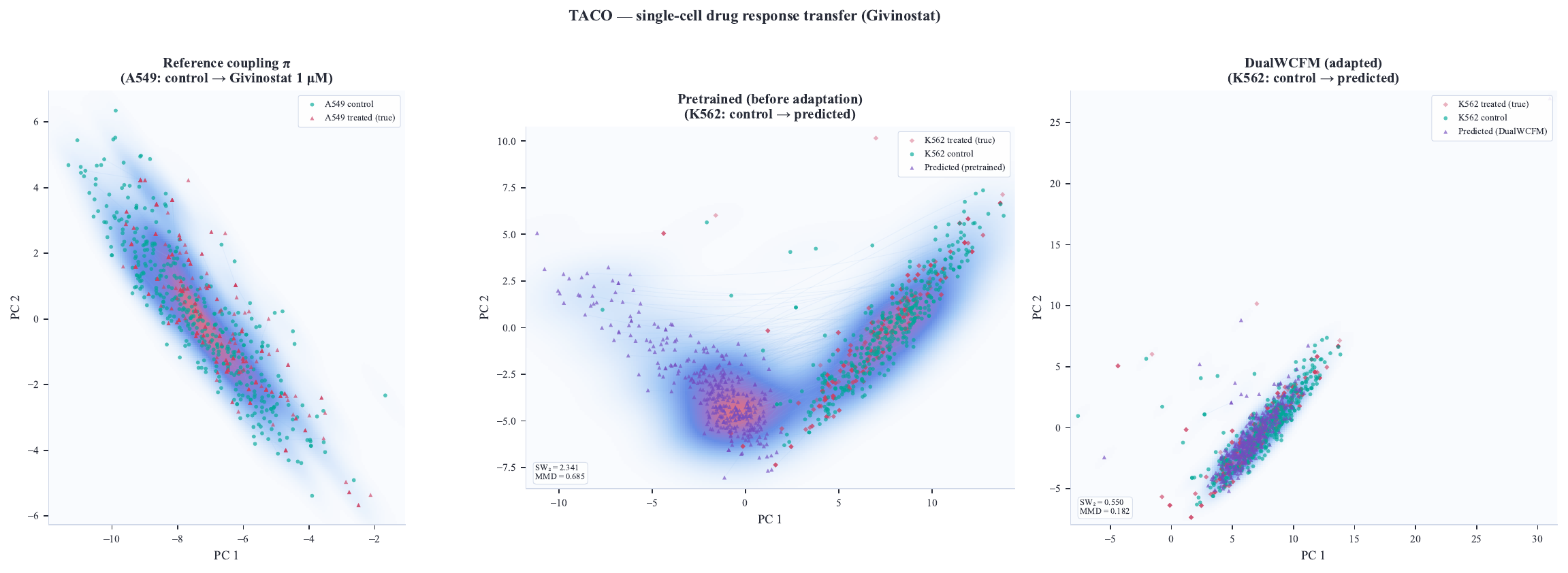}
    \caption{Standard setting (K562, 1\,\textmu M).}
  \end{subfigure}\\[4pt]
  \begin{subfigure}[b]{\linewidth}
    \includegraphics[width=\linewidth]{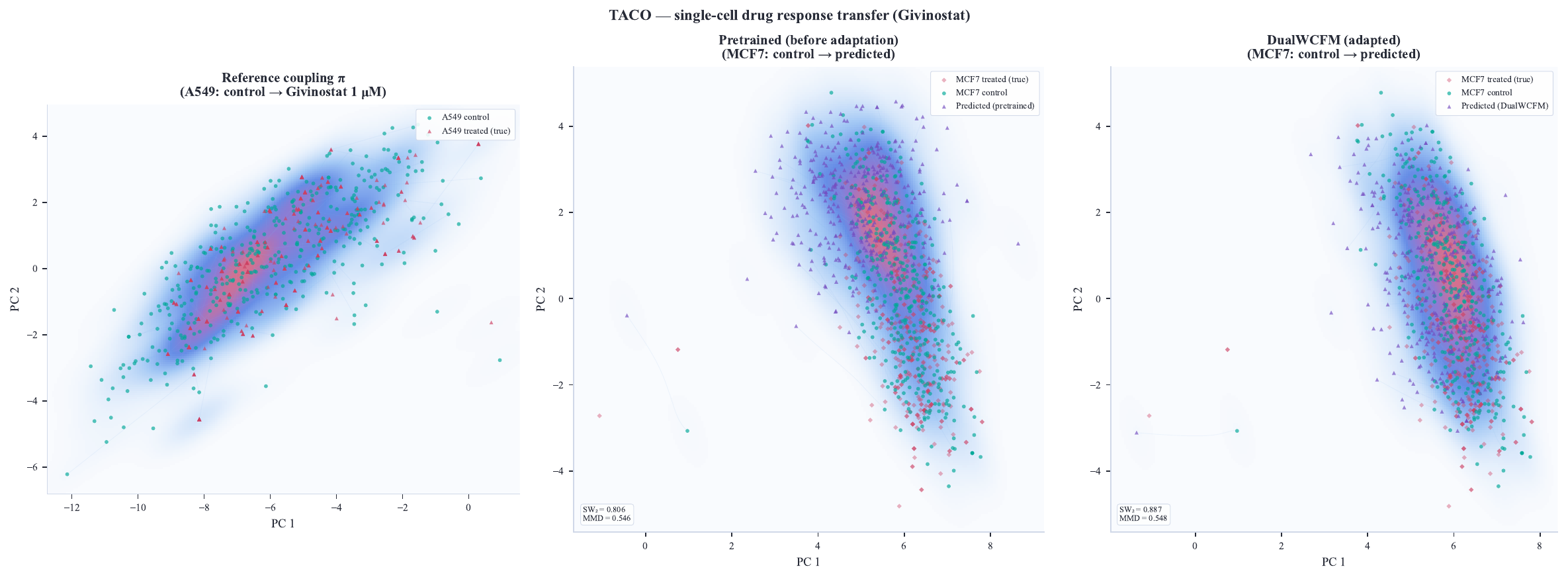}
    \caption{Perturbed setting (MCF7, 0.1\,\textmu M).}
  \end{subfigure}
  \caption{%
    \textbf{Single-cell transport (TACO).}
    PCA visualisation of transported control cells (source) onto the treated distribution (target).
    \textit{Left}: source control cells ($\mu$).
    \textit{Centre}: ground-truth treated cells ($\nu$).
    \textit{Right}: TACO-transported cells ($T_\sharp\mu$).
  }
  \label{fig:singlecell_dual_mlp}
\end{figure}

\paragraph{Convergence.}
Figure~\ref{fig:singlecell_convergence} traces the evolution of $W_1$ (target marginal) over the 100 outer TACO iterations for both settings.
Both variants start from a pretrained baseline ($W_1 \approx 0.21$) and converge rapidly within the first 20--30 iterations.

\begin{figure}[ht]
  \centering
  \includegraphics[width=\linewidth]{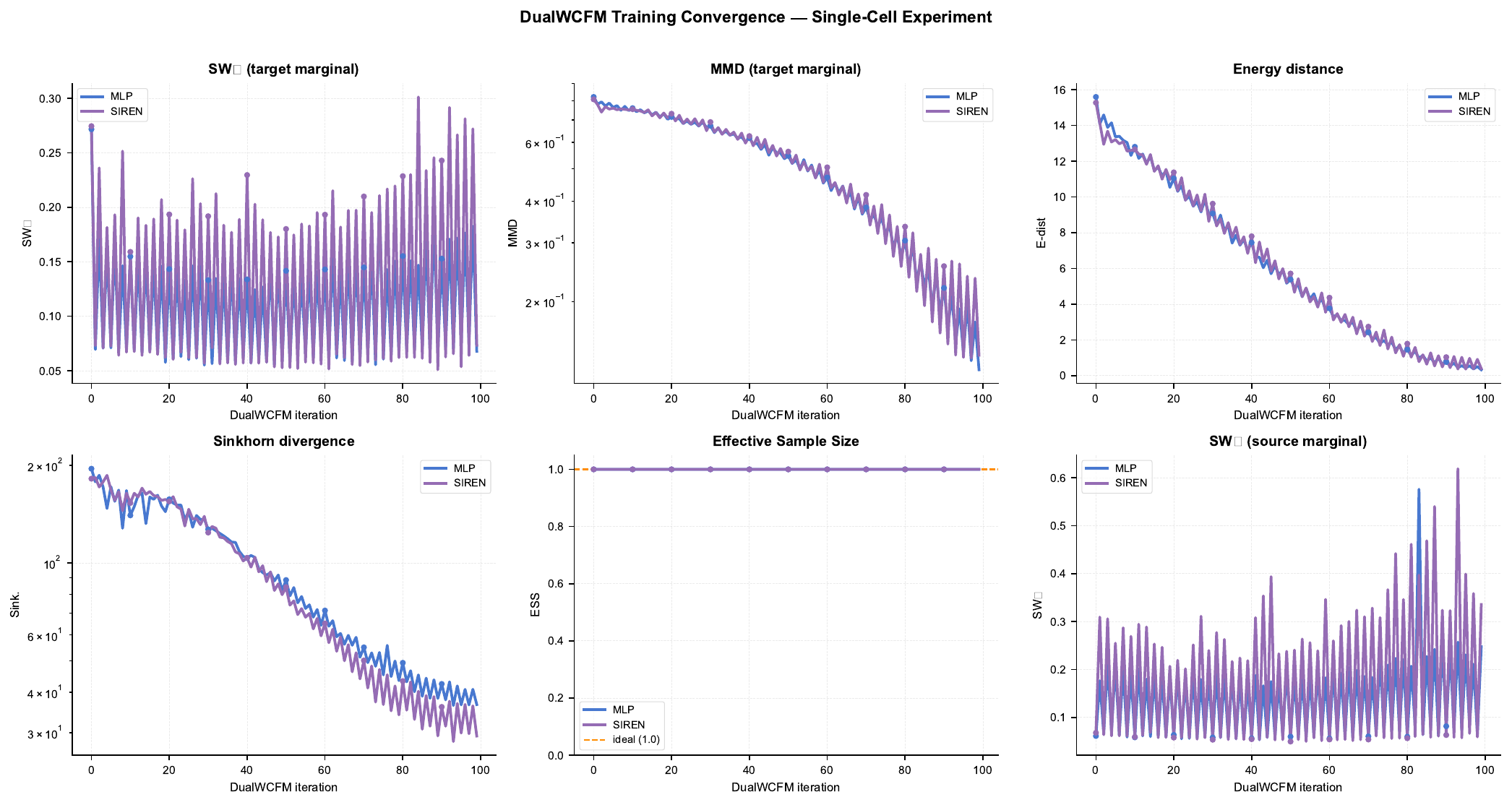}
  \caption{%
    \textbf{Convergence on single-cell transfer.}
    Target-marginal $W_1$ vs.\ outer iteration $k$ for the standard (K562) and perturbed (MCF7 0.1\,\textmu M) settings.
    Both converge to low error within $\approx$30 iterations.
  }
  \label{fig:singlecell_convergence}
\end{figure}

\paragraph{Coupling structure.}
Figure~\ref{fig:singlecell_coupling_arrows} visualises the learned entropic coupling $\pi$ by drawing arrows from source control cells to their most-probable transport targets.
The coupling concentrates mass along biologically meaningful directions in PCA space, reflecting the gene-expression shift induced by drug treatment.

\begin{figure}[ht]
  \centering
  \includegraphics[width=0.85\linewidth]{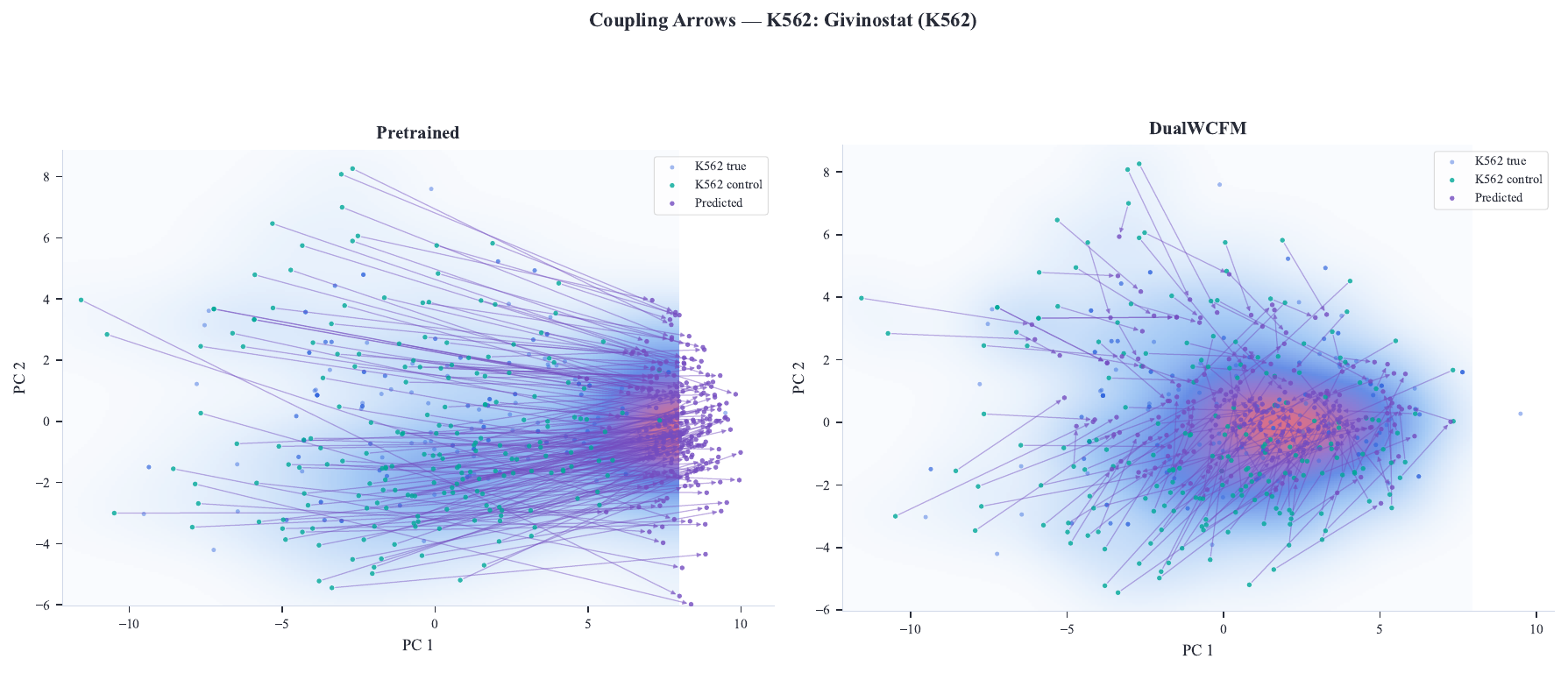}
  \caption{%
    \textbf{Entropic coupling arrows.}
    Each arrow connects a source control cell to its weighted centre of mass under the learned coupling $\pi$.
    Colours indicate transport distance.
  }
  \label{fig:singlecell_coupling_arrows}
\end{figure}

\paragraph{Iteration evolution.}
Figure~\ref{fig:singlecell_evolution} shows PCA snapshots of the transported distribution at selected TACO iterations, illustrating how the generator progressively aligns with the target.

\begin{figure}[ht]
  \centering
  \includegraphics[width=\linewidth]{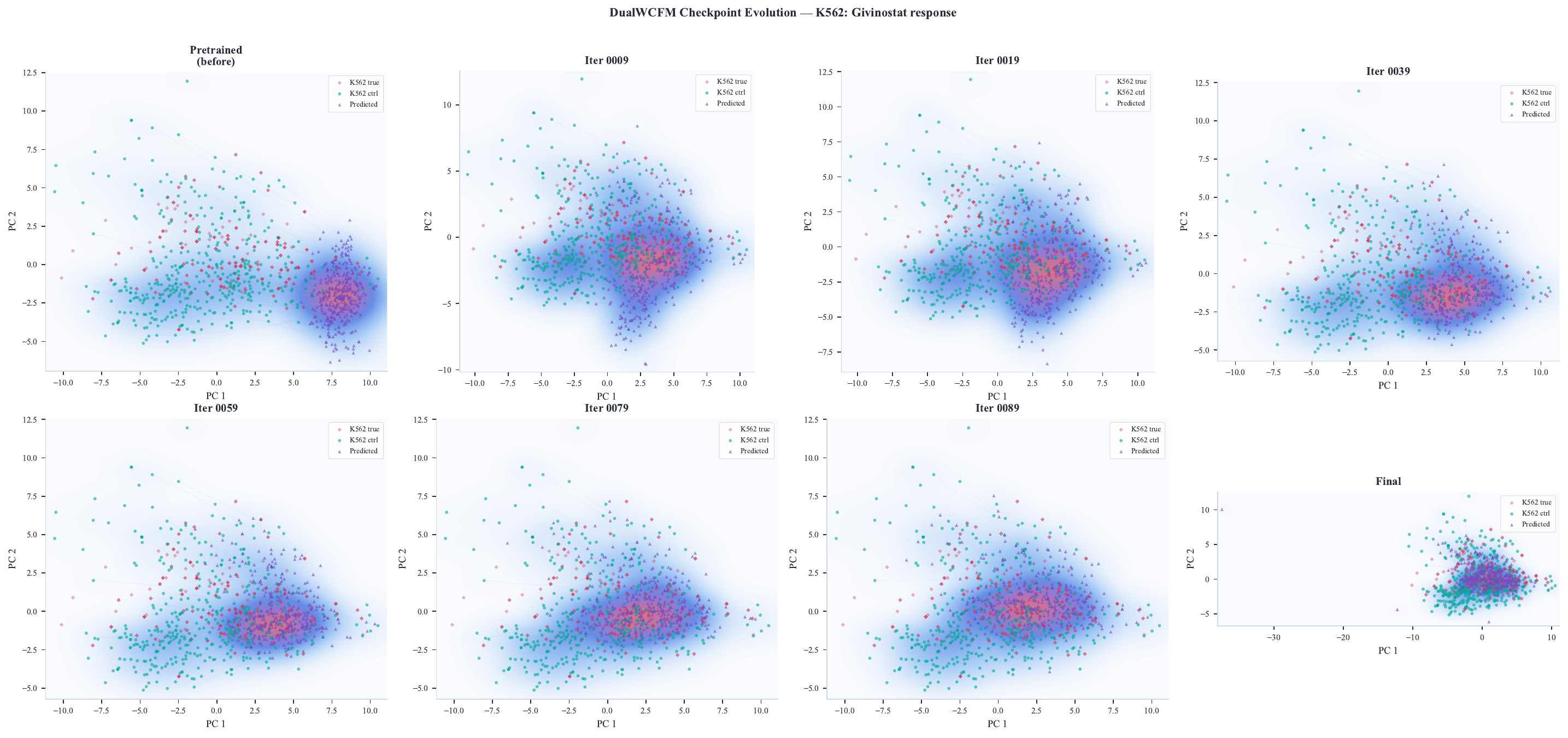}
  \caption{%
    \textbf{Evolution of transported distribution across TACO iterations.}
    Each panel shows a PCA projection of the transported cells at a given outer iteration $k$.
    The distribution converges toward the ground-truth target within $\approx$30 iterations.
  }
  \label{fig:singlecell_evolution}
\end{figure}

\paragraph{Metric comparison.}
Figure~\ref{fig:singlecell_metrics_bar} summarises all transport quality metrics for both settings as a grouped bar chart, making the quantitative gains from Table~\ref{tab:singlecell_results} visually apparent.

\begin{figure}[ht]
  \centering
  \includegraphics[width=0.85\linewidth]{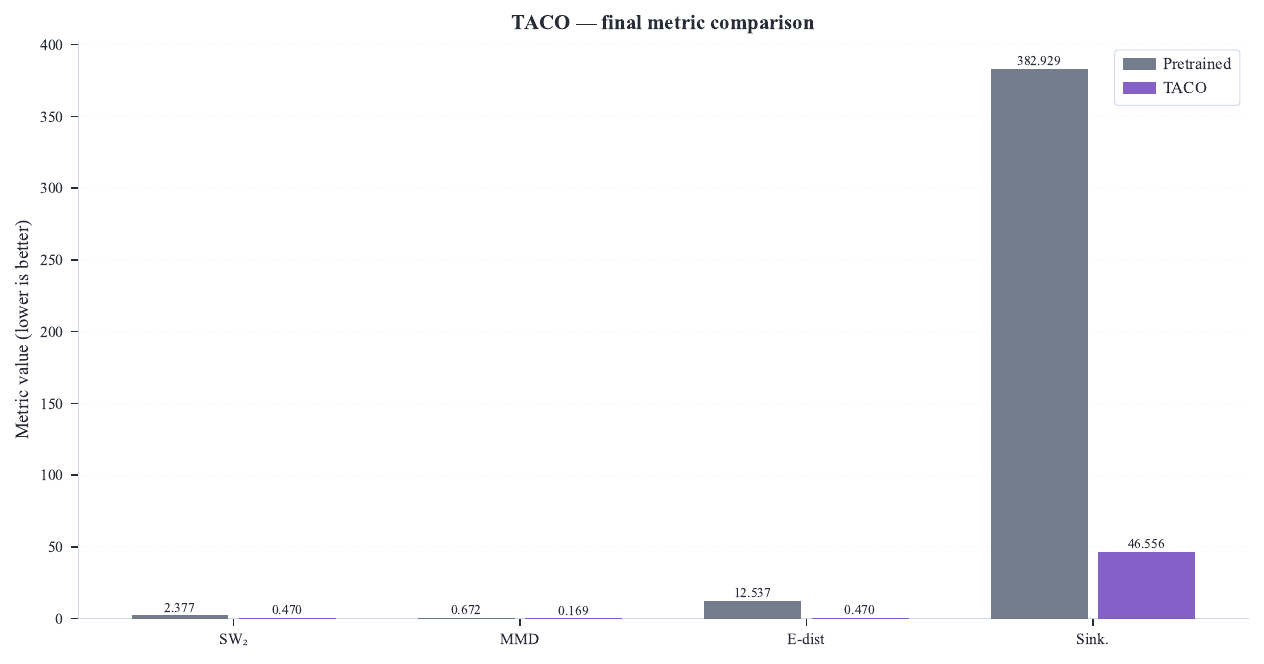}
  \caption{%
    \textbf{Single-cell transport metrics (bar chart).}
    Grouped bars compare OT-CFM (scratch), pretrained baseline, and TACO across $W_1$, $W_2$, MMD, Energy, and Sinkhorn divergence for the standard and perturbed settings.
    Lower is better for all metrics.
  }
  \label{fig:singlecell_metrics_bar}
\end{figure}

\paragraph{Discussion.}
TACO reduces $W_1$ on the target marginal by a factor of $7\times$ relative to the zero-shot pretrained baseline (from $\approx 0.21$ to $\approx 0.03$ in the standard setting).
The perturbed setting — in which the dose and cell line both differ from the reference — is equally well handled: the final $W_1$ of $0.024$ is \emph{lower} than in the standard setting.
This result is at first surprising, but has a natural geometric explanation: the MCF7 control distribution lies closer to the reference A549 distribution in PCA space than K562's control does, so despite the added dose perturbation the effective transport distance is smaller and the pretrained model provides a better initialisation for adaptation.

\section{Use of Large Language Models}
\label{app:llm}

In the interest of transparency, we describe our use of large language model (LLM) assistants in this work.

LLM tools (specifically Claude by Anthropic) were used in the following \emph{auxiliary} capacities:
\begin{itemize}[noitemsep, topsep=3pt]
  \item \textbf{Experiment infrastructure.}
        LLM assistance was used to help write boilerplate experimental code, including data-loading pipelines, training loops, and evaluation scripts.
        The design of all experiments --- which geometries to study, which baselines to include, what metrics to report, and how to structure the comparison --- was decided entirely by the authors.
  \item \textbf{Baseline setup.}
        Setting up and running the OT-CFM, LightSB, DSBM, and DSB baselines required interfacing with their respective codebases. LLM assistance was used to accelerate this integration work.
  \item \textbf{Writing and grammar.}
        LLM tools were used to proofread text and correct grammar and typographical errors.
        All mathematical derivations, theorem statements, and proof arguments were written and verified exclusively by the authors.
\end{itemize}

\noindent\textbf{No LLM contribution to core research.}
The mathematical foundations of TACO --- the cost-transfer problem formulation, the path-wise tilting approach, the covariance-type update equations, the convergence analysis, and all experimental ideas --- are entirely the authors' original contributions.
LLM assistants were not consulted for mathematical insight, algorithmic design, or experimental hypothesis generation.
This use falls under the NeurIPS 2026 LLM policy exception for writing and coding assistance that does not affect scientific originality.

%% file: plots/baseline_tables/per_experiment/2d_simple_table.tex
\begin{table}[ht]
\centering
\caption{Baselines on \textbf{2D-Simple}. \textbf{Bold}: best per metric.  \textrm{--}: not available. $\dagger$: cost-access baselines.}
\label{tab:baselines_2d_simple}
\resizebox{\linewidth}{!}{%
\begin{tabular}{lrrrrrrrr}
\toprule
Method & $\mathrm{SW}_2(\hat\nu)$ & $\mathrm{SW}_1(\hat\nu)$ & $\mathrm{SW}_2(\hat\mu)$ & $\mathrm{SW}_1(\hat\mu)$ & Map err.\ (fwd) & Sinkhorn div. & MMD & Energy dist. \\
\midrule
CFM$^\dagger$ (indep.) & 0.0370 & 0.0303 & 0.0284 & 0.0200 & 1.3872 & 0.0133 & 0.0668 & 0.0022 \\
OT-CFM$^\dagger$ (indep.) & 0.0503 & 0.0398 & 0.0324 & 0.0235 & 1.3874 & 0.0195 & 0.0360 & 0.0032 \\
LightSB$^\dagger$ (indep.) & 0.0647 & 0.0481 & 0.0438 & 0.0324 & 1.3750 & 0.0176 & 0.0514 & 0.0070 \\
DSB$^\dagger$ (indep.) & -- & -- & 0.0243 & 0.0188 & -- & -- & -- & -- \\
GENOT$^\dagger$ (indep.) & 0.5860 & 0.4748 & 0.0255 & 0.0206 & 0.9271 & 0.7338 & 0.6981 & 0.6053 \\
\addlinespace[2pt]\cdashline{1-9}[0.4pt/2pt]\addlinespace[2pt]
CFM$^\dagger$ (coupl.) & 0.0279 & 0.0180 & 0.0292 & 0.0218 & 0.0049 & 0.0113 & 0.0347 & 0.0023 \\
OT-CFM$^\dagger$ (coupl.) & 0.0279 & 0.0196 & 0.0253 & 0.0201 & 0.0179 & 0.0176 & 0.0393 & 0.0045 \\
LightSB$^\dagger$ (coupl.) & 0.0285 & 0.0211 & 0.0327 & 0.0252 & 0.0053 & \textbf{0.0106} & 0.0362 & 0.0016 \\
DSB$^\dagger$ (coupl.) & 0.0321 & 0.0249 & 0.0389 & 0.0308 & \textbf{0.0024} & 0.0125 & 0.0265 & 0.0025 \\
GENOT$^\dagger$ (coupl.) & \textbf{0.0210} & \textbf{0.0133} & \textbf{0.0240} & \textbf{0.0153} & 0.0134 & 0.0128 & \textbf{0.0135} & \textbf{0.0007} \\
\addlinespace[2pt]\cdashline{1-9}[0.4pt/2pt]\addlinespace[2pt]
\textit{Pretrained} & 14.5864 & 13.2404 & 0.0269 & 0.0184 & 20.9789 & 437.9151 & 0.9456 & 40.4188 \\
\textbf{TACO} (weighted) & 0.0236 & 0.0155 & 0.0312 & 0.0222 & 0.0206 & 0.0127 & 0.0218 & 0.0014 \\
\textbf{TACO} (implicit) & 0.0336 & 0.0255 & 0.1567 & 0.1422 & 0.2422 & 0.0133 & 0.0267 & 0.0017 \\
\bottomrule
\end{tabular}
}
\end{table}

%% file: plots/baseline_tables/per_experiment/2d_medium_table.tex
\begin{table}[ht]
\centering
\caption{Baselines on \textbf{2D-Medium}. \textbf{Bold}: best per metric.  \textrm{--}: not available. $\dagger$: cost-access baselines.}
\label{tab:baselines_2d_medium}
\resizebox{\linewidth}{!}{%
\begin{tabular}{lrrrrrrrr}
\toprule
Method & $\mathrm{SW}_2(\hat\nu)$ & $\mathrm{SW}_1(\hat\nu)$ & $\mathrm{SW}_2(\hat\mu)$ & $\mathrm{SW}_1(\hat\mu)$ & Map err.\ (fwd) & Sinkhorn div. & MMD & Energy dist. \\
\midrule
CFM$^\dagger$ (indep.) & 0.8310 & 0.2311 & 0.7157 & 0.1504 & 9.5912 & 0.0735 & 0.0511 & \textbf{0.0056} \\
OT-CFM$^\dagger$ (indep.) & \textbf{0.6233} & 0.1351 & 0.7028 & 0.1629 & 9.9819 & 0.0234 & 0.0174 & 0.0086 \\
LightSB$^\dagger$ (indep.) & 0.7885 & 0.2609 & 0.5728 & 0.1298 & 9.5284 & 0.1616 & 0.0244 & 0.0281 \\
DSB$^\dagger$ (indep.) & 2.3148 & 1.8994 & 0.7816 & 0.1720 & 6.4116 & 23.7596 & 0.0969 & 1.1297 \\
\addlinespace[2pt]\cdashline{1-9}[0.4pt/2pt]\addlinespace[2pt]
CFM$^\dagger$ (coupl.) & 0.6345 & 0.1463 & 0.9357 & 0.2539 & 0.0075 & 0.0280 & \textbf{0.0121} & 0.0163 \\
OT-CFM$^\dagger$ (coupl.) & 1.0326 & 0.3025 & 0.7652 & 0.1863 & 0.0139 & 0.0241 & 0.0587 & 0.0140 \\
LightSB$^\dagger$ (coupl.) & 0.7709 & 0.1821 & 0.6000 & 0.1221 & 0.0151 & \textbf{0.0222} & 0.0244 & 0.0079 \\
DSB$^\dagger$ (coupl.) & 0.6415 & \textbf{0.1330} & \textbf{0.4501} & \textbf{0.0817} & \textbf{0.0019} & 0.0316 & 0.0162 & 0.0286 \\
GENOT$^\dagger$ (coupl.) & 0.7500 & 0.1693 & 0.4864 & 0.0988 & 0.0029 & 0.0250 & 0.0181 & 0.0207 \\
\addlinespace[2pt]\cdashline{1-9}[0.4pt/2pt]\addlinespace[2pt]
\textit{Pretrained} & 2.1879 & 1.7006 & 0.9728 & 0.2670 & 5.4657 & 21.5863 & 0.0622 & 1.0830 \\
\textbf{TACO} (weighted) & 0.8731 & 0.2884 & 0.6418 & 0.2491 & 0.1755 & 0.8761 & 0.0289 & 0.0253 \\
\textbf{TACO} (implicit) & 0.6050 & 0.1807 & 0.6929 & 0.2824 & 0.2012 & 0.4880 & 0.0193 & 0.0113 \\
\textbf{TACO} (explicit) & 0.7543 & 0.2204 & 0.5194 & 0.2381 & 0.2724 & 0.8113 & 0.0297 & 0.0603 \\
\bottomrule
\end{tabular}
}
\end{table}

%% file: plots/baseline_tables/per_experiment/2d_complex_table.tex
\begin{table}[ht]
\centering
\caption{Baselines on \textbf{2D-Complex}. \textbf{Bold}: best per metric.  \textrm{--}: not available. $\dagger$: cost-access baselines.}
\label{tab:baselines_2d_complex}
\resizebox{\linewidth}{!}{%
\begin{tabular}{lrrrrrrrr}
\toprule
Method & $\mathrm{SW}_2(\hat\nu)$ & $\mathrm{SW}_1(\hat\nu)$ & $\mathrm{SW}_2(\hat\mu)$ & $\mathrm{SW}_1(\hat\mu)$ & Map err.\ (fwd) & Sinkhorn div. & MMD & Energy dist. \\
\midrule
CFM$^\dagger$ (indep.) & 1.9356 & 0.8295 & 1.2282 & 0.2990 & 9.5051 & 0.4017 & 0.0681 & 0.1349 \\
OT-CFM$^\dagger$ (indep.) & 1.2560 & 0.3055 & 0.8753 & 0.1661 & 8.5517 & 0.0070 & 0.0391 & 0.0523 \\
LightSB$^\dagger$ (indep.) & 1.4293 & 0.5129 & 1.0503 & 0.1993 & 9.0646 & 0.3133 & 0.0428 & 0.0730 \\
DSB$^\dagger$ (indep.) & 2.9665 & 2.2784 & 0.8882 & 0.1649 & 6.7110 & 20.6027 & 0.1729 & 1.3699 \\
\addlinespace[2pt]\cdashline{1-9}[0.4pt/2pt]\addlinespace[2pt]
CFM$^\dagger$ (coupl.) & 0.7945 & 0.1368 & 0.9007 & 0.1754 & 0.0075 & \textbf{0.0020} & 0.0505 & 0.0107 \\
OT-CFM$^\dagger$ (coupl.) & \textbf{0.4999} & \textbf{0.0583} & 1.1071 & 0.2606 & 0.0088 & 0.0020 & 0.0123 & \textbf{0.0018} \\
LightSB$^\dagger$ (coupl.) & 0.9679 & 0.1978 & 0.9133 & 0.1723 & 0.0768 & 0.0055 & \textbf{0.0099} & 0.0082 \\
DSB$^\dagger$ (coupl.) & 1.1198 & 0.2531 & \textbf{0.8544} & \textbf{0.1516} & \textbf{0.0023} & 0.0021 & 0.0253 & 0.0363 \\
GENOT$^\dagger$ (coupl.) & 1.0119 & 0.2058 & 0.8596 & 0.1654 & 0.0053 & 0.0030 & 0.0446 & 0.0224 \\
\addlinespace[2pt]\cdashline{1-9}[0.4pt/2pt]\addlinespace[2pt]
\textit{Pretrained} & 2.6361 & 1.9509 & 0.8691 & 0.1633 & 5.3409 & 29.0502 & 0.0678 & 1.1339 \\
\textbf{TACO} (weighted) & 0.5920 & 0.1614 & 1.0777 & 0.3383 & 0.0701 & 0.2590 & 0.0171 & 0.0167 \\
\textbf{TACO} (implicit) & 1.0164 & 0.2963 & 0.7388 & 0.2325 & 0.1033 & 0.2584 & 0.0383 & 0.0487 \\
\textbf{TACO} (explicit) & 1.0662 & 0.3083 & 0.9013 & 0.2752 & 0.1001 & 0.2625 & 0.0082 & 0.0083 \\
\bottomrule
\end{tabular}
}
\end{table}

%% file: plots/baseline_tables/per_experiment/2d_moon_table.tex
\begin{table}[ht]
\centering
\caption{Baselines on \textbf{2D-Moon}. \textbf{Bold}: best per metric.  \textrm{--}: not available. $\dagger$: cost-access baselines.}
\label{tab:baselines_2d_moon}
\resizebox{\linewidth}{!}{%
\begin{tabular}{lrrrrrrrr}
\toprule
Method & $\mathrm{SW}_2(\hat\nu)$ & $\mathrm{SW}_1(\hat\nu)$ & $\mathrm{SW}_2(\hat\mu)$ & $\mathrm{SW}_1(\hat\mu)$ & Sinkhorn div. & MMD & Energy dist. \\
\midrule
CFM$^\dagger$ (indep.) & 0.7922 & 0.6071 & 0.0684 & 0.0442 & 0.4002 & 0.1587 & 0.2172 \\
OT-CFM$^\dagger$ (indep.) & 3.1254 & 2.3131 & 0.0988 & 0.0661 & 1.3587 & 0.6294 & 3.0416 \\
LightSB$^\dagger$ (indep.) & 0.8032 & 0.6008 & 0.1002 & 0.0697 & 0.3416 & 0.1506 & 0.2535 \\
DSB$^\dagger$ (indep.)  & -- & -- & 0.0581 & 0.0424 & -- & -- & -- & -- \\
\addlinespace[2pt]\cdashline{1-9}[0.4pt/2pt]\addlinespace[2pt]
CFM$^\dagger$ (coupl.) & \textbf{0.1122} & \textbf{0.0883} & 0.0810 & 0.0608 & 0.0681 & \textbf{0.0338} & 0.0105 \\
OT-CFM$^\dagger$ (coupl.) & 3.0227 & 2.2762 & 0.2059 & 0.1533 & 1.0114 & 0.5591 & 2.7875 \\
LightSB$^\dagger$ (coupl.) & 0.1680 & 0.1248 & \textbf{0.0444} & \textbf{0.0343} & \textbf{0.0639} & 0.0599 & \textbf{0.0047} \\
DSB$^\dagger$ (coupl.) & 5.2916 & 4.6548 & 0.0677 & 0.0527 & 33.8119 & 0.6601 & 6.1208 \\
\addlinespace[2pt]\cdashline{1-9}[0.4pt/2pt]\addlinespace[2pt]
\textit{Pretrained} & 5.4399 & 4.7335 & 0.1905 & 0.1288 & 25.0978 & 0.9099 & 8.9572 \\
\textbf{TACO} (weighted) & 0.1032 & 0.0787 & 0.3859 & 0.3413 & 0.0500 & 0.0144 & 0.0040 \\
\textbf{TACO} (implicit) & 0.1516 & 0.1109 & 0.7182 & 0.6190 & 0.0603 & 0.0133 & 0.0078 \\
\textbf{TACO} (explicit) & 0.0664 & 0.0515 & 0.4620 & 0.3962 & 0.0401 & 0.0528 & 0.0264 \\
\bottomrule
\end{tabular}
}
\end{table}

%% file: plots/baseline_tables/per_experiment/2d_simple_perturbed_table.tex
\begin{table}[ht]
\centering
\caption{Baselines on \textbf{2D-Simple (pert.)}. \textbf{Bold}: best per metric.  \textrm{--}: not available. $\dagger$: cost-access baselines.}
\label{tab:baselines_2d_simple_perturbed}
\resizebox{\linewidth}{!}{%
\begin{tabular}{lrrrrrrrr}
\toprule
Method & $\mathrm{SW}_2(\hat\nu)$ & $\mathrm{SW}_1(\hat\nu)$ & $\mathrm{SW}_2(\hat\mu)$ & $\mathrm{SW}_1(\hat\mu)$ & Map err.\ (fwd) & Sinkhorn div. & MMD & Energy dist. \\
\midrule
CFM$^\dagger$ (indep.) & 0.0279 & 0.0208 & 0.0210 & 0.0149 & 1.4196 & 0.0220 & \textbf{0.0116} & 0.0008 \\
OT-CFM$^\dagger$ (indep.) & 0.0716 & 0.0628 & \textbf{0.0202} & \textbf{0.0126} & 1.4199 & 0.0226 & 0.0615 & 0.0123 \\
LightSB$^\dagger$ (indep.) & 0.0826 & 0.0702 & 0.0256 & 0.0181 & 1.3790 & 0.0257 & 0.0900 & 0.0223 \\
DSB$^\dagger$ (indep.) & -- & -- & 0.0263 & 0.0207 & -- & -- & -- & -- \\
GENOT$^\dagger$ (indep.) & 0.5338 & 0.4285 & 0.0264 & 0.0178 & 0.8764 & 0.5814 & 0.6428 & 0.5037 \\
\addlinespace[2pt]\cdashline{1-9}[0.4pt/2pt]\addlinespace[2pt]
CFM$^\dagger$ (coupl.) & 0.0230 & 0.0163 & 0.0297 & 0.0229 & 0.0076 & \textbf{0.0095} & 0.0318 & 0.0011 \\
OT-CFM$^\dagger$ (coupl.) & 0.0229 & 0.0159 & 0.0326 & 0.0243 & 0.0090 & 0.0115 & 0.0170 & 0.0026 \\
LightSB$^\dagger$ (coupl.) & 0.0262 & 0.0203 & 0.0240 & 0.0175 & 0.0102 & 0.0132 & 0.0210 & 0.0014 \\
DSB$^\dagger$ (coupl.) & \textbf{0.0222} & \textbf{0.0139} & 0.0242 & 0.0179 & \textbf{0.0029} & 0.0121 & 0.0142 & \textbf{0.0007} \\
GENOT$^\dagger$ (coupl.) & 0.0372 & 0.0295 & 0.0274 & 0.0196 & 0.0121 & 0.0131 & 0.0502 & 0.0009 \\
\addlinespace[2pt]\cdashline{1-9}[0.4pt/2pt]\addlinespace[2pt]
\textit{Pretrained} & 15.2926 & 13.7000 & 0.0221 & 0.0145 & 21.4855 & 459.0915 & 0.9537 & 41.4947 \\
\textbf{TACO} (weighted) & 0.0284 & 0.0233 & 0.0909 & 0.0736 & 0.1232 & 0.0122 & 0.0281 & 0.0030 \\
\textbf{TACO} (implicit) & 0.0407 & 0.0315 & 0.1603 & 0.1426 & 0.3647 & 0.0119 & 0.0468 & 0.0040 \\
\textbf{TACO} (explicit) & 0.0334 & 0.0262 & 0.0615 & 0.0524 & 0.1514 & 0.0125 & 0.0291 & 0.0011 \\
\bottomrule
\end{tabular}
}
\end{table}

%% file: plots/baseline_tables/per_experiment/2d_medium_perturbed_table.tex
\begin{table}[ht]
\centering
\caption{Baselines on \textbf{2D-Medium (pert.)}. \textbf{Bold}: best per metric.  \textrm{--}: not available. $\dagger$: cost-access baselines.}
\label{tab:baselines_2d_medium_perturbed}
\resizebox{\linewidth}{!}{%
\begin{tabular}{lrrrrrrrr}
\toprule
Method & $\mathrm{SW}_2(\hat\nu)$ & $\mathrm{SW}_1(\hat\nu)$ & $\mathrm{SW}_2(\hat\mu)$ & $\mathrm{SW}_1(\hat\mu)$ & Map err.\ (fwd) & Sinkhorn div. & MMD & Energy dist. \\
\midrule
CFM$^\dagger$ (indep.) & 0.6411 & 0.1823 & 0.7368 & 0.1624 & 9.8108 & 0.1038 & 0.0201 & 0.0115 \\
OT-CFM$^\dagger$ (indep.) & 0.7730 & 0.1862 & 0.7368 & 0.1758 & 10.0271 & 0.0288 & 0.0231 & 0.0108 \\
LightSB$^\dagger$ (indep.) & 0.8088 & 0.2217 & 0.6822 & 0.1656 & 9.9250 & 0.0340 & 0.0272 & \textbf{0.0077} \\
DSB$^\dagger$ (indep.) & 1.6203 & 1.3661 & 0.6431 & 0.1453 & 7.8346 & 5.5589 & 0.1104 & 0.8014 \\
\addlinespace[2pt]\cdashline{1-9}[0.4pt/2pt]\addlinespace[2pt]
CFM$^\dagger$ (coupl.) & \textbf{0.4315} & \textbf{0.0782} & \textbf{0.3877} & \textbf{0.0693} & 0.0127 & \textbf{0.0207} & 0.0260 & 0.0221 \\
OT-CFM$^\dagger$ (coupl.) & 0.7816 & 0.1870 & 0.7568 & 0.1821 & 0.0111 & 0.0243 & 0.0454 & 0.0097 \\
LightSB$^\dagger$ (coupl.) & 0.5472 & 0.1162 & 0.5643 & 0.1090 & 0.0202 & 0.0259 & \textbf{0.0186} & 0.0093 \\
DSB$^\dagger$ (coupl.) & 0.9932 & 0.2818 & 0.8476 & 0.2182 & \textbf{0.0023} & 0.0288 & 0.0415 & 0.0186 \\
GENOT$^\dagger$ (coupl.) & 0.6825 & 0.1525 & 0.5047 & 0.1001 & 0.0065 & 0.0250 & 0.0459 & 0.0089 \\
\addlinespace[2pt]\cdashline{1-9}[0.4pt/2pt]\addlinespace[2pt]
\textit{Pretrained} & 2.0647 & 1.7206 & 0.5679 & 0.1195 & 6.2529 & 16.5827 & 0.0651 & 1.0207 \\
\textbf{TACO} (weighted) & 0.5000 & 0.1604 & 0.6526 & 0.4161 & 0.6592 & 0.3736 & 0.0360 & 0.0128 \\
\textbf{TACO} (implicit) & 0.5299 & 0.1715 & 0.6916 & 0.4681 & 0.7467 & 0.5871 & 0.0278 & 0.0172 \\
\textbf{TACO} (explicit) & 0.6691 & 0.2252 & 0.7208 & 0.4608 & 0.6765 & 0.7521 & 0.0167 & 0.0106 \\
\bottomrule
\end{tabular}
}
\end{table}

%% file: plots/baseline_tables/per_experiment/2d_complex_perturbed_table.tex
\begin{table}[ht]
\centering
\caption{Baselines on \textbf{2D-Complex (pert.)}. \textbf{Bold}: best per metric.  \textrm{--}: not available. $\dagger$: cost-access baselines.}
\label{tab:baselines_2d_complex_perturbed}
\resizebox{\linewidth}{!}{%
\begin{tabular}{lrrrrrrrr}
\toprule
Method & $\mathrm{SW}_2(\hat\nu)$ & $\mathrm{SW}_1(\hat\nu)$ & $\mathrm{SW}_2(\hat\mu)$ & $\mathrm{SW}_1(\hat\mu)$ & Map err.\ (fwd) & Sinkhorn div. & MMD & Energy dist. \\
\midrule
CFM$^\dagger$ (indep.) & 1.6794 & 0.6591 & 0.7693 & 0.1176 & 10.9010 & 0.7101 & 0.0490 & 0.0359 \\
OT-CFM$^\dagger$ (indep.) & \textbf{0.6992} & \textbf{0.1015} & \textbf{0.7021} & \textbf{0.1145} & 12.5855 & 0.0027 & 0.0156 & \textbf{0.0048} \\
LightSB$^\dagger$ (indep.) & 1.3858 & 0.4345 & 1.1896 & 0.2487 & 11.6316 & 0.6913 & 0.0392 & 0.0454 \\
DSB$^\dagger$ (indep.) & 3.0211 & 2.1737 & 0.7827 & 0.1334 & 8.0323 & 41.6791 & 0.1058 & 1.2610 \\
\addlinespace[2pt]\cdashline{1-9}[0.4pt/2pt]\addlinespace[2pt]
CFM$^\dagger$ (coupl.) & 1.2861 & 0.3417 & 1.2755 & 0.3253 & 0.0114 & 0.0020 & 0.0594 & 0.0773 \\
OT-CFM$^\dagger$ (coupl.) & 1.2354 & 0.3130 & 1.1396 & 0.2816 & 0.0110 & 0.0019 & 0.0420 & 0.0157 \\
LightSB$^\dagger$ (coupl.) & 1.0497 & 0.2527 & 0.9314 & 0.1806 & 0.1322 & 0.0088 & 0.0256 & 0.0083 \\
DSB$^\dagger$ (coupl.) & 1.2171 & 0.2987 & 1.2034 & 0.2745 & \textbf{0.0034} & 0.0027 & 0.0565 & 0.0253 \\
GENOT$^\dagger$ (coupl.) & 0.7074 & 0.1119 & 1.1141 & 0.2644 & 0.0068 & \textbf{0.0018} & \textbf{0.0105} & 0.0070 \\
\addlinespace[2pt]\cdashline{1-9}[0.4pt/2pt]\addlinespace[2pt]
\textit{Pretrained} & 2.8105 & 2.0669 & 0.8379 & 0.1497 & 6.7445 & 41.7735 & 0.0801 & 1.1699 \\
\textbf{TACO} (weighted) & 1.2528 & 0.3981 & 0.6264 & 0.2982 & 0.3487 & 0.2912 & 0.0283 & 0.0254 \\
\textbf{TACO} (implicit) & 1.1341 & 0.3541 & 0.7440 & 0.3251 & 0.3189 & 0.2624 & 0.0045 & 0.0189 \\
\textbf{TACO} (explicit) & 0.8186 & 0.2196 & 0.7247 & 0.2901 & 0.2781 & 0.3113 & 0.0279 & 0.0258 \\
\bottomrule
\end{tabular}
}
\end{table}

%% file: plots/baseline_tables/per_experiment/2d_moon_perturbed_table.tex
\begin{table}[ht]
\centering
\caption{Baselines on \textbf{2D-Moon (pert.)}. \textbf{Bold}: best per metric. \textrm{--}: not available. $\dagger$: cost-access baselines.}
\label{tab:baselines_2d_moon_perturbed}
\resizebox{\linewidth}{!}{%
\begin{tabular}{lrrrrrrr}
\toprule
Method & $\mathrm{SW}_2(\hat\nu)$ & $\mathrm{SW}_1(\hat\nu)$ & $\mathrm{SW}_2(\hat\mu)$ & $\mathrm{SW}_1(\hat\mu)$ & Sinkhorn div. & MMD & Energy dist. \\
\midrule
CFM$^\dagger$ (indep.) & 0.8431 & 0.6098 & 0.1390 & 0.0973 & 0.8004 & 0.1378 & 0.2378 \\
OT-CFM$^\dagger$ (indep.) & 1.7087 & 1.1989 & 0.1060 & 0.0774 & 0.2040 & 0.3122 & 0.9720 \\
LightSB$^\dagger$ (indep.) & 0.9611 & 0.7243 & 0.0718 & 0.0504 & 0.8518 & 0.1978 & 0.2751 \\
DSB$^\dagger$ (indep.) & 1.1999 & 0.9514 & \textbf{0.0509} & \textbf{0.0375} & 5.2869 & 0.1779 & 0.4286 \\
\addlinespace[2pt]\cdashline{1-8}[0.4pt/2pt]\addlinespace[2pt]
CFM$^\dagger$ (coupl.) & \textbf{0.1321} & \textbf{0.1000} & 0.0627 & 0.0473 & \textbf{0.0957} & \textbf{0.0156} & \textbf{0.0114} \\
OT-CFM$^\dagger$ (coupl.) & 1.7256 & 1.2679 & 0.0998 & 0.0746 & 0.2648 & 0.3316 & 0.8150 \\
LightSB$^\dagger$ (coupl.) & 0.2584 & 0.1852 & 0.1422 & 0.0959 & 0.1029 & 0.0433 & 0.0317 \\
DSB$^\dagger$ (coupl.) & -- & -- & 0.1854 & 0.1373 & -- & 0.5161 & -- \\
\addlinespace[2pt]\cdashline{1-8}[0.4pt/2pt]\addlinespace[2pt]
\textit{Pretrained} & 4.0099 & 3.4039 & 0.0950 & 0.0676 & 19.0597 & 0.7372 & 5.2649 \\
\textbf{TACO} (weighted) & 0.0906 & 0.0678 & 0.1789 & 0.1493 & 0.0445 & 0.0227 & 0.0100 \\
\textbf{TACO} (implicit) & 0.0789 & 0.0570 & 0.4823 & 0.4020 & 0.0504 & 0.0184 & 0.0044 \\
\textbf{TACO} (explicit) & 0.1281 & 0.0911 & 0.3239 & 0.2817 & 0.0493 & 0.0126 & 0.0078 \\
\bottomrule
\end{tabular}
}
\end{table}

%% file: refs_complete.bib
@book{villani2021topics,
  author    = {Villani, C{\'e}dric},
  title     = {Topics in Optimal Transportation},
  series    = {Graduate Studies in Mathematics},
  volume    = {58},
  publisher = {American Mathematical Society},
  address   = {Providence, RI},
  year      = {2003}
}

@book{peyre2019computational,
  title={Computational optimal transport: With applications to data science},
  author={Peyr{\'e}, Gabriel and Cuturi, Marco},
  year={2019},
  publisher={Now Foundations and Trends}
}

@inproceedings{cuturi2013sinkhorn,
  author    = {Cuturi, Marco},
  title     = {Sinkhorn Distances: Lightspeed Computation of Optimal Transport},
  booktitle = {Advances in Neural Information Processing Systems},
  volume    = {26},
  pages     = {2292--2300},
  year      = {2013},
  publisher = {Curran Associates}
}

@article{nutz2021introduction,
  author    = {Nutz, Marcel},
  title     = {Introduction to Entropic Optimal Transport},
  journal   = {Lecture Notes, Columbia University},
  year      = {2021},
  note      = {Available at \url{https://www.math.columbia.edu/~mnutz/docs/EOT_lecture_notes.pdf}}
}

@article{carlier2017convergence,
  title={Convergence of entropic schemes for optimal transport and gradient flows},
  author={Carlier, Guillaume and Duval, Vincent and Peyr{\'e}, Gabriel and Schmitzer, Bernhard},
  journal={SIAM Journal on Mathematical Analysis},
  volume={49},
  number={2},
  pages={1385--1418},
  year={2017},
  publisher={SIAM}
}

@article{conforti2019second,
  author    = {Conforti, Giovanni},
  title     = {A Second Order Equation for {Schr{\"o}dinger} Bridges with Applications to the Hot Gas Experiment and Entropic Transportation Cost},
  journal   = {Probability Theory and Related Fields},
  volume    = {174},
  number    = {1},
  pages     = {1--47},
  year      = {2019}
}

@article{gonzalez2024identifiability,
  author    = {Gonz{\'a}lez-Sanz, Alberto and Groppe, Michel and Munk, Axel},
  title     = {Identifiability of the Optimal Transport Cost on Finite Spaces},
  journal   = {arXiv preprint arXiv:2410.23146},
  year      = {2024}
}

@inproceedings{seguy2018largescale,
  author    = {Seguy, Vivien and Damodaran, Bharath Bhushan and Flamary, R{\'e}mi and Courty, Nicolas and Rolet, Antoine and Blondel, Mathieu},
  title     = {Large-Scale Optimal Transport and Mapping Estimation},
  booktitle = {International Conference on Learning Representations},
  year      = {2018}
}

@inproceedings{kassraie2024progot,
  author    = {Kassraie, Parnian and Pooladian, Aram-Alexandre and Klein, Michal and Thornton, James and Niles-Weed, Jonathan and Cuturi, Marco},
  title     = {Progressive Entropic Optimal Transport Solvers},
  booktitle = {Advances in Neural Information Processing Systems},
  volume    = {37},
  year      = {2024}
}

@inproceedings{gushchin2023entropic,
  author    = {Gushchin, Nikita and Kolesov, Alexander and Korotin, Alexander and Vetrov, Dmitry and Burnaev, Evgeny},
  title     = {Entropic Neural Optimal Transport via Diffusion Processes},
  booktitle = {Advances in Neural Information Processing Systems},
  volume    = {36},
  year      = {2023},
  note      = {Oral presentation}
}

@inproceedings{klein2024genot,
  author    = {Klein, Dominik and Prigent, Hugues and Cuturi, Marco and Peyré, Gabriel and Theis, Fabian J.},
  title     = {{GENOT}: Entropic ({Gromov}) {Wasserstein} Flow Matching with Applications to Single-Cell Genomics},
  booktitle = {Advances in Neural Information Processing Systems},
  volume    = {37},
  year      = {2024},
  note      = {arXiv:2310.09254}
}

@inproceedings{debortoli2021diffusion,
  author    = {{De Bortoli}, Valentin and Thornton, James and Heng, Jeremy and Doucet, Arnaud},
  title     = {Diffusion {Schr{\"o}dinger} Bridge with Applications to Score-Based Generative Modeling},
  booktitle = {Advances in Neural Information Processing Systems},
  volume    = {34},
  pages     = {17695--17709},
  year      = {2021}
}

@inproceedings{shi2023diffusion,
  author    = {Shi, Yuyang and {De Bortoli}, Valentin and Campbell, Andrew and Doucet, Arnaud},
  title     = {Diffusion {Schr{\"o}dinger} Bridge Matching},
  booktitle = {Advances in Neural Information Processing Systems},
  volume    = {36},
  year      = {2023}
}

@misc{pachebat2025iterative,
      title={Iterative Tilting for Diffusion Fine-Tuning}, 
      author={Jean Pachebat and Giovanni Conforti and Alain Durmus and Yazid Janati},
      year={2025},
      eprint={2512.03234},
      archivePrefix={arXiv},
      primaryClass={stat.ML},
      url={https://arxiv.org/abs/2512.03234}, 
}

@inproceedings{korotin2024light,
  author    = {Gushchin, Nikita and Kholkin, Sergei and Burnaev, Evgeny and Korotin, Alexander},
  title     = {Light and Optimal {Schr{\"o}dinger} Bridge Matching},
  booktitle = {International Conference on Machine Learning},
  year      = {2024},
  note      = {arXiv:2402.03207}
}

@inproceedings{debortoli2024sbflow,
  author    = {De Bortoli, Valentin and Korshunova, Iryna and Mnih, Andriy and Doucet, Arnaud},
  title     = {{Schr{\"o}dinger} Bridge Flow for Unpaired Data Translation},
  booktitle = {Advances in Neural Information Processing Systems},
  volume    = {37},
  year      = {2024},
  note      = {Spotlight. arXiv:2409.09347}
}

@inproceedings{lipman2022flow,
  author    = {Lipman, Yaron and Chen, Ricky T. Q. and Ben-Hamu, Heli and Nickel, Maximilian and Le, Matt},
  title     = {Flow Matching for Generative Modeling},
  booktitle = {International Conference on Learning Representations},
  year      = {2023},
  note      = {arXiv:2210.02747}
}

@article{lipman2023flowmatchinggenerativemodeling,
  author    = {Lipman, Yaron and Chen, Ricky T. Q. and Ben-Hamu, Heli and Nickel, Maximilian and Le, Matt},
  title     = {Flow Matching for Generative Modeling},
  journal   = {arXiv preprint arXiv:2210.02747},
  year      = {2022}
}

@article{tong2023improving,
  title={Improving and generalizing flow-based generative models with minibatch optimal transport},
  author={Tong, Alexander and Fatras, Kilian and Malkin, Nikolay and Huguet, Guillaume and Zhang, Yanlei and Rector-Brooks, Jarrid and Wolf, Guy and Bengio, Yoshua},
  journal={arXiv preprint arXiv:2302.00482},
  year={2023}
}

@inproceedings{liu2022flow,
  author    = {Liu, Xingchao and Gong, Chengyue and Liu, Qiang},
  title     = {Flow Straight and Fast: Learning to Generate and Transfer Data with Rectified Flow},
  booktitle = {International Conference on Learning Representations},
  year      = {2023},
  note      = {Spotlight. arXiv:2209.03003}
}

@inproceedings{albergo2022building,
  author    = {Albergo, Michael S. and Vanden-Eijnden, Eric},
  title     = {Building Normalizing Flows with Stochastic Interpolants},
  booktitle = {International Conference on Learning Representations},
  year      = {2023},
  note      = {arXiv:2209.15571}
}

@article{ho2020denoising,
  title={Denoising diffusion probabilistic models},
  author={Ho, Jonathan and Jain, Ajay and Abbeel, Pieter},
  journal={Advances in neural information processing systems},
  volume={33},
  pages={6840--6851},
  year={2020}
}

@inproceedings{song2020score,
  author    = {Song, Yang and Sohl-Dickstein, Jascha and Kingma, Diederik P. and Kumar, Abhishek and Ermon, Stefano and Poole, Ben},
  title     = {Score-Based Generative Modeling through Stochastic Differential Equations},
  booktitle = {International Conference on Learning Representations},
  year      = {2021},
  note      = {Outstanding Paper Award. arXiv:2011.13456}
}

@inproceedings{rezende2015variational,
  author    = {Rezende, Danilo J. and Mohamed, Shakir},
  title     = {Variational Inference with Normalizing Flows},
  booktitle = {International Conference on Machine Learning},
  volume    = {37},
  pages     = {1530--1538},
  year      = {2015}
}

@inproceedings{amos2023meta,
  author    = {Amos, Brandon and Luise, Giulia and Cohen, Samuel and Redko, Ievgen},
  title     = {Meta Optimal Transport},
  booktitle = {International Conference on Machine Learning},
  volume    = {202},
  pages     = {791--813},
  year      = {2023}
}

@article{bunne2022condot,
  title={Supervised training of conditional monge maps},
  author={Bunne, Charlotte and Krause, Andreas and Cuturi, Marco},
  journal={Advances in Neural Information Processing Systems},
  volume={35},
  pages={6859--6872},
  year={2022}
}

@article{potaptchik2025tilt,
  title={Tilt Matching for Scalable Sampling and Fine-Tuning},
  author={Potaptchik, Peter and Lee, Cheuk-Kit and Albergo, Michael S},
  journal={arXiv preprint arXiv:2512.21829},
  year={2025}
}

@article{bunne2023cellot,
  title={Learning single-cell perturbation responses using neural optimal transport},
  author={Bunne, Charlotte and Stark, Stefan G and Gut, Gabriele and Del Castillo, Jacobo Sarabia and Levesque, Mitch and Lehmann, Kjong-Van and Pelkmans, Lucas and Krause, Andreas and R{\"a}tsch, Gunnar},
  journal={Nature methods},
  volume={20},
  number={11},
  pages={1759--1768},
  year={2023},
  publisher={Nature Publishing Group US New York}
}

@inproceedings{zhu2017unpaired,
  title={Unpaired image-to-image translation using cycle-consistent adversarial networks},
  author={Zhu, Jun-Yan and Park, Taesung and Isola, Phillip and Efros, Alexei A},
  booktitle={Proceedings of the IEEE international conference on computer vision},
  pages={2223--2232},
  year={2017}
}

@inproceedings{ronneberger2015unet,
  title={U-net: Convolutional networks for biomedical image segmentation},
  author={Ronneberger, Olaf and Fischer, Philipp and Brox, Thomas},
  booktitle={International Conference on Medical image computing and computer-assisted intervention},
  pages={234--241},
  year={2015},
  organization={Springer}
}

@article{henry2019martingale,
  title={From (Martingale) Schrodinger bridges to a new class of Stochastic Volatility Models},
  author={Henry-Labordere, Pierre},
  journal={arXiv preprint arXiv:1904.04554},
  year={2019}
}

@article{chen2026convergence,
  title={Convergence of Sinkhorn’s algorithm for entropic martingale optimal transport problem},
  author={Chen, Fan and Conforti, Giovanni and Ren, Zhenjie and Wang, Xiaozhen},
  journal={Mathematics of Operations Research},
  year={2026},
  publisher={INFORMS}
}

@article{kazeykina2025entropic,
  title={Entropic Optimal Transport Problem with Convex Functional Cost},
  author={Kazeykina, Anna and Ren, Zhenjie and Wang, Xiaozhen and Zhang, Yufei},
  journal={arXiv preprint arXiv:2503.11843},
  year={2025}
}

@article{bertsekas1997nonlinear,
  title={Nonlinear programming},
  author={Bertsekas, Dimitri P},
  journal={Journal of the Operational Research Society},
  volume={48},
  number={3},
  pages={334--334},
  year={1997},
  publisher={Taylor \& Francis}
}

@article{tseng2001convergence,
  title={Convergence of a block coordinate descent method for nondifferentiable minimization},
  author={Tseng, Paul},
  journal={Journal of optimization theory and applications},
  volume={109},
  number={3},
  pages={475--494},
  year={2001},
  publisher={Springer}
}

@inproceedings{makkuva2020optimal,
  title={Optimal transport mapping via input convex neural networks},
  author={Makkuva, Ashok and Taghvaei, Amirhossein and Oh, Sewoong and Lee, Jason},
  booktitle={International Conference on Machine Learning},
  pages={6672--6681},
  year={2020},
  organization={PMLR}
}

@book{evans2015measure,
    title     = {Measure Theory and Fine Properties of Functions},
    author    = {Evans, Lawrence C. and Gariepy, Ronald F.},
    year      = {2015},
    edition   = {Revised},
    publisher = {CRC Press},
    address   = {Boca Raton, FL},
    series    = {Textbooks in Mathematics}
}

@article{gazdieva2024light,
  title={Light unbalanced optimal transport},
  author={Gazdieva, Milena and Asadulaev, Arip and Burnaev, Evgeny and Korotin, Alexander},
  journal={Advances in Neural Information Processing Systems},
  volume={37},
  pages={93907--93938},
  year={2024}
}

@book{moral2004feynman,
  title={Feynman-Kac formulae: genealogical and interacting particle systems with applications},
  author={Moral, Pierre},
  year={2004},
  publisher={Springer}
}

@article{bunne2022supervised,
  title={Supervised training of conditional monge maps},
  author={Bunne, Charlotte and Krause, Andreas and Cuturi, Marco},
  journal={Advances in Neural Information Processing Systems},
  volume={35},
  pages={6859--6872},
  year={2022}
}

@article{kim2022maximum,
  title={Maximum likelihood training of implicit nonlinear diffusion model},
  author={Kim, Dongjun and Na, Byeonghu and Kwon, Se Jung and Lee, Dongsoo and Kang, Wanmo and Moon, Il-Chul},
  journal={Advances in neural information processing systems},
  volume={35},
  pages={32270--32284},
  year={2022}
}

@inproceedings{heusel2017gans,
 author = {Heusel, Martin and Ramsauer, Hubert and Unterthiner, Thomas and Nessler, Bernhard and Hochreiter, Sepp},
 booktitle = {Advances in Neural Information Processing Systems},
 editor = {I. Guyon and U. Von Luxburg and S. Bengio and H. Wallach and R. Fergus and S. Vishwanathan and R. Garnett},
 pages = {},
 publisher = {Curran Associates, Inc.},
 title = {GANs Trained by a Two Time-Scale Update Rule Converge to a Local Nash Equilibrium},
 url = {https://proceedings.neurips.cc/paper_files/paper/2017/file/8a1d694707eb0fefe65871369074926d-Paper.pdf},
 volume = {30},
 year = {2017}
}

@misc{binkowski2018demystifying,
      title={Demystifying MMD GANs}, 
      author={Mikołaj Bińkowski and Danica J. Sutherland and Michael Arbel and Arthur Gretton},
      year={2018},
      eprint={1801.01401},
      archivePrefix={arXiv},
      primaryClass={stat.ML}
}

@inproceedings{sitzmann2020implicit,
 author = {Sitzmann, Vincent and Martel, Julien and Bergman, Alexander and Lindell, David and Wetzstein, Gordon},
 booktitle = {Advances in Neural Information Processing Systems},
 editor = {H. Larochelle and M. Ranzato and R. Hadsell and M.F. Balcan and H. Lin},
 pages = {7462--7473},
 publisher = {Curran Associates, Inc.},
 title = {Implicit Neural Representations with Periodic Activation Functions},
 url = {https://proceedings.neurips.cc/paper_files/paper/2020/file/53c04118df112c13a8c34b38343b9c10-Paper.pdf},
 volume = {33},
 year = {2020}
}

@inproceedings{rahaman2019spectral,
  title={On the spectral bias of neural networks},
  author={Rahaman, Nasim and Baratin, Aristide and Arpit, Devansh and Draxler, Felix and Lin, Min and Hamprecht, Fred and Bengio, Yoshua and Courville, Aaron},
  booktitle={International conference on machine learning},
  pages={5301--5310},
  year={2019},
  organization={PMLR}
}

@article{srivatsan2020massively,
  title={Massively multiplex chemical transcriptomics at single-cell resolution},
  author={Srivatsan, Sanjay R and McFaline-Figueroa, Jos{\'e} L and Ramani, Vijay and Saunders, Lauren and Cao, Junyue and Packer, Jonathan and Pliner, Hannah A and Jackson, Dana L and Daza, Riza M and Christiansen, Lena and others},
  journal={Science},
  volume={367},
  number={6473},
  pages={45--51},
  year={2020},
  publisher={American Association for the Advancement of Science}
}

@article{bonneel2015sliced,
  author  = {Bonneel, Nicolas and Rabin, Julien and Peyr{\'e}, Gabriel and Pfister, Hanspeter},
  title   = {Sliced and Radon Wasserstein Barycenters of Measures},
  journal = {Journal of Mathematical Imaging and Vision},
  year    = {2015},
  volume  = {51},
  number  = {1},
  pages   = {22--45},
  doi     = {10.1007/s10851-014-0506-3},
  url     = {https://doi.org/10.1007/s10851-014-0506-3}
}

@InProceedings{feydy2019interpolating,
  title = 	 {Interpolating between Optimal Transport and MMD using Sinkhorn Divergences},
  author =       {Feydy, Jean and S\'{e}journ\'{e}, Thibault and Vialard, Fran\c{c}ois-Xavier and Amari, Shun-ichi and Trouve, Alain and Peyr\'{e}, Gabriel},
  booktitle = 	 {Proceedings of the Twenty-Second International Conference on Artificial Intelligence and Statistics},
  pages = 	 {2681--2690},
  year = 	 {2019},
  editor = 	 {Chaudhuri, Kamalika and Sugiyama, Masashi},
  volume = 	 {89},
  series = 	 {Proceedings of Machine Learning Research},
  month = 	 {16--18 Apr},
  publisher =    {PMLR},
  url = 	 {https://proceedings.mlr.press/v89/feydy19a.html}
}

@misc{kingma2014adam,
      title={Adam: A Method for Stochastic Optimization}, 
      author={Diederik P. Kingma and Jimmy Ba},
      year={2014},
      eprint={1412.6980},
      archivePrefix={arXiv},
      primaryClass={cs.LG},
      url={https://arxiv.org/abs/1412.6980}, 
}
